\numberwithin{equation}{section}
\newtheorem{Theorem}{Theorem}[section]
\newtheorem*{Theorem*}{Theorem}
\newtheorem{Corollary}[Theorem]{Corollary}
\newtheorem{Lemma}[Theorem]{Lemma}
\newtheorem{Proposition}[Theorem]{Proposition}
 { \theoremstyle{definition}
\newtheorem{Definition}[Theorem]{Definition}

\newtheorem{Remark}[Theorem]{Remark} }
\newcommand{\brane}{\mathcal D}
\newcommand{\CC}{\mathbb C}
\newcommand{\CCP}{\mathbb C \mathrm{P}}
\newcommand{\RR}{\mathbb R}
\newcommand{\torus}{\mathbb T}
\newcommand{\MM}{\mathbb M}
\newcommand{\NN}{\mathbb N}
\newcommand{\G}{\mathcal G}
\newcommand{\hb}{\mathbf h}
\newcommand{\ub}{\mathbf u}
\newcommand{\V}{\text{\bf\textcolor{red}{/}}}
\newcommand{\U}{\text{\bf\textcolor{blue}{\textbackslash}}}
\newcommand{\W}{\mathbf W}
\newcommand{\prequot}{\widetilde \MM^s}
\newcommand{\cdloop}[1]{\ar[loop, distance = 2em, out=110, in=70, "#1"]}
\newcommand{\pt}{\mathrm{pt}}
\newcommand{\but}{\beta}
\newcommand{\site}{\mathfrak s}
\newcommand{\surgery}{\mathfrak S}
\newcommand{\bx}{\mathrm{bx}}
\def\ttt#1{\text{\tt #1}}
\newcommand\bs{{\color{blue} \char`\\}}
\newcommand\fs{{\color{red}/}}
\DeclareMathOperator{\bowvar}{\mathcal C}
\DeclareMathOperator{\Hom}{Hom}
\DeclareMathOperator{\End}{End}
\DeclareMathOperator{\im}{im}
\DeclareMathOperator{\charge}{charge}
\DeclareMathOperator{\pen}{\Gamma}
\begin{document}

\newcommand{\arXivNumber}{2310.04973}

\renewcommand{\PaperNumber}{016}

\FirstPageHeading

\ShortArticleName{Tangent Weights and Invariant Curves in Type A Bow Varieties}

\ArticleName{Tangent Weights and Invariant Curves\\ in Type A Bow Varieties}

\Author{Alexander FOSTER~$^{\rm a}$ and Yiyan SHOU~$^{\rm b}$}

\AuthorNameForHeading{A.~Foster and Y.~Shou}

\Address{$^{\rm a)}$~Department of Mathematics, University of North Carolina at Chapel Hill, NC, USA}
\EmailD{\href{mailto:aofoster@live.unc.edu}{aofoster@live.unc.edu}}

\Address{$^{\rm b)}$~Independent Researcher, Alexandria, VA, USA}
\EmailD{\href{mailto:yiyanshou@gmail.com}{yiyanshou@gmail.com}}

\ArticleDates{Received December 08, 2023, in final form February 20, 2025; Published online March 09, 2025}

\Abstract{This paper provides a complete classification of torus-invariant curves in Cherkis bow varieties of type~A. We develop combinatorial codes for compact and noncompact invariant curves involving the butterfly diagrams, Young diagrams, and binary contingency tables. As a key intermediate step, we also develop a novel tangent weight formula. Finally, we apply this new machinery to example bow varieties to demonstrate how to obtain their~1-skeletons (union of fixed points and invariant curves).}

\Keywords{Cherkis bow varieties; invariant curves; butterfly surgery; Young diagrams}

\Classification{14H10; 05E14}

\section{Introduction}
For a broad class of complex algebraic varieties with a torus action $T\, \rotatebox[origin=c]{-90}{$\circlearrowright$}\, Y$, the fixed points and invariant curves under the torus action play an important role in the study of the equivariant cohomology of $Y$. Here, we define an invariant curve to be the closure of a 1-dimensional $T$-orbit. This paper extends the work of \cite{RS} on the case where $Y$ is a Cherkis bow variety of type A by providing combinatorial codes for invariant curves.

For simplicity, assume that $Y$ has finitely many fixed points. Under certain conditions, the map $\iota^*\colon H^*_T (Y) \to H^*_T \bigl(Y^T\bigr)$ induced by the inclusion of the fixed point locus is injective. In other words, each equivariant cohomology class is uniquely determined by its fixed point restrictions. This is the case for partial flag varieties, and more generally, Nakajima quiver varieties of type~A. As $H^*_T(\pt) = \CC[u_1, \dots , u_m]$, where $m = \dim(T)$, this allows us to describe equivariant cohomology classes as tuples of polynomials. This approach is taken in \cite{R} to study $\hbar$-deformed Schubert classes. In the setting where the variety is equivariantly formal with respect to the~$T$-action, we can describe the image of $\iota^*$ in terms of the fixed points and invariant curves. For instance, when~$Y$ possesses finitely many invariant curves, the result of~\cite{GKM} describes the image of $\iota^*$ in terms of simple matching conditions on the polynomials (see also~\cite{Ty}). These matching conditions are determined by the invariant curves and their tangent data. When there are infinitely many invariant curves, the Chang--Skjelbred lemma \cite{CS} provides a more complicated description (see also \cite{equiv_coh}). Unfortunately, not every Cherkis bow variety is equivariantly formal nor has injective restriction map \cite{yibo}. The appendix of \cite{BR} shows, however, that there is a natural subalgebra (generated by the stable envelope) of $H^*_T(Y)$ on which $\iota^*$ is injective. Determining whether this subalgebra can be described in terms of fixed points and invariant curves is an open problem.\looseness=-1

Besides providing a concrete way to describe equivariant cohomology classes, the fixed points and invariant curves play an important role in the study of stable envelopes \cite{AO, MO}. The stable envelope is an axiomatically defined $T$-equivariant characteristic class with important connections to quantum integrable systems, quantum groups, and quantum cohomology. In certain settings, the axioms can be formulated as conditions on fixed point restrictions that are entirely determined by the invariant curves and their tangent data. This formulation is, in a sense, local to the fixed points, in contrast to the original global axioms of \cite{AO, MO}. See \cite{RTV1, RTV2, RTV3} for a~discussion of local stable envelope axioms in the Schubert calculus setting and \cite[Section~7]{RS} for the Cherkis bow variety setting. In the Schubert calculus setting, the stable envelope agrees with the Chern--Schwartz--MacPherson classes of Schubert cells \cite{motivic_overview}. While we are focusing on equivariant cohomology here, this discussion generalizes to K-theory and elliptic cohomology. In the elliptic cohomology setting, the stable envelope plays an important role in the study of~3d~($N = 4$) mirror symmetry \cite{BR, fullflag_mirror, grassman_mirror, SZ}.

The work of \cite{BR} shows that Cherkis bow varieties \cite{C1, C2, C3, NT} form a natural pool of varieties where the phenomenon of mirror symmetry manifests itself in the combinatorics and elliptic stable envelope. Bow varieties are holomorphic symplectic manifolds that come equipped with a torus action and generalize Nakajima quiver varieties \cite{N_quiver}. In \cite{RS}, a combinatorial framework for studying the fixed points of bow varieties of type A is established. This paper completes the picture by providing a complete combinatorial description of the invariant curves. A construction for compact invariant curves involving so called ``butterfly surgeries'' was previously known to the second author (see~\cite{S}). However, whether that construction captured all compact invariant curves remained a conjecture. This work resolves that conjecture in the affirmative and extends the result by also capturing the noncompact invariant curves using modified ``butterfly surgery'' constructions.

While the main accomplishment of this paper is the complete classification of invariant curves, several intermediate developments are required, some of which are interesting in their own right. In Section~\ref{sec:background}, we review the combinatorial framework of \cite{RS} for the study of type A Cherkis bow varieties. In Section~\ref{sec:bct_weights}, we prove a novel and noncancellative formula for the tangent weights at a fixed point of a bow variety. This formula can be used to efficiently calculate tangent weights by hand or by computer and plays a pivotal role in the proof of our main result. In Section~\ref{sec:curves}, we review the essentials of torus invariant curves and give a self-contained presentation of the butterfly surgery construction of \cite{S}. This section also describes two novel constructions for noncompact invariant curves. Finally, we reformulate these constructions in terms of surgery operations on Young diagrams, resulting in a description of invariant curves in terms of familiar combinatorics. Section~\ref{sec:block_swap} further reformulates the combinatorics of Section~\ref{sec:curves} in terms of binary contingency tables (BCTs). The results of this section provide a simple algorithm for constructing all invariant curves in a bow variety. Section~\ref{sec:classification} contains the main result, a classification of invariant curves containing a given fixed point. Curves are characterized as one of three types, each having its own unique properties. In the final section, Section~\ref{sec:examples}, we~apply the machinery developed in the previous sections to the example bow varieties of \cite{RS}.

\section{Background} \label{sec:background}
We begin by recalling the essential pieces of the combinatorial framework of \cite{RS} for the study of bow varieties. We will require brane diagrams, tie diagrams, table-with-margins, butterfly diagrams, the formula for the tangent bundle of a bow variety, formulas for fixed point restrictions, and the Hanany--Witten transition.

\subsection{Brane diagrams and bow varieties}
The \cite{RS} framework is based on the \cite{NT} construction of bow varieties. Here, we provide a~simplified presentation. A \textit{brane diagram} $\brane$ consists of a horizontal line divided into segments by~$n$ lines of slope 1 and $m$ lines of slope $-1$. The lines must be arranged so as not to create a~triple (or greater) intersection. Additionally, each segment is decorated by a nonnegative integer multiplicity where the infinite segments on the far left and right are automatically decorated with~0. An example of a brane diagram is shown in Figure~\ref{fig:brane_dgm}. The lines of slope 1 are called ``NS5 branes'', and we label them from left to right as $V_1, \dots , V_n$. The lines of slope $-1$ are called ``D5 branes'', and we label them from left to right as $U_1, \dots , U_m$. The NS5 and D5 branes are referred to collectively as ``5-branes''. The segments along with their multiplicities are referred to as ``D3 branes''. The segments are labeled as $X_0, \dots , X_{n + m}$ and the multiplicity of a D3 brane $X$ is denoted by $d_X$. In particular, $d_{X_0} = d_{X_{n + m}} = 0$.

\begin{figure}[t]\vspace*{-11mm}
\centering
\begin{tikzpicture}[scale=.5]
\draw [thick,red] (0.5,0) --(1.5,2);
\node[red] at (.5,-.6) {$V_1$}; 
\draw[thick] (1.5,1)--(2.5,1) node [above] {$2$} -- (3.5,1);
\draw [thick,blue](4.5,0) --(3.5,2);
\draw [thick](4.5,1)--(5.5,1) node [above] {$2$} -- (6.5,1);
\draw [thick,red](6.5,0) -- (7.5,2); 
\node[red] at (6.5,-.6) {$V_2$};
\draw [thick](7.5,1) --(8.5,1) node [above] {$2$} -- (9.5,1);
\draw[thick,blue] (10.5,0) -- (9.5,2);
\draw[thick] (10.5,1) --(11.5,1) node [above] {$4$} -- (12.5,1);
\draw [thick,red](12.5,0) -- (13.5,2); 
\draw [thick](13.5,1) --(14.5,1) node [above] {$3$} -- (15.5,1);
\draw[thick,red] (15.5,0) -- (16.5,2); 
\draw [thick](16.5,1) --(17.5,1) node [above] {$3$} -- (18.5,1);
\draw [thick,red](18.5,0) -- (19.5,2); 
\draw [thick](19.5,1) --(20.5,1) node [above] {$4$} -- (21.5,1);
\draw [thick,blue](22.5,0) -- (21.5,2);
\node[blue] at (21.5,2.4) {$U_3$};
\draw [thick](22.5,1) --(23.5,1) node [above] {$3$} -- (24.5,1);
\draw[thick,red] (24.5,0) -- (25.5,2);
\node[blue] at (27.5,2.4) {$U_4$};
\node[blue] at (30.5,2.4) {$U_5$};
\draw[thick] (25.5,1) --(26.5,1) node [above] {$2$} -- (27.5,1);
\draw [thick,blue](28.5,0) -- (27.5,2); 
\draw [thick](28.5,1) --(29.5,1) node [above] {$2$} -- (30.5,1);
\draw [thick,blue](31.5,0) -- (30.5,2); 

\draw [black,dashed,->](7,-5) to [out=170,in=270] (0.3,-1);
\draw [black,dashed,->](7,-5) to [out=110,in=270] (6.3,-1);
\draw [black,dashed,->](7,-5) to [out=80,in=210] (12.3,-.3);
\draw [black,dashed,->](7,-5) to [out=70,in=210] (15.3,-.3);
\draw [black,dashed,->](7,-5) to [out=40,in=210] (18.3,-.3);
\draw [black,dashed,->](7,-5) to [out=20,in=220] (24.3,-.3);
\node at (7,-5.5) {NS5 branes};

\draw [black,dashed,->](23,6) to [out=190,in=60] (3.7,2.25);
\draw [black,dashed,->](23,6) to [out=230,in=80] (9.6,2.25);
\draw [black,dashed,->](23,6) to [out=250,in=90] (21.7,3.25);
\draw [black,dashed,->](23,6) to [out=270,in=100] (27.25,3.25);
\draw [black,dashed,->](23,6) to [out=280,in=110] (30,3.25);
\node at (25.3, 6) {D5 branes};

\node at (3,6) {D3 branes};
\draw [black,dashed,->](3,5) to [out=-90,in=90] (2,1.3);
\draw [black,dashed,->](3,5) to [out=-80,in=90] (5,1.3);
\draw [black,dashed,->](3,5) to [out=-70,in=100] (8,1.3);
\draw [black,dashed,->](3,5) to [out=-65,in=110] (11,1.3);
\node [color=black] at (3.7,4.7) {$\ldots$};
\end{tikzpicture}

\caption{The first example of a brane diagram given in \cite{RS}.}
\label{fig:brane_dgm}
\end{figure}
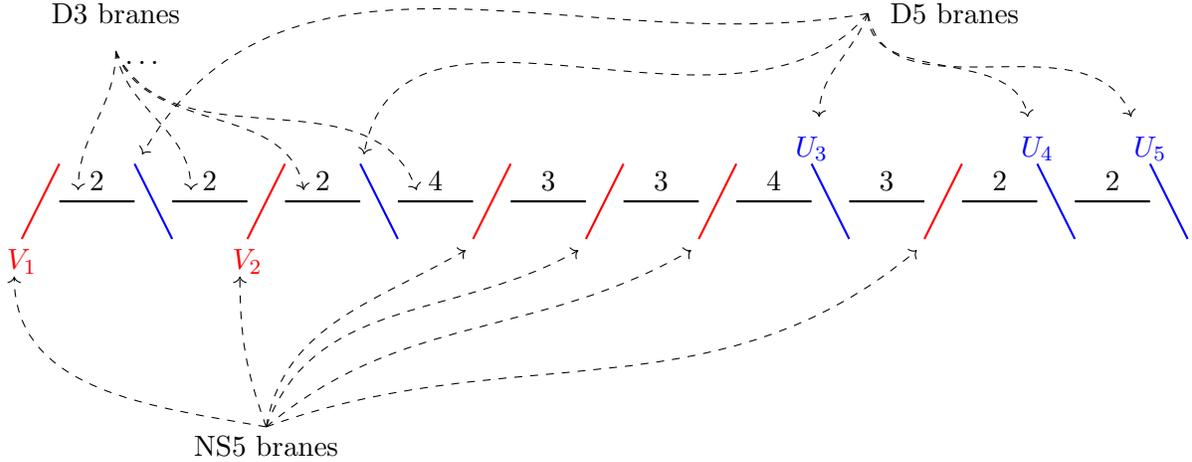

We do not bother displaying $X_0$ and $X_{n + m}$ in graphical representations of $\brane$. We also color the NS5 branes red and the D5 branes blue. These colors are purely for visual clarity and have no mathematical meaning.

To each brane diagram $\brane$, we associate a smooth, torus equivariant, symplectic holomorphic variety $\torus\, \rotatebox[origin=c]{-90}{$\circlearrowright$}\,\bowvar(\brane)$. This variety will be constructed as a GIT quotient. Given any brane $\beta$ (D5, NS5, or D3), denote the brane immediately to the left by $\beta^-$ and right by $\beta^+$.

First, a vector space will be associated to each 5-brane. For each D3 brane $X$, let $W_X = \CC^{d_X}$, and for each D5 brane $U$, let $\CC_U = \CC$.
\begin{itemize}\itemsep=0pt
\item To each D5 brane $U$, associate a ``triangle part''
\begin{align*}
\MM_U={}&\Hom(W_{U^+},W_{U^-}) \oplus
\Hom(W_{U^+},\CC_U) \oplus \Hom(\CC_U,W_{U^-}) \\
& \oplus\End(W_{U^-}) \oplus \End(W_{U^+}),
\end{align*}
whose elements will be denoted by $(A_U, b_U, a_U, B_U, B'_U)$, as shown in the diagram
\[
\begin{tikzcd}
W_{U^-} \arrow[loop,out=200,in=160,distance=2em, "B_U", "\circ" marking] & & W_{U^+} \arrow[ld, "\circ" marking, "b_U" ] \arrow[ll, "A_U"] \arrow[loop,out=20,in=-20,distance=2em, "B'_U", "\circ" marking] \\
& \CC_U.\arrow[ul, "a_U"] &
\end{tikzcd}
\]

\item To each NS5 brane $V$, associate the ``two-way part''
\[
\MM_V= \Hom(W_{V^+},W_{V^-}) \oplus
\Hom(W_{V^-},W_{V^+}),
\]
whose elements will be denoted by $(C_V, D_V)$, as shown in the diagram
\[
\begin{tikzcd}
W_{V^-} \ar[rr,bend right, "D_V"] & & W_{V^+}. \ar[ll, "C_V", bend right, "\circ" marking]
\end{tikzcd}
\]
\end{itemize}
Finally, define
\[
\MM=\bigoplus_{U\text{ D5}}\MM_U \oplus \bigoplus_{V\text{ NS5}}\MM_V,
\]
the sum of all triangle and two-way parts.

Next, define the groups
\[
\torus = \CC_\hbar^\times \times \prod_{U \text{ D5}}\CC_U^\times \qquad\text{and}\qquad \G = \prod_{X \text{ D3}} \mathrm{GL}(W_X),
\]
where $\CC_\hbar^\times$ and $\CC^\times_U$ are copies of $\CC^\times$. The group $\G$ acts on each $W_X$ and hence on $\MM$ in a natural way. The torus $\torus$ acts on $\MM$ by scaling $\CC_U$ by the $\CC_U^\times$ factor and scaling each circled map in the above diagrams by the $\CC^\times_\hbar$ factor.

Let $\MM_0$ be the subset of $\MM$ satisfying the following conditions:
\begin{enumerate}\itemsep=0pt
\item[(1)] For each D5 brane $U$, we have $B_UA_U - A_UB'_U + a_Ub_U = 0$.
\item[(2)] For each D3 brane $X$, we have
\begin{itemize}\itemsep=0pt
\item $B'_{X^-} - B_{X^+} = 0$ if $X$ is in between two D5 branes ($\U-X-\U$),
\item $C_{X^+}D_{X^+} - D_{X^-}C_{X^-} = 0$ if $X$ is in between two NS5 branes ($\V-X-\V$),
\item $-D_{X^-}C_{X^-} - B_{X^+} = 0$ if $X^-$ is an NS5 brane and $X^+$ is a D5 brane ($\V-X-\U$),
\item $C_{X^+}D_{X^+} + B'_{X^-} = 0$ if $X^-$ is a D5 brane and $X^+$ is an NS5 brane ($\U-X-\V$).
\end{itemize}
\end{enumerate}
These conditions arise from setting a certain moment map equal to 0 (see \cite{NT} or \cite{RS} for details). Hence, we will refer to them as ``0-momentum'' conditions. Let $\W = \bigoplus_{X\text{ D3}} W_X$, and define three additional ``stability'' conditions:
\begin{enumerate}\itemsep=0pt
\item[(1)] For all D5 branes $U$, the only $B_{U^+}$-invariant subspace $S\subset W_{U^+}$ with $A_U(S) = 0$, $b_U(S) = 0$ is $S=0$.
\item[(2)] For all D5 branes $U$, the only $B_{U^-}$-invariant subspace $S\subset W_{U^-}$ with $\im(A_U)+\im(a_U) \subset S$ is $S=W_{U^-}$.
\item[(3)] The only subspace $S=\bigoplus_X S_X\subset \W$ invariant under all $A$, $B$, $C$, |$D$ maps such that $\im(a_U) \subset S$ and $A_U$ induces an isomorphism $W_{U^+}/S_{U^+}\to W_{U^-}/S_{U^-}$ for all D5 branes $U$ is $S=\W$.
\end{enumerate}
Stability conditions 1 and 2 are the S1 and S2 conditions of \cite[Section~2.2]{NT}, and stability condition 3 is the $\nu$2 condition of \cite[Section~2.4]{NT}. Let $\prequot$ be the subset of $\MM_0$ satisfying the three stability conditions. $\prequot$ is $\G$- and $\torus$-invariant.

Define the ``Cherkis bow variety'' corresponding to brane diagram $\brane$ as the quotient
$
\bowvar(\brane) = \prequot / \G$.
There is a residual action of $\torus$ on $\bowvar(\brane)$, along with a collection of $\torus$-equivariant ``tautological'' vector bundles $\xi_X \to \bowvar(\brane)$, associated with the $W_X$ vector spaces. The rank of $\xi_X$ is $d_X$. A formula for the $\torus$-equivariant K-theory class of the tangent bundle $T\bowvar(\brane)$ is given in \cite[Section~3.2]{RS}:
\begin{itemize}\itemsep=0pt
\item Let $\hb$ be the trivial line bundle whose fibres are scaled by the $\CC_\hbar^\times$ factor of $\torus$.
\item For a D5 brane $U$ define
\begin{align*}
T_U ={}&
\Hom(\xi_{U^+},\xi_{U^-})\oplus
\hb\Hom(\xi_{U^+},\CC_U) \oplus
\Hom(\CC_U,\xi_{U^-})
\\
& \oplus \hb \End(\xi_{U^-})\oplus \hb\End(\xi_{U^+}).
\end{align*}
\item
For an NS5 brane $V$ define
$
T_V =\hb\Hom(\xi_{V^+},\xi_{V^-}) \oplus \Hom(\xi_{V^-},\xi_{V^+})
$.
\end{itemize}
Then, as elements of $K^0_\torus (\bowvar(\brane))$, we have
\begin{align}
T\bowvar(\brane)={}&
\biggl(\bigoplus_{U \text{ D5}} T_U \biggr)\oplus
\biggl( \bigoplus_{V \text{ NS5}} T_V \biggr)\nonumber \\
&
\ominus \biggl( \bigoplus_{U \text{ D5}} \hb \Hom(\xi_{U^+},\xi_{U^-}) \biggr)
\ominus \biggl(\bigoplus_{X \text{ D3}} (1+\hb)\End(\xi_{X}) \biggr).\label{eqn:tangent_bundle}
\end{align}

\subsection{Torus fixed points} \label{sec:fixed_pts}
Paper \cite{RS} provides three combinatorial codes for the $\torus$-fixed points of a bow variety $\bowvar(\brane)$: tie diagrams, butterfly diagrams, and table-with-margins (BCTs). We will be utilizing all three in what follows.

Let us begin with tie diagrams. A ``tie diagram'' is obtained from a brane diagram by adding dashed lines, called ``ties'', between the 5-branes such that
\begin{itemize}\itemsep=0pt
\item each tie joins 5-branes of different type (one NS5 and one D5 brane),
\item for each segment $X$, there are $d_X$ distinct ties covering $X$.
\end{itemize}
We allow at most one tie between any two 5-branes. In Figure~\ref{fig:tie_dgm}, we added ties to the brane diagram in Figure~\ref{fig:brane_dgm} to create a tie diagram. This tie diagram corresponds to one of the $\torus$-fixed points.
\begin{figure}[t]\vspace{-3mm}
\centering
\begin{tikzpicture}[baseline=0,scale=.44]
\draw [thick,red] (0.5,0) --(1.5,2);
\draw[thick] (1,1)--(2.5,1) node [above] {$2$} -- (31,1);
\draw [thick,blue](4.5,0) --(3.5,2);
\draw [thick](4.5,1)--(5.5,1) node [above] {$2$} -- (6.5,1);
\draw [thick,red](6.5,0) -- (7.5,2); 
\draw [thick](7.5,1) --(8.5,1) node [above] {$2$} -- (9.5,1);
\draw[thick,blue] (10.5,0) -- (9.5,2);
\draw[thick] (10.5,1) --(11.5,1) node [above] {$4$} -- (12.5,1);
\draw [thick,red](12.5,0) -- (13.5,2); 
\draw [thick](13.5,1) --(14.5,1) node [above] {$3$} -- (15.5,1);
\draw[thick,red] (15.5,0) -- (16.5,2); 
\draw [thick](16.5,1) --(17.5,1) node [above] {$3$} -- (18.5,1);
\draw [thick,red](18.5,0) -- (19.5,2); 
\draw [thick](19.5,1) --(20.5,1) node [above] {$4$} -- (21.5,1);
\draw [thick,blue](22.5,0) -- (21.5,2);
\draw [thick](22.5,1) --(23.5,1) node [above] {$3$} -- (24.5,1);
\draw[thick,red] (24.5,0) -- (25.5,2);
\draw[thick] (25.5,1) --(26.5,1) node [above] {$2$} -- (27.5,1);
\draw [thick,blue](28.5,0) -- (27.5,2); 
\draw [thick](28.5,1) --(29.5,1) node [above] {$2$} -- (30.5,1);
\draw [thick,blue](31.5,0) -- (30.5,2); 

\draw [dashed, black](4.5,-.25) to [out=-45,in=225] (12.5,-.25);
\draw [dashed, black](10.5,-.25) to [out=-45,in=225] (12.5,-.25);
\draw [dashed, black](10.5,-.25) to [out=-45,in=225] (15.5,-.25);
\draw [dashed, black](10.5,-.25) to [out=-45,in=225] (24.5,-.25);
\draw [dashed, black](22.5,-.25) to [out=-45,in=225] (24.5,-.25);

\draw [dashed, black](1.5,2.25) to [out=45,in=-225] (3.5,2.25);
\draw [dashed, black](1.5,2.25) to [out=45,in=-225] (9.5,2.25);
\draw [dashed, black](13.5,2.25) to [out=45,in=-225] (21.5,2.25);
\draw [dashed, black](16.5,2.25) to [out=45,in=-225] (21.5,2.25);
\draw [dashed, black](19.5,2.25) to [out=45,in=-225] (30.5,2.25);
\draw [dashed, black](25.5,2.25) to [out=45,in=-225] (30.5,2.25);
\end{tikzpicture}
\vspace{-4mm}

\caption{An example of a tie diagram whose underlying brane diagram is the one displayed in Figure~\ref{fig:brane_dgm}.}
\label{fig:tie_dgm}
\vspace{-2mm}

\end{figure}
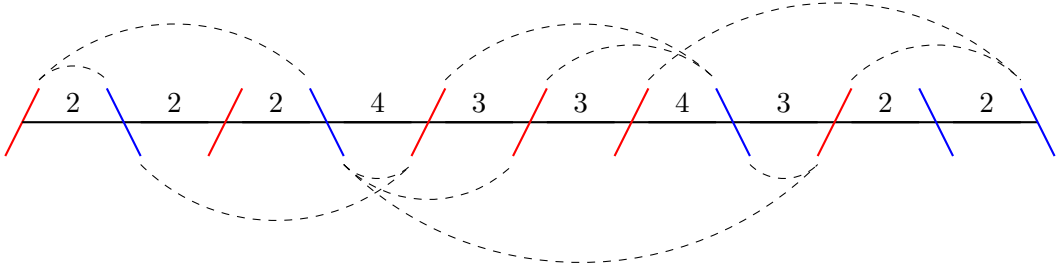
Next, we will create a correspondence between tie diagrams and binary contingency tables (BCTs). An intermediate notion will be required. An integer called the ``charge'' will be associated to each 5-brane.
For an NS5 brane $V$, let
\[
\charge(V)=
(d_{V^+}-d_{V^-})+\#\{\text{D5 branes left of $V$}\}.
\]
For a D5 brane $U$, let
\[
\charge(U)=
(d_{U^-}-d_{U^+})+\#\{\text{NS5 branes right of $U$}\}.
\]
Define the ``row margin'' and ``column margin'', respectively, by
\begin{align*}
r = (\charge(V_1), \dots , \charge(V_n)), \qquad
c = (\charge(U_1), \dots , \charge(U_m)).
\end{align*}
A ``binary contingency table'' (BCT) with row and column margins $r$, $c$ is a binary matrix with row sums equal to $r$ and columns sums equal to $c$. Note that the existence of such a matrix implies that $\sum_i r_i = \sum_j c_j$. Given a tie diagram, construct a BCT $M$ with margins $r$, $c$ as follows:
\begin{itemize}\itemsep=0pt
\item if $V_i$ is to the left of $U_j$, set $M_{ij} = 1$ if there is a tie between $V_i$ and $U_j$ and $M_{ij} = 0$ otherwise,
\item if $V_i$ is to the right of $U_j$, set $M_{ij} = 1$ if there is no tie between $V_i$ and $U_j$ and $M_{ij} = 0$ otherwise.
\end{itemize}
Assuming the underlying brane diagram $\brane$ is fixed, this gives a bijection between tie diagrams and BCTs with margins $r$, $c$. However, the map sending all tie diagrams (with arbitrary underlying brane diagram) to their corresponding BCT is not injective. An additional piece of data, called the ``separating line'', is required to make this map bijective. A separating line is a path running from the top left to the bottom right of an $(m + 1) \times (n + 1)$ lattice making only downward and rightward moves. We draw the separating line over the BCT, as depicted in Figure~\ref{fig:bct}. The separating line plays the role of encoding the order of the 5-branes. Downward moves correspond to NS5 branes and rightward moves to D5 branes. A BCT along with a separating line is called a ``table-with-margins''. Table-with-margins are in bijection with tie diagrams.\looseness=-1

\begin{figure}
\centering
\begin{tikzpicture}[baseline=1.4 cm, scale=.57]
\draw[ultra thin] (0,0) -- (5,0);
\draw[ultra thin] (0,1) -- (5,1);
\draw[ultra thin] (0,2) -- (5,2);
\draw[ultra thin] (0,3) -- (5,3);
\draw[ultra thin] (0,4) -- (5,4);
\draw[ultra thin] (0,5) -- (5,5);
\draw[ultra thin] (0,6) -- (5,6);
\draw[ultra thin] (0,0) -- (0,6);
\draw[ultra thin] (1,0) -- (1,6);
\draw[ultra thin] (2,0) -- (2,6);
\draw[ultra thin] (3,0) -- (3,6);
\draw[ultra thin] (4,0) -- (4,6);
\draw[ultra thin] (5,0) -- (5,6);
\draw[ultra thick] (0,6) -- (0,5) -- (1,5) -- (1,4) -- (2,4) -- (2,1) -- (3,1) -- (3,0) -- (5,0);
\node at (.5,6.4) {$5$}; \node at (1.5,6.4) {$2$}; \node at (2.5,6.4) {$2$}; \node at (3.5,6.4) {$0$}; \node at (4.5,6.4) {$2$};
\node at (.5,7.4) {$U_1$}; \node at (1.5,7.4) {$U_2$}; \node at (2.5,7.4) {$U_3$}; \node at (3.5,7.4) {$U_4$}; \node at (4.5,7.4) {$U_5$};
\node at (-.5,0.5) {$2$}; \node at (-.5,1.5) {$3$}; \node at (-.5,2.5) {$2$}; \node at (-.5,3.5) {$1$}; \node at (-.5,4.5) {$1$}; \node at (-.5,5.5) {$2$};
\node at (-1.8,0.5) {$V_6$}; \node at (-1.8,1.5) {$V_5$}; \node at (-1.8,2.5) {$V_4$}; \node at (-1.8,3.5) {$V_3$}; \node at (-1.8,4.5) {$V_2$}; \node at (-1.8,5.5) {$V_1$};
\node[violet] at (0.5,0.5) {$1$};\node[violet] at (1.5,0.5) {$0$};\node[violet] at (2.5,0.5) {$0$};\node[violet] at (3.5,0.5) {$0$};\node[violet] at (4.5,0.5) {$1$};
\node[violet] at (0.5,1.5) {$1$};\node[violet] at (1.5,1.5) {$1$};\node[violet] at (2.5,1.5) {$0$};\node[violet] at (3.5,1.5) {$0$};\node[violet] at (4.5,1.5) {$1$};
\node[violet] at (0.5,2.5) {$1$};\node[violet] at (1.5,2.5) {$0$};\node[violet] at (2.5,2.5) {$1$};\node[violet] at (3.5,2.5) {$0$};\node[violet] at (4.5,2.5) {$0$};
\node[violet] at (0.5,3.5) {$0$};\node[violet] at (1.5,3.5) {$0$};\node[violet] at (2.5,3.5) {$1$};\node[violet] at (3.5,3.5) {$0$};\node[violet] at (4.5,3.5) {$0$};
\node[violet] at (0.5,4.5) {$1$};\node[violet] at (1.5,4.5) {$0$};\node[violet] at (2.5,4.5) {$0$};\node[violet] at (3.5,4.5) {$0$};\node[violet] at (4.5,4.5) {$0$};
\node[violet] at (0.5,5.5) {$1$};\node[violet] at (1.5,5.5) {$1$};\node[violet] at (2.5,5.5) {$0$};\node[violet] at (3.5,5.5) {$0$};\node[violet] at (4.5,5.5) {$0$};
\end{tikzpicture}

\caption{The table-with-margins corresponding to the tie diagram in Figure~\ref{fig:tie_dgm}.}
\label{fig:bct}
\vspace{-2mm}
\end{figure}

\begin{Remark}
Paper \cite{NT} provides a more general construction of bow varieties of \emph{affine} type~A. In this more general setting, BCTs are replaced by the ``Maya diagrams'' of~\cite{N_satake}. The precise relationship between BCTs and Maya diagrams is discussed in an appendix of~\cite{RS}.
\end{Remark}

Finally, we will describe butterfly diagrams. These provide a depiction of a representative in the prequotient $\prequot$ of a $\torus$-fixed point of $\bowvar(\brane)$. Fix a tie diagram and a D5 brane $U$. For each segment $X$, let $d_X^U$ be the number of distinct ties connected to $U$ covering $X$. Place a column of $d_X^U$ vertices below $X$ with fixed spacing, and align them in the following way:
\begin{itemize}\itemsep=0pt
\item If the 5-brane between two consecutive columns is D5, align the columns at the bottom.
\item For columns to the right of $U$, if the 5-brane between two consecutive columns is NS5, align the columns at the top.
\item For columns to the left of $U$, if the 5-brane between two consecutive columns is NS5, align the columns so that the top vertex of the left column is one position lower than that of the right.
\end{itemize}
Also, place a special ``framing'' vertex (indicated by an open circle in our diagrams) below $U$. We will then join these vertices with directed edges:
\begin{itemize}\itemsep=0pt
\item If a column is adjacent to a D5 brane, create downward (black) edges between each consecutive pair of vertices in the column.
\item If the 5-brane between two adjacent columns is D5, create leftward (blue) edges between horizontally adjacent pairs of vertices.
\item If the 5-brane between two adjacent columns is NS5, create (magenta dotted) edges pointing left one position and down one position, wherever possible, and rightward (red) edges between horizontally adjacent pairs of vertices.
\item If $d_{U^-}^U > 0$, create an (green) edge from the framing vertex to the top vertex under $U^-$.
\item If $d_{U^+}^U > d_{U^-}^U$, create an (green) edge from vertex $d_{U^+}^U - d_{U^-}^U$ (counted from top to bottom) to the framing vertex.
\end{itemize}
The resulting directed graph is called a ``butterfly''. Taking the disjoint union of the butterflies of all the D5 branes gives us a ``butterfly diagram''. Figure~\ref{fig:butterfly_dgm} displays the butterfly diagram corresponding to the tie diagram in Figure~\ref{fig:tie_dgm}.

\begin{figure}[t]\vspace{-3mm}
\centering
\begin{tikzpicture}[scale=.45]
\draw [thick, red] (0.5,0) --(1.5,2);
\draw[thick] (1,1)--(2.5,1) node [above] {$2$} -- (31,1);
\draw [thick,blue](4.5,0) --(3.5,2);
\draw [thick](4.5,1)--(5.5,1) node [above] {$2$} -- (6.5,1);
\draw [thick,red](6.5,0) -- (7.5,2);
\draw [thick](7.5,1) --(8.5,1) node [above] {$2$} -- (9.5,1);
\draw[thick,blue] (10.5,0) -- (9.5,2);
\draw[thick] (10.5,1) --(11.5,1) node [above] {$4$} -- (12.5,1);
\draw [thick,red](12.5,0) -- (13.5,2);
\draw [thick](13.5,1) --(14.5,1) node [above] {$3$} -- (15.5,1);
\draw[thick,red] (15.5,0) -- (16.5,2);
\draw [thick](16.5,1) --(17.5,1) node [above] {$3$} -- (18.5,1);
\draw [thick,red](18.5,0) -- (19.5,2);
\draw [thick](19.5,1) --(20.5,1) node [above] {$4$} -- (21.5,1);
\draw [thick,blue](22.5,0) -- (21.5,2);
\draw [thick](22.5,1) --(23.5,1) node [above] {$3$} -- (24.5,1);
\draw[thick,red] (24.5,0) -- (25.5,2);
\draw[thick] (25.5,1) --(26.5,1) node [above] {$2$} -- (27.5,1);
\draw [thick,blue](28.5,0) -- (27.5,2);
\draw [thick](28.5,1) --(29.5,1) node [above] {$2$} -- (30.5,1);
\draw [thick,blue](31.5,0) -- (30.5,2);

\draw [dashed, black](4.5,-.25) to [out=-45,in=225] (12.5,-.25);
\draw [dashed, black](10.5,-.25) to [out=-45,in=225] (12.5,-.25);
\draw [dashed, black](10.5,-.25) to [out=-45,in=225] (15.5,-.25);
\draw [dashed, black](10.5,-.25) to [out=-45,in=225] (24.5,-.25);
\draw [dashed, black](22.5,-.25) to [out=-45,in=225] (24.5,-.25);

\draw [dashed, black](1.5,2.25) to [out=45,in=-225] (3.5,2.25);
\draw [dashed, black](1.5,2.25) to [out=45,in=-225] (9.5,2.25);
\draw [dashed, black](13.5,2.25) to [out=45,in=-225] (21.5,2.25);
\draw [dashed, black](16.5,2.25) to [out=45,in=-225] (21.5,2.25);
\draw [dashed, black](19.5,2.25) to [out=45,in=-225] (30.5,2.25);
\draw [dashed, black](25.5,2.25) to [out=45,in=-225] (30.5,2.25);

\draw[fill] (2.5,-4) circle [radius=.04]; \draw[fill] (5.5,-4) circle [radius=.04]; \draw[fill] (8.5,-4) circle [radius=.04]; \draw[fill] (11.5,-4) circle [radius=.04];
\draw[blue, <-] (2.7,-4)--(5.3,-4); \draw[red, ->] (5.7,-4)--(8.3,-4); \draw[blue, <-] (8.7,-4)--(11.3,-4);
\draw[<-,green] (2.5,-3.8) to [out=70,in=-120] (4,-3.1); \draw[] (4,-3) circle [radius=.1];

\draw[fill] (11.5,-5) circle [radius=.04]; \draw[fill] (14.5,-5) circle [radius=.04]; \draw[fill] (17.5,-5) circle [radius=.04]; \draw[fill] (20.5,-5) circle [radius=.04]; \draw[fill] (23.5,-5) circle [radius=.04];
\draw[red, ->] (11.7,-5)--(14.3,-5);\draw[red, ->] (14.7,-5)--(17.3,-5);\draw[red, ->] (17.7,-5)--(20.3,-5);\draw[blue,<-] (20.7,-5)--(23.3,-5);
\draw[fill] (11.5,-6) circle [radius=.04]; \draw[fill] (14.5,-6) circle [radius=.04];
\draw[red, ->] (11.7,-6)--(14.3,-6);
\draw[fill] (11.5,-7) circle [radius=.04];
\draw [magenta, dotted,->] (14.25,-5.1) -- (11.8,-5.9);\draw [magenta, dotted,->] (17.25,-5.1) -- (14.8,-5.9);\draw [magenta, dotted,->] (14.25,-6.1) -- (11.8,-6.9);
\draw[blue,<-] (8.7,-7)--(11.3,-7);
\draw[fill] (8.5,-7) circle [radius=.04];\draw[fill] (5.5,-8) circle [radius=.04];\draw[fill] (2.5,-8) circle [radius=.04];
\draw [magenta, dotted,->] (8.25,-7.1) -- (5.8,-7.9);\draw [blue, ->] (5.25,-8) -- (2.8,-8);
\draw [->] (11.5,-6.1) -- (11.5,-6.9);\draw [->] (11.5,-5.1) -- (11.5,-5.9);
\draw[<-,green] (8.5,-6.8) to [out=70,in=-120] (9.9,-5.5); \draw[<-,green] (10.1,-5.5) to [out=-50,in=160] (11.5,-6); \draw[] (10,-5.5) circle [radius=.1];

\draw[fill] (17.5,-8) circle [radius=.04]; \draw[fill] (20.5,-8) circle [radius=.04]; \draw[fill] (23.5,-8) circle [radius=.04];\draw[fill] (20.5,-7) circle [radius=.04];\draw[fill] (17.5,-9) circle [radius=.04];\draw[fill] (14.5,-9) circle [radius=.04];
\draw[red, ->] (14.7,-9)--(17.3,-9);\draw[red, ->] (17.7,-8)--(20.3,-8);\draw [blue, ->] (23.25,-8) -- (20.8,-8);
\draw [magenta, dotted,->] (20.25,-7.1) -- (17.8,-7.9);\draw [magenta, dotted,->] (20.25,-8.1) -- (17.8,-8.9);\draw [magenta, dotted,->] (17.25,-8.1) -- (14.8,-8.9);
\draw [->] (20.5,-7.1) -- (20.5,-7.9);
\draw[<-,green] (20.5,-6.8) to [out=70,in=-120] (21.9,-5.9);\draw[] (21.9,-5.8) circle [radius=.1];

\draw[fill] (20.5,-10) circle [radius=.04];\draw[fill] (23.5,-10) circle [radius=.04];\draw[fill] (26.5,-10) circle [radius=.04];\draw[fill] (29.5,-10) circle [radius=.04];\draw[fill] (26.5,-9) circle [radius=.04];\draw[fill] (29.5,-9) circle [radius=.04];
\draw [->] (29.5,-9.1) -- (29.5,-9.9);\draw [->] (26.5,-9.1) -- (26.5,-9.9);
\draw [blue, ->] (29.25,-10) -- (26.8,-10);\draw [blue, ->] (29.25,-9) -- (26.8,-9);\draw [blue, ->] (23.25,-10) -- (20.8,-10);
\draw [magenta, dotted,->] (26.25,-9.1) -- (23.8,-9.9);
\draw[red, ->] (23.7,-10)--(26.3,-10);
\draw[<-,green] (29.5,-8.8) to [out=70,in=-120] (30.9,-7.9);\draw[] (30.9,-7.8) circle [radius=.1];
\end{tikzpicture}

\caption{The tie diagram from Figure~\ref{fig:tie_dgm} along with its corresponding butterfly diagram.}
\label{fig:butterfly_dgm}
\end{figure}
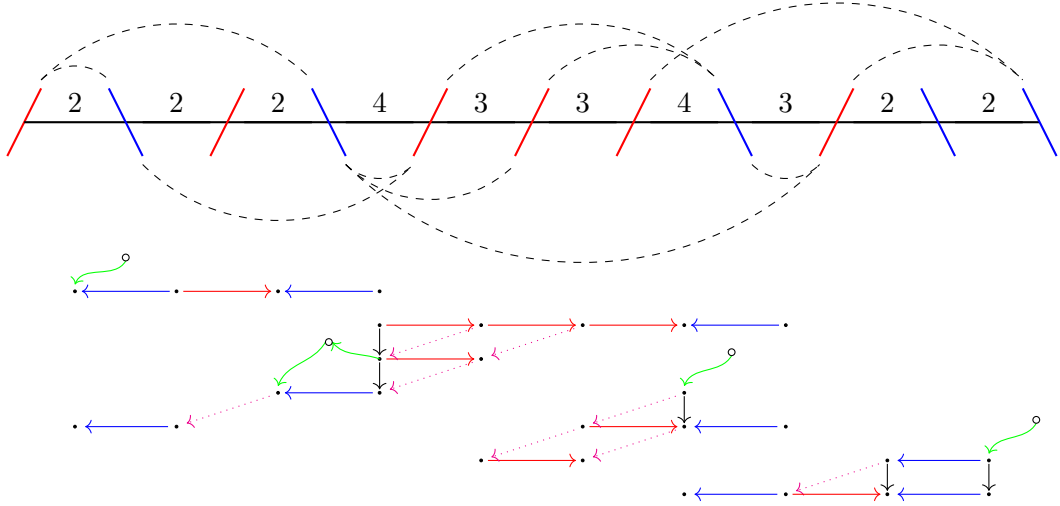

For each segment $X$, pick a basis for $W_X$ and identify the basis vectors with the vertices of the butterfly diagram below $X$. For each D5 brane $U$, identify $\CC_U$ with the framing vertex of the butterfly of $U$. Then, interpret the edges as $A$, $-B$, $C$, $D$, $a$, $-b$ maps. Note that when $X^-$ and $X^+$ are both D5 branes, there are two $B$ maps acting on $W_X$, one from the triangle part of $X^-$ and one from the triangle part of $X^+$. Considering the $0$-momentum condition, we take these maps to be equal, both represented by the black edges below $X$. The maps determined by the butterfly diagram constitute an element $\tilde{p}\in\prequot$. Moreover, this element descends to a~$\torus$-fixed point of $\bowvar(\brane)$. The converse is also true, that is, every $\torus$-fixed point of $\bowvar(\brane)$ can be represented by an element of $\prequot$ determined by some butterfly diagram. This is the content of~\cite[Theorem~4.8]{RS}.

\subsection{Fixed point restrictions} \label{sec:fixed_res}
Recall that a bow variety $\bowvar(\brane)$ comes equipped with tautological bundles $\xi_X$, one for each segment. Let $p\in \bowvar(\brane)^\torus$, and consider the corresponding butterfly diagram. Each non-framing vertex will be assigned a ``height'' relative to the butterfly it belongs to.

\begin{Definition} \label{def:height}
Fix a butterfly and define the ``height'' of each of its vertices as follows:
\begin{itemize}\itemsep=0pt
\item If the tail of a green edge is the framing vertex, the head is a vertex of height $0$.
\item If the head of a green edge is the framing vertex, the tail is a vertex of height $1$.
\item The height decreases by $1$ when we go from one vertex to another one immediately below it, and is constant across horizontal rows of vertices.
\end{itemize}
Given any vertex $v$ in a butterfly diagram, denote its height by $y(v)$. Note that the height of $v$ is measured with respect to the butterfly containing $v$.
\end{Definition}

Let $\ub$ be the trivial line bundle over $p$ whose $\torus$-action is given by scaling by the $\CC_U$ factor. Similarly, let $\hb$ be the trivial line bundle scaled by $\CC_\hbar$. Decorate each vertex $v$ of the $U$ butterfly by $\ub \hb^{y(v)}\in K^0_\torus(p)$. Figure~\ref{fig:butterfly_mons} shows these decorations for the butterfly diagram of Figure~\ref{fig:butterfly_dgm}. Then, we have that $\xi_X|_p$ is equal the sum of the line bundles decorating the vertices of the butterfly diagram below $X$ as elements of $K^0_\torus(p)$. For example, in Figure~\ref{fig:butterfly_mons}, we have
\[
\xi_{X_7} = \ub_2\hb^2 + \ub_3 + \frac{\ub_3}{\hb} + \frac{\ub_4}{\hb}.
\]

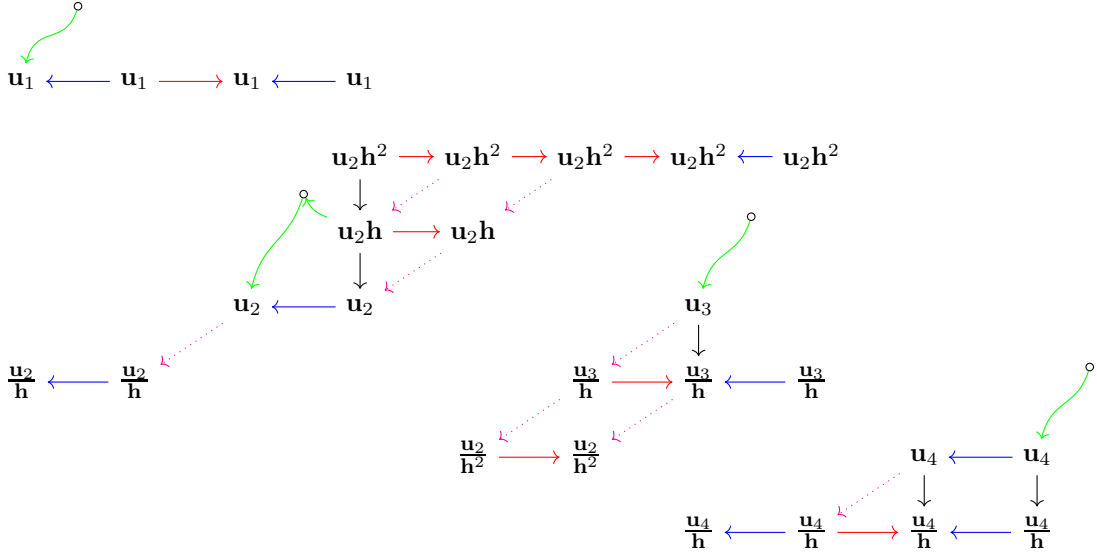
\begin{figure}[t]
\centering
\begin{tikzpicture}[scale=.5, yscale=2]
\draw (2.5,-4) node(1a1){\small $\ub_1$}; \draw[fill] (5.5,-4) node(2a1){\small $\ub_1$} (2a1); \draw[fill] (8.5,-4) node(3a1){\small $\ub_1$};
\draw[fill] (11.5,-4) node(4a1){\small $\ub_1$};
\draw[blue, <-] (1a1)--(2a1); \draw[red, ->] (2a1)--(3a1); \draw[blue, <-] (3a1)--(4a1);
\draw (4, -3) node [draw, circle, inner sep = 0, minimum size = 0.1cm] (u1) {}; \draw[<-,green] (1a1) to [out=70,in=-120] (u1);

\draw[fill] (11.5,-5) node(4b1){\small $\ub_2\hb^{2}$}; \draw[fill] (14.5,-5) node(5b1){\small $\ub_2\hb^{2}$};
\draw[fill] (17.5,-5) node(6b1){\small $\ub_2\hb^2$}; \draw[fill] (20.5,-5) node(7b1){\small $\ub_2\hb^2$};
\draw[fill] (23.5,-5) node(8b1){\small $\ub_2\hb^2$};
\draw[red, ->] (4b1)--(5b1);\draw[red, ->] (5b1)--(6b1);\draw[red, ->] (6b1)--(7b1);\draw[blue,<-] (7b1)--(8b1);
\draw[fill] (11.5,-6) node(4b2){\small $\ub_2\hb$}; \draw[fill] (14.5,-6) node(5b2){\small $\ub_2\hb$};
\draw[red, ->] (4b2)--(5b2);
\draw[fill] (11.5,-7) node(4b3){\small $\ub_2$};
\draw [magenta, dotted,->] (5b1) -- (4b2);\draw [magenta, dotted,->] (6b1) -- (5b2);\draw [magenta, dotted,->] (5b2) -- (4b3);
\draw[fill] (8.5,-7) node(3b1){\small $\ub_2$};\draw[fill] (5.5,-8) node(2b1){$\frac{\ub_2}{\hb}$};
\draw[fill] (2.5,-8) node(1b1){$\frac{\ub_2}{\hb}$};
\draw[blue,<-] (3b1)--(4b3);\draw [magenta, dotted,->] (3b1) -- (2b1);\draw [blue, ->] (2b1) -- (1b1);
\draw [->] (4b1) -- (4b2);\draw [->] (4b2) -- (4b3);
\draw[] (10,-5.5) node [draw, circle, inner sep = 0, minimum size = 0.1cm] (u2) {};
\draw[<-,green] (3b1) to [out=70,in=-120] (u2); \draw[<-,green] (u2) to [out=-50,in=160] (4b2);

\draw[fill] (17.5,-8) node(2c1){$\frac{\ub_3}{\hb}$}; \draw[fill] (20.5,-8) node(3c2){$\frac{\ub_3}{\hb}$};
\draw[fill] (23.5,-8) node(4c1){$\frac{\ub_3}{\hb}$};
\draw[fill] (20.5,-7) node(3c1){\small $\ub_3$};
\draw[fill] (17.5,-9) node(2c2){$\frac{\ub_2}{\hb^2}$};\draw[fill] (14.5,-9) node(1c1){$\frac{\ub_2}{\hb^2}$};
\draw[red, ->] (1c1)--(2c2);\draw[red, ->] (2c1)--(3c2);\draw [blue, <-] (3c2) -- (4c1);
\draw [magenta, dotted,->] (2c1) -- (1c1);\draw [magenta, dotted,->] (3c1) -- (2c1);\draw [magenta, dotted,->] (3c2) -- (2c2);
\draw [->] (3c1) -- (3c2);
\draw[] (21.9,-5.8) node [draw, circle, inner sep = 0, minimum size = 0.1cm] (u3) {};\draw[<-,green] (3c1) to [out=70,in=-120] (u3);

\draw[fill] (20.5,-10) node(1d1){$\frac{\ub_4}{\hb}$};\draw[fill] (23.5,-10) node(2d1){$\frac{\ub_4}{\hb}$};
\draw[fill] (26.5,-10) node(3d2){$\frac{\ub_4}{\hb}$};\draw[fill] (29.5,-10) node(4d2){$\frac{\ub_4}{\hb}$};
\draw[fill] (26.5,-9) node(3d1){\small $\ub_4$};\draw[fill] (29.5,-9) node(4d1){\small $\ub_4$};
\draw [->] (3d1) -- (3d2);\draw [->] (4d1) -- (4d2);
\draw [blue, ->] (4d1) -- (3d1);\draw [blue, ->] (4d2) -- (3d2);\draw [blue, ->] (2d1) -- (1d1);
\draw [magenta, dotted,->] (3d1) -- (2d1);
\draw[red, ->] (2d1)--(3d2);
\draw[] (30.9,-7.8) node [draw, circle, inner sep = 0, minimum size = 0.1cm] (u4) {};\draw[<-,green] (4d1) to [out=70,in=-120] (u4);
\end{tikzpicture}

\caption{The butterfly diagram of Figure~\ref{fig:butterfly_dgm} decorated by elements of $K^0_\torus(p)$.}
\label{fig:butterfly_mons}
\end{figure}

Of central importance in this work is the $\torus$-equivariant K-theory class of $T_p\bowvar(\brane)$. To express this class in terms of $\ub$ and $\hb$ bundles, one simply takes the tangent bundle formula \eqref{eqn:tangent_bundle} and substitutes each tautological bundle with the formula for its fixed point restriction calculated as above. Then, one can apply bilinearity of $\Hom$ and the isomorphism $\Hom(\alpha, \beta)\cong \beta\otimes\alpha^*$ to expand this expression into a sum of line bundles. In our running example, after a great deal of cancellations, we obtain
\begin{align}
T_p\bowvar(\brane) ={}& \frac{\ub_2\hb^4}{\ub_5} + \frac{\ub_2\hb^4}{\ub_3} + \frac{\ub_1\hb^3}{\ub_3} + \frac{\ub_2\hb^3}{\ub_3}
+ \frac{\ub_4\hb^2}{\ub_5} + \frac{\ub_3\hb}{\ub_5} + \frac{\ub_4\hb}{\ub_5} + \frac{\ub_5\hb}{\ub_3}\nonumber \\
& + \frac{\ub_3}{\ub_5}
+ \frac{\ub_5}{\ub_4} + \frac{\ub_5}{\ub_4\hb} + \frac{\ub_3}{\ub_2\hb^2} + \frac{\ub_3}{\ub_1\hb^2} + \frac{\ub_3}{\ub_2\hb^3}
+ \frac{\ub_5}{\ub_2\hb^3}.\label{eqn:cancel}
\end{align}
Each line bundle in this expansion is called a ``tangent weight at $p$''. In other words, the tangent weights at $p$ are the K-theoretic Chern roots of $T_p\bowvar(\brane)$.

\subsection{Hanany--Witten transition and separated bow varieties} \label{sec:HW}

Let $\brane$ contain a D5 brane $U$ followed immediately by an NS5 brane $V$. Let $\widetilde\brane$ agree with $\brane$ except at $U$ and $V$, where they differ as in the picture
\[
\begin{tikzpicture}[baseline=(current bounding box.center), scale=.5]
\draw[thick] (0,1)--(1,1) node [above] {$d_1$} -- (2,1);
\draw[thick,blue] (3,0)--(2,2);
\draw[thick] (3,1)--(4,1) node [above] {$d_2$}--(5,1);
\draw[thick,red] (5,0)--(6,2);
\draw[thick] (6,1)--(7,1) node [above] {$d_3$}--(8,1);
\node at (2.7,-.45) {$U$}; \node at (5.3,-.45) {$V$};
\node at (4,.3) {$X$};
\draw[thick] (15,1)--(16,1) node [above] {$d_1$} -- (17,1);
\draw[thick,red] (17,0)--(18,2);
\draw[thick] (18,1)--(19,1) node [above] {$\widetilde{d}_2$}--(20,1);
\draw[thick,blue] (21,0)--(20,2);
\draw[thick] (21,1)--(22,1) node [above] {$d_3$}--(23,1);
\node at (20.7,-.4) {$\widetilde{U}$}; \node at (17.3,-.45) {$\widetilde{V}$};
\node at (19,.3) {$\widetilde{X}$};
\end{tikzpicture},
\]
where $d_2 + \widetilde d_2 = d_1 + d_3 + 1$. Then, there is a natural holomorphic symplectic isomorphism \smash{$\bowvar(\brane) \overset{\cong}{\to} \bowvar(\widetilde\brane)$} called a ``Hanany--Witten transition''. In other words, Hanany--Witten transitions allow one to exchange two consecutive 5-branes of different type (cf.\ \cite[Section~8]{RS}) at the expense of changing the multiplicity of the D3 brane in between. The Hanany--Witten transition is equivariant with respect to the reparametrization $\widetilde \ub = \ub\hb$ (the other factors of the torus are not affected).

The Hanany--Witten transition induces a bijection of $\torus$-fixed points. This bijection can best be understood through transformations of tie diagrams and tables-with-margins. The action of the Hanany--Witten transition on tie diagrams is described by Figure~\ref{fig:HW_tie}. In words, switch $U$ and $V$ while keeping all ties attached. If there was a tie between $U$ and $V$, remove it, and if there was no such tie, create one.
\begin{figure}[t]
\centering
\begin{tikzpicture}[scale=.5]
\draw [thick] (0,1) --(2,1);
\draw [thick, blue] (3,0) -- (2,2);
\draw[thick] (3,1)--(5,1);
\draw[thick,red] (5,0)--(6,2);
\draw[thick] (6,1)--(8,1);
\draw [dashed](2,2) to [out=120,in=0] (0,3) ;
\draw [dashed](2,2) to [out=120,in=0] (0,3.25) node [left] {$A$} ;
\draw [dashed](2,2) to [out=120,in=0] (0,3.4) ;
\draw [dashed](5,0) to [out=240,in=0] (0,-2) ;
\draw [dashed](5,0) to [out=240,in=0] (0,-2.25) node [left] {$B$} ;
\draw [dashed](5,0) to [out=240,in=0] (0,-2.4);
\draw [dashed](3,0) to [out=-60,in=180] (8,-2);
\draw [dashed](3,0) to [out=-60,in=180] (8,-2.25) node [right]{$C$};
\draw [dashed](3,0) to [out=-60,in=180] (8,-2.4) ;
\draw [dashed](6,2) to [out=60,in=180] (8,3) ;
\draw [dashed](6,2) to [out=60,in=180] (8,3.25) node [right]{$D$};
\draw [dashed](6,2) to [out=60,in=180] (8,3.4);
\draw[dashed] (3,0) to [out=-40,in=-140](5,0);
\node at (4,0.25) {$E$};
\end{tikzpicture}
\qquad
\begin{tikzpicture}[scale=.5, baseline=-30pt]
\draw[ultra thick, <->] (0,1)--(2,1) node[above]{HW transition} -- (4,1);
\draw[ultra thick, <->] (0,1)--(2,1) node[below]{on fixpoints} -- (4,1);
\end{tikzpicture}
\qquad
\begin{tikzpicture}[scale=.5]
\draw [thick] (0,1) --(2,1);
\draw [thick, red] (2,0) -- (3,2);
\draw[thick] (3,1)--(5,1);
\draw[thick, blue] (6,0)--(5,2);
\draw[thick] (6,1)--(8,1);
\draw [dashed](2,0) to [out=240,in=0] (0,-.7);
\draw [dashed](2,0) to [out=240,in=0] (0,-.9) node [left] {$B$};
\draw [dashed](2,0) to [out=240,in=0] (0,-1.1) ;
\draw [dashed](5,2) to [out=120,in=0] (0,4) ;
\draw [dashed](5,2) to [out=120,in=0] (0,4.25) node [left] {$A$};
\draw [dashed](5,2) to [out=120,in=0] (0,4.4) ;
\draw [dashed](3,2) to [out=60,in=180] (8,4);
\draw [dashed](3,2) to [out=60,in=180] (8,4.25) node [right]{$D$};
\draw [dashed](3,2) to [out=60,in=180] (8,4.4) ;
\draw [dashed](6,0) to [out=-60,in=180] (8,-.7) ;
\draw [dashed](6,0) to [out=-60,in=180] (8,-.9)node [right]{$C$};
\draw [dashed](6,0) to [out=-60,in=180] (8,-1.1);
\draw[dashed] (3,2) to [out=40,in=140](5,2);
\node at (4,1.8) {$\lnot E$};
\end{tikzpicture}

\caption{The action of Hanany--Witten transition on tie diagrams. The $E$ and $\neg E$ symbols mean that the tie labeled $E$ is present if and only if the tie labeled $\neg E$ is not.}\label{fig:HW_tie}
\end{figure}
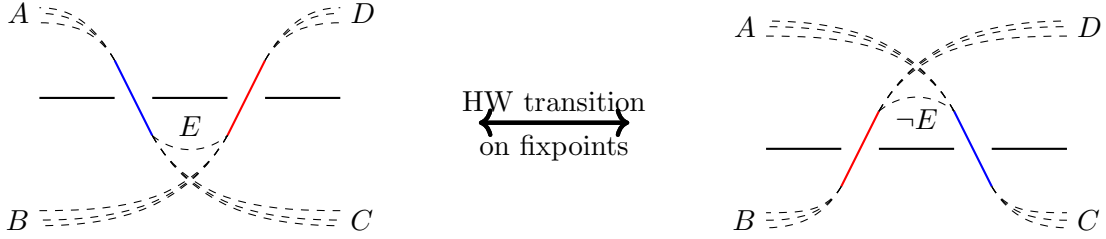
The Hanany--Witten transition acts on tables-with-margins by perturbing the separating line while leaving the BCT fixed.

By a sequence of Hanany--Witten transitions, one may transform any brane diagram into a~brane diagram with all NS5 branes on the left and all D5 branes on the right. Such a brane diagram will be called ``separated'', as will the associated bow variety. Figure~\ref{fig:sep_tie} depicts the result of applying this procedure to separate the tie diagram of Figure~\ref{fig:tie_dgm}.
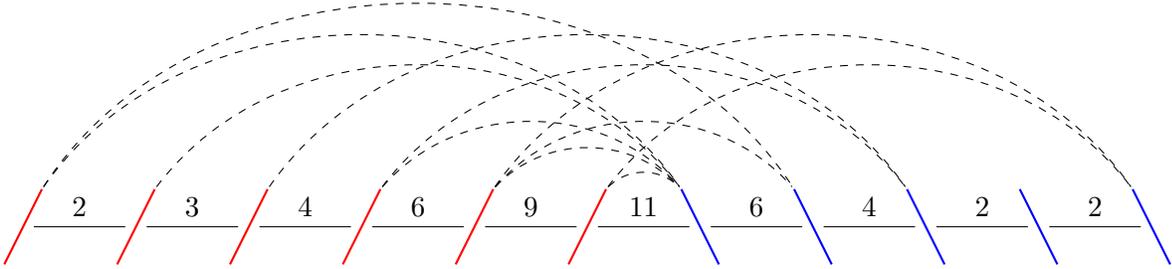
\begin{figure}\centering\vspace{-3mm}
\begin{tikzpicture}[scale = 0.5]
\draw[thick,red] (-0.5,-1)--(0.5,1) node[pos=0.5](V1){} node[pos=1,inner sep=0](V1t){} node[pos=0,inner sep=0](V1b){};
\draw[thick,red] (2.5,-1)--(3.5,1) node[pos=0.5](V2){} node[pos=1,inner sep=0](V2t){} node[pos=0,inner sep=0](V2b){};
\draw[thick,red] (5.5,-1)--(6.5,1) node[pos=0.5](V3){} node[pos=1,inner sep=0](V3t){} node[pos=0,inner sep=0](V3b){};
\draw[thick,red] (8.5,-1)--(9.5,1) node[pos=0.5](V4){} node[pos=1,inner sep=0](V4t){} node[pos=0,inner sep=0](V4b){};
\draw[thick,red] (11.5,-1)--(12.5,1) node[pos=0.5](V5){} node[pos=1,inner sep=0](V5t){} node[pos=0,inner sep=0](V5b){};
\draw[thick,red] (14.5,-1)--(15.5,1) node[pos=0.5](V6){} node[pos=1,inner sep=0](V6t){} node[pos=0,inner sep=0](V6b){};
\draw[thick,blue] (18.5,-1)--(17.5,1) node[pos=0.5](U1){} node[pos=1,inner sep=0](U1t){} node[pos=0,inner sep=0](U1b){};
\draw[thick,blue] (21.5,-1)--(20.5,1) node[pos=0.5](U2){} node[pos=1,inner sep=0](U2t){} node[pos=0,inner sep=0](U2b){};
\draw[thick,blue] (24.5,-1)--(23.5,1) node[pos=0.5](U3){} node[pos=1,inner sep=0](U3t){} node[pos=0,inner sep=0](U3b){};
\draw[thick,blue] (27.5,-1)--(26.5,1) node[pos=0.5](U4){} node[pos=1,inner sep=0](U4t){} node[pos=0,inner sep=0](U4b){};
\draw[thick,blue] (30.5,-1)--(29.5,1) node[pos=0.5](U5){} node[pos=1,inner sep=0](U5t){} node[pos=0,inner sep=0](U5b){};

\draw (V1)--(V2) node[pos=0.5, above](X1){2};
\draw (V2)--(V3) node[pos=0.5, above](X2){3};
\draw (V3)--(V4) node[pos=0.5, above](X3){4};
\draw (V4)--(V5) node[pos=0.5, above](X4){6};
\draw (V5)--(V6) node[pos=0.5, above](X5){9};
\draw (V6)--(U1) node[pos=0.5, above](X6){11};
\draw (U1)--(U2) node[pos=0.5, above](X7){6};
\draw (U2)--(U3) node[pos=0.5, above](X8){4};
\draw (U3)--(U4) node[pos=0.5, above](X9){2};
\draw (U4)--(U5) node[pos=0.5, above](X10){2};

\draw[dashed] (V1t) to [out=55,in=125] (U1t);
\draw[dashed] (V1t) to [out=57,in=123] (U2t);
\draw[dashed] (V3t) to [out=55,in=125] (U3t);
\draw[dashed] (V4t) to [out=53,in=127] (U3t);
\draw[dashed] (V5t) to [out=55,in=125] (U5t);
\draw[dashed] (V6t) to [out=53,in=127] (U5t);
\draw[dashed] (V2t) to [out=53,in=127] (U1t);
\draw[dashed] (V4t) to [out=49,in=131] (U1t);
\draw[dashed] (V5t) to [out=49,in=131] (U2t);
\draw[dashed] (V5t) to [out=47,in=133] (U1t);
\draw[dashed] (V6t) to [out=45,in=135] (U1t);
\end{tikzpicture}

\caption{The tie diagram of Figure~\ref{fig:tie_dgm} put into separated form using Hanany--Witten transitions.}
\label{fig:sep_tie}
\end{figure}
Separated bow varieties will be the focus of the remainder of this work. Any results derived can be translated to non-separated bow varieties by making a suitable reparametrization of $\torus$. In practical terms, this amounts to adjusting the exponents of $\hb$ in the formulas.

\section{A combinatorial formula for tangent weights} \label{sec:bct_weights}
Fix a separated brane diagram $\brane$ and torus fixed point $p\in\bowvar(\brane)^\torus$. We will give a combinatorial formula for the tangent weights at $p$ (i.e., K-theoretic Chern roots of $T_p\bowvar(\brane)$) in terms of the associated table-with-margins. Unlike the approach used to derive \eqref{eqn:cancel}, this formula will return the tangent weights without the need for any further cancellations or simplifications. Note that since the bow variety is separated, the separating line always runs along the left and bottom sides. We ignore it and consider only the BCT $M$. Again, since the bow variety is separated, we have
\[
M_{ij} = \begin{cases}
1 & \text{if there is a tie between }V_i\text{ and }U_j, \\
0 & \text{otherwise}. \end{cases}
\]

\begin{Definition} \label{def:pairs}
For $0\leq i \leq n$, $1 \leq j \leq m$, define
\[
s_{ij} = \sum_{i' = 1}^i M_{i' j}.
\]
Furthermore, define a ``$kl$-pair'' to be a tuple of indices $(i, j, j')$ such that $M_{i j} = k$, $M_{i j'} = l$, and $j < j'$.
\end{Definition}

\begin{Theorem} \label{thm:weights}
Given a separated brane diagram $\brane$ and a fixed point $p\in\bowvar(\brane)^\torus$, we have
\begin{equation*}
T_p\bowvar(\brane) = \sum_{(i, j_0, j_1)\text{$01$-pair}} \left(
 \frac{\ub_{j_0}}{\ub_{j_1}}\hb^{s_{i j_1}-s_{i j_0}} + \frac{\ub_{j_1}}{\ub_{j_0}}\hb^{s_{i j_0}-s_{i j_1}+1} \right)
\end{equation*}
as elements of $K^0_\torus(p)$.
\end{Theorem}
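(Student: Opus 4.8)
\emph{Strategy and setup.} The plan is to compute the fixed-point restriction $\xi_X|_p$ of each tautological bundle directly from the butterfly diagram of $p$, substitute into the tangent-bundle formula~\eqref{eqn:tangent_bundle}, and show the resulting Laurent polynomial in $\ub_1,\dots,\ub_m,\hb$ collapses, after one round of bookkeeping, to the asserted sum over $01$-pairs. Throughout we use the separated order $X_0,V_1,X_1,\dots,V_n,X_n,U_1,X_{n+1},\dots,U_m,X_{n+m}$, so that $V_i^-=X_{i-1}$, $V_i^+=X_i$, $U_j^-=X_{n+j-1}$, $U_j^+=X_{n+j}$; we abbreviate $\chi_X:=\xi_X|_p\in K^0_\torus(p)$, and let $\overline{(\,\cdot\,)}$ be the involution $\ub_j\mapsto\ub_j^{-1}$, $\hb\mapsto\hb^{-1}$, so that $\Hom(\xi_A,\xi_B)|_p=\chi_B\overline{\chi_A}$, $\End(\xi_X)|_p=\chi_X\overline{\chi_X}$ and $\CC_{U_j}|_p=\ub_j$.

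\emph{Step 1: restrictions from the butterfly.} Since $\brane$ is separated, the butterfly of $U_j$ has no vertices to the right of $U_j$: its non-framing vertices lie below $X_0,\dots,X_{n+j-1}$, with $d^{U_j}_{X_k}=s_{kj}$ below $X_k$ for $0\le k\le n$ and $c_j$ below $X_{n+l}$ for $0\le l\le j-1$. By Definition~\ref{def:height} the green edge from the framing vertex puts the top vertex below $X_{n+j-1}$ at height $0$, bottom-alignment at the D5 branes $U_1,\dots,U_{j-1}$ carries height $0$ to the top vertices below $X_n,\dots,X_{n+j-1}$, and the ``left of $U_j$'' rule at the NS5 branes $V_n,\dots,V_1$ lowers the top by one at each step, so the top vertex below $X_k$ has height $k-n$ for $0\le k\le n$. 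Summing the decorations $\ub_j\hb^{y(v)}$ column by column, and setting $[s]_q:=1+q+\dots+q^{s-1}$, $\phi_j:=\ub_j[c_j]_{\hb^{-1}}$ and $\psi_k:=\sum_j\ub_j[s_{kj}]_{\hb^{-1}}$, we obtain
\[
\chi_{X_k}=\hb^{k-n}\psi_k\quad(0\le k\le n),\qquad \chi_{X_{n+l}}=\sum_{j>l}\phi_j\quad(0\le l\le m),
\]
together with $\chi_{X_0}=\chi_{X_{n+m}}=0$; the two formulas agree at $k=n$, $l=0$ because $s_{nj}=c_j$.

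\emph{Step 2: telescoping and the single cancellation.} Two telescopings drive the collapse: since $\psi_0=0$ and $\psi_k-\psi_{k-1}=\sum_{j:M_{kj}=1}\ub_j\hb^{1-s_{kj}}$ we have $\psi_k=\sum_{i=1}^k\rho_i$ with $\rho_i:=\sum_{j:M_{ij}=1}\ub_j\hb^{1-s_{ij}}$; and $\chi_{X_{n+l}}-\chi_{X_{n+l+1}}=\phi_{l+1}$, while $\phi_j=\sum_{i:M_{ij}=1}\ub_j\hb^{1-s_{ij}}$ (as $s_{ij}$ runs over $1,\dots,c_j$). Feeding Step~1 into~\eqref{eqn:tangent_bundle}, the factors $\hb^{k-n}\overline{\hb^{k-n}}=1$ disappear, and a direct computation in which the positive $\End$ terms from the $T_U$, the subtracted $(1+\hb)\End(\xi_X)$ and $\hb\Hom(\xi_{U^+},\xi_{U^-})$, and the two telescopings all interact, cancels every diagonal term $\psi_k\overline{\psi_k}$, $\chi_{X_{n+l}}\overline{\chi_{X_{n+l}}}$ and leaves
\[
T_p\bowvar(\brane)=\sum_{i'<i}\bigl(\rho_{i'}\overline{\rho_i}+\hb\rho_i\overline{\rho_{i'}}\bigr)+\sum_{j<j'}\bigl(\hb\ub_j\overline{\phi_{j'}}+\ub_j^{-1}\phi_{j'}-\hb\phi_j\overline{\phi_{j'}}-\phi_{j'}\overline{\phi_j}\bigr)+\sum_j\bigl([c_j]_{\hb^{-1}}-\phi_j\overline{\phi_j}\bigr).
\]

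\emph{Step 3: matching monomials, and the main obstacle.} Each monomial above is either $\ub_a/\ub_b$ with $a\ne b$ or a pure power of $\hb$. Fix $a<b$, put $A=\{i:M_{ia}=1\}$, $B=\{i:M_{ib}=1\}$, so $s_{\alpha a}$ is the rank of $\alpha$ in $A$ and $s_{\beta b}$ the rank of $\beta$ in $B$; collecting the $\ub_a/\ub_b$-coefficient from the $\rho$- and $\phi$-sums and cancelling the summands indexed by $\alpha>\beta$ leaves
\[
\Bigl(\sum_{\substack{\alpha\in A,\ \beta\in B\\\alpha<\beta}}\hb^{s_{\beta b}-s_{\alpha a}}\Bigr)(1-\hb)+\sum_{\beta\in B}\hb^{s_{\beta b}}-\sum_{\alpha\in A\cap B}\hb^{s_{\alpha b}-s_{\alpha a}+1},
\]
and using $(1-\hb)(\hb^{-1}+\dots+\hb^{-p})=\hb^{-p}-1$ on the inner sum (with $p=\#\{\alpha\in A:\alpha<\beta\}=s_{\beta-1,a}$), then splitting $B=(B\setminus A)\sqcup(B\cap A)$, this telescopes to $\sum_{i:M_{ia}=0,\,M_{ib}=1}\hb^{s_{ib}-s_{ia}}$ --- exactly the $\ub_a/\ub_b$-coefficient of the claimed formula (the first summand of the $01$-pairs $(i,a,b)$). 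The $\ub_b/\ub_a$-coefficient is matched by the parallel computation, and the pure-$\hb$ coefficient vanishes, as it must, by evaluating $\sum_{e=1}^{c_j-1}(c_j-e)\hb^{-e}$ in the two ways suggested by the $\rho$- and $\phi$-contributions. The genuine work is Steps~1 and~2: reading the heights off the butterfly rules without sign errors, and arranging the substitution into~\eqref{eqn:tangent_bundle} so that exactly the right terms cancel --- this is the single round of cancellation that the noncancellative formula has absorbed; given the telescoped expression, the matching in this last step is routine $q$-series algebra.
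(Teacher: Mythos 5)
Your proposal is correct and takes essentially the same route as the paper: both proofs substitute the BCT-indexed fixed-point restrictions of the tautological bundles into the tangent bundle formula~\eqref{eqn:tangent_bundle} and then organize the cancellation until the noncancellative form over $01$-pairs appears (I checked your Step~1 restrictions, the intermediate bilinear identity of Step~2, and the Step~3 coefficient extraction, and all are correct). The only difference is bookkeeping: the paper fixes a pair $j_0<j_1$ and computes its coefficient $g_{j_0j_1}$ piecewise via the decomposition $P_{1i},P_{2j},P_{3j},P_4$ and the differences $\delta_i^{(j)},\zeta_i^{(j)}$, whereas you telescope globally into the $\rho_i,\phi_j$ expression and then match the $\ub_a/\ub_b$-coefficients by a rank-telescoping argument.
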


\begin{proof}

To aid in our proof, let us establish some notation. For $1\leq j \leq m$ and $r\in\NN$, let
\[
r*\ub_j = \ub_j + \frac{\ub_j}{\hb} + \frac{\ub_j}{\hb^2} + \cdots + \frac{\ub_j}{\hb^{r-1}} \in K^0_\torus(p).
\]
Using this notation, we can rephrase the formula for the fixed point restriction of tautological bundles of Section~\ref{sec:fixed_res} as
\begin{equation}
\label{eqn:bct_res}
\xi_k|_p = \begin{cases}
\displaystyle\sum_{j=1}^m \hb^{k-n}(s_{kj}*\ub_j) & \text{if }k\leq n, \\
\displaystyle\sum_{j=k-n+1}^{m} s_{nj}*\ub_j & \text{if }n\leq k \leq n+m-1, \\
0 & \text{otherwise}.
\end{cases}
\end{equation}
Let \smash{$\xi_k^{(j)}$} be the subspace of $\xi_k|_p$ corresponding to the $U_j$ butterfly. Then, each of the summands in \eqref{eqn:bct_res} corresponds to a \smash{$\xi_k^{(j)}$}. Namely, we have
\begin{equation}
\label{eqn:bct_res_bfly}
\xi_k^{(j)} = \begin{cases}
\hb^{k-n}(s_{kj}*\ub_j) & \text{if }k\leq n, \\
s_{nj}*\ub_j & \text{if }n\leq k \leq n+j-1, \\
0 & \text{otherwise}.
\end{cases}
\end{equation}
It will also be helpful to summarize the tangent bundle formula (2.1) using the diagram
\begin{equation}
\label{eqn:tangent_sep}
\begin{tikzcd}[column sep = 0.8em]
\xi_1 \ar[rr, bend left, "1"] \cdloop{-1-\hb}
&& \cdots \ar[ll, bend left, "\hb"] \ar[rr, bend left, "1"]
&& \xi_{n - 1} \ar[ll, bend left, "\hb"] \ar[rr, bend left, "1"] \cdloop{-1-\hb}
&& \xi_n \ar[ll, bend left, "\hb"] \cdloop{-1}
&& \xi_{n + 1} \ar[ll, swap, "1-\hb"] \cdloop{-1+\hb} \ar[ld, "\hb"]
&& \cdots \ar[ll, swap, "1-\hb"] \ar[ld, "\hb"]
&& \xi_{n+m-1} \ar[ll, swap, "1-\hb"] \cdloop{-1+\hb} \ar[ld, "\hb"]
\\
&&&&&&&
\ub_{1} \ar[lu, "1"]
&& \ub_{2} \ar[lu, "1"]
&& \ub_{{m-1}} \ar[lu, "1"]
&& \ub_{m} \ar[lu, "1"]
\end{tikzcd}\!\!\!\!\!
\end{equation}
where $\xi_k = \xi_{X_k}$. Finally, divide the diagram \eqref{eqn:tangent_sep} into four types of pieces

\begin{tabular}{cp{0.59\textwidth}}
\begin{tikzcd}
\xi_{i-1} \cdloop{-1} \ar[r, bend left, "1"]
& \xi_{i} \ar[l, bend left, "\hb"] \cdloop{-\hb}
\end{tikzcd}
&
$P_{1i} = \Hom(\xi_{i-1}, \xi_{i}) + \hb\Hom(\xi_{i}, \xi_{i-1}) - \End(\xi_{i-1}) - \hb\End(\xi_i)$ for $1\leq i \leq n$. \\

\begin{tikzcd}
\xi_{n+j-1}
& \xi_{n+j} \ar[l, "1-\hb"] \cdloop{-1+\hb}
\end{tikzcd}
&
$P_{2j} = (1-\hb)\Hom(\xi_{n+j}, \xi_{n+j-1}) + (-1+\hb)\End(\xi_{n+j})$ for $1 \leq j\leq m$. \\

\begin{tikzcd}
\xi_{n+j-1} && \xi_{n+j} \ar[ld, "\hb"] \\
& \ub_j \ar[lu, "1"]
\end{tikzcd}
&
$P_{3j} = \Hom(\ub_j, \xi_{n+j-1}) + \hb\Hom(\xi_{n+j}, \ub_j)$ for $1\leq j\leq m$, where $\xi_{n+m}=0$ by convention. \\

\begin{tikzcd}
\xi_n \cdloop{-1+\hb}
\end{tikzcd}
&
$P_4 = (-1+\hb)\End(\xi_n)$.\vspace{2mm}
\end{tabular}

Inspecting the tangent bundle formula~\eqref{eqn:tangent_bundle} and the fixed point restriction formula~\eqref{eqn:bct_res}, we see that
\[
T_p\bowvar(\brane) = f(\hb) + \sum_{j_0 \neq j_1} \frac{\ub_{j_0}}{\ub_{j_1}} g_{j_0j_1}(\hb),
\]
where $f, g_{j_0j_1} \in \CC\big[\hb^{\pm 1}\big]$ for all $1\leq j_0, j_1 \leq m$. Fix $j_0 < j_1$ and define
\[
\delta_i^{(j)} = \xi_i^{(j)} - \xi_{i-1}^{(j)} \qquad \text{and} \qquad
\zeta_i^{(j)} = \xi_i^{(j)} - \hb\xi_{i-1}^{(j)}.
\] Then, the contribution of $P_{1i}$ to $g_{j_0j_1}$ is given by \smash{$\hat{P}_{1i} = -\hb\Hom\bigl(\zeta_i^{(j_1)}, \delta_i^{(j_0)}\bigr)$}.
The only nonzero contribution of the $P_{2j}$ is
\[
\hat{P}_{2j_0} = (1-\hb)\Hom\bigl(\xi_{n+j_0}^{j_1}, \xi_{n+j_0 - 1}^{(j_0)}\bigr).
\]
 From \eqref{eqn:bct_res_bfly}, we have \smash{$\xi_{n+j_0}^{(j_1)} = \xi_n^{(j_1)}$} and \smash{$ \xi_{n+j_0 - 1}^{(j_0)} = \xi_n^{(j_0)}$}. Thus, $\hat{P}_{2j_0}$ cancels with the contribution~$\hat{P}_4$ of $P_4$. Finally, the only nonzero contribution of the $P_{3j}$ is
 \[
 \hat{P}_{3j_0} = \hb\Hom\bigl(\xi_{n+j_0}^{(j_1)}, \ub_{j_0}\bigr).
 \] Summing everything together yields
\begin{align*}
\frac{\ub_{j_0}}{\ub_{j_1}}g_{j_0j_1} &= \hat{P}_{3j_0} + \sum_{i=1}^n \hat{P}_{1i}
= \sum_{i=1}^n \hat{P}_{1i} + \sum_{j=1}^m \bigl( \hat{P}_{2j} + \hat{P}_{3j} \bigr) + \hat{P}_4 \\
&= \hb\Hom\bigl(\xi_{n+j_0}^{(j_1)}, \ub_{j_0}\bigr) - \sum_{i=1}^n \hb\Hom(\zeta_i^{(j_1)}, \delta_i^{(j_0)}) \\
&= \hb\Hom\bigl(\xi_n^{(j_1)}, \ub_{j_0}\bigr) -\sum_{i=1}^n \hb\Hom(\zeta_i^{(j_1)}, \delta_i^{(j_0)}).
\end{align*}
Note that \smash{$\xi_n^{(j)} = \sum_{i=1}^n \hb^{n-i}\zeta_i^{(j)}$}. Substituting into the first term above results in
\[
\frac{\ub_{j_0}}{\ub_{j_1}}g_{j_0j_1} = \sum_{i=1}^n\hb\Hom\bigl(\zeta_i^{(j_1)}, \hb^{i-n}\ub_0 - \delta_i^{(j_0)}\bigr).
\]
The following formulas can be derived easily from \eqref{eqn:bct_res_bfly}:
\[
\delta_i^{(j)} = \hb^{i-n}(\ub_j + (M_{ij} - 1)\ub_j\hb^{-s_{ij}}),\qquad \zeta_i^{(j)} = \hb^{i-n} \bigl(M_{ij}\ub_j\hb^{-s_{ij}+1}\bigr).
\]
From these formulas, we can deduce
\[
g_{j_0j_1} = \sum_{i=1}^n (1-M_{ij_0})M_{ij_1}\hb^{s_{ij_1}-s_{ij_0}} = \sum_{\substack{i \\ (i, j_0, j_1) \text{ 01-pair}}} \hb^{s_{ij_1}-s_{ij_0}}.
\]

From a symmetric calculation, one can obtain
\[
g_{j_1j_0} = \sum_{\substack{i \\ (i, j_0, j_1) \mathrm{01-pair}}} \hb^{s_{ij_0}-s_{ij_1}+1}.
\]
Alternatively, once can appeal to the selfduality of the tangent bundle: $T\bowvar(\brane) = \hb T\bowvar(\brane)^\vee$. This selfduality property follows from general considerations but also from the explicit construction of a polarization bundle in \cite[Section~4.4.2]{S}.

Finally, a similar calculation can be done for $f$
\begin{gather*}
\hat{P}_{1i} = -\sum_{j=1}^m (M_{ij}\hb^{s_{ij}} + M_{ij}(M_{ij} - 1)), \qquad
\hat{P}_{2j} = 0, \\
\hat{P}_{3j} = \sum_{k=0}^{s_{nj}-1} \hb^{-k}, \qquad
\hat{P}_4 = (-1+\hb)\sum_{j=1}^m \End\bigl(\xi_n^{(j)}\bigr).
\end{gather*}
Since $M_{ij}(M_{ij} - 1) = 0$, we have
\[
\sum_{i=1}^n \hat{P}_{1i} = -\sum_{j=1}^m \sum_{i=1}^n M_{ij}\hb^{s_{ij}} = -\sum_{j=1}^m \sum_{k=1}^{s_{nj}} \hb^k,
\]
from which is easy to see that $\sum_i \hat{P}_{1i} + \sum_j \hat{P}_{3j} = -\hat{P}_4$. It follows that $f=0$, completing the proof.
\end{proof}

Suppose now that $\brane$ is not separated and $p\in\bowvar(\brane)^\torus$. By applying a sequence of Hanany--Witten transitions, the tie diagram for $p$ can be transformed into a separated diagram $\widetilde\brane$, representing a fixed point \smash{$\widetilde p\in\bowvar(\widetilde\brane)^{\widetilde\torus}$} on a separated bow variety. This changes the separating line, but not the BCT. Hence, we can immediately apply Theorem~\ref{thm:weights} to obtain the tangent weights at $\widetilde p$. As discussed in Section~\ref{sec:HW}, these differ from the tangent weights at $p$ only in the exponents of $\hb$. Starting from $\widetilde\brane$ and going to $\brane$, each time we move a D5 brane $U$ to the left, we must multiply $\ub$ by $\hb$.

\begin{Corollary} \label{cor:weights}
Let $\brane$ be a brane diagram and $p\in\bowvar(\brane)^\torus$. Let $M$ be the corresponding BCT and $\sigma_j$ be the number of entries in the $j$-th column of $M$ lying below the separating line. Then, the tangent weights at $p$ are given by
\[
T_p\bowvar(\brane) = \sum_{(i, j_0, j_1)\text{\rm 01-pair}} \left(
 \frac{\ub_{j_0}}{\ub_{j_1}}\hb^{s_{ij_1}-s_{ij_0} + \sigma_{j_0}-\sigma_{j_1}} + \frac{\ub_{j_1}}{\ub_{j_0}}\hb^{s_{ij_0}-s_{ij_1} + \sigma_{j_1}-\sigma_{j_0} +1} \right).
\]
\end{Corollary}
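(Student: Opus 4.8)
The plan is to deduce Corollary~\ref{cor:weights} from Theorem~\ref{thm:weights} by carefully bookkeeping how a sequence of Hanany--Witten transitions alters the exponents of $\hb$, while leaving the BCT $M$ (and hence the quantities $s_{ij}$ and the notion of $01$-pair) untouched. Concretely, starting from the given brane diagram $\brane$ with fixed point $p$, I would perform Hanany--Witten moves to push all NS5 branes to the left and all D5 branes to the right, arriving at a separated diagram $\widetilde\brane$ and a corresponding fixed point $\widetilde p$; by the discussion in Section~\ref{sec:HW}, the table-with-margins of $\widetilde p$ has the same BCT $M$ but a separating line running along the left and bottom edges, so Theorem~\ref{thm:weights} applies verbatim to $\widetilde p$.

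The core of the argument is then a reparametrization computation. Each Hanany--Witten transition that moves a D5 brane $U_j$ one step to the \emph{left} is equivariant with respect to $\widetilde\ub_j = \ub_j\hb$ (equivalently, going from $\widetilde\brane$ back to $\brane$, each leftward move of $U_j$ replaces $\ub_j$ by $\ub_j\hb$ in all formulas), and leaves the other $\ub$'s and $\hb$ unchanged. I would first observe that the number of leftward moves needed to bring $U_j$ from its separated position to its position in $\brane$ is exactly $\sigma_j$, the number of NS5 branes to the right of $U_j$ in $\brane$ --- and by the description of tables-with-margins, this is precisely the number of entries in the $j$-th column of $M$ lying below the separating line (downward moves of the separating line encode NS5 branes, and a column entry is ``below'' the line iff the corresponding NS5 brane sits to the right of $U_j$). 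Hence passing from the tangent weights of $\widetilde p$ to those of $p$ amounts to the substitution $\ub_j \mapsto \ub_j\hb^{\sigma_j}$ for every $j$. Applying this to the formula of Theorem~\ref{thm:weights}, the summand $\frac{\ub_{j_0}}{\ub_{j_1}}\hb^{s_{ij_1}-s_{ij_0}}$ becomes $\frac{\ub_{j_0}}{\ub_{j_1}}\hb^{s_{ij_1}-s_{ij_0}+\sigma_{j_0}-\sigma_{j_1}}$, and similarly for the dual term, which is exactly the claimed expression.

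The main obstacle --- really the only nontrivial point --- is pinning down the sign and count in the exponent shift: verifying that it is $\sigma_{j_0}-\sigma_{j_1}$ (and not its negative, or some shifted variant), and that $\sigma_j$ as defined via ``entries below the separating line'' genuinely coincides with the number of leftward Hanany--Witten moves applied to $U_j$. I would handle this by tracking a single D5 brane through one transition at a time: when $U_j$ swaps past an NS5 brane to its right, the separating line near column $j$ acquires one more downward step below the relevant row, increasing $\sigma_j$ by one, in lockstep with one application of the reparametrization $\ub_j\mapsto\ub_j\hb$. A small compatibility check is that the order in which one performs the moves, and the intermediate changes to D3-brane multiplicities $d_X$, do not affect the final $\hb$-exponents --- this follows because the reparametrization data attached to $U_j$ depends only on how many NS5 branes it has crossed, not on the crossings of other D5 branes. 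Once this bookkeeping is in place, the statement is immediate from Theorem~\ref{thm:weights}.
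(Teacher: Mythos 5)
Your proposal is correct and follows essentially the same route as the paper: the paper deduces the corollary from Theorem~\ref{thm:weights} in the paragraph preceding it, by separating the diagram via Hanany--Witten transitions (which preserve the BCT), applying the separated formula, and then reparametrizing $\ub_j\mapsto\ub_j\hb^{\sigma_j}$, with $\sigma_j$ counting the NS5 branes crossed, i.e., the column-$j$ entries below the separating line. Your extra bookkeeping (order-independence of the moves, identifying $\sigma_j$ with entries below the line) just makes explicit what the paper leaves as a remark.
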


Observe that the tangent weights come in pairs of the form $w+\hb w^{-1}$. This is a manifestation of the fact that bow varieties admit polarization bundles \cite[Section~4.4.2]{S}. Theorem~\ref{thm:weights} combined with the dimension formula \cite[equation~(4)]{RS} gives us a roundabout proof of the following purely combinatorial result.
\begin{Corollary}
Let $M$ be a BCT with margins $r$, $c$. Then, the number of $01$-pairs in $M$ is
\[
\frac{1}{2}\Biggl(
\sum_{j=1}^m ( \underline c_j(\underline c_j + 1) + \underline c_{j-1}(\underline c_{j-1} + 1) )
+ 2\sum_{i=1}^n \underline r_{i-1}\underline r_i
-2\sum_{j=1}^m \underline c_j^2 -2\sum_{i=1}^{n-1} \underline r_i^2
\Biggr),
\]
where $\underline c_j=\sum_{j'=1}^j c_{j'}$, and $\underline r_i=\sum_{i'=1}^i r_{i'}$. In particular, this number depends only on the margins.
\end{Corollary}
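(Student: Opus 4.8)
The plan is to count $01$-pairs in two different ways and equate the results, following the strategy hinted at in the statement. On the one hand, Theorem~\ref{thm:weights} (or Corollary~\ref{cor:weights}) expresses $T_p\bowvar(\brane)$ as a sum indexed by $01$-pairs, with each $01$-pair contributing exactly two terms (one of the form $w$ and one of the form $\hb w^{-1}$). Hence the rank of $T_p\bowvar(\brane)$, which equals $\dim_\CC\bowvar(\brane)$, is exactly twice the number of $01$-pairs in $M$. On the other hand, \cite[equation~(4)]{RS} gives a closed formula for $\dim\bowvar(\brane)$ in terms of the multiplicities $d_X$ (or equivalently, in terms of the charges, hence the margins). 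Dividing by two and rewriting everything in terms of the partial sums $\underline c_j$, $\underline r_i$ will yield the claimed expression.

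The concrete steps are as follows. First, I would recall the dimension formula from \cite[equation~(4)]{RS}; it is a quadratic expression in the D3-brane multiplicities $d_{X_k}$. Second, I would translate the $d_{X_k}$ into the partial charge sums. For a separated brane diagram (which one may assume by Hanany--Witten invariance, since the number of $01$-pairs and the margins are unchanged) the multiplicities along the NS5 side and the D5 side are governed by the charges: from the definitions of $\charge(V_i)$ and $\charge(U_j)$ one gets telescoping identities expressing $d_{X_k}$ as a partial sum $\underline r_i$ (on the NS5 side, $k \le n$) or $\underline c_{\,\cdot}$ (on the D5 side), up to the bookkeeping shift coming from the count of branes of the other type to one side. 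Third, I would substitute these into the \cite{RS} dimension formula and collect terms; the terms $\underline c_j(\underline c_j+1)$, $\underline r_{i-1}\underline r_i$, $\underline c_j^2$, $\underline r_i^2$ appearing in the target should emerge directly, the linear-in-$\underline c_j$ pieces being exactly the $\#\{\text{branes}\}$ corrections in the charge definitions. Finally, I would invoke Theorem~\ref{thm:weights} to identify $\tfrac12\operatorname{rank}T_p\bowvar(\brane)$ with the number of $01$-pairs, completing the argument. Since bow varieties with the given margins always possess a fixed point (any BCT with margins $r,c$ corresponds to one), the argument is nonvacuous.

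The main obstacle I anticipate is purely bookkeeping: getting the index shifts exactly right when converting $d_{X_k}$ to partial charge sums, in particular handling the boundary conventions $d_{X_0}=d_{X_{n+m}}=0$, the off-by-one between $\underline c_j$ and $\underline c_{j-1}$ in the target formula, and making sure the $+1$ in $\underline c_j(\underline c_j+1)$ is correctly sourced from the $\#\{\text{NS5 branes right of }U\}$ term rather than miscounted. A secondary point worth checking is that the final expression is manifestly independent of the brane ordering (i.e.\ of the separating line), which is automatic once everything is written in terms of $r,c$ alone, but which also serves as a useful consistency check on the computation. No genuinely hard idea is needed beyond the observation that tangent weights come in polarization pairs, which is already recorded above.
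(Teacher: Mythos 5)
Your proposal is correct and follows essentially the same route as the paper: the paper's proof is precisely the observation that Theorem~\ref{thm:weights} makes $\dim_\CC\bowvar(\brane)=2\cdot\#\{01\text{-pairs}\}$ at any fixed point realizing $M$, combined with the dimension formula of \cite[equation~(4)]{RS} rewritten in terms of the margins. The bookkeeping you outline (translating the $d_{X_k}$ into partial charge sums for a separated diagram) is exactly the unstated computation behind the paper's one-line argument.
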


\section{Butterfly surgeries and invariant curves} \label{sec:curves}
In this section, we will develop a combinatorial representation of $\torus$-invariant curves involving surgery operations on pairs of butterflies (see Section~\ref{sec:fixed_pts}). Our goal is to identify all $\torus$-invariant curves. We will accomplish this using an exhaustive approach. For each fixed point $p$ and tangent weight $w$ that appears with multiplicity $k$ at $p$, we will produce a $k$-dimensional pencil of invariant curves with tangent weight $w$ at $p$.

\subsection{Torus invariant curves} \label{sec:invariant_curves}
Let us begin by collecting some standard facts regarding invariant curves that will be used throughout what follows. These facts can be found in \cite[Section~7]{GKM} and \cite[Section~7.2]{equiv_coh}. We address specifically the properties of invariant curves in bow varieties, though these properties are quite general and hold for a much larger family of spaces. Fix a bow variety $\bowvar(\brane)$ with the corresponding action by the torus $\torus$. By a ``$\torus$-invariant curve'' $\gamma$, we mean the closure of a~1-dimensional $\torus$-orbit in $\bowvar(\brane)$. The following are true of $\torus$-invariant curves:
\begin{itemize}\itemsep=0pt
\item A 1-dimensional $\torus$-orbit is isomorphic to $\CC^\times$, and its closure is obtained by adding up to two $\torus$-fixed points.
\item An invariant curve is compact if it contains two fixed points and noncompact otherwise. Compact invariant curves can be parametrized by equivariant embeddings $\CCP^1 \to \bowvar(\brane)$, and noncompact invariant curves with one fixed point can be parametrized by equivariant embeddings $\CC \to \bowvar(\brane)$.
\item If $\gamma$ is an invariant curve, and $p \in \gamma$ is a fixed point, then $T_p \gamma$ is a $\torus$-weight space of~$T_p \bowvar(\brane)$. If $p_1, p_2 \in \gamma$ are two distinct fixed points, then the $\torus$-weights of $T_{p_1}\gamma$ and $T_{p_2}\gamma$ are reciprocols.
\item In contrast to \cite{GKM}, bow varieties may contain infinitely many invariant curves. In this case, the curves can be arranged into parametrized families or ``pencils''. Pencils of invariant curves will be described in Section~\ref{sec:butterfly_surgery}. An invariant curve is a special case of a pencil, where the dimension of the pencil is 1.
\end{itemize}

The pencils of invariant curves have important implications for the study of the equivariant cohomology of $\bowvar(\brane)$. When $\bowvar(\brane)$ is equivariantly formal and has only finitely many fixed points and invariant curves (no multidimensional pencils), \cite{GKM} shows that the map $H^*_\torus (\bowvar(\brane)) \to H^*_\torus \bigl(\bowvar(\brane)^\torus\bigr)$ induced by the inclusion of the fixed point locus is an injective ring homomorphism whose image is characterized by simple matching conditions on pairs of fixed points joined by an invariant curve. When multidimensional pencils of invariant curves are present, however, these matching conditions are not sufficient, and must be supplemented by more general matching conditions. See \cite{CS} and \cite[Section~7.4]{equiv_coh}. It also appears that not all bow varieties are equivariantly formal \cite{yibo}.

From now on, we will only be interested in invariant curves containing at least one fixed point. The following result will be pivotal in classifying all such curves.

\begin{Proposition} \label{prop:equiv_geo}
Let $\bowvar(\brane)$ be a bow variety and $p \in \bowvar(\brane)^\torus$. Then, we have
\begin{enumerate}\itemsep=0pt
\item[$(1)$] All $\torus$-invariant curves containing $p$ are smooth.
\item[$(2)$] If $\gamma_1$ and $\gamma_2$ are $\torus$-invariant curves containing $p$, and $T_p\gamma_1=T_p\gamma_2$ as vector subspaces of~$T_p\bowvar(\brane)$, then $\gamma_1 = \gamma_2$.
\end{enumerate}
\end{Proposition}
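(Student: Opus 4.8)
The plan is to reduce both statements to local assertions about the $\torus$-action near $p$, using the fact that a $\torus$-invariant curve $\gamma$ through $p$ is, by the previous bullet points, an equivariant image of $\CC$ or $\CCP^1$, so it is determined by its germ at $p$. To get a handle on that germ, I would first linearize: choose a $\torus$-invariant affine neighborhood $\bowvar(\brane)_{\loc}$ of $p$ (which exists since the bow variety is quasi-projective with a torus action, so one can use a Bialynicki-Birula-type or Sumihiro-type argument, or simply restrict attention to a GIT chart around the representative $\tilde p \in \prequot$). On such a chart there is a $\torus$-equivariant closed embedding into the tangent space $T_p\bowvar(\brane)$ with its linear $\torus$-action, or at least an equivariant étale/formal map; the cleanest version is the standard fact that a smooth affine variety with a torus action and a fixed point $p$ admits a $\torus$-equivariant isomorphism between the completion of its local ring at $p$ and the completion of $T_p\bowvar(\brane)$ at the origin (equivariant formal slice). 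Smoothness of $\bowvar(\brane)$ at $p$ is exactly what lets us do this.

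Once we are inside $\widehat{T_p\bowvar(\brane)} = \prod_w \widehat{L_w}$, decomposed into $\torus$-weight spaces $L_w$, the argument becomes combinatorial. A $\torus$-invariant curve $\gamma$ through $p$ has a well-defined tangent line $T_p\gamma$, which by the cited properties is a weight line, contained in some $L_w$ with $w$ a tangent weight at $p$. For part (1), I would argue that the curve's formal branch at $p$ must actually be a smooth formal arc: parametrize the dense orbit $\CC^\times \hookrightarrow \gamma$ equivariantly, pull back coordinates, and observe that each coordinate function of $\gamma$ in the $L_{w'}$-direction is a monomial $t^{\langle w', \chi\rangle}$ in the orbit parameter $t$ (where $\chi$ is the one-parameter subgroup generating the orbit), so $\gamma$ is the closure of $\{(t^{a_{w'}})_{w'} : t \in \CC^\times\}$ for nonnegative integers $a_{w'}$; the minimal such exponent is $1$ (it is realized in the direction $T_p\gamma$), hence after reparametrizing $t$ the curve is a smooth monomial curve, i.e., smooth at $p$. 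Part (2) then follows in the same language: if $T_p\gamma_1 = T_p\gamma_2 = L_w$ (a line), then both $\gamma_i$ are, near $p$, graphs over the $L_w$-coordinate $t$, with the other coordinates given by monomials $t^{a_{w'}}$, $a_{w'}\ge 2$, whose exponents are forced by $a_{w'} = \langle w', \chi\rangle / \langle w, \chi\rangle$ — i.e., entirely determined by the weights, not by $\gamma_i$ — so the two formal germs coincide. Two irreducible curves through $p$ with the same germ at $p$ are equal (an irreducible curve is the closure of any of its branches, equivalently its normalization is $\CC$ or $\CCP^1$ and the equivariant parametrization is unique up to scaling $t$).

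To make the monomial claim rigorous I would phrase it via the equivariant one-parameter subgroup: pick a generic cocharacter $\chi\colon \CC^\times \to \torus$ so that the pairings $\langle w', \chi\rangle$ for the tangent weights $w'$ at $p$ are distinct positive integers after possibly replacing $p$-coordinates; the $\CC^\times$ acting through $\chi$ on $\gamma$ has $p$ as an attracting (or repelling) fixed point, and the Bialynicki-Birula decomposition identifies the attracting set of $p$ with an affine space whose coordinates are the $L_{w'}$ with $\langle w', \chi\rangle > 0$; a $\chi$-invariant curve in this affine space is literally a monomial curve. The main obstacle is the very first step: justifying the equivariant local model (formal slice / equivariant affine chart) cleanly in this GIT-quotient setting, since one must know that $p$ has a $\torus$-stable affine neighborhood and that $\bowvar(\brane)$ is smooth there — smoothness is granted by the construction, and torus-stable affine neighborhoods exist by Sumihiro's theorem for normal quasi-projective varieties, so this is more a matter of citing the right general result than of genuine difficulty. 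After that, everything is the elementary geometry of monomial curves in affine space together with the already-established fact that $T_p\gamma$ is a single weight line and determines the equivariant parametrization up to rescaling.
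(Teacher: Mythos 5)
Your overall skeleton (equivariantly linearize at $p$ via a $\torus$-stable affine chart plus an \'etale or formal slice to $T_p\bowvar(\brane)$, then study invariant curves in the linear model) is the same as the paper's, which uses the Luna slice theorem. But the heart of the argument is missing, and the reasoning you substitute for it does not hold. In a general linear $\torus$-representation, an invariant curve through the origin is a monomial curve $\bigl(t^{n_{w'}}v_{w'}\bigr)_{w'}$ and need \emph{not} be a line, nor smooth, nor determined by its tangent line: with weights $w$ and $2w$ the curves $\{y=0\}$ and $\{y=x^2\}$ are distinct invariant curves through $0$ with the same tangent line, and with weights $2w$ and $3w$ the invariant curve $(t^2,t^3)$ is a cusp. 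So both (1) and (2) are simply false for arbitrary weight configurations, and any correct proof must use the specific structure of the tangent weights of a bow variety. Your proposal never does: the assertion that ``the minimal exponent is $1$, realized in the direction $T_p\gamma$'' is unjustified (and close to circular, since a well-defined one-dimensional tangent line is what smoothness would give you), and in part (2) the statement that the higher monomials are ``entirely determined by the weights, not by $\gamma_i$'' ignores that the \emph{coefficients} $v_{w'}$ of those monomials are free parameters, exactly as in the $y=x^2$ versus $y=2x^2$ (or $y=0$) example.

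What closes the gap, and what the paper's proof actually leans on, is Corollary~\ref{cor:weights} (equivalently Theorem~\ref{thm:weights}): every tangent weight at $p$ has the form $\frac{\ub_{j_0}}{\ub_{j_1}}\hb^{k}$ with $j_0\neq j_1$, so its exponent vector in the character lattice of $\torus$ is primitive (the $\ub$-part is $e_{j_0}-e_{j_1}$). Hence no two \emph{distinct} tangent weights lie on a common ray through the origin, so a point of $T_p\bowvar(\brane)$ with one-dimensional orbit whose closure contains $0$ has all its nonzero components in a single weight space, and the orbit closure is a straight line. Lines are smooth and are determined by their tangent direction, and pulling this back through the \'etale (hence tangent-space-preserving, locally analytically invertible) slice map gives (1) and (2). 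If you insert this weight-structure step — replacing your ``monomial curve with minimal exponent $1$'' claim and your coefficient-free determination argument — your outline becomes essentially the paper's proof; without it, the argument proves a statement that is false for general torus actions.
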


\begin{proof}
Let $p\in U \subset \bowvar(\brane)$ be an affine $\torus$-invariant neighborhood. By the Luna slice theorem~\cite[Appendix of Chapter~1]{MFK}, there exists a $\torus$-equivariant morphism $\phi\colon V \to T_p \bowvar(\brane)$ such that~${p \in V \subset U}$ is a $\torus$-invariant neighborhood, $\phi(p) = 0$, and $\phi$ is \'etale. Due to equivariance, $\phi$~maps invariant curves containing $p$ to invariant curves containing $0$. It can be easily seen from the structure of the tangent weights provided by Corollary~\ref{cor:weights} that the only invariant curves in~$T_p\bowvar(\brane)$ containing $0$ are lines. \'Etale morphisms induce isomorphisms on tangent spaces. This proves (1). Moreover, since our spaces are smooth, $\phi$ restricts to a diffeomorphism on some analytic neighborhood of $p$. This proves (2).
\end{proof}

 Our approach will be to construct a collection of invariant curves containing $p$ such that every $\torus$-invariant tangent line at $p$ appears as the tangent line to one of the curves. Then, Proposition~\ref{prop:equiv_geo} will guarantee that all curves containing $p$ are accounted for.

\subsection{Butterfly surgery} \label{sec:butterfly_surgery}
\begin{figure}[t]
\centering
\begin{tikzpicture}
\begin{scope}[shift = {(0, 21)}]
\draw[fill] (-1,-1) circle [radius = 0.025] node(0a1){};

\draw[fill] (0,0) circle [radius = 0.025] node(1a1){}; \draw[fill] (0,-1) circle [radius = 0.025] node(1a2){};

\draw[fill] (1,1) circle [radius = 0.025] node(2a1){}; \draw[fill] (1,0) circle [radius = 0.025] node(2a2){}; \draw[fill] (1,-1) circle [radius = 0.025] node(2a3){};

\draw[fill] (2,2) circle [radius = 0.025] node(3a1){}; \draw[fill] (2,1) circle [radius = 0.025] node(3a2){}; \draw[fill] (2,0) circle [radius = 0.025] node(3a3){};

\draw[fill] (3,3) circle [radius = 0.025] node(4a1){}; \draw[fill] (3,2) circle [radius = 0.025] node(4a2){}; \draw[fill] (3,1) circle [radius = 0.025] node(4a3){};
\draw[fill] (3,0) circle [radius = 0.025] node(4a4){};

\draw[fill] (4,4) circle [radius = 0.025] node(5a1){}; \draw[fill] (4,3) circle [radius = 0.025] node(5a2){}; \draw[fill] (4,2) circle [radius = 0.025] node(5a3){};
\draw[fill] (4,1) circle [radius = 0.025] node(5a4){};

\draw[fill] (5,4) circle [radius = 0.025] node(6a1){}; \draw[fill] (5,3) circle [radius = 0.025] node(6a2){}; \draw[fill] (5,2) circle [radius = 0.025] node(6a3){};
\draw[fill] (5,1) circle [radius = 0.025] node(6a4){};

\draw[fill] (6,4) circle [radius = 0.025] node(7a1){}; \draw[fill] (6,3) circle [radius = 0.025] node(7a2){}; \draw[fill] (6,2) circle [radius = 0.025] node(7a3){};
\draw[fill] (6,1) circle [radius = 0.025] node(7a4){};

\draw[fill] (7,4) circle [radius = 0.025] node(8a1){}; \draw[fill] (7,3) circle [radius = 0.025] node(8a2){}; \draw[fill] (7,2) circle [radius = 0.025] node(8a3){};
\draw[fill] (7,1) circle [radius = 0.025] node(8a4){};

\draw[fill] (8,4) circle [radius = 0.025] node(9a1){}; \draw[fill] (8,3) circle [radius = 0.025] node(9a2){}; \draw[fill] (8,2) circle [radius = 0.025] node(9a3){};
\draw[fill] (8,1) circle [radius = 0.025] node(9a4){};

\draw[fill] (9,4) circle [radius = 0.025] node(10a1){}; \draw[fill] (9,3) circle [radius = 0.025] node(10a2){};
\draw[fill] (9,2) circle [radius = 0.025] node(10a3){};

\draw[fill] (10,4) circle [radius = 0.025] node(11a1){}; \draw[fill] (10,3) circle [radius = 0.025] node(11a2){};

\draw[fill] (11,4) circle [radius = 0.025] node(12a1){}; \draw[fill] (11,3) circle [radius = 0.025] node(12a2){};

\draw[fill] (12,4) circle [radius = 0.025] node(13a1){};

\draw (4.5, 5) circle [radius = 0.05] node(a){};

\draw[->, red] (0a1)--(1a2);
\draw[->, dotted, magenta] (1a1)--(0a1);

\draw[->, red] (1a1)--(2a2); \draw[->, red] (1a2)--(2a3);
\draw[->, dotted, magenta] (2a1)--(1a1); \draw[->, dotted, magenta] (2a2)--(1a2);

\draw[->, red] (2a1)--(3a2); \draw[->, red] (2a2)--(3a3);
\draw[->, dotted, magenta] (3a2)--(2a2); \draw[->, dotted, magenta] (3a1)--(2a1); \draw[->, dotted, magenta] (3a3)--(2a3);

\draw[->, red] (3a1)--(4a2); \draw[->, red] (3a2)--(4a3); \draw[->, red] (3a3)--(4a4);
\draw[->, dotted, magenta] (4a1)--(3a1); \draw[->, dotted, magenta] (4a2)--(3a2); \draw[->, dotted, magenta] (4a3)--(3a3);

\draw[->, red] (4a1)--(5a2); \draw[->, red] (4a2)--(5a3); \draw[->, red] (4a3)--(5a4);
\draw[->, dotted, magenta] (5a1)--(4a1); \draw[->, dotted, magenta] (5a2)--(4a2); \draw[->, dotted, magenta] (5a3)--(4a3);
\draw[->, dotted, magenta] (5a4)--(4a4);

\draw[->, blue] (6a1)--(5a1); \draw[->, blue] (6a2)--(5a2); \draw[->, blue] (6a3)--(5a3); \draw[->, blue] (6a4)--(5a4);
\draw[->] (5a1)--(5a2); \draw[->] (5a2)--(5a3); \draw[->] (5a3)--(5a4); \draw[->] (6a1)--(6a2); \draw[->] (6a2)--(6a3); \draw[->] (6a3)--(6a4);

\draw[->, red] (6a1)--(7a1); \draw[->, red] (6a2)--(7a2); \draw[->, red] (6a3)--(7a3); \draw[->, red] (6a4)--(7a4);
\draw[->, dotted, magenta] (7a1)--(6a2); \draw[->, dotted, magenta] (7a2)--(6a3); \draw[->, dotted, magenta] (7a3)--(6a4);

\draw[->, red] (7a1)--(8a1); \draw[->, red] (7a2)--(8a2); \draw[->, red] (7a3)--(8a3); \draw[->, red] (7a4)--(8a4);
\draw[->, dotted, magenta] (8a1)--(7a2); \draw[->, dotted, magenta] (8a2)--(7a3); \draw[->, dotted, magenta] (8a3)--(7a4);

\draw[->, blue] (9a1)--(8a1); \draw[->, blue] (9a2)--(8a2); \draw[->, blue] (9a3)--(8a3); \draw[->, blue] (9a4)--(8a4);
\draw[->] (8a1)--(8a2); \draw[->] (8a2)--(8a3); \draw[->] (8a3)--(8a4); \draw[->] (9a1)--(9a2); \draw[->] (9a2)--(9a3); \draw[->] (9a3)--(9a4);

\draw[->, red] (9a1)--(10a1); \draw[->, red] (9a2)--(10a2); \draw[->, red] (9a3)--(10a3);
\draw[->, dotted, magenta] (10a1)--(9a2); \draw[->, dotted, magenta] (10a2)--(9a3); \draw[->, dotted, magenta] (10a3)--(9a4);

\draw[->, red] (10a1)--(11a1); \draw[->, red] (10a2)--(11a2);
\draw[->, dotted, magenta] (11a1)--(10a2); \draw[->, dotted, magenta] (11a2)--(10a3);

\draw[->, red] (11a1)--(12a1); \draw[->, red] (11a2)--(12a2);
\draw[->, dotted, magenta] (12a1)--(11a2);

\draw[->, red] (12a1)--(13a1);
\draw[->, dotted, magenta] (13a1)--(12a2);

\draw[->, green] (a) to [in = 15, out = 245] (5a1);
\end{scope}

\begin{scope}[shift = {(0, 14)}]
\draw[fill] (1,1) circle [radius = 0.025] node(2b1){};

\draw[fill] (2,2) circle [radius = 0.025] node(3b1){};

\draw[fill] (3,3) circle [radius = 0.025] node(4b1){};

\draw[fill] (4,4) circle [radius = 0.025] node(5b1){}; \draw[fill] (4,3) circle [radius = 0.025] node(5b2){};

\draw[fill] (5,4) circle [radius = 0.025] node(6b1){}; \draw[fill] (5,3) circle [radius = 0.025] node(6b2){};

\draw[fill] (6,5) circle [radius = 0.025] node(7b1){}; \draw[fill] (6,4) circle [radius = 0.025] node(7b2){};

\draw[fill] (7,6) circle [radius = 0.025] node(8b1){}; \draw[fill] (7,5) circle [radius = 0.025] node(8b2){};

\draw[fill] (8,5) circle [radius = 0.025] node(9b1){};

\draw[fill] (9,5) circle [radius = 0.025] node(10b1){};

\draw[fill] (10,5) circle [radius = 0.025] node(11b1){};

\draw[fill] (11,5) circle [radius = 0.025] node(12b1){};

\draw[fill] (12,5) circle [radius = 0.025] node(13b1){};

\draw (7.5, 7) circle [radius = 0.05] node(b){};

\draw[->, dotted, magenta] (3b1)--(2b1);

\draw[->, dotted, magenta] (4b1)--(3b1);

\draw[->, red] (4b1)--(5b2);
\draw[->, dotted, magenta] (5b1)--(4b1);

\draw[->, blue] (6b1)--(5b1); \draw[->, blue] (6b2)--(5b2);
\draw[->] (5b1)--(5b2); \draw[->] (6b1)--(6b2);

\draw[->, red] (6b1)--(7b2);
\draw[->, dotted, magenta] (7b1)--(6b1); \draw[->, dotted, magenta] (7b2)--(6b2);

\draw[->, red] (7b1)--(8b2);
\draw[->, dotted, magenta] (8b1)--(7b1); \draw[->, dotted, magenta] (8b2)--(7b2);

\draw[->, blue] (9b1)--(8b2);
\draw[->] (8b1)--(8b2);

\draw[->, red] (9b1)--(10b1);

\draw[->, red] (10b1)--(11b1);

\draw[->, red] (11b1)--(12b1);

\draw[->, red] (12b1)--(13b1);

\draw[->, green] (b) to [in = 15, out = 245] (8b1);
\end{scope}

\begin{scope}[shift = {(0, 20)}]
\draw[gray] (-0.1, -0.1)--(1.05, -0.1)--(2.05, 0.9)--(3.05, 0.9)--(4.05, 1.9)--(8.05, 1.9)--(10.05, 3.9)--(11.1, 3.9)--(11.1, 4.1)--(9.95, 4.1)--(6.95, 4.1)--(4.95, 2.1)--(2.95, 2.1)--(0.95, 0.1)--(-0.1, 0.1)--(-0.1, -0.1);
\end{scope}

\begin{scope}[shift = {(0, 14)}]
\draw[gray] (-0.1, -0.1)--(1.05, -0.1)--(2.05, 0.9)--(3.05, 0.9)--(4.05, 1.9)--(8.05, 1.9)--(10.05, 3.9)--(11.1, 3.9)--(11.1, 4.1)--(9.95, 4.1)--(6.95, 4.1)--(4.95, 2.1)--(2.95, 2.1)--(0.95, 0.1)--(-0.1, 0.1)--(-0.1, -0.1);
\end{scope}
\end{tikzpicture}

\caption{Fixed point with the site of a butterfly surgery outlined.} \label{fig:preop}
\end{figure}

A fixed point $p_1\in \bowvar(\brane)^\torus$ corresponds to a butterfly diagram, that is, a collection of butterflies, one centered on each D5 brane. We stack the butterflies on top of each other, so that the centers of the butterflies listed from top to bottom go from left to right in $\brane$. Fix two butterflies $\but_1$ and~$\but'_1$ with centers $U$ and $U'$, respectively. Suppose that there is a (possibly disconnected) subgraph $\site$ of~$\but_1$, such that stacking $\site$ below $\but'_1$ and creating new edges between $\but'_1$ and $\site$ according to the rules of Section~\ref{sec:fixed_pts} results in a butterfly $\but'_2$. The subgraph $\site$, called the \emph{site} of the surgery, must be translated vertically, in a rigid fashion, without any lateral movement, rotation, or deformation, and we do not create new edges within $\site$. Let $\but_2$ be obtained by deleting $\site$ and all adjacent edges from $\but_1$, and assume that $\but_2$ is also a butterfly. The operation of replacing $\but_1$ with $\but_2$ and $\but'_1$ with $\but'_2$ is called a ``butterfly surgery''. It transforms the butterfly diagram for $p_1$ into the butterfly diagram for another fixed point $p_2\in \bowvar(\brane)^\torus$. We will use $\surgery$ to denote butterfly surgeries. Clearly, in a butterfly surgery, $\site$ must contain all edges of $\but_1$ with both endpoints within $\site$. Moreover, the 0-momentum condition forces $\site$ to be $A$, $A^{-1}$, $B$, $C$, $D$-invariant. By this, we mean that all $B$, $C$, $D$ edges in $\but_1$ incident to $\site$ are directed into $\site$, and $\site$ contains all incident $A$ edges.

An invariant curve arises from a butterfly surgery as follows. Consider the newly created edges in the butterfly diagram for $p_2$. Each edge starts in $\but'_1$ and ends in the translated copy of~$\site$, which we will call $\site'$. Create the corresponding edges starting in $\but'_1$ and ending in $\site\subset\but_1$ in the butterfly diagram for $p_1$, as in Figure \ref{fig:invariant_curve}. Identifying vertices with basis vectors of the $W_X$ spaces and interpreting the edges as linear maps, the resulting graph gives an element~${\tilde{p}_\gamma \in \MM}$. The 0-momentum condition holds by construction, as the rules in Section~\ref{sec:fixed_pts} governing the vertices and edges of a butterfly directly reflect the 0-momentum conditions (see the proof of~\cite[Theorem~4.8]{RS}). The S1, S2, and $\nu$ stability conditions hold as well.

\begin{Lemma} \label{lemma:type1_curve}
The point $\tilde{p}_\gamma \in \MM$ satisfies the stability conditions S$1$, S$2$, and $\nu$.
\end{Lemma}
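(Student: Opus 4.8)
\emph{Proof proposal.}
Rather than checking S$1$, S$2$ and $\nu$ by hand, the plan is to degenerate $\tilde p_\gamma$ to the fixed point $p_1$ and invoke that the stable locus $\prequot\subset\MM_0$ is open and $\torus$-invariant. Let $\tilde p_1\in\prequot$ be the butterfly representative of $p_1$; it lies in $\prequot$, i.e.\ satisfies S$1$, S$2$, $\nu$, by \cite[Theorem~4.8]{RS}. By construction $\tilde p_\gamma$ is obtained from $\tilde p_1$ by adjoining precisely the new edges running from $\but'_1$ into $\site$ and leaving everything else untouched, and its $0$-momentum conditions hold by construction, so $\tilde p_\gamma\in\MM_0$.

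The key step is to produce a cocharacter $\lambda\colon\CC^\times\to\torus$ with $\lim_{t\to 0}\lambda(t)\cdot\tilde p_\gamma=\tilde p_1$. Let $U$ be the centre of $\but_1$ (the butterfly from which $\site$ is removed) and take $\lambda$ to be the inclusion of the $\CC_U^\times$ factor of $\torus$, with orientation fixed as below. Reading off the vertex decorations of Section~\ref{sec:fixed_res}: every edge of the butterfly diagram of $p_1$ lies inside a single butterfly, hence joins two vertices whose decorations carry the \emph{same} $\ub$-letter, so its $\torus$-weight is a pure power of $\hb$ and is fixed by $\lambda$; by contrast, the excerpt records that every new edge starts in $\but'_1$ (decoration $\ub_{U'}\hb^{\bullet}$) and ends in $\site\subset\but_1$ (decoration $\ub_U\hb^{\bullet}$), so its weight is $\ub_U\ub_{U'}^{-1}\hb^{\bullet}$, with one more factor of $\hb$ if the edge is a circled map. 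Choosing the orientation of $\lambda$ so that this weight evaluates to $t$, the operator $\lambda(t)$ rescales \emph{every} new edge by $t$ and fixes all remaining data; hence $\lambda(t)\cdot\tilde p_\gamma\to\tilde p_1$ as $t\to 0$.

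Finally, $\prequot$ is open in $\MM_0$ (it is the stable locus of the GIT construction of \cite{NT}), and $\MM_0$ is $\torus$-invariant, so $\lambda(t)\cdot\tilde p_\gamma\in\MM_0$ for all $t$. Since $\lambda(t)\cdot\tilde p_\gamma\to\tilde p_1\in\prequot$, the point $\lambda(t)\cdot\tilde p_\gamma$ lies in $\prequot$ for all sufficiently small $t\neq 0$; and since $\prequot$ is $\torus$-invariant, $\tilde p_\gamma=\lambda(t)^{-1}\cdot\bigl(\lambda(t)\cdot\tilde p_\gamma\bigr)\in\prequot$, which is exactly the assertion. I expect the only subtle point to be the $\torus$-weight bookkeeping for the new edges — in particular confirming, from the fact that all new edges are oriented from $\but'_1$ into $\site'$, that they all scale by the \emph{same} power of $t$ (and tracking the $\hb$-shifts of circled maps correctly); once that is in place the argument is purely formal. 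As an alternative one could mimic the proof of \cite[Theorem~4.8]{RS} and verify S$1$, S$2$, $\nu$ directly by analysing how the extra edges enlarge the relevant invariant subspaces, but this route is considerably more laborious.
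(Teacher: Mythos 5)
Your overall strategy — degenerate $\tilde p_\gamma$ to the fixed-point representative $\tilde p_1$ and invoke openness plus group-invariance of the stable locus — is genuinely different from the paper, which instead verifies S1, S2 and $\nu$ directly by chasing the butterfly graph (showing that repeated applications of $B$ escape $\ker b$, that $\im(a)$ generates each butterfly under $B$, etc.). The degeneration idea can be made to work, but as written it has a genuine gap at its key step: the cocharacter you choose does not do what you claim. The torus $\torus=\CC^\times_\hbar\times\prod_U\CC^\times_U$ acts on $\MM$ only by scaling the lines $\CC_U$ (hence the maps $a_U$, $b_U$) and by the $\hbar$-scaling of the circled maps; it acts trivially on the $\Hom(W_X,W_{X'})$ components. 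The new edges created by the surgery are precisely such maps between $W_X$-spaces, so the inclusion of the $\CC^\times_U$-factor does not rescale them at all. What it does rescale are the green edges $a_U$, $b_U$ of the \emph{old} data, so $\lim_{t\to 0}\lambda(t)\cdot\tilde p_\gamma$ is not $\tilde p_1$ but the point with $a_U$ (or $b_U$) killed, which typically violates S2/$\nu$. The vertex decorations $\ub\hb^{y(v)}$ of Section~\ref{sec:fixed_res} encode the $\torus$-weights only at a fixed point, where the $\torus$-action is compensated by a homomorphism into $\G$; at the non-fixed point $\tilde p_\gamma$ this "weight of an edge" bookkeeping does not describe the naive $\torus$-action on matrix entries.

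The repair is to run your argument with a one-parameter subgroup of $\torus\times\G$ rather than of $\torus$ alone: combine the torus factor with the gauge transformation rescaling the basis vectors of the relevant butterfly (this is exactly the "suitable change of basis in $W_X$ spaces" the paper performs right after the lemma), so that the resulting family is the diagram of $\tilde p_1$ with all new edges multiplied by a single parameter $w$, and $w\to 0$ gives $\tilde p_1$. Since $\MM_0$ and $\prequot$ are $\G$- as well as $\torus$-invariant, and $\prequot$ is open in $\MM_0$ (the stability conditions are open, by the usual properness-of-Grassmannian-incidence argument underlying the GIT description in \cite{NT}), your openness-plus-invariance conclusion then goes through and yields $\tilde p_\gamma\in\prequot$, giving a legitimate alternative to the paper's hands-on verification. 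Note also that you cannot sidestep this by arguing in the quotient $\bowvar(\brane)$, since having a well-defined point $p_\gamma$ there already presupposes the statement being proved.
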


\begin{proof}
For each D5 brane $U_0$, S1 and S2, place a condition on the $A_{U_0}$, $B_{U_0}$, $a_{U_0}$, $b_{U_0}$ maps. Let~${s \in \ker(A_{U_0}) \cap \ker(b_{U_0})}$. Inspecting the structure of $\tilde{p}_\gamma$, $s$ must be in the span of the vertices of the $U_0$ butterfly above the source of the~$b_{U_0}$ edge. However, sufficiently many applications of~$B_{{U_0}^+}$ will result in a vector no longer in the kernel of $b_{U_0}$. This proves S1. For S2, observe that~$\im(A_{U_0})$ contains all vertices under ${U_0}^-$ except possibly for some in the $U_0$ butterfly. Any vertex of the $U_0$ butterfly can be reached from $\im(a_{U_0})$ by sufficiently many applications of $B_{{U_0}^-}$.

To verify the $\nu$ stability condition, observe that for any D5 brane $U_0$, every vertex of the $U_0$ butterfly can be reached from the target vertex of $a_{U_0}$ or from the highest vertex under $U_0^+$ by following a sequence of $A$, $A^{-1}$, $B$, $C$, $D$ edges. By following an $A^{-1}$ edge, we simply mean following an $A$ edge in the direction opposite to its orientation.
\end{proof}

It follows that \smash{$\tilde{p}_\gamma \in \prequot$} descends to a point $p_\gamma \in \bowvar(\brane)$. Denote the $\torus$-orbit closure of $p_\gamma$ by~$\gamma$.

\begin{figure}[t]
\centering
\begin{tikzpicture}
\begin{scope}[shift = {(0, 7)}]
\draw[fill] (-1,-1) circle [radius = 0.025] node(0a1){};

\draw[fill] (0,0) circle [radius = 0.025] node(1a1){};

\draw[fill] (1,1) circle [radius = 0.025] node(2a1){}; \draw[fill] (1,0) circle [radius = 0.025] node(2a2){};

\draw[fill] (2,2) circle [radius = 0.025] node(3a1){}; \draw[fill] (2,1) circle [radius = 0.025] node(3a2){};

\draw[fill] (3,3) circle [radius = 0.025] node(4a1){}; \draw[fill] (3,2) circle [radius = 0.025] node(4a2){};

\draw[fill] (4,4) circle [radius = 0.025] node(5a1){}; \draw[fill] (4,3) circle [radius = 0.025] node(5a2){}; \draw[fill] (4,2) circle [radius = 0.025] node(5a3){};

\draw[fill] (5,4) circle [radius = 0.025] node(6a1){}; \draw[fill] (5,3) circle [radius = 0.025] node(6a2){}; \draw[fill] (5,2) circle [radius = 0.025] node(6a3){};

\draw[fill] (6,4) circle [radius = 0.025] node(7a1){}; \draw[fill] (6,3) circle [radius = 0.025] node(7a2){};

\draw[fill] (7,4) circle [radius = 0.025] node(8a1){};

\draw[fill] (8,4) circle [radius = 0.025] node(9a1){};

\draw[fill] (9,4) circle [radius = 0.025] node(10a1){};

\draw[fill] (10,4) circle [radius = 0.025] node(11a1){};

\draw[fill] (11,4) circle [radius = 0.025] node(12a1){};

\draw[fill] (12,4) circle [radius = 0.025] node(13a1){};

\draw (4.5, 5) circle [radius = 0.05] node(a){};

\draw[->, dotted, magenta] (1a1)--(0a1);

\draw[->, red] (1a1)--(2a2); \draw[->, dotted, magenta] (2a1)--(1a1);

\draw[->, red] (2a1)--(3a2);
\draw[->, dotted, magenta] (3a2)--(2a2); \draw[->, dotted, magenta] (3a1)--(2a1);

\draw[->, red] (3a1)--(4a2);
\draw[->, dotted, magenta] (4a1)--(3a1); \draw[->, dotted, magenta] (4a2)--(3a2);

\draw[->, red] (4a1)--(5a2); \draw[->, red] (4a2)--(5a3);
\draw[->, dotted, magenta] (5a1)--(4a1); \draw[->, dotted, magenta] (5a2)--(4a2);

\draw[->, blue] (6a1)--(5a1); \draw[->, blue] (6a2)--(5a2); \draw[->, blue] (6a3)--(5a3);
\draw[->] (5a1)--(5a2); \draw[->] (5a2)--(5a3); \draw[->] (6a1)--(6a2); \draw[->] (6a2)--(6a3);

\draw[->, red] (6a1)--(7a1); \draw[->, red] (6a2)--(7a2);
\draw[->, dotted, magenta] (7a1)--(6a2); \draw[->, dotted, magenta] (7a2)--(6a3);

\draw[->, red] (7a1)--(8a1);
\draw[->, dotted, magenta] (8a1)--(7a2);

\draw[->, blue] (9a1)--(8a1);

\draw[->, red] (9a1)--(10a1);

\draw[->, red] (10a1)--(11a1);

\draw[->, red] (11a1)--(12a1);

\draw[->, red] (12a1)--(13a1);

\draw[->, green] (a) to [in = 15, out = 245] (5a1);
\end{scope}

\begin{scope}
\draw[fill] (0,0) circle [radius = 0.025] node(1a1){};

\draw[fill] (1,1) circle [radius = 0.025] node(2a1){}; \draw[fill] (1,0) circle [radius = 0.025] node(2a2){};

\draw[fill] (2,2) circle [radius = 0.025] node(3a1){}; \draw[fill] (2,1) circle [radius = 0.025] node(3a2){};

\draw[fill] (3,3) circle [radius = 0.025] node(4a1){}; \draw[fill] (3,2) circle [radius = 0.025] node(4a2){}; \draw[fill] (3,1) circle [radius = 0.025] node(4a3){};

\draw[fill] (4,4) circle [radius = 0.025] node(5a1){}; \draw[fill] (4,3) circle [radius = 0.025] node(5a2){}; \draw[fill] (4,2) circle [radius = 0.025] node(5a3){};

\draw[fill] (5,4) circle [radius = 0.025] node(6a1){}; \draw[fill] (5,3) circle [radius = 0.025] node(6a2){}; \draw[fill] (5,2) circle [radius = 0.025] node(6a3){};

\draw[fill] (6,5) circle [radius = 0.025] node(7a1){}; \draw[fill] (6,4) circle [radius = 0.025] node(7a2){}; \draw[fill] (6,3) circle [radius = 0.025] node(7a3){};
\draw[fill] (6,2) circle [radius = 0.025] node(7a4){};

\draw[fill] (7,6) circle [radius = 0.025] node(8a1){}; \draw[fill] (7,5) circle [radius = 0.025] node(8a2){}; \draw[fill] (7,4) circle [radius = 0.025] node(8a3){};
\draw[fill] (7,3) circle [radius = 0.025] node(8a4){}; \draw[fill] (7,2) circle [radius = 0.025] node(8a5){};

\draw[fill] (8,5) circle [radius = 0.025] node(9a1){}; \draw[fill] (8,4) circle [radius = 0.025] node(9a2){}; \draw[fill] (8,3) circle [radius = 0.025] node(9a3){};
\draw[fill] (8,2) circle [radius = 0.025] node(9a4){};

\draw[fill] (9,5) circle [radius = 0.025] node(10a1){}; \draw[fill] (9,4) circle [radius = 0.025] node(10a2){};
\draw[fill] (9,3) circle [radius = 0.025] node(10a3){};

\draw[fill] (10,5) circle [radius = 0.025] node(11a1){}; \draw[fill] (10,4) circle [radius = 0.025] node(11a2){};

\draw[fill] (11,5) circle [radius = 0.025] node(12a1){}; \draw[fill] (11,4) circle [radius = 0.025] node(12a2){};

\draw[fill] (12,5) circle [radius = 0.025] node(13a1){};

\draw (7.5, 7) circle [radius = 0.05] node(a){};

\draw[->, red] (1a1)--(2a2); \draw[->, dotted, magenta] (2a1)--(1a1);

\draw[->, red] (2a1)--(3a2);
\draw[->, dotted, magenta] (3a2)--(2a2); \draw[->, dotted, magenta] (3a1)--(2a1);

\draw[->, red] (3a1)--(4a2); \draw[->, red] (3a2)--(4a3);
\draw[->, dotted, magenta] (4a1)--(3a1); \draw[->, dotted, magenta] (4a2)--(3a2);

\draw[->, red] (4a1)--(5a2); \draw[->, red] (4a2)--(5a3);
\draw[->, dotted, magenta] (5a1)--(4a1); \draw[->, dotted, magenta] (5a2)--(4a2); \draw[->, dotted, magenta] (5a3)--(4a3);

\draw[->, blue] (6a1)--(5a1); \draw[->, blue] (6a2)--(5a2); \draw[->, blue] (6a3)--(5a3);
\draw[->] (5a1)--(5a2); \draw[->] (5a2)--(5a3); \draw[->] (6a1)--(6a2); \draw[->] (6a2)--(6a3);

\draw[->, red] (6a1)--(7a2); \draw[->, red] (6a2)--(7a3); \draw[->, red] (6a3)--(7a4);
\draw[->, dotted, magenta] (7a1)--(6a1); \draw[->, dotted, magenta] (7a2)--(6a2); \draw[->, dotted, magenta] (7a3)--(6a3);

\draw[->, red] (7a1)--(8a2); \draw[->, red] (7a2)--(8a3); \draw[->, red] (7a3)--(8a4); \draw[->, red] (7a4)--(8a5);
\draw[->, dotted, magenta] (8a1)--(7a1); \draw[->, dotted, magenta] (8a2)--(7a2); \draw[->, dotted, magenta] (8a3)--(7a3);
\draw[->, dotted, magenta] (8a4)--(7a4);

\draw[->, blue] (9a1)--(8a2); \draw[->, blue] (9a2)--(8a3); \draw[->, blue] (9a3)--(8a4); \draw[->, blue] (9a4)--(8a5);
\draw[->] (8a1)--(8a2); \draw[->] (8a2)--(8a3); \draw[->] (8a3)--(8a4); \draw[->] (8a4)--(8a5); \draw[->] (9a1)--(9a2); \draw[->] (9a2)--(9a3);
\draw[->] (9a3)--(9a4);

\draw[->, red] (9a1)--(10a1); \draw[->, red] (9a2)--(10a2); \draw[->, red] (9a3)--(10a3);
\draw[->, dotted, magenta] (10a1)--(9a2); \draw[->, dotted, magenta] (10a2)--(9a3); \draw[->, dotted, magenta] (10a3)--(9a4);

\draw[->, red] (10a1)--(11a1); \draw[->, red] (10a2)--(11a2);
\draw[->, dotted, magenta] (11a1)--(10a2); \draw[->, dotted, magenta] (11a2)--(10a3);

\draw[->, red] (11a1)--(12a1); \draw[->, red] (11a2)--(12a2);
\draw[->, dotted, magenta] (12a1)--(11a2);

\draw[->, red] (12a1)--(13a1);
\draw[->, dotted, magenta] (13a1)--(12a2);

\draw[->, green] (a) to [in = 15, out = 245] (8a1);
\end{scope}

\begin{scope}[shift = {(0, 6)}]
\draw[gray] (-0.1, -0.1)--(1.05, -0.1)--(2.05, 0.9)--(3.05, 0.9)--(4.05, 1.9)--(8.05, 1.9)--(10.05, 3.9)--(11.1, 3.9)--(11.1, 4.1)--(9.95, 4.1)--(6.95, 4.1)--(4.95, 2.1)--(2.95, 2.1)--(0.95, 0.1)--(-0.1, 0.1)--(-0.1, -0.1);
\end{scope}

\begin{scope}
\draw[gray] (-0.1, -0.1)--(1.05, -0.1)--(2.05, 0.9)--(3.05, 0.9)--(4.05, 1.9)--(8.05, 1.9)--(10.05, 3.9)--(11.1, 3.9)--(11.1, 4.1)--(9.95, 4.1)--(6.95, 4.1)--(4.95, 2.1)--(2.95, 2.1)--(0.95, 0.1)--(-0.1, 0.1)--(-0.1, -0.1);
\end{scope}
\end{tikzpicture}

\vspace{1mm}

\caption{New fixed point resulting from butterfly surgery.} \label{fig:postop}
\end{figure}
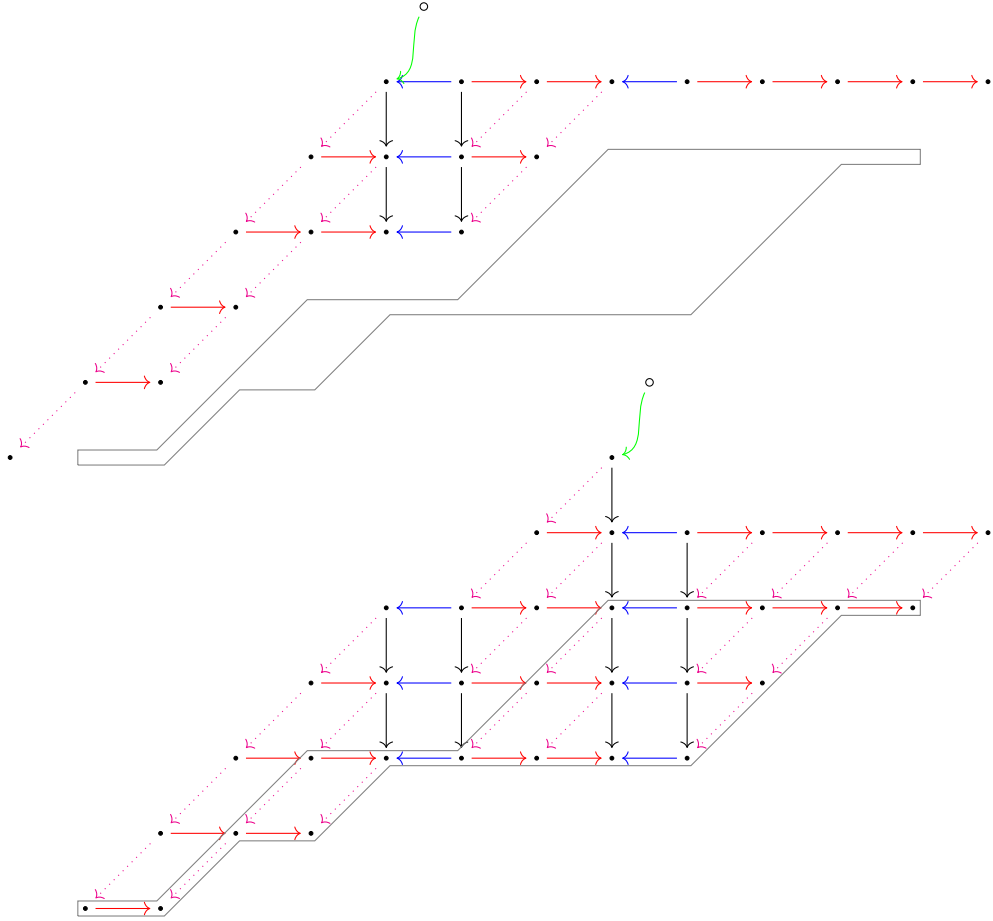

For a vertex $v$ of the butterfly diagram, recall that its height (see Definition~\ref{def:height}) is denoted by~$y(v)$. Now, let $v\in\site \subset \beta$. Applying the surgery yields the corresponding vertex $v'\in\site' \subset \beta'$. The change in height $\Delta_\surgery y = y(v') - y(v)$ is constant with respect to $v\in\site$. Consider the action of $\torus$ on $p_\gamma$. By a suitable change of basis in $W_X$ spaces (the $\G$-action), a representation of the resulting point can be obtained by multiplying the new edges by the weight $\hb^{-\Delta_\surgery y} \ub / \ub'$ and leaving the other edges alone. By fixing $\ub$, $\hb$ and taking $\ub' \to \infty$, we see that $p_1 \in \gamma$.

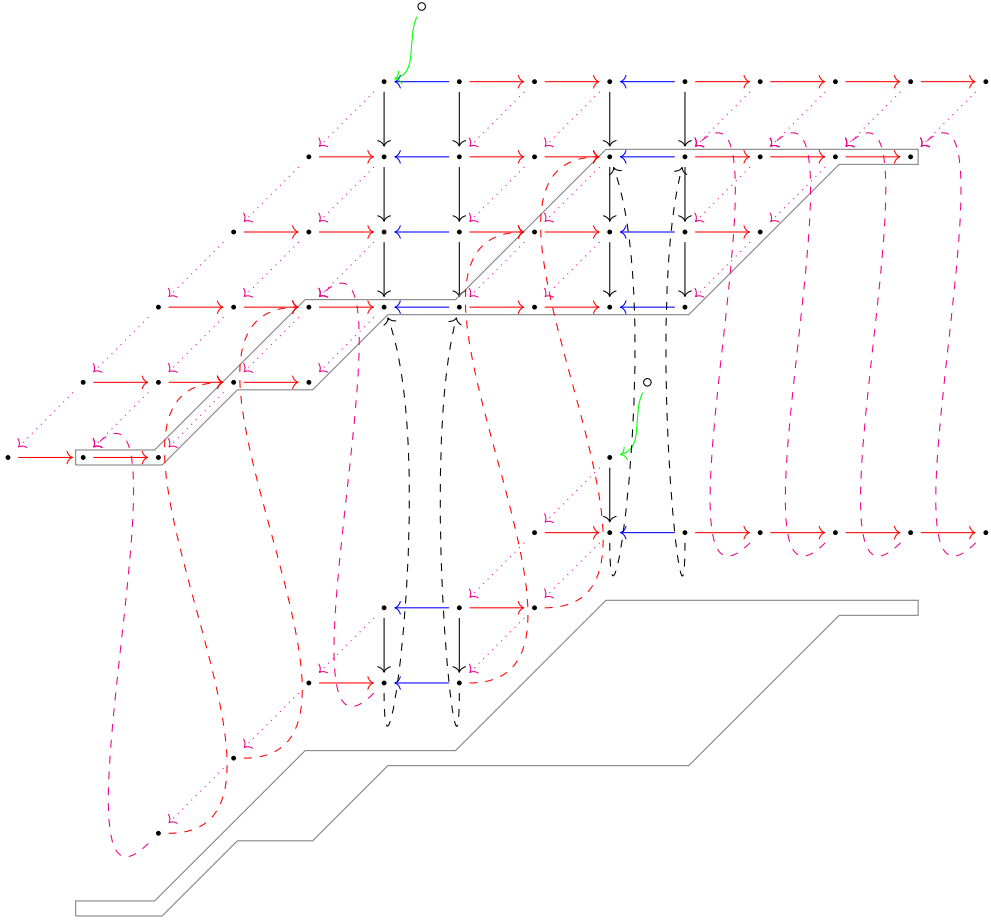
\begin{figure}[t]
\centering
\begin{tikzpicture}
\begin{scope}[shift = {(0, 21)}]
\draw[fill] (-1,-1) circle [radius = 0.025] node(0a1){};

\draw[fill] (0,0) circle [radius = 0.025] node(1a1){}; \draw[fill] (0,-1) circle [radius = 0.025] node(1a2){};

\draw[fill] (1,1) circle [radius = 0.025] node(2a1){}; \draw[fill] (1,0) circle [radius = 0.025] node(2a2){}; \draw[fill] (1,-1) circle [radius = 0.025] node(2a3){};

\draw[fill] (2,2) circle [radius = 0.025] node(3a1){}; \draw[fill] (2,1) circle [radius = 0.025] node(3a2){}; \draw[fill] (2,0) circle [radius = 0.025] node(3a3){};

\draw[fill] (3,3) circle [radius = 0.025] node(4a1){}; \draw[fill] (3,2) circle [radius = 0.025] node(4a2){}; \draw[fill] (3,1) circle [radius = 0.025] node(4a3){};
\draw[fill] (3,0) circle [radius = 0.025] node(4a4){};

\draw[fill] (4,4) circle [radius = 0.025] node(5a1){}; \draw[fill] (4,3) circle [radius = 0.025] node(5a2){}; \draw[fill] (4,2) circle [radius = 0.025] node(5a3){};
\draw[fill] (4,1) circle [radius = 0.025] node(5a4){};

\draw[fill] (5,4) circle [radius = 0.025] node(6a1){}; \draw[fill] (5,3) circle [radius = 0.025] node(6a2){}; \draw[fill] (5,2) circle [radius = 0.025] node(6a3){};
\draw[fill] (5,1) circle [radius = 0.025] node(6a4){};

\draw[fill] (6,4) circle [radius = 0.025] node(7a1){}; \draw[fill] (6,3) circle [radius = 0.025] node(7a2){}; \draw[fill] (6,2) circle [radius = 0.025] node(7a3){};
\draw[fill] (6,1) circle [radius = 0.025] node(7a4){};

\draw[fill] (7,4) circle [radius = 0.025] node(8a1){}; \draw[fill] (7,3) circle [radius = 0.025] node(8a2){}; \draw[fill] (7,2) circle [radius = 0.025] node(8a3){};
\draw[fill] (7,1) circle [radius = 0.025] node(8a4){};

\draw[fill] (8,4) circle [radius = 0.025] node(9a1){}; \draw[fill] (8,3) circle [radius = 0.025] node(9a2){}; \draw[fill] (8,2) circle [radius = 0.025] node(9a3){};
\draw[fill] (8,1) circle [radius = 0.025] node(9a4){};

\draw[fill] (9,4) circle [radius = 0.025] node(10a1){}; \draw[fill] (9,3) circle [radius = 0.025] node(10a2){};
\draw[fill] (9,2) circle [radius = 0.025] node(10a3){};

\draw[fill] (10,4) circle [radius = 0.025] node(11a1){}; \draw[fill] (10,3) circle [radius = 0.025] node(11a2){};

\draw[fill] (11,4) circle [radius = 0.025] node(12a1){}; \draw[fill] (11,3) circle [radius = 0.025] node(12a2){};

\draw[fill] (12,4) circle [radius = 0.025] node(13a1){};

\draw (4.5, 5) circle [radius = 0.05] node(a){};

\draw[->, red] (0a1)--(1a2);
\draw[->, dotted, magenta] (1a1)--(0a1);

\draw[->, red] (1a1)--(2a2); \draw[->, red] (1a2)--(2a3);
\draw[->, dotted, magenta] (2a1)--(1a1); \draw[->, dotted, magenta] (2a2)--(1a2);

\draw[->, red] (2a1)--(3a2); \draw[->, red] (2a2)--(3a3);
\draw[->, dotted, magenta] (3a2)--(2a2); \draw[->, dotted, magenta] (3a1)--(2a1); \draw[->, dotted, magenta] (3a3)--(2a3);

\draw[->, red] (3a1)--(4a2); \draw[->, red] (3a2)--(4a3); \draw[->, red] (3a3)--(4a4);
\draw[->, dotted, magenta] (4a1)--(3a1); \draw[->, dotted, magenta] (4a2)--(3a2); \draw[->, dotted, magenta] (4a3)--(3a3);

\draw[->, red] (4a1)--(5a2); \draw[->, red] (4a2)--(5a3); \draw[->, red] (4a3)--(5a4);
\draw[->, dotted, magenta] (5a1)--(4a1); \draw[->, dotted, magenta] (5a2)--(4a2); \draw[->, dotted, magenta] (5a3)--(4a3);
\draw[->, dotted, magenta] (5a4)--(4a4);

\draw[->, blue] (6a1)--(5a1); \draw[->, blue] (6a2)--(5a2); \draw[->, blue] (6a3)--(5a3); \draw[->, blue] (6a4)--(5a4);
\draw[->] (5a1)--(5a2); \draw[->] (5a2)--(5a3); \draw[->] (5a3)--(5a4); \draw[->] (6a1)--(6a2); \draw[->] (6a2)--(6a3); \draw[->] (6a3)--(6a4);

\draw[->, red] (6a1)--(7a1); \draw[->, red] (6a2)--(7a2); \draw[->, red] (6a3)--(7a3); \draw[->, red] (6a4)--(7a4);
\draw[->, dotted, magenta] (7a1)--(6a2); \draw[->, dotted, magenta] (7a2)--(6a3); \draw[->, dotted, magenta] (7a3)--(6a4);

\draw[->, red] (7a1)--(8a1); \draw[->, red] (7a2)--(8a2); \draw[->, red] (7a3)--(8a3); \draw[->, red] (7a4)--(8a4);
\draw[->, dotted, magenta] (8a1)--(7a2); \draw[->, dotted, magenta] (8a2)--(7a3); \draw[->, dotted, magenta] (8a3)--(7a4);

\draw[->, blue] (9a1)--(8a1); \draw[->, blue] (9a2)--(8a2); \draw[->, blue] (9a3)--(8a3); \draw[->, blue] (9a4)--(8a4);
\draw[->] (8a1)--(8a2); \draw[->] (8a2)--(8a3); \draw[->] (8a3)--(8a4); \draw[->] (9a1)--(9a2); \draw[->] (9a2)--(9a3); \draw[->] (9a3)--(9a4);

\draw[->, red] (9a1)--(10a1); \draw[->, red] (9a2)--(10a2); \draw[->, red] (9a3)--(10a3);
\draw[->, dotted, magenta] (10a1)--(9a2); \draw[->, dotted, magenta] (10a2)--(9a3); \draw[->, dotted, magenta] (10a3)--(9a4);

\draw[->, red] (10a1)--(11a1); \draw[->, red] (10a2)--(11a2);
\draw[->, dotted, magenta] (11a1)--(10a2); \draw[->, dotted, magenta] (11a2)--(10a3);

\draw[->, red] (11a1)--(12a1); \draw[->, red] (11a2)--(12a2);
\draw[->, dotted, magenta] (12a1)--(11a2);

\draw[->, red] (12a1)--(13a1);
\draw[->, dotted, magenta] (13a1)--(12a2);

\draw[->, green] (a) to [in = 15, out = 245] (5a1);
\end{scope}

\begin{scope}[shift = {(0, 14)}]
\draw[fill] (1,1) circle [radius = 0.025] node(2b1){};

\draw[fill] (2,2) circle [radius = 0.025] node(3b1){};

\draw[fill] (3,3) circle [radius = 0.025] node(4b1){};

\draw[fill] (4,4) circle [radius = 0.025] node(5b1){}; \draw[fill] (4,3) circle [radius = 0.025] node(5b2){};

\draw[fill] (5,4) circle [radius = 0.025] node(6b1){}; \draw[fill] (5,3) circle [radius = 0.025] node(6b2){};

\draw[fill] (6,5) circle [radius = 0.025] node(7b1){}; \draw[fill] (6,4) circle [radius = 0.025] node(7b2){};

\draw[fill] (7,6) circle [radius = 0.025] node(8b1){}; \draw[fill] (7,5) circle [radius = 0.025] node(8b2){};

\draw[fill] (8,5) circle [radius = 0.025] node(9b1){};

\draw[fill] (9,5) circle [radius = 0.025] node(10b1){};

\draw[fill] (10,5) circle [radius = 0.025] node(11b1){};

\draw[fill] (11,5) circle [radius = 0.025] node(12b1){};

\draw[fill] (12,5) circle [radius = 0.025] node(13b1){};

\draw (7.5, 7) circle [radius = 0.05] node(b){};

\draw[->, dotted, magenta] (3b1)--(2b1);

\draw[->, dotted, magenta] (4b1)--(3b1);

\draw[->, red] (4b1)--(5b2);
\draw[->, dotted, magenta] (5b1)--(4b1);

\draw[->, blue] (6b1)--(5b1); \draw[->, blue] (6b2)--(5b2);
\draw[->] (5b1)--(5b2); \draw[->] (6b1)--(6b2);

\draw[->, red] (6b1)--(7b2);
\draw[->, dotted, magenta] (7b1)--(6b1); \draw[->, dotted, magenta] (7b2)--(6b2);

\draw[->, red] (7b1)--(8b2);
\draw[->, dotted, magenta] (8b1)--(7b1); \draw[->, dotted, magenta] (8b2)--(7b2);

\draw[->, blue] (9b1)--(8b2);
\draw[->] (8b1)--(8b2);

\draw[->, red] (9b1)--(10b1);

\draw[->, red] (10b1)--(11b1);

\draw[->, red] (11b1)--(12b1);

\draw[->, red] (12b1)--(13b1);

\draw[->, green] (b) to [in = 15, out = 245] (8b1);
\end{scope}

\begin{scope}[shift = {(0, 20)}]
\draw[gray] (-0.1, -0.1)--(1.05, -0.1)--(2.05, 0.9)--(3.05, 0.9)--(4.05, 1.9)--(8.05, 1.9)--(10.05, 3.9)--(11.1, 3.9)--(11.1, 4.1)--(9.95, 4.1)--(6.95, 4.1)--(4.95, 2.1)--(2.95, 2.1)--(0.95, 0.1)--(-0.1, 0.1)--(-0.1, -0.1);
\end{scope}

\begin{scope}[shift = {(0, 14)}]
\draw[gray] (-0.1, -0.1)--(1.05, -0.1)--(2.05, 0.9)--(3.05, 0.9)--(4.05, 1.9)--(8.05, 1.9)--(10.05, 3.9)--(11.1, 3.9)--(11.1, 4.1)--(9.95, 4.1)--(6.95, 4.1)--(4.95, 2.1)--(2.95, 2.1)--(0.95, 0.1)--(-0.1, 0.1)--(-0.1, -0.1);
\end{scope}

\draw[->, red, dashed] (2b1) to[out = 0, in = 180] (3a3); \draw[->, magenta, dashed] (2b1) to[out = 225, in = 45] (1a2);

\draw[->, red, dashed] (3b1) to[out = 0, in = 180] (4a3);

\draw[->, magenta, dashed] (5b2) to[out = 225, in = 45] (4a3); \draw[->, dashed] (5b2) to[out = 270, in = 290] (5a4);

\draw[->, red, dashed] (6b2) to[out = 0, in = 180] (7a3); \draw[->, dashed] (6b2) to[out = 270, in = 255] (6a4);

\draw[->, red, dashed] (7b2) to[out = 0, in = 180] (8a2);

\draw[->, dashed] (8b2) to[out = 270, in = 290] (8a2);

\draw[->, dashed] (9b1) to[out = 270, in = 255] (9a2);

\draw[->, magenta, dashed] (10b1) to[out = 225, in = 45] (9a2);

\draw[->, magenta, dashed] (11b1) to[out = 225, in = 45] (10a2);

\draw[->, magenta, dashed] (12b1) to[out = 225, in = 45] (11a2);

\draw[->, magenta, dashed] (13b1) to[out = 225, in = 45] (12a2);
\end{tikzpicture}
\vspace{-3mm}

\caption{Explicit construction of an invariant curve corresponding to butterfly surgery. The dashed lines are the new edges added to the butterfly diagram. Taking the $\torus$-orbit closure of the element of $\prequot$ represented by this graph yields an invariant curve with tangent weight \smash{$\bigl(\frac{\ub_1}{\ub_2}\hb\bigr)^{\pm 1}$} at its fixed points.} \label{fig:invariant_curve}
\vspace{-2mm}
\end{figure}

Next, set $\ub' = \hb = 1$, so that the new edges are acted upon by $\ub$. Apply the $\G$-action that multiplies each vertex in $\site$ by $\ub^{-1}$. Since all edges in $\beta_1$ adjacent to $\site$ point into $\site$, those edges will be multiplied by $\ub^{-1}$. Meanwhile, the factor on the new edges will cancel to 1, and all other edges will be unchanged. Taking $\ub \to \infty$, the edges adjacent to $\site$ are killed, leaving behind $\beta_2$ and $\beta_2'$. Thus, $p_2\in\gamma$.

There is a surjective map $\CC^\times \to \torus.p_\gamma$ given by sending $t\in\CC^\times$ to the point obtained by multiplying the new edges by $t$. It follows that $\gamma$ is at most 1-dimensional. Since $\gamma$ contains two distinct fixed points $p_1$, $p_2$, it must be that $\dim(\gamma) = 1$. Therefore, $\gamma$ is a compact invariant curve. Its tangent weight at $p_1$ is precisely $\hb^{-\Delta_\surgery y} \ub / \ub'$, and its tangent weight at $p_2$ is $\hb^{\Delta_\surgery y} \ub' / \ub$.

\begin{figure}[t]
\centering
\begin{tikzpicture}
\begin{scope}[shift = {(0, 21)}]
\draw[fill] (-1,-1) circle [radius = 0.025] node(0a1){};

\draw[fill] (0,0) circle [radius = 0.025] node(1a1){}; \draw[fill] (0,-1) circle [radius = 0.025] node(1a2){};

\draw[fill] (1,1) circle [radius = 0.025] node(2a1){}; \draw[fill] (1,0) circle [radius = 0.025] node(2a2){}; \draw[fill] (1,-1) circle [radius = 0.025] node(2a3){};

\draw[fill] (2,2) circle [radius = 0.025] node(3a1){}; \draw[fill] (2,1) circle [radius = 0.025] node(3a2){}; \draw[fill] (2,0) circle [radius = 0.025] node(3a3){};

\draw[fill] (3,3) circle [radius = 0.025] node(4a1){}; \draw[fill] (3,2) circle [radius = 0.025] node(4a2){}; \draw[fill] (3,1) circle [radius = 0.025] node(4a3){};
\draw[fill] (3,0) circle [radius = 0.025] node(4a4){};

\draw[fill] (4,4) circle [radius = 0.025] node(5a1){}; \draw[fill] (4,3) circle [radius = 0.025] node(5a2){}; \draw[fill] (4,2) circle [radius = 0.025] node(5a3){};
\draw[fill] (4,1) circle [radius = 0.025] node(5a4){};

\draw[fill] (5,4) circle [radius = 0.025] node(6a1){}; \draw[fill] (5,3) circle [radius = 0.025] node(6a2){}; \draw[fill] (5,2) circle [radius = 0.025] node(6a3){};
\draw[fill] (5,1) circle [radius = 0.025] node(6a4){};

\draw[fill] (6,4) circle [radius = 0.025] node(7a1){}; \draw[fill] (6,3) circle [radius = 0.025] node(7a2){}; \draw[fill] (6,2) circle [radius = 0.025] node(7a3){};
\draw[fill] (6,1) circle [radius = 0.025] node(7a4){};

\draw[fill] (7,4) circle [radius = 0.025] node(8a1){}; \draw[fill] (7,3) circle [radius = 0.025] node(8a2){}; \draw[fill] (7,2) circle [radius = 0.025] node(8a3){};
\draw[fill] (7,1) circle [radius = 0.025] node(8a4){};

\draw[fill] (8,4) circle [radius = 0.025] node(9a1){}; \draw[fill] (8,3) circle [radius = 0.025] node(9a2){}; \draw[fill] (8,2) circle [radius = 0.025] node(9a3){};
\draw[fill] (8,1) circle [radius = 0.025] node(9a4){};

\draw[fill] (9,4) circle [radius = 0.025] node(10a1){}; \draw[fill] (9,3) circle [radius = 0.025] node(10a2){};
\draw[fill] (9,2) circle [radius = 0.025] node(10a3){};

\draw[fill] (10,4) circle [radius = 0.025] node(11a1){}; \draw[fill] (10,3) circle [radius = 0.025] node(11a2){};

\draw[fill] (11,4) circle [radius = 0.025] node(12a1){}; \draw[fill] (11,3) circle [radius = 0.025] node(12a2){};

\draw[fill] (12,4) circle [radius = 0.025] node(13a1){};

\draw (4.5, 5) circle [radius = 0.05] node(a){};

\draw[->, red] (0a1)--(1a2);
\draw[->, dotted, magenta] (1a1)--(0a1);

\draw[->, red] (1a1)--(2a2); \draw[->, red] (1a2)--(2a3);
\draw[->, dotted, magenta] (2a1)--(1a1); \draw[->, dotted, magenta] (2a2)--(1a2);

\draw[->, red] (2a1)--(3a2); \draw[->, red] (2a2)--(3a3);
\draw[->, dotted, magenta] (3a2)--(2a2); \draw[->, dotted, magenta] (3a1)--(2a1); \draw[->, dotted, magenta] (3a3)--(2a3);

\draw[->, red] (3a1)--(4a2); \draw[->, red] (3a2)--(4a3); \draw[->, red] (3a3)--(4a4);
\draw[->, dotted, magenta] (4a1)--(3a1); \draw[->, dotted, magenta] (4a2)--(3a2); \draw[->, dotted, magenta] (4a3)--(3a3);

\draw[->, red] (4a1)--(5a2); \draw[->, red] (4a2)--(5a3); \draw[->, red] (4a3)--(5a4);
\draw[->, dotted, magenta] (5a1)--(4a1); \draw[->, dotted, magenta] (5a2)--(4a2); \draw[->, dotted, magenta] (5a3)--(4a3);
\draw[->, dotted, magenta] (5a4)--(4a4);

\draw[->, blue] (6a1)--(5a1); \draw[->, blue] (6a2)--(5a2); \draw[->, blue] (6a3)--(5a3); \draw[->, blue] (6a4)--(5a4);
\draw[->] (5a1)--(5a2); \draw[->] (5a2)--(5a3); \draw[->] (5a3)--(5a4); \draw[->] (6a1)--(6a2); \draw[->] (6a2)--(6a3); \draw[->] (6a3)--(6a4);

\draw[->, red] (6a1)--(7a1); \draw[->, red] (6a2)--(7a2); \draw[->, red] (6a3)--(7a3); \draw[->, red] (6a4)--(7a4);
\draw[->, dotted, magenta] (7a1)--(6a2); \draw[->, dotted, magenta] (7a2)--(6a3); \draw[->, dotted, magenta] (7a3)--(6a4);

\draw[->, red] (7a1)--(8a1); \draw[->, red] (7a2)--(8a2); \draw[->, red] (7a3)--(8a3); \draw[->, red] (7a4)--(8a4);
\draw[->, dotted, magenta] (8a1)--(7a2); \draw[->, dotted, magenta] (8a2)--(7a3); \draw[->, dotted, magenta] (8a3)--(7a4);

\draw[->, blue] (9a1)--(8a1); \draw[->, blue] (9a2)--(8a2); \draw[->, blue] (9a3)--(8a3); \draw[->, blue] (9a4)--(8a4);
\draw[->] (8a1)--(8a2); \draw[->] (8a2)--(8a3); \draw[->] (8a3)--(8a4); \draw[->] (9a1)--(9a2); \draw[->] (9a2)--(9a3); \draw[->] (9a3)--(9a4);

\draw[->, red] (9a1)--(10a1); \draw[->, red] (9a2)--(10a2); \draw[->, red] (9a3)--(10a3);
\draw[->, dotted, magenta] (10a1)--(9a2); \draw[->, dotted, magenta] (10a2)--(9a3); \draw[->, dotted, magenta] (10a3)--(9a4);

\draw[->, red] (10a1)--(11a1); \draw[->, red] (10a2)--(11a2);
\draw[->, dotted, magenta] (11a1)--(10a2); \draw[->, dotted, magenta] (11a2)--(10a3);

\draw[->, red] (11a1)--(12a1); \draw[->, red] (11a2)--(12a2);
\draw[->, dotted, magenta] (12a1)--(11a2);

\draw[->, red] (12a1)--(13a1);
\draw[->, dotted, magenta] (13a1)--(12a2);

\draw[->, green] (a) to [in = 15, out = 245] (5a1);
\end{scope}

\begin{scope}[shift = {(0, 14)}]
\draw[fill] (1,1) circle [radius = 0.025] node(2b1){};

\draw[fill] (2,2) circle [radius = 0.025] node(3b1){};

\draw[fill] (3,3) circle [radius = 0.025] node(4b1){};

\draw[fill] (4,4) circle [radius = 0.025] node(5b1){}; \draw[fill] (4,3) circle [radius = 0.025] node(5b2){};

\draw[fill] (5,4) circle [radius = 0.025] node(6b1){}; \draw[fill] (5,3) circle [radius = 0.025] node(6b2){};

\draw[fill] (6,5) circle [radius = 0.025] node(7b1){}; \draw[fill] (6,4) circle [radius = 0.025] node(7b2){};

\draw[fill] (7,6) circle [radius = 0.025] node(8b1){}; \draw[fill] (7,5) circle [radius = 0.025] node(8b2){};

\draw[fill] (8,5) circle [radius = 0.025] node(9b1){};

\draw[fill] (9,5) circle [radius = 0.025] node(10b1){};

\draw[fill] (10,5) circle [radius = 0.025] node(11b1){};

\draw[fill] (11,5) circle [radius = 0.025] node(12b1){};

\draw[fill] (12,5) circle [radius = 0.025] node(13b1){};

\draw (7.5, 7) circle [radius = 0.05] node(b){};

\draw[->, dotted, magenta] (3b1)--(2b1);

\draw[->, dotted, magenta] (4b1)--(3b1);

\draw[->, red] (4b1)--(5b2);
\draw[->, dotted, magenta] (5b1)--(4b1);

\draw[->, blue] (6b1)--(5b1); \draw[->, blue] (6b2)--(5b2);
\draw[->] (5b1)--(5b2); \draw[->] (6b1)--(6b2);

\draw[->, red] (6b1)--(7b2);
\draw[->, dotted, magenta] (7b1)--(6b1); \draw[->, dotted, magenta] (7b2)--(6b2);

\draw[->, red] (7b1)--(8b2);
\draw[->, dotted, magenta] (8b1)--(7b1); \draw[->, dotted, magenta] (8b2)--(7b2);

\draw[->, blue] (9b1)--(8b2);
\draw[->] (8b1)--(8b2);

\draw[->, red] (9b1)--(10b1);

\draw[->, red] (10b1)--(11b1);

\draw[->, red] (11b1)--(12b1);

\draw[->, red] (12b1)--(13b1);

\draw[->, green] (b) to [in = 15, out = 245] (8b1);
\end{scope}

\draw[gray] (4a4) +(-0.1,0.1)--+(0.1,0.1)--+(0.1,-0.1)--+(-0.1,-0.1)--+(-0.1,0.1);
\draw[gray] (7a4) ++(-0.1,0) ++(-0.1,-0.1)--++(2.2,0)--++(1.2,1.2)--++(-2.2,0)--++(-1.2,-1.2);

\draw[gray] (4a4) ++(0,-5) +(-0.1,0.1)--+(0.1,0.1)--+(0.1,-0.1)--+(-0.1,-0.1)--+(-0.1,0.1);
\draw[gray] (7a4) ++(-0.1,-5) ++(-0.1,-0.1)--++(2.2,0)--++(1.2,1.2)--++(-2.2,0)--++(-1.2,-1.2);

\draw (3a3) node at +(0,-2.5){$z_1$};
\draw[red,dashed,->] (3b1) to[in=180,out=0] (4a4);
\draw[magenta,dashed,->] (5b2) to[in=45,out=225] (4a4);

\draw (6a4) node at +(0,-2.5){$z_2$};
\draw[red,dashed,->] (6b2) to[in=180,out=0] (7a4);
\draw[red,dashed,->] (7b2) to[in=180,out=0] (8a3);
\draw[magenta,dashed,->] (10b1) to[in=45,out=225] (9a3);
\draw[magenta,dashed,->] (11b1) to[in=45,out=225] (10a3);
\draw[->, dashed] (8b2) to[out = 270, in = 290] (8a3);
\draw[->, dashed] (9b1) to[out = 270, in = 255] (9a3);
\end{tikzpicture}

\caption{Explicit 2-dimensional pencil of invariant curves with tangent weight \smash{$\bigl(\frac{\ub_1}{\ub_2}\bigr)^{\pm 1}$} at its fixed points.}
\label{fig:invariant_pencil}
\end{figure}

From general considerations, $\gamma$ is isomorphic to $\CCP^1$. We obtain an explicit parametrization by multiplying each of the new edges by $t\in\CC$. This gives us an isomorphism $\CC \to \gamma \setminus\{p_2\}$. Compactifying $\CC$ with a point $\infty$ and mapping $\infty$ to $p_2$ yields a parametrized curve $\gamma\colon\CCP^1 \to \bowvar(\brane)$ (we are abusing notation and using $\gamma$ to refer both to the parametrization and its image). Note that performing the inverse butterfly surgery yields the same invariant curve with a different parametrization, one with $\gamma(0) = p_2$.

Suppose $\site$ has $k$ connected components $\site_1,\dots ,\site_k$. By multiplying the new edges pointing into $\site_i$ by $z_i$, where $z = (z_1, \dots , z_k)\in(\CC\setminus\{0\})^k$, we obtain a $k$-parameter family of $\torus$-invariant curves containing $p_1$ and $p_2$. In the closure of their union, there are other fixed points formed by stacking some of the connected components but not others, as well as invariant curves between them. For each subset $I\subset \{1,\dots ,k\}$, consider the fixed point $p_I$ to be obtained from the butterfly surgery with site the union of components $\site_i$ for $i\in I$. So $p_1 = p_\varnothing$ and \smash{$p_{\{1,\dots ,k\}} = p_2$}.

Now consider the hypercube \smash{$\bigl(\CCP^1\bigr)^k$} endowed with the diagonal $\torus$-action with every tangent weight equal to \smash{$\hb^{-\Delta_\surgery y} \ub / \ub'$} at $(0,\dots ,0)$. We obtain a fixed point \smash{$x_I\in\bigl(\CCP^1\bigr)^k$} for each $I\subset \{1,\dots ,k\}$ where the $i$-th component of $x_I$ is $\infty$ if $i\in I$ and $0$ otherwise. Next we obtain a~$\torus$-equivariant map \smash{$\pen_\surgery\colon\bigl(\CCP^1\bigr)^k \to \bowvar(\brane)$} that sends each $x_I$ to $p_I$. We define this map first on the interior and on the facets containing $x_\varnothing$ by sending each $z = (z_1, \dots , z_k)\in\CC^k$ to the butterfly diagram of $p_\varnothing$ with new edges of $\surgery$ pointing into $\site_i$ multiplied by $z_i$, providing all curves containing $p_\varnothing$ in the image of $\pen_\surgery$. We similarly define that for each \smash{$z = (z_1, \dots , z_k)\in\bigl(\CCP^1\bigr)^k$} with $\infty$ in each component of index in $I\subset\{1,\dots ,k\}$, $\pen_\surgery$ maps $z$ to the butterfly diagram of~$p_I$ with all new edges of $\surgery$ that still can be drawn in the butterfly diagram of $p_I$ (the ones with site components $\site_i$ where $z_i$ is finite) multiplied by the appropriate finite $z$-values. Despite the fact that only the curves in $\pen_\surgery\bigl((\CC\setminus\{0\})^k\bigr)$ are between $p_1$ and $p_2$, we will still call $\pen_\surgery$ a pencil of invariant curves between $p_1$ and $p_2$.
Consider the $k$ individual butterfly surgeries corresponding to each of the $k$ connected components of $\site$. These butterfly surgeries produce invariant curves $\gamma_1, \dots , \gamma_k$. We will show in Proposition~\ref{prop:pencil_generation} that the tangent spaces of these curves at $p_1$ (and also at $p_2$) are linearly independent. Moreover, we have
\[
\forall z\in\CC^k\setminus\{0\},\qquad \left. \frac{\partial}{\partial t} \right|_{t=0} \pen_\surgery(tz) = \sum_{i = 1}^k z_i\gamma_i'(0).
\]
It follows that each invariant curve in $\Gamma_\surgery$ has a distinct tangent space at $p_1$ and is therefore distinct. Similar considerations can be made for each other fixed point $p_I$. In other words, $\pen_\surgery$~is injective. The results of this section are summarized in the following lemma.

\begin{Lemma} \label{lemma:butterfly_surgery}
Suppose a butterfly surgery $\surgery$ sends a $\torus$-fixed point $p_1$ to the $\torus$-fixed point $p_2$. Let the site $\site$ be moved from the $U$ butterfly to the $U'$ butterfly by $\surgery$, where $k$ is the number of connected components of $\site$. Then, there is a $k$-dimensional pencil \smash{$\pen_\surgery\colon\bigl(\CCP^1\bigr)^k\to\bowvar(\brane)$}, a~$\torus$-equ\-ivariant map where $\pen_\surgery\bigl((\CC\setminus\{0\})^k\bigr)$ consists of compact invariant curves containing $p_1$ and~$p_2$. Moreover, each curve in the pencil has tangent weight
\smash{$
\frac{\ub}{\ub'}\hb^{-\Delta_\surgery y}$},
at $p_1$, where
$
\Delta_\surgery y = y(v') - y(v)$,
for any $v\in\site$ and corresponding $v'\in\site'$. Here, $y$ denotes the height of a vertex $($see Definition~{\rm\ref{def:height})}.
\end{Lemma}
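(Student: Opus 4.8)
The plan is to prove Lemma~\ref{lemma:butterfly_surgery} by assembling the map $\pen_\surgery$ from the explicit butterfly-diagram constructions carried out earlier in this section and reading off its properties from them; most of the work is already in place and what remains is to check that the pieces fit together into an algebraic, $\torus$-equivariant morphism. First I would record the construction on $\CC^k$: for $z = (z_1,\dots,z_k) \in \CC^k$, let $\tilde p_\gamma(z) \in \MM$ be represented by the butterfly diagram of $p_1$ with the new edges of $\surgery$ adjoined, each edge pointing into the component $\site_i$ scaled by $z_i$. The $0$-momentum conditions hold because the vertex/edge rules of Section~\ref{sec:fixed_pts} encode them, and the stability conditions S$1$, S$2$, $\nu$ hold by the proof of Lemma~\ref{lemma:type1_curve}, which does not depend on the scalars $z_i$ and still applies when some $z_i$ vanish (one then has the sub-surgery with site $\bigcup_{z_i \neq 0}\site_i$). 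Thus $\tilde p_\gamma(z) \in \prequot$ and descends to $\pen_\surgery(z) \in \bowvar(\brane)$, with $\pen_\surgery(0,\dots,0) = p_1$.

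Next I would extend $\pen_\surgery$ over the boundary strata of $\bigl(\CCP^1\bigr)^k$, which requires a little care. Given $I \subset \{1,\dots,k\}$, apply the $\G$-change of basis scaling every vertex of $\site_i$, $i \in I$, by $z_i^{-1}$ and then let those $z_i \to \infty$; exactly as in the one-component computation following Lemma~\ref{lemma:type1_curve}, the edges meeting each $\site_i$ are killed in the limit and what is left is the butterfly diagram of the fixed point $p_I$ produced by the surgery with site $\bigcup_{i\in I}\site_i$, still carrying the new edges whose parameters stayed finite. This limit again lies in $\prequot$, being a butterfly-diagram point for which the residual surgery (site $\bigcup_{j\notin I}\site_j$, from $U$) is still available, so Lemma~\ref{lemma:type1_curve} applies to it as well. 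This defines $\pen_\surgery$ on all of $\bigl(\CCP^1\bigr)^k$ with $\pen_\surgery(x_I) = p_I$; in particular $x_\varnothing \mapsto p_1$ and $x_{\{1,\dots,k\}} \mapsto p_2$. For equivariance I would invoke the observation, already made for $k=1$, that since $\Delta_\surgery y = y(v') - y(v)$ is constant on all of $\site$, the $\torus$-action on $\tilde p_\gamma(z)$ followed by the $\G$-normalization multiplies each $z_i$ by the single character $\hb^{-\Delta_\surgery y}\ub/\ub'$ and changes no other edge; hence $\pen_\surgery$ intertwines the $\torus$-action on $\bowvar(\brane)$ with the diagonal $\torus$-action on $\bigl(\CCP^1\bigr)^k$ all of whose tangent weights at $(0,\dots,0)$ equal $\hb^{-\Delta_\surgery y}\ub/\ub'$.

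Then I would identify the curves and their weights. For $z \in (\CC\setminus\{0\})^k$, set $\gamma_z = \overline{\torus.\pen_\surgery(z)}$. The $\torus$-action on $\pen_\surgery(z)$ factors through the one character $\hb^{-\Delta_\surgery y}\ub/\ub'$, giving a surjection $\CC^\times \to \torus.\pen_\surgery(z)$ and so $\dim \gamma_z \le 1$; on the other hand the two limits ($\ub' \to \infty$ with $\ub,\hb$ fixed, and $\ub \to \infty$ with $\ub' = \hb = 1$, each after the $\G$-normalization) place the distinct fixed points $p_1$ and $p_2$ in $\gamma_z$, so $\dim\gamma_z = 1$ and, containing two fixed points, $\gamma_z$ is a compact invariant curve by the generalities of Section~\ref{sec:invariant_curves}. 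Proposition~\ref{prop:equiv_geo}(1) then makes $T_{p_1}\gamma_z$ a $\torus$-weight subspace of $T_{p_1}\bowvar(\brane)$, and reading off the parametrization $t \mapsto \pen_\surgery(tz)$, an isomorphism onto $\gamma_z \setminus \{p_2\}$ with $t = 0 \mapsto p_1$, the $\torus$-action scales $t$ by exactly $\hb^{-\Delta_\surgery y}\ub/\ub'$; this is the tangent weight at $p_1$. Running this for every $z \in (\CC\setminus\{0\})^k$ establishes the remaining claims, and injectivity of $\pen_\surgery$ (so that the pencil really is $k$-dimensional) would follow by combining the linear independence of $\gamma_1'(0),\dots,\gamma_k'(0)$ from Proposition~\ref{prop:pencil_generation} with Proposition~\ref{prop:equiv_geo}(2).

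The main obstacle is the boundary extension of the second step: one has to verify that the ``rescale the $i$-th slot by $z_i^{-1}$, then let $z_i \to \infty$'' recipe genuinely defines an algebraic map on all of $\bigl(\CCP^1\bigr)^k$ and that its value on the stratum where $z_i = \infty$ for $i \in I$ is literally the butterfly diagram of $p_I$ with the surviving new edges — in particular that stability is not destroyed in the limit. Once that is in hand, the rest is bookkeeping: tracking the vertex decorations $\ub\hb^{y(v)}$ of Figure~\ref{fig:butterfly_mons} under the surgery and the $\G$-action, and quoting Lemma~\ref{lemma:type1_curve}, Proposition~\ref{prop:equiv_geo}, and Proposition~\ref{prop:pencil_generation}.
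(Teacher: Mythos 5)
Your proposal is correct and follows essentially the same route as the paper: the paper's argument in Section~\ref{sec:butterfly_surgery} likewise builds $\tilde p_\gamma(z)$ by adjoining the new edges scaled by $z_i$, checks stability via Lemma~\ref{lemma:type1_curve}, uses the two limits ($\ub'\to\infty$, resp.\ $\ub\to\infty$ after the $\G$-rescaling of $\site$) to place $p_1$ and $p_2$ in the orbit closure, reads the tangent weight $\hb^{-\Delta_\surgery y}\ub/\ub'$ off the character acting on the new edges, and assembles $\pen_\surgery$ on $\bigl(\CCP^1\bigr)^k$ stratum by stratum with injectivity resting on the forward-referenced linear independence in Proposition~\ref{prop:pencil_generation}. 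The only cosmetic difference is your explicit appeal to Proposition~\ref{prop:equiv_geo}(2) for injectivity, which the paper handles by the same tangent-space argument without naming it.
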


It will be shown that butterfly surgeries capture all of the compact invariant curves in $\bowvar(\brane)$. To do so, our approach also requires us to construct noncompact invariant curves.

\subsection{Botched butterfly surgery} \label{sec:botched_surgery}
For the remainder of Section~\ref{sec:curves}, we will assume that $M$ is separated. Analogous results will follow for the nonseparated case by tracing the relevant constructions through Hanany--Witten transitions. See the proof of \cite[Proposition~8.1]{NT} for the explicit construction of the Hanany--Witten isomorphism. We explicitly trace a butterfly through the Hanany--Witten transition in~\cite[Section~3.2.4]{S}. First, let us define a surgery operation that closely resembles a butterfly surgery, but is not exactly a butterfly surgery.

Let $\brane$ be a separated brane diagram with $n$ NS5 branes and $m$ D5 branes. As in Section~\ref{sec:bct_weights}, index the branes from left to right. The NS5 branes are denoted $V_1, \dots , V_n$, the D5 branes are denoted $U_1, \dots , U_m$, and the segments (D3 branes) are denoted $X_0, X_1, \dots , X_{n+m}$, where $X_0$ and $X_{n+m}$ are the infinite left and right segments. We will divide $\brane$ into a ``left side'' and a~``right side''. The left side comprises $X_0, \dots , X_n$ and $V_1, \dots , V_n$. The right side is the remaining portion of the diagram. The notion of left and right side extends naturally to butterflies and butterfly diagrams. The left side of a butterfly always has a certain inverted staircase-like shape. See Figure~\ref{fig:butterfly_sides} for an example.

\begin{figure}[th!]
\centering
\begin{tikzpicture}[scale=0.3]
\draw[dashed, cyan] (29, 2)--(29, -58);

\draw[thick,red] (-0.5,-1)--(0.5,1) node[pos=0.5](V1){} node[pos=1,inner sep=0](V1t){} node[pos=0,inner sep=0](V1b){};
\draw[thick,red] (2.5,-1)--(3.5,1) node[pos=0.5](V2){} node[pos=1,inner sep=0](V2t){} node[pos=0,inner sep=0](V2b){};
\draw[thick,red] (5.5,-1)--(6.5,1) node[pos=0.5](V3){} node[pos=1,inner sep=0](V3t){} node[pos=0,inner sep=0](V3b){};
\draw[thick,red] (8.5,-1)--(9.5,1) node[pos=0.5](V4){} node[pos=1,inner sep=0](V4t){} node[pos=0,inner sep=0](V4b){};
\draw[thick,red] (11.5,-1)--(12.5,1) node[pos=0.5](V5){} node[pos=1,inner sep=0](V5t){} node[pos=0,inner sep=0](V5b){};
\draw[thick,red] (14.5,-1)--(15.5,1) node[pos=0.5](V6){} node[pos=1,inner sep=0](V6t){} node[pos=0,inner sep=0](V6b){};
\draw[thick,red] (17.5,-1)--(18.5,1) node[pos=0.5](V7){} node[pos=1,inner sep=0](V7t){} node[pos=0,inner sep=0](V7b){};
\draw[thick,red] (20.5,-1)--(21.5,1) node[pos=0.5](V8){} node[pos=1,inner sep=0](V8t){} node[pos=0,inner sep=0](V8b){};
\draw[thick,red] (23.5,-1)--(24.5,1) node[pos=0.5](V9){} node[pos=1,inner sep=0](V9t){} node[pos=0,inner sep=0](V9b){};
\draw[thick,red] (26.5,-1)--(27.5,1) node[pos=0.5](V10){} node[pos=1,inner sep=0](V10t){} node[pos=0,inner sep=0](V10b){};
\draw[thick,blue] (30.5,-1)--(29.5,1) node[pos=0.5](U1){} node[pos=1,inner sep=0](U1t){} node[pos=0,inner sep=0](U1b){};
\draw[thick,blue] (33.5,-1)--(32.5,1) node[pos=0.5](U2){} node[pos=1,inner sep=0](U2t){} node[pos=0,inner sep=0](U2b){};
\draw[thick,blue] (36.5,-1)--(35.5,1) node[pos=0.5](U3){} node[pos=1,inner sep=0](U3t){} node[pos=0,inner sep=0](U3b){};

\draw (V1)--(V2) node[pos=0.5, above](X1){1};
\draw (V2)--(V3) node[pos=0.5, above](X2){2};
\draw (V3)--(V4) node[pos=0.5, above](X3){3};
\draw (V4)--(V5) node[pos=0.5, above](X4){4};
\draw (V5)--(V6) node[pos=0.5, above](X5){5};
\draw (V6)--(V7) node[pos=0.5, above](X6){7};
\draw (V7)--(V8) node[pos=0.5, above](X7){9};
\draw (V8)--(V9) node[pos=0.5, above](X8){9};
\draw (V9)--(V10) node[pos=0.5, above](X9){10};
\draw (V10)--(U1) node[pos=0.5, above](X10){11};
\draw (U1)--(U2) node[pos=0.5, above](X11){6};
\draw (U2)--(U3) node[pos=0.5, above](X12){4};

\node () at (0,-6) {};

\draw (30,-3) circle[radius=0.14] node(u1){};
\draw[fill] (1.5,-33) circle[radius=0.07] node(u1a1a1){};
\draw[fill] (4.5,-30) circle[radius=0.07] node(u1a2a1){};
\draw[fill] (7.5,-27) circle[radius=0.07] node(u1a3a1){};
\draw[fill] (10.5,-24) circle[radius=0.07] node(u1a4a1){};
\draw[fill] (10.5,-27) circle[radius=0.07] node(u1a4a2){};
\draw[fill] (13.5,-21) circle[radius=0.07] node(u1a5a1){};
\draw[fill] (13.5,-24) circle[radius=0.07] node(u1a5a2){};
\draw[fill] (13.5,-27) circle[radius=0.07] node(u1a5a3){};
\draw[fill] (16.5,-18) circle[radius=0.07] node(u1a6a1){};
\draw[fill] (16.5,-21) circle[radius=0.07] node(u1a6a2){};
\draw[fill] (16.5,-24) circle[radius=0.07] node(u1a6a3){};
\draw[fill] (16.5,-27) circle[radius=0.07] node(u1a6a4){};
\draw[fill] (19.5,-15) circle[radius=0.07] node(u1a7a1){};
\draw[fill] (19.5,-18) circle[radius=0.07] node(u1a7a2){};
\draw[fill] (19.5,-21) circle[radius=0.07] node(u1a7a3){};
\draw[fill] (19.5,-24) circle[radius=0.07] node(u1a7a4){};
\draw[fill] (22.5,-12) circle[radius=0.07] node(u1a8a1){};
\draw[fill] (22.5,-15) circle[radius=0.07] node(u1a8a2){};
\draw[fill] (22.5,-18) circle[radius=0.07] node(u1a8a3){};
\draw[fill] (22.5,-21) circle[radius=0.07] node(u1a8a4){};
\draw[fill] (25.5,-9) circle[radius=0.07] node(u1a9a1){};
\draw[fill] (25.5,-12) circle[radius=0.07] node(u1a9a2){};
\draw[fill] (25.5,-15) circle[radius=0.07] node(u1a9a3){};
\draw[fill] (25.5,-18) circle[radius=0.07] node(u1a9a4){};
\draw[fill] (25.5,-21) circle[radius=0.07] node(u1a9a5){};
\draw[fill] (28.5,-6) circle[radius=0.07] node(u1a10a1){};
\draw[fill] (28.5,-9) circle[radius=0.07] node(u1a10a2){};
\draw[fill] (28.5,-12) circle[radius=0.07] node(u1a10a3){};
\draw[fill] (28.5,-15) circle[radius=0.07] node(u1a10a4){};
\draw[fill] (28.5,-18) circle[radius=0.07] node(u1a10a5){};

\draw[->, green] (u1) to[out=-120,in=70] (u1a10a1);
\draw[->,dotted,magenta] (u1a2a1)--(u1a1a1);
\draw[->,dotted,magenta] (u1a3a1)--(u1a2a1);
\draw[->,dotted,magenta] (u1a4a1)--(u1a3a1);
\draw[->,red] (u1a3a1)--(u1a4a2);
\draw[->,dotted,magenta] (u1a5a1)--(u1a4a1);
\draw[->,dotted,magenta] (u1a5a2)--(u1a4a2);
\draw[->,red] (u1a4a1)--(u1a5a2);
\draw[->,red] (u1a4a2)--(u1a5a3);
\draw[->,dotted,magenta] (u1a6a1)--(u1a5a1);
\draw[->,dotted,magenta] (u1a6a2)--(u1a5a2);
\draw[->,dotted,magenta] (u1a6a3)--(u1a5a3);
\draw[->,red] (u1a5a1)--(u1a6a2);
\draw[->,red] (u1a5a2)--(u1a6a3);
\draw[->,red] (u1a5a3)--(u1a6a4);
\draw[->,dotted,magenta] (u1a7a1)--(u1a6a1);
\draw[->,dotted,magenta] (u1a7a2)--(u1a6a2);
\draw[->,dotted,magenta] (u1a7a3)--(u1a6a3);
\draw[->,dotted,magenta] (u1a7a4)--(u1a6a4);
\draw[->,red] (u1a6a1)--(u1a7a2);
\draw[->,red] (u1a6a2)--(u1a7a3);
\draw[->,red] (u1a6a3)--(u1a7a4);
\draw[->,dotted,magenta] (u1a8a1)--(u1a7a1);
\draw[->,dotted,magenta] (u1a8a2)--(u1a7a2);
\draw[->,dotted,magenta] (u1a8a3)--(u1a7a3);
\draw[->,dotted,magenta] (u1a8a4)--(u1a7a4);
\draw[->,red] (u1a7a1)--(u1a8a2);
\draw[->,red] (u1a7a2)--(u1a8a3);
\draw[->,red] (u1a7a3)--(u1a8a4);
\draw[->,dotted,magenta] (u1a9a1)--(u1a8a1);
\draw[->,dotted,magenta] (u1a9a2)--(u1a8a2);
\draw[->,dotted,magenta] (u1a9a3)--(u1a8a3);
\draw[->,dotted,magenta] (u1a9a4)--(u1a8a4);
\draw[->,red] (u1a8a1)--(u1a9a2);
\draw[->,red] (u1a8a2)--(u1a9a3);
\draw[->,red] (u1a8a3)--(u1a9a4);
\draw[->,red] (u1a8a4)--(u1a9a5);
\draw[->,dotted,magenta] (u1a10a1)--(u1a9a1);
\draw[->,dotted,magenta] (u1a10a2)--(u1a9a2);
\draw[->,dotted,magenta] (u1a10a3)--(u1a9a3);
\draw[->,dotted,magenta] (u1a10a4)--(u1a9a4);
\draw[->,dotted,magenta] (u1a10a5)--(u1a9a5);
\draw[->,red] (u1a9a1)--(u1a10a2);
\draw[->,red] (u1a9a2)--(u1a10a3);
\draw[->,red] (u1a9a3)--(u1a10a4);
\draw[->,red] (u1a9a4)--(u1a10a5);
\draw[->] (u1a10a1)--(u1a10a2);
\draw[->] (u1a10a2)--(u1a10a3);
\draw[->] (u1a10a3)--(u1a10a4);
\draw[->] (u1a10a4)--(u1a10a5);

\begin{scope}[yshift=9cm]
\node () at (0,-33) {};

\draw (33,-30) circle[radius=0.14] node(u2){};
\draw[fill] (16.5,-45) circle[radius=0.07] node(u2a6a1){};
\draw[fill] (19.5,-42) circle[radius=0.07] node(u2a7a1){};
\draw[fill] (19.5,-45) circle[radius=0.07] node(u2a7a2){};
\draw[fill] (22.5,-39) circle[radius=0.07] node(u2a8a1){};
\draw[fill] (22.5,-42) circle[radius=0.07] node(u2a8a2){};
\draw[fill] (25.5,-36) circle[radius=0.07] node(u2a9a1){};
\draw[fill] (25.5,-39) circle[radius=0.07] node(u2a9a2){};
\draw[fill] (28.5,-33) circle[radius=0.07] node(u2a10a1){};
\draw[fill] (28.5,-36) circle[radius=0.07] node(u2a10a2){};
\draw[fill] (31.5,-33) circle[radius=0.07] node(u2a11a1){};
\draw[fill] (31.5,-36) circle[radius=0.07] node(u2a11a2){};

\draw[->, green] (u2) to[out=-120,in=70] (u2a11a1);
\draw[->,dotted,magenta] (u2a7a1)--(u2a6a1);
\draw[->,red] (u2a6a1)--(u2a7a2);
\draw[->,dotted,magenta] (u2a8a1)--(u2a7a1);
\draw[->,dotted,magenta] (u2a8a2)--(u2a7a2);
\draw[->,red] (u2a7a1)--(u2a8a2);
\draw[->,dotted,magenta] (u2a9a1)--(u2a8a1);
\draw[->,dotted,magenta] (u2a9a2)--(u2a8a2);
\draw[->,red] (u2a8a1)--(u2a9a2);
\draw[->,dotted,magenta] (u2a10a1)--(u2a9a1);
\draw[->,dotted,magenta] (u2a10a2)--(u2a9a2);
\draw[->,red] (u2a9a1)--(u2a10a2);
\draw[->, blue] (u2a11a2)--(u2a10a2);
\draw[->, blue] (u2a11a1)--(u2a10a1);
\draw[->] (u2a10a1)--(u2a10a2);
\draw[->] (u2a11a1)--(u2a11a2);
\end{scope}
\begin{scope}[yshift=9cm]
\node () at (0,-45) {};

\draw (36,-42) circle[radius=0.14] node(u3){};
\draw[fill] (4.5,-69) circle[radius=0.07] node(u3a2a1){};
\draw[fill] (7.5,-66) circle[radius=0.07] node(u3a3a1){};
\draw[fill] (7.5,-69) circle[radius=0.07] node(u3a3a2){};
\draw[fill] (10.5,-63) circle[radius=0.07] node(u3a4a1){};
\draw[fill] (10.5,-66) circle[radius=0.07] node(u3a4a2){};
\draw[fill] (13.5,-60) circle[radius=0.07] node(u3a5a1){};
\draw[fill] (13.5,-63) circle[radius=0.07] node(u3a5a2){};
\draw[fill] (16.5,-57) circle[radius=0.07] node(u3a6a1){};
\draw[fill] (16.5,-60) circle[radius=0.07] node(u3a6a2){};
\draw[fill] (19.5,-54) circle[radius=0.07] node(u3a7a1){};
\draw[fill] (19.5,-57) circle[radius=0.07] node(u3a7a2){};
\draw[fill] (19.5,-60) circle[radius=0.07] node(u3a7a3){};
\draw[fill] (22.5,-51) circle[radius=0.07] node(u3a8a1){};
\draw[fill] (22.5,-54) circle[radius=0.07] node(u3a8a2){};
\draw[fill] (22.5,-57) circle[radius=0.07] node(u3a8a3){};
\draw[fill] (25.5,-48) circle[radius=0.07] node(u3a9a1){};
\draw[fill] (25.5,-51) circle[radius=0.07] node(u3a9a2){};
\draw[fill] (25.5,-54) circle[radius=0.07] node(u3a9a3){};
\draw[fill] (28.5,-45) circle[radius=0.07] node(u3a10a1){};
\draw[fill] (28.5,-48) circle[radius=0.07] node(u3a10a2){};
\draw[fill] (28.5,-51) circle[radius=0.07] node(u3a10a3){};
\draw[fill] (28.5,-54) circle[radius=0.07] node(u3a10a4){};
\draw[fill] (31.5,-45) circle[radius=0.07] node(u3a11a1){};
\draw[fill] (31.5,-48) circle[radius=0.07] node(u3a11a2){};
\draw[fill] (31.5,-51) circle[radius=0.07] node(u3a11a3){};
\draw[fill] (31.5,-54) circle[radius=0.07] node(u3a11a4){};
\draw[fill] (34.5,-45) circle[radius=0.07] node(u3a12a1){};
\draw[fill] (34.5,-48) circle[radius=0.07] node(u3a12a2){};
\draw[fill] (34.5,-51) circle[radius=0.07] node(u3a12a3){};
\draw[fill] (34.5,-54) circle[radius=0.07] node(u3a12a4){};

\draw[->, green] (u3) to[out=-120,in=70] (u3a12a1);
\draw[->,dotted,magenta] (u3a3a1)--(u3a2a1);
\draw[->,red] (u3a2a1)--(u3a3a2);
\draw[->,dotted,magenta] (u3a4a1)--(u3a3a1);
\draw[->,dotted,magenta] (u3a4a2)--(u3a3a2);
\draw[->,red] (u3a3a1)--(u3a4a2);
\draw[->,dotted,magenta] (u3a5a1)--(u3a4a1);
\draw[->,dotted,magenta] (u3a5a2)--(u3a4a2);
\draw[->,red] (u3a4a1)--(u3a5a2);
\draw[->,dotted,magenta] (u3a6a1)--(u3a5a1);
\draw[->,dotted,magenta] (u3a6a2)--(u3a5a2);
\draw[->,red] (u3a5a1)--(u3a6a2);
\draw[->,dotted,magenta] (u3a7a1)--(u3a6a1);
\draw[->,dotted,magenta] (u3a7a2)--(u3a6a2);
\draw[->,red] (u3a6a1)--(u3a7a2);
\draw[->,red] (u3a6a2)--(u3a7a3);
\draw[->,dotted,magenta] (u3a8a1)--(u3a7a1);
\draw[->,dotted,magenta] (u3a8a2)--(u3a7a2);
\draw[->,dotted,magenta] (u3a8a3)--(u3a7a3);
\draw[->,red] (u3a7a1)--(u3a8a2);
\draw[->,red] (u3a7a2)--(u3a8a3);
\draw[->,dotted,magenta] (u3a9a1)--(u3a8a1);
\draw[->,dotted,magenta] (u3a9a2)--(u3a8a2);
\draw[->,dotted,magenta] (u3a9a3)--(u3a8a3);
\draw[->,red] (u3a8a1)--(u3a9a2);
\draw[->,red] (u3a8a2)--(u3a9a3);
\draw[->,dotted,magenta] (u3a10a1)--(u3a9a1);
\draw[->,dotted,magenta] (u3a10a2)--(u3a9a2);
\draw[->,dotted,magenta] (u3a10a3)--(u3a9a3);
\draw[->,red] (u3a9a1)--(u3a10a2);
\draw[->,red] (u3a9a2)--(u3a10a3);
\draw[->,red] (u3a9a3)--(u3a10a4);
\draw[->, blue] (u3a11a4)--(u3a10a4);
\draw[->, blue] (u3a11a3)--(u3a10a3);
\draw[->, blue] (u3a11a2)--(u3a10a2);
\draw[->, blue] (u3a11a1)--(u3a10a1);
\draw[->, blue] (u3a12a4)--(u3a11a4);
\draw[->, blue] (u3a12a3)--(u3a11a3);
\draw[->, blue] (u3a12a2)--(u3a11a2);
\draw[->, blue] (u3a12a1)--(u3a11a1);
\draw[->] (u3a10a1)--(u3a10a2);
\draw[->] (u3a10a2)--(u3a10a3);
\draw[->] (u3a10a3)--(u3a10a4);
\draw[->] (u3a11a1)--(u3a11a2);
\draw[->] (u3a11a2)--(u3a11a3);
\draw[->] (u3a11a3)--(u3a11a4);
\draw[->] (u3a12a1)--(u3a12a2);
\draw[->] (u3a12a2)--(u3a12a3);
\draw[->] (u3a12a3)--(u3a12a4);
\end{scope}

\draw[gray] (u1a10a4) ++(0.2, 0.2)--++(-3.4, 0)--++(-6, -6)--++(-3, 0)--++(-6, -6)--++(0, -0.4)--++(6.4, 0)--++(6, 6)--++(3, 0)--++(3, 3)--++(0, 3.4);

\draw[gray] (u3a10a4) ++(0, -3) ++(0.2, 0.2)--++(-3.4, 0)--++(-6, -6)--++(-3, 0)--++(-6, -6)--++(0, -0.4)--++(6.4, 0)--++(6, 6)--++(3, 0)--++(3, 3)
--++(0, 3.4);

\draw[gray] (u3a10a3) ++(-0.2, 0.2)--++(-9, -9)--++(-6, 0)--++(-9, -9)--++(0, -0.4)--++(3.4, 0)--++(9, 9)--++(3, 0)--++(6, 6)--++(3, 0)--++(0, 3.4)
--++(-0.4, 0);

\draw[gray] (u2a10a2) ++(0, -3) ++(-0.2, 0.2)--++(-9, -9)--++(-6, 0)--++(-9, -9)--++(0, -0.4)--++(3.4, 0)--++(9, 9)--++(3, 0)--++(6, 6)--++(3, 0)
--++(0, 3.4)--++(-0.4, 0);

\draw[gray, densely dotted, thick] (u3a10a3) ++(0.2, 0.2)--++(6, 0)--++(0, -3.4)--++(-6, 0);

\draw[gray, densely dotted, thick] (u2a10a2) ++(0, -3) ++(0.2, 0.2)--++(6, 0)--++(0, -3.4)--++(-6, 0);

\end{tikzpicture}

\caption{The butterfly diagram of a fixed point of a separated bow variety. The cyan dashed line indicates the boundary between the ``left side'' and ``right side''. Two different botched butterfly surgeries are outlined in solid gray. The surgery with site in the $U_1$ butterfly is subject to and satisfies constraint~\eqref{eqn:typeII_constraint}. The other surgery is not subject to this constraint. The additional edges and vertices added to $\site$ to form $\overline\site$ are outlined in dotted gray.}
\label{fig:butterfly_sides}\vspace{-2mm}
\end{figure}
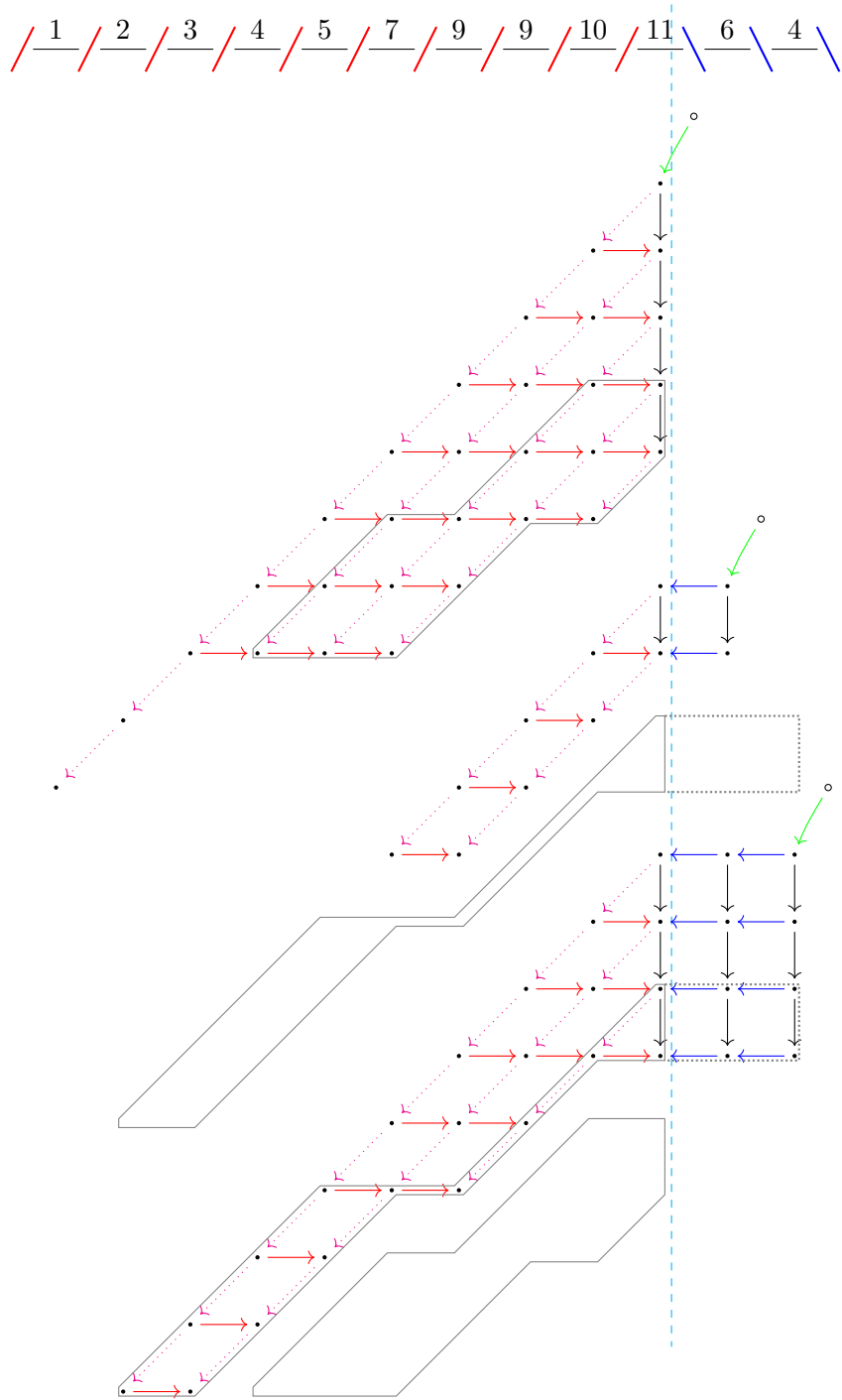

Fix $p\in \bowvar(\brane)^\torus$ and consider its butterfly diagram. Note that the portion of the butterfly diagram below $X_n$ and the right side will be the same for all fixed points. Consider a surgery operation $\surgery$ as in Section~\ref{sec:butterfly_surgery} on the left side of the butterfly diagram. That is, we take a~$B$, $C$, $D$-invariant subgraph $\site$ of the left side of the $U_j$ butterfly and vertically translate it to stack it below the left side of the $U_{j'}$ butterfly, for some $1\leq j\neq j' \leq m$. Assume that the left side of the resulting diagram is the left side of some butterfly diagram (not necessarily associated to the same bow variety). Note that all butterfly surgeries meet these conditions. The key difference is that butterfly surgeries cannot alter the $X_n$ column. If $\site$ includes at least one vertex in the $X_n$ column, then call $\surgery$ a ``botched butterfly surgery''. See Figure~\ref{fig:butterfly_sides} for an example. In Section~\ref{sec:butterfly_surgery}, the $B$, $C$, $D$-invariance of $\site$ emerged from other constraints on $\site$. Here, we must impose this property as an additional condition. We will produce a noncompact invariant curve from a~botched butterfly surgery following the approach of Section~\ref{sec:butterfly_surgery}.

The right side of the butterfly diagram is organized in a grid formed by horizontal leftward edges, which represent $A$ maps, and vertical downward edges, which represent $-B$ maps (see Figure~\ref{fig:butterfly_sides}). Let $\overline\site$ be the minimal $A$, $A^{-1}$, $B$, $C$, $D$-invariant subgraph containing $\site$. Let $\overline\site'$ be a copy of $\overline\site$ translated according to $\surgery$. Take the edges extending from the $U_{j'}$ butterfly to $\overline\site'$ imputed by the rules of Section~\ref{sec:fixed_pts}, and create the corresponding edges between the $U_{j'}$ butterfly and $\overline\site$. As in Section~\ref{sec:butterfly_surgery}, the resulting graph determines an element $\tilde{p}_\gamma \in \MM$.

If $j>j'$, it is easy to verify the 0-momentum condition. If $j<j'$, all momentum conditions will be satisfied except for $B_{U_{j}}A_{U_{j}} - A_{U_{j}}B'_{U_{j}} + a_{U_{j}}b_{U_{j}} = 0$. Due to the structure of separated brane diagrams, the number of vertices in the $X_n, X_{n+1}, \dots , X_{n+k-1}$ column of the $U_k$ butterfly is given by $c_k$. Recall that $c$ is the margin vector of D5 brane charges (see Section~\ref{sec:fixed_pts}). Suppose~$\site$ contains the bottom $r$ vertices of the $X_n$ column, where
\begin{equation} \label{eqn:typeII_constraint}
r \geq c_j - c_{j'} + 1.
\end{equation}
Label the vertices of the $X_{n+j-1}$ column of the $U_j$ butterfly from top to bottom as $v_1, \dots , v_{c_j}$. Label the vertices of the $X_{n+j}$ column of the $U{j'}$ butterfly from top to bottom as $v'_1, \dots , v'_{c_{j'}}$. Add additional edges according to the following rule:
\begin{itemize}\itemsep=0pt
\item Create negative blue edges from $v'_{k + c_{j'} - c_j + r}$ to $v_k$ for $1 \leq k \leq c_j - r$. Recall that the blue edges represent $A$.
\item Create a negative green edge from $v'_{c_{j'} - c_j + r}$ to the $U_j$ framing vertex. Recall that the green edge directed into the framing vertex represents $-b$ (Section~\ref{sec:fixed_pts}).
\end{itemize}
This corrects the momentum, so that the 0-momentum condition is satisfied. This rule is depicted in Figure~\ref{fig:special_botched}. The point $\tilde{p}_\gamma$ is illustrated in Figure~\ref{fig:typeII_curve} for both of the surgeries in Figure~\ref{fig:butterfly_sides}.

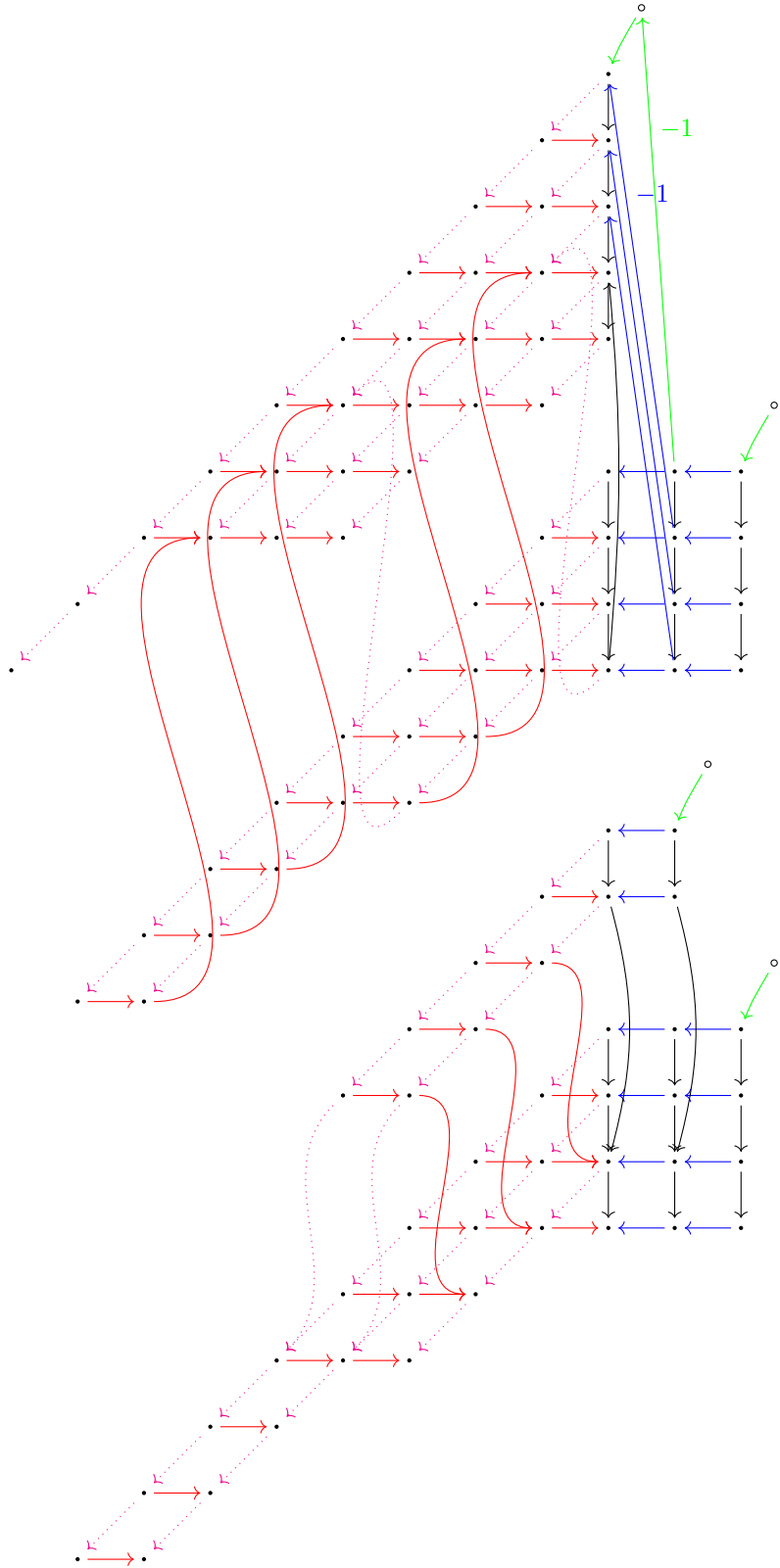
\begin{figure}[th!]
\centering
\begin{tikzpicture}[scale=0.28]
\node () at (0,-6) {};

\draw (30,-3) circle[radius=0.14] node(u1){};
\draw[fill] (1.5,-33) circle[radius=0.07] node(u1a1a1){};
\draw[fill] (4.5,-30) circle[radius=0.07] node(u1a2a1){};
\draw[fill] (7.5,-27) circle[radius=0.07] node(u1a3a1){};
\draw[fill] (10.5,-24) circle[radius=0.07] node(u1a4a1){};
\draw[fill] (10.5,-27) circle[radius=0.07] node(u1a4a2){};
\draw[fill] (13.5,-21) circle[radius=0.07] node(u1a5a1){};
\draw[fill] (13.5,-24) circle[radius=0.07] node(u1a5a2){};
\draw[fill] (13.5,-27) circle[radius=0.07] node(u1a5a3){};
\draw[fill] (16.5,-18) circle[radius=0.07] node(u1a6a1){};
\draw[fill] (16.5,-21) circle[radius=0.07] node(u1a6a2){};
\draw[fill] (16.5,-24) circle[radius=0.07] node(u1a6a3){};
\draw[fill] (16.5,-27) circle[radius=0.07] node(u1a6a4){};
\draw[fill] (19.5,-15) circle[radius=0.07] node(u1a7a1){};
\draw[fill] (19.5,-18) circle[radius=0.07] node(u1a7a2){};
\draw[fill] (19.5,-21) circle[radius=0.07] node(u1a7a3){};
\draw[fill] (19.5,-24) circle[radius=0.07] node(u1a7a4){};
\draw[fill] (22.5,-12) circle[radius=0.07] node(u1a8a1){};
\draw[fill] (22.5,-15) circle[radius=0.07] node(u1a8a2){};
\draw[fill] (22.5,-18) circle[radius=0.07] node(u1a8a3){};
\draw[fill] (22.5,-21) circle[radius=0.07] node(u1a8a4){};
\draw[fill] (25.5,-9) circle[radius=0.07] node(u1a9a1){};
\draw[fill] (25.5,-12) circle[radius=0.07] node(u1a9a2){};
\draw[fill] (25.5,-15) circle[radius=0.07] node(u1a9a3){};
\draw[fill] (25.5,-18) circle[radius=0.07] node(u1a9a4){};
\draw[fill] (25.5,-21) circle[radius=0.07] node(u1a9a5){};
\draw[fill] (28.5,-6) circle[radius=0.07] node(u1a10a1){};
\draw[fill] (28.5,-9) circle[radius=0.07] node(u1a10a2){};
\draw[fill] (28.5,-12) circle[radius=0.07] node(u1a10a3){};
\draw[fill] (28.5,-15) circle[radius=0.07] node(u1a10a4){};
\draw[fill] (28.5,-18) circle[radius=0.07] node(u1a10a5){};

\draw[->, green] (u1) to[out=-120,in=70] (u1a10a1);
\draw[->,dotted,magenta] (u1a2a1)--(u1a1a1);
\draw[->,dotted,magenta] (u1a3a1)--(u1a2a1);
\draw[->,dotted,magenta] (u1a4a1)--(u1a3a1);
\draw[->,red] (u1a3a1)--(u1a4a2);
\draw[->,dotted,magenta] (u1a5a1)--(u1a4a1);
\draw[->,dotted,magenta] (u1a5a2)--(u1a4a2);
\draw[->,red] (u1a4a1)--(u1a5a2);
\draw[->,red] (u1a4a2)--(u1a5a3);
\draw[->,dotted,magenta] (u1a6a1)--(u1a5a1);
\draw[->,dotted,magenta] (u1a6a2)--(u1a5a2);
\draw[->,dotted,magenta] (u1a6a3)--(u1a5a3);
\draw[->,red] (u1a5a1)--(u1a6a2);
\draw[->,red] (u1a5a2)--(u1a6a3);
\draw[->,red] (u1a5a3)--(u1a6a4);
\draw[->,dotted,magenta] (u1a7a1)--(u1a6a1);
\draw[->,dotted,magenta] (u1a7a2)--(u1a6a2);
\draw[->,dotted,magenta] (u1a7a3)--(u1a6a3);
\draw[->,dotted,magenta] (u1a7a4)--(u1a6a4);
\draw[->,red] (u1a6a1)--(u1a7a2);
\draw[->,red] (u1a6a2)--(u1a7a3);
\draw[->,red] (u1a6a3)--(u1a7a4);
\draw[->,dotted,magenta] (u1a8a1)--(u1a7a1);
\draw[->,dotted,magenta] (u1a8a2)--(u1a7a2);
\draw[->,dotted,magenta] (u1a8a3)--(u1a7a3);
\draw[->,dotted,magenta] (u1a8a4)--(u1a7a4);
\draw[->,red] (u1a7a1)--(u1a8a2);
\draw[->,red] (u1a7a2)--(u1a8a3);
\draw[->,red] (u1a7a3)--(u1a8a4);
\draw[->,dotted,magenta] (u1a9a1)--(u1a8a1);
\draw[->,dotted,magenta] (u1a9a2)--(u1a8a2);
\draw[->,dotted,magenta] (u1a9a3)--(u1a8a3);
\draw[->,dotted,magenta] (u1a9a4)--(u1a8a4);
\draw[->,red] (u1a8a1)--(u1a9a2);
\draw[->,red] (u1a8a2)--(u1a9a3);
\draw[->,red] (u1a8a3)--(u1a9a4);
\draw[->,red] (u1a8a4)--(u1a9a5);
\draw[->,dotted,magenta] (u1a10a1)--(u1a9a1);
\draw[->,dotted,magenta] (u1a10a2)--(u1a9a2);
\draw[->,dotted,magenta] (u1a10a3)--(u1a9a3);
\draw[->,dotted,magenta] (u1a10a4)--(u1a9a4);
\draw[->,dotted,magenta] (u1a10a5)--(u1a9a5);
\draw[->,red] (u1a9a1)--(u1a10a2);
\draw[->,red] (u1a9a2)--(u1a10a3);
\draw[->,red] (u1a9a3)--(u1a10a4);
\draw[->,red] (u1a9a4)--(u1a10a5);
\draw[->] (u1a10a1)--(u1a10a2);
\draw[->] (u1a10a2)--(u1a10a3);
\draw[->] (u1a10a3)--(u1a10a4);
\draw[->] (u1a10a4)--(u1a10a5);

\begin{scope}[yshift=21cm]
\node () at (0,-45) {};

\draw (36,-42) circle[radius=0.14] node(u3){};
\draw[fill] (4.5,-69) circle[radius=0.07] node(u3a2a1){};
\draw[fill] (7.5,-66) circle[radius=0.07] node(u3a3a1){};
\draw[fill] (7.5,-69) circle[radius=0.07] node(u3a3a2){};
\draw[fill] (10.5,-63) circle[radius=0.07] node(u3a4a1){};
\draw[fill] (10.5,-66) circle[radius=0.07] node(u3a4a2){};
\draw[fill] (13.5,-60) circle[radius=0.07] node(u3a5a1){};
\draw[fill] (13.5,-63) circle[radius=0.07] node(u3a5a2){};
\draw[fill] (16.5,-57) circle[radius=0.07] node(u3a6a1){};
\draw[fill] (16.5,-60) circle[radius=0.07] node(u3a6a2){};
\draw[fill] (19.5,-54) circle[radius=0.07] node(u3a7a1){};
\draw[fill] (19.5,-57) circle[radius=0.07] node(u3a7a2){};
\draw[fill] (19.5,-60) circle[radius=0.07] node(u3a7a3){};
\draw[fill] (22.5,-51) circle[radius=0.07] node(u3a8a1){};
\draw[fill] (22.5,-54) circle[radius=0.07] node(u3a8a2){};
\draw[fill] (22.5,-57) circle[radius=0.07] node(u3a8a3){};
\draw[fill] (25.5,-48) circle[radius=0.07] node(u3a9a1){};
\draw[fill] (25.5,-51) circle[radius=0.07] node(u3a9a2){};
\draw[fill] (25.5,-54) circle[radius=0.07] node(u3a9a3){};
\draw[fill] (28.5,-45) circle[radius=0.07] node(u3a10a1){};
\draw[fill] (28.5,-48) circle[radius=0.07] node(u3a10a2){};
\draw[fill] (28.5,-51) circle[radius=0.07] node(u3a10a3){};
\draw[fill] (28.5,-54) circle[radius=0.07] node(u3a10a4){};
\draw[fill] (31.5,-45) circle[radius=0.07] node(u3a11a1){};
\draw[fill] (31.5,-48) circle[radius=0.07] node(u3a11a2){};
\draw[fill] (31.5,-51) circle[radius=0.07] node(u3a11a3){};
\draw[fill] (31.5,-54) circle[radius=0.07] node(u3a11a4){};
\draw[fill] (34.5,-45) circle[radius=0.07] node(u3a12a1){};
\draw[fill] (34.5,-48) circle[radius=0.07] node(u3a12a2){};
\draw[fill] (34.5,-51) circle[radius=0.07] node(u3a12a3){};
\draw[fill] (34.5,-54) circle[radius=0.07] node(u3a12a4){};

\draw[->, green] (u3) to[out=-120,in=70] (u3a12a1);
\draw[->,dotted,magenta] (u3a3a1)--(u3a2a1);
\draw[->,red] (u3a2a1)--(u3a3a2);
\draw[->,dotted,magenta] (u3a4a1)--(u3a3a1);
\draw[->,dotted,magenta] (u3a4a2)--(u3a3a2);
\draw[->,red] (u3a3a1)--(u3a4a2);
\draw[->,dotted,magenta] (u3a5a1)--(u3a4a1);
\draw[->,dotted,magenta] (u3a5a2)--(u3a4a2);
\draw[->,red] (u3a4a1)--(u3a5a2);
\draw[->,dotted,magenta] (u3a6a1)--(u3a5a1);
\draw[->,dotted,magenta] (u3a6a2)--(u3a5a2);
\draw[->,red] (u3a5a1)--(u3a6a2);
\draw[->,dotted,magenta] (u3a7a1)--(u3a6a1);
\draw[->,dotted,magenta] (u3a7a2)--(u3a6a2);
\draw[->,red] (u3a6a1)--(u3a7a2);
\draw[->,red] (u3a6a2)--(u3a7a3);
\draw[->,dotted,magenta] (u3a8a1)--(u3a7a1);
\draw[->,dotted,magenta] (u3a8a2)--(u3a7a2);
\draw[->,dotted,magenta] (u3a8a3)--(u3a7a3);
\draw[->,red] (u3a7a1)--(u3a8a2);
\draw[->,red] (u3a7a2)--(u3a8a3);
\draw[->,dotted,magenta] (u3a9a1)--(u3a8a1);
\draw[->,dotted,magenta] (u3a9a2)--(u3a8a2);
\draw[->,dotted,magenta] (u3a9a3)--(u3a8a3);
\draw[->,red] (u3a8a1)--(u3a9a2);
\draw[->,red] (u3a8a2)--(u3a9a3);
\draw[->,dotted,magenta] (u3a10a1)--(u3a9a1);
\draw[->,dotted,magenta] (u3a10a2)--(u3a9a2);
\draw[->,dotted,magenta] (u3a10a3)--(u3a9a3);
\draw[->,red] (u3a9a1)--(u3a10a2);
\draw[->,red] (u3a9a2)--(u3a10a3);
\draw[->,red] (u3a9a3)--(u3a10a4);
\draw[->, blue] (u3a11a4)--(u3a10a4);
\draw[->, blue] (u3a11a3)--(u3a10a3);
\draw[->, blue] (u3a11a2)--(u3a10a2);
\draw[->, blue] (u3a11a1)--(u3a10a1);
\draw[->, blue] (u3a12a4)--(u3a11a4);
\draw[->, blue] (u3a12a3)--(u3a11a3);
\draw[->, blue] (u3a12a2)--(u3a11a2);
\draw[->, blue] (u3a12a1)--(u3a11a1);
\draw[->] (u3a10a1)--(u3a10a2);
\draw[->] (u3a10a2)--(u3a10a3);
\draw[->] (u3a10a3)--(u3a10a4);
\draw[->] (u3a11a1)--(u3a11a2);
\draw[->] (u3a11a2)--(u3a11a3);
\draw[->] (u3a11a3)--(u3a11a4);
\draw[->] (u3a12a1)--(u3a12a2);
\draw[->] (u3a12a2)--(u3a12a3);
\draw[->] (u3a12a3)--(u3a12a4);
\end{scope}

\draw[->] (u3a10a4)to[out=85, in=-85](u1a10a4);
\draw[magenta, dotted, ->] (u3a10a4)to[out=225, in=45](u1a9a3); \draw[magenta, dotted, ->] (u3a7a3)to[out=225, in=45](u1a6a2);
\draw[red, ->] (u3a8a3)to[out=0, in=180](u1a9a3); \draw[red, ->] (u3a7a3)to[out=0, in=180](u1a8a3);
\draw[red, ->] (u3a5a2)to[out=0, in=180](u1a6a2); \draw[red, ->] (u3a4a2)to[out=0, in=180](u1a5a2); \draw[red, ->] (u3a3a2)to[out=0, in=180](u1a4a2);
\draw[blue, ->] (u3a11a4)--(u1a10a3); \draw[blue, ->] (u3a11a3)--(u1a10a2); \draw[blue, ->] (u3a11a2)--(u1a10a1) node[pos=0.75, right]{\small $-1$};
\draw[green, ->] (u3a11a1)--(u1) node[pos=0.75, right]{\small $-1$};
\end{tikzpicture}

\vspace{-100pt}
\begin{tikzpicture}[scale=0.28]
\node () at (0,-33) {};

\draw (33,-30) circle[radius=0.14] node(u2){};
\draw[fill] (16.5,-45) circle[radius=0.07] node(u2a6a1){};
\draw[fill] (19.5,-42) circle[radius=0.07] node(u2a7a1){};
\draw[fill] (19.5,-45) circle[radius=0.07] node(u2a7a2){};
\draw[fill] (22.5,-39) circle[radius=0.07] node(u2a8a1){};
\draw[fill] (22.5,-42) circle[radius=0.07] node(u2a8a2){};
\draw[fill] (25.5,-36) circle[radius=0.07] node(u2a9a1){};
\draw[fill] (25.5,-39) circle[radius=0.07] node(u2a9a2){};
\draw[fill] (28.5,-33) circle[radius=0.07] node(u2a10a1){};
\draw[fill] (28.5,-36) circle[radius=0.07] node(u2a10a2){};
\draw[fill] (31.5,-33) circle[radius=0.07] node(u2a11a1){};
\draw[fill] (31.5,-36) circle[radius=0.07] node(u2a11a2){};

\draw[->, green] (u2) to[out=-120,in=70] (u2a11a1);
\draw[->,dotted,magenta] (u2a7a1)--(u2a6a1);
\draw[->,red] (u2a6a1)--(u2a7a2);
\draw[->,dotted,magenta] (u2a8a1)--(u2a7a1);
\draw[->,dotted,magenta] (u2a8a2)--(u2a7a2);
\draw[->,red] (u2a7a1)--(u2a8a2);
\draw[->,dotted,magenta] (u2a9a1)--(u2a8a1);
\draw[->,dotted,magenta] (u2a9a2)--(u2a8a2);
\draw[->,red] (u2a8a1)--(u2a9a2);
\draw[->,dotted,magenta] (u2a10a1)--(u2a9a1);
\draw[->,dotted,magenta] (u2a10a2)--(u2a9a2);
\draw[->,red] (u2a9a1)--(u2a10a2);
\draw[->, blue] (u2a11a2)--(u2a10a2);
\draw[->, blue] (u2a11a1)--(u2a10a1);
\draw[->] (u2a10a1)--(u2a10a2);
\draw[->] (u2a11a1)--(u2a11a2);

\begin{scope}[yshift=3cm]
\node () at (0,-45) {};

\draw (36,-42) circle[radius=0.14] node(u3){};
\draw[fill] (4.5,-69) circle[radius=0.07] node(u3a2a1){};
\draw[fill] (7.5,-66) circle[radius=0.07] node(u3a3a1){};
\draw[fill] (7.5,-69) circle[radius=0.07] node(u3a3a2){};
\draw[fill] (10.5,-63) circle[radius=0.07] node(u3a4a1){};
\draw[fill] (10.5,-66) circle[radius=0.07] node(u3a4a2){};
\draw[fill] (13.5,-60) circle[radius=0.07] node(u3a5a1){};
\draw[fill] (13.5,-63) circle[radius=0.07] node(u3a5a2){};
\draw[fill] (16.5,-57) circle[radius=0.07] node(u3a6a1){};
\draw[fill] (16.5,-60) circle[radius=0.07] node(u3a6a2){};
\draw[fill] (19.5,-54) circle[radius=0.07] node(u3a7a1){};
\draw[fill] (19.5,-57) circle[radius=0.07] node(u3a7a2){};
\draw[fill] (19.5,-60) circle[radius=0.07] node(u3a7a3){};
\draw[fill] (22.5,-51) circle[radius=0.07] node(u3a8a1){};
\draw[fill] (22.5,-54) circle[radius=0.07] node(u3a8a2){};
\draw[fill] (22.5,-57) circle[radius=0.07] node(u3a8a3){};
\draw[fill] (25.5,-48) circle[radius=0.07] node(u3a9a1){};
\draw[fill] (25.5,-51) circle[radius=0.07] node(u3a9a2){};
\draw[fill] (25.5,-54) circle[radius=0.07] node(u3a9a3){};
\draw[fill] (28.5,-45) circle[radius=0.07] node(u3a10a1){};
\draw[fill] (28.5,-48) circle[radius=0.07] node(u3a10a2){};
\draw[fill] (28.5,-51) circle[radius=0.07] node(u3a10a3){};
\draw[fill] (28.5,-54) circle[radius=0.07] node(u3a10a4){};
\draw[fill] (31.5,-45) circle[radius=0.07] node(u3a11a1){};
\draw[fill] (31.5,-48) circle[radius=0.07] node(u3a11a2){};
\draw[fill] (31.5,-51) circle[radius=0.07] node(u3a11a3){};
\draw[fill] (31.5,-54) circle[radius=0.07] node(u3a11a4){};
\draw[fill] (34.5,-45) circle[radius=0.07] node(u3a12a1){};
\draw[fill] (34.5,-48) circle[radius=0.07] node(u3a12a2){};
\draw[fill] (34.5,-51) circle[radius=0.07] node(u3a12a3){};
\draw[fill] (34.5,-54) circle[radius=0.07] node(u3a12a4){};

\draw[->, green] (u3) to[out=-120,in=70] (u3a12a1);
\draw[->,dotted,magenta] (u3a3a1)--(u3a2a1);
\draw[->,red] (u3a2a1)--(u3a3a2);
\draw[->,dotted,magenta] (u3a4a1)--(u3a3a1);
\draw[->,dotted,magenta] (u3a4a2)--(u3a3a2);
\draw[->,red] (u3a3a1)--(u3a4a2);
\draw[->,dotted,magenta] (u3a5a1)--(u3a4a1);
\draw[->,dotted,magenta] (u3a5a2)--(u3a4a2);
\draw[->,red] (u3a4a1)--(u3a5a2);
\draw[->,dotted,magenta] (u3a6a1)--(u3a5a1);
\draw[->,dotted,magenta] (u3a6a2)--(u3a5a2);
\draw[->,red] (u3a5a1)--(u3a6a2);
\draw[->,dotted,magenta] (u3a7a1)--(u3a6a1);
\draw[->,dotted,magenta] (u3a7a2)--(u3a6a2);
\draw[->,red] (u3a6a1)--(u3a7a2);
\draw[->,red] (u3a6a2)--(u3a7a3);
\draw[->,dotted,magenta] (u3a8a1)--(u3a7a1);
\draw[->,dotted,magenta] (u3a8a2)--(u3a7a2);
\draw[->,dotted,magenta] (u3a8a3)--(u3a7a3);
\draw[->,red] (u3a7a1)--(u3a8a2);
\draw[->,red] (u3a7a2)--(u3a8a3);
\draw[->,dotted,magenta] (u3a9a1)--(u3a8a1);
\draw[->,dotted,magenta] (u3a9a2)--(u3a8a2);
\draw[->,dotted,magenta] (u3a9a3)--(u3a8a3);
\draw[->,red] (u3a8a1)--(u3a9a2);
\draw[->,red] (u3a8a2)--(u3a9a3);
\draw[->,dotted,magenta] (u3a10a1)--(u3a9a1);
\draw[->,dotted,magenta] (u3a10a2)--(u3a9a2);
\draw[->,dotted,magenta] (u3a10a3)--(u3a9a3);
\draw[->,red] (u3a9a1)--(u3a10a2);
\draw[->,red] (u3a9a2)--(u3a10a3);
\draw[->,red] (u3a9a3)--(u3a10a4);
\draw[->, blue] (u3a11a4)--(u3a10a4);
\draw[->, blue] (u3a11a3)--(u3a10a3);
\draw[->, blue] (u3a11a2)--(u3a10a2);
\draw[->, blue] (u3a11a1)--(u3a10a1);
\draw[->, blue] (u3a12a4)--(u3a11a4);
\draw[->, blue] (u3a12a3)--(u3a11a3);
\draw[->, blue] (u3a12a2)--(u3a11a2);
\draw[->, blue] (u3a12a1)--(u3a11a1);
\draw[->] (u3a10a1)--(u3a10a2);
\draw[->] (u3a10a2)--(u3a10a3);
\draw[->] (u3a10a3)--(u3a10a4);
\draw[->] (u3a11a1)--(u3a11a2);
\draw[->] (u3a11a2)--(u3a11a3);
\draw[->] (u3a11a3)--(u3a11a4);
\draw[->] (u3a12a1)--(u3a12a2);
\draw[->] (u3a12a2)--(u3a12a3);
\draw[->] (u3a12a3)--(u3a12a4);
\end{scope}

\draw[magenta, dotted, ->] (u2a6a1)to[out=225, in=45](u3a5a1); \draw[magenta, dotted, ->] (u2a7a2)to[out=225, in=45](u3a6a2);
\draw[red, ->] (u2a7a2)to[out=0, in=180](u3a8a3); \draw[red, ->] (u2a8a2)to[out=0, in=180](u3a9a3); \draw[red, ->] (u2a9a2)to[out=0, in=180](u3a10a3);
\draw[->] (u2a10a2)to[out=-75, in=75](u3a10a3); \draw[->] (u2a11a2)to[out=-75, in=75](u3a11a3);
\end{tikzpicture}

\caption{Construction of the invariant curves corresponding to the two botched butterfly surgeries in Figure~\ref{fig:butterfly_sides}.}
\label{fig:typeII_curve}
\vspace{-2mm}

\end{figure}

Using similar arguments to those in Section~\ref{sec:butterfly_surgery} (see Lemma~\ref{lemma:type23_curve}), one can show that \smash{$\tilde{p}_\gamma \in \prequot$}, the orbit of its image $p_\gamma \in \bowvar(\brane)$ is 1-dimensional, and its closure $\gamma$ is an invariant curve containing~$p$. Additionally, if $\site$ has $k$ connected components, then a $k$-dimensional pencil $\pen_\surgery$ of invariant curves containing $p$ can be constructed. These results are summarized by the following analog to Lemma~\ref{lemma:butterfly_surgery}. The question of compactness is postponed until Section~\ref{sec:classification}.

\begin{Lemma} \label{lemma:botched_surgery}
Fix a separated bow variety $\bowvar(\brane)$ and $p\in \bowvar(\brane)^\torus$. Let $\surgery$ be a botched butterfly surgery moving the site $\site$ from the $U_j$ butterfly to the $U_{j'}$ butterfly. If $j < j'$, impose the additional condition that $\site$ contains at least $c_j - c_{j'} + 1$ vertices of the $X_n$ column, where $c$ is the margin vector of D$5$ brane charges. Then, there exists a $k$-dimensional pencil $\pen_\surgery$ of $\torus$-invariant curves containing $p$. Each curve has tangent weight
\smash{$
\frac{\ub}{\ub'}\hb^{-\Delta_\surgery y}$},
at $p$.
\end{Lemma}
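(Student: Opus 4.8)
The plan is to transcribe the argument of Section~\ref{sec:butterfly_surgery} almost verbatim, isolating the two places where a botched surgery genuinely differs from an honest one: the possible failure of the $U_j$ momentum equation when $j<j'$, and the absence of a second fixed point in the orbit closure.

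First I would check that the graph defining $\tilde p_\gamma$ gives a point of $\prequot$. The $0$-momentum condition at every D$5$ brane other than $U_j$, and at every D$3$ brane, is either inherited from $p$ or checked exactly as in Section~\ref{sec:butterfly_surgery}; this settles the case $j>j'$ in full. For $j<j'$ the only equation in question is $B_{U_j}A_{U_j}-A_{U_j}B'_{U_j}+a_{U_j}b_{U_j}=0$, and here one uses that the correction blue edges are components of $A_{U_j}$ and the correction green edge is a component of $b_{U_j}$, placed at precisely those indices so that their contributions cancel the residual term of that equation; the hypothesis \eqref{eqn:typeII_constraint} is exactly the inequality $c_{j'}-c_j+r\ge 1$ that guarantees the source vertex $v'_{c_{j'}-c_j+r}$ of the correction green edge exists, and with it every correction edge can be drawn. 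The stability conditions S$1$, S$2$, $\nu$ then follow from the reachability arguments of Lemma~\ref{lemma:type1_curve}, the correction edges only enlarging the images and spans one needs; this, together with the preceding points, is the content of Lemma~\ref{lemma:type23_curve}, which I would invoke.

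Next I would produce the curve. Since $\tilde p_\gamma\in\prequot$, it descends to $p_\gamma\in\bowvar(\brane)$, and $t\mapsto$(the diagram of $p_\gamma$ with every newly created edge scaled by $t$) gives a surjection $\CC^\times\to\torus.p_\gamma$, so $\dim\torus.p_\gamma\le 1$. Exactly as in Section~\ref{sec:butterfly_surgery}, a $\G$-change of basis multiplying the vertices in the translated image of $\site$ by a scalar shows that the $\torus$-action on $p_\gamma$ amounts to scaling the new edges by $\frac{\ub}{\ub'}\hb^{-\Delta_\surgery y}$, where $\Delta_\surgery y=y(v')-y(v)$; this difference is constant in $v\in\site$ since height is affine in the vertical coordinate and $\site$ is translated rigidly. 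Fixing $\ub,\hb$ and sending $\ub'\to\infty$ kills every new edge and returns the diagram of $p$, so $p\in\overline{\torus.p_\gamma}$; since $p_\gamma$ is not $\torus$-fixed (Lemma~\ref{lemma:type23_curve}), $\dim\torus.p_\gamma=1$, and $\gamma:=\overline{\torus.p_\gamma}$ is a $\torus$-invariant curve through $p$ with tangent weight $\frac{\ub}{\ub'}\hb^{-\Delta_\surgery y}$ at $p$. In contrast to Section~\ref{sec:butterfly_surgery}, the opposite degeneration $\ub\to\infty$ leaves $\bowvar(\brane)$ — it would alter the $X_n$ column — which is why compactness of $\gamma$ has to be postponed to Section~\ref{sec:classification}.

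Finally, if $\site=\site_1\sqcup\cdots\sqcup\site_k$, scaling by an independent parameter $z_i$ the new edges associated with $\site_i$ produces a $k$-parameter family of $\torus$-invariant curves through $p$, all with tangent weight $\frac{\ub}{\ub'}\hb^{-\Delta_\surgery y}$ there; as in Section~\ref{sec:butterfly_surgery} these assemble into a $\torus$-equivariant map $\pen_\surgery\colon\bigl(\CCP^1\bigr)^k\to\bowvar(\brane)$, injective by Proposition~\ref{prop:pencil_generation} together with the identity $\left.\frac{\partial}{\partial t}\right|_{t=0}\pen_\surgery(tz)=\sum_i z_i\gamma_i'(0)$. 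The main obstacle is the $0$-momentum bookkeeping for $j<j'$: verifying that the correction rule with exactly those indices cancels the momentum defect of $U_j$, and that this (and the stability check) persist for each partial site $\bigcup_{i\in I}\site_i$ arising on the faces of the cube $\bigl(\CCP^1\bigr)^k$, so that $\pen_\surgery$ is well defined everywhere — which is precisely what Lemma~\ref{lemma:type23_curve} is set up to handle.
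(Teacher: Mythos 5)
Your proposal is correct and follows essentially the same route as the paper: the paper's own justification of this lemma is precisely the Section~\ref{sec:botched_surgery} construction (new edges to $\overline\site$, plus the correction blue/green edges whose existence is exactly what constraint~\eqref{eqn:typeII_constraint} guarantees when $j<j'$), combined with Lemma~\ref{lemma:type23_curve} for stability and $p_\gamma\neq p$, and the Section~\ref{sec:butterfly_surgery} orbit/degeneration and component-scaling arguments for the tangent weight and the $k$-dimensional pencil, with compactness deferred to Section~\ref{sec:classification}. Your forward reference to Proposition~\ref{prop:pencil_generation} for distinctness of curves in the pencil matches the paper's own usage.
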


\subsection{Nonsurgery curves} \label{sec:nonsurgery}
We retain the conventions of Section~\ref{sec:botched_surgery} for this section. Fix $1\leq j < j' \leq m$. We will construct $\max\{0, c_{j'} - c_j\}$ many invariant curves. To construct the $i$-th curve,
\begin{itemize}\itemsep=0pt
\item create a green edge from $v'_i$ to the $U_j$ framing vertex,
\item create blue edges from $v'_{i+k}$ to $v_k$ for $1 \leq k \leq c_j$.
\end{itemize}
Figure~\ref{fig:nonsurgery_curve} depicts these added edges. Using similar arguments to those in Section~\ref{sec:butterfly_surgery} (see Lemma~\ref{lemma:type23_curve}), one can show that this results in a point of $\bowvar(\brane)$ with a 1-dimensional orbit having~$p$ in its closure. The tangent weight of this curve at $p$ can easily be determined by examining the added $b$ edge and comparing the heights of the target and source.

\begin{figure}[t]
\centering
\subcaptionbox{botched surgery \label{fig:special_botched}}[0.4\textwidth]
{
\begin{tikzpicture}
\draw (0, 0) node(l){} ++(2, 0) node(u){} ++(2, 0) node(r){};
\draw (l)--(r);
\draw[blue] (u) +(0.5, -0.5) -- +(-0.5, 0.5) +(0, 0.5) node{$U_j$};

\draw (2, -1) circle[radius = .05] node(uj){};

\draw (1, -1.5) node(a1){} ++(0, -.5) node(a2){} ++(0, -.5) node(a3){} ++(0, -.5) node(a4){} ++(0, -.5) node(a5){} ++(0, -.5) node(a6){}
 ++(0, -.5) node(a7){} ++(0, -.5) node(a8){};

\draw (a8) ++(0, -1) node(b1){} ++(0, -.5) node(b2){} ++(0, -.5) node(b3){} ++(0, -.5) node(b4){} ++(0, -.5) node(b5){} ++(0, -.5) node(b6){}
 ++(0, -.5) node(b7){} ++(0, -.5) node(b8){};

\draw (b1) ++(2, 0) node(c1){} ++(0, -.5) node(c2){} ++(0, -.5) node(c3){} ++(0, -.5) node(c4){} ++(0, -.5) node(c5){} ++(0, -.5) node(c6){}
 ++(0, -.5) node(c7){} ++(0, -.5) node(c8){};

\draw[fill] (a1) circle[radius = 0.025]; \draw[fill] (a4) circle [radius = 0.025]; \draw[fill] (a5) circle [radius = 0.025]; \draw[fill] (a8) circle [radius = 0.025];
\draw[fill] (b1) circle[radius = 0.025]; \draw[fill] (b4) circle[radius = 0.025]; \draw[fill] (b5) circle[radius = 0.025]; \draw[fill] (b8) circle[radius = 0.025];
\draw[fill] (c1) circle[radius = 0.025]; \draw[fill] (c4) circle[radius = 0.025]; \draw[fill] (c5) circle[radius = 0.025]; \draw[fill] (c8) circle[radius = 0.025];

\draw[->, green] (uj) to[out = 250, in = 45] (a1);
\draw[->] (a1) -- (a2); \draw[->] (a3) -- (a4); \draw[->] (a4) -- (a5); \draw[->] (a5) -- (a6); \draw[->] (a7) -- (a8);
\draw[->] (b1) -- (b2); \draw[->] (b3) -- (b4); \draw[->] (b4) -- (b5); \draw[->] (b5) -- (b6); \draw[->] (b7) -- (b8);
\draw[->] (c1) -- (c2); \draw[->] (c3) -- (c4); \draw[->] (c4) -- (c5); \draw[->] (c5) -- (c6); \draw[->] (c7) -- (c8);

\draw[->, blue] (c1) -- (b1); \draw[->, blue] (c4) -- (b4); \draw[->, blue] (c5) -- (b5); \draw[->, blue] (c8) -- (b8);

\draw[dotted] (a2) -- (a3); \draw[dotted] (a6) -- (a7);
\draw[dotted] (b2) -- (b3); \draw[dotted] (b6) -- (b7);
\draw[dotted] (c2) -- (c3); \draw[dotted] (c6) -- (c7);

\draw[->, green] (c4) to[out = 75, in = 280] (uj); \draw (2.6, -2.2) node[green]{\small -1};
\draw[->, blue] (c5) -- (a1) node[pos = 0.75, right]{\small -1}; \draw[->, blue] (c8) -- (a4) node[pos = 0.75, right]{\small -1};
\draw[->] (b8) to[out = 105, in = 255] (a5);

\draw[gray] ($(a5)+(-0.1, 0.1)$) -- ($(a8)+(-0.1, -0.1)$) -- ($(a8)+(0.1, -0.1)$) -- ($(a5)+(0.1, 0.1)$) -- ($(a5)+(-0.1, 0.1)$);
\end{tikzpicture}
}
\subcaptionbox{nonsurgery \label{fig:nonsurgery_curve}}[0.4\textwidth]
{
\begin{tikzpicture}
\draw (0, 0) node(l){} ++(2, 0) node(u){} ++(2, 0) node(r){};
\draw (l)--(r);
\draw[blue] (u) +(0.5, -0.5) -- +(-0.5, 0.5) +(0, 0.5) node{$U_j$};

\draw (2, -1) circle[radius = .05] node(uj){};

\draw (1, -1.5) node(a1){} ++(0, -.5) node(a2){} ++(0, -.5) node(a3){} ++(0, -.5) node(a4){} ++(0, -.5);

\draw (a4) ++(0, -1.5) node(b1){} ++(0, -.5) node(b2){} ++(0, -.5) node(b3){} ++(0, -.5) node(b4){} ++(0, -.5) node(b5){} ++(0, -.5) node(b6){}
 ++(0, -.5) node(b7){} ++(0, -.5) node(b8){} ++(0, -.5) node(b9){} ++(0, -.5) node(b10){} ++(0, -.5) node(b11){};

\draw (b1) ++(2, 0) node(c1){} ++(0, -.5) node(c2){} ++(0, -.5) node(c3){} ++(0, -.5) node(c4){} ++(0, -.5) node(c5){} ++(0, -.5) node(c6){}
 ++(0, -.5) node(c7){} ++(0, -.5) node(c8){} ++(0, -.5) node(c9){} ++(0, -.5) node(c10){} ++(0, -.5) node(c11){};

\draw[fill] (a1) circle[radius = 0.025]; \draw[fill] (a4) circle [radius = 0.025];

\draw[fill] (b1) circle[radius = 0.025]; \draw[fill] (b4) circle[radius = 0.025]; \draw[fill] (b5) circle[radius = 0.025]; \draw[fill] (b8) circle[radius = 0.025];
\draw[fill] (b11) circle [radius = 0.025];

\draw[fill] (c1) circle[radius = 0.025]; \draw[fill] (c4) circle[radius = 0.025]; \draw[fill] (c5) circle[radius = 0.025]; \draw[fill] (c8) circle[radius = 0.025];
\draw[fill] (c11) circle [radius = 0.025];

\draw[->, green] (uj) to[out = 250, in = 45] (a1);

\draw[->] (a1) -- (a2); \draw[dotted] (a2) -- (a3); \draw[->] (a3) -- (a4);

\draw[->] (b1) -- (b2); \draw[dotted] (b2) -- (b3); \draw[->] (b3) -- (b4); \draw[->] (b4) -- (b5); \draw[->] (b5) -- (b6); \draw[dotted] (b6) -- (b7);
\draw[->] (b7) -- (b8); \draw[->] (b8) -- (b9); \draw[dotted] (b9) -- (b10); \draw[->] (b10) -- (b11);

\draw[->] (c1) -- (c2); \draw[dotted] (c2) -- (c3); \draw[->] (c3) -- (c4); \draw[->] (c4) -- (c5); \draw[->] (c4) -- (c5); \draw[->] (c5) -- (c6);
\draw[dotted] (c6) -- (c7); \draw[->] (c7) -- (c8); \draw[->] (c8) -- (c9); \draw[dotted] (c9) -- (c10); \draw[->] (c10) -- (c11);

\draw[->, blue] (c1) -- (b1); \draw[->, blue] (c4) -- (b4); \draw[->, blue] (c5) -- (b5); \draw[->, blue] (c8) -- (b8); \draw[->, blue] (c11) -- (b11);

\draw[->, green] (c4) to[out = 75, in = 280] (uj);
\draw[->, blue] (c5) -- (a1); \draw[->, blue] (c8) -- (a4);
\end{tikzpicture}
}

\caption{Local depictions of (a) the curve associated to a botched surgery moving a part of a higher butterfly to a lower butterfly and (b) a nonsurgery invariant curve. In (a), those visible vertices that are contained in $\overline\site$ are outlined in gray.}
\label{fig:special_curves}
\end{figure}
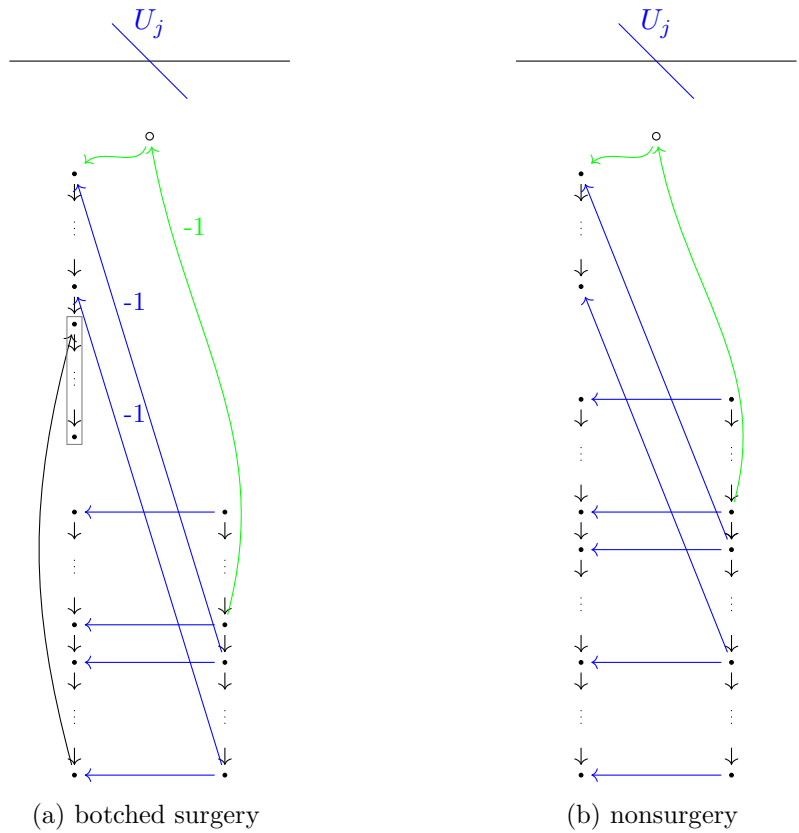

\begin{Lemma} \label{lemma:type23_curve}
Suppose $\tilde{p}_\gamma \in \MM$ is obtained through a botched butterfly surgery or nonsurgery construction with respect to $p \in \bowvar(\brane)^\torus$. Then,
\begin{enumerate}\itemsep=0pt
\item[$(1)$] $\tilde{p}_\gamma$ satisfies the S$1$, S$2$, and $\nu$ stability conditions.
\item[$(2)$] $p_\gamma \neq p$, so that $\dim(\torus.p_\gamma) > 0$.
\end{enumerate}
\end{Lemma}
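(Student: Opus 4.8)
The plan is to adapt, almost verbatim, the argument proving Lemma~\ref{lemma:type1_curve}. The essential observation is that $\tilde p_\gamma$ is the butterfly-diagram representative $\tilde p$ of the fixed point $p$ (which already lies in $\prequot$) together with a handful of extra edges: the edges joining the $U_{j'}$ butterfly to $\overline\site$ dictated by the rules of Section~\ref{sec:fixed_pts}, and, when $j<j'$, the momentum-correcting negative blue and green edges of Figure~\ref{fig:special_botched} (and, for the nonsurgery construction, the added green and blue edges of Figure~\ref{fig:nonsurgery_curve}). Since adjoining edges can only shrink the kernels and enlarge the images and reachable subspaces that occur in S1, S2 and the $\nu$ condition, the whole of part~(1) comes down to re-running the reachability arguments of Lemma~\ref{lemma:type1_curve} and checking the two genuinely new features carefully.

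Concretely, for part~(1) I would argue S1 and S2 one D5 brane at a time: for every brane whose triangle maps are untouched the conditions hold verbatim, and for the (few) affected branes they persist because the relevant kernels only shrink and the relevant images only grow. The delicate point is that in the botched case $\site$ meets the $X_n$ column, so the butterfly rules also create a new black ($B$) edge there bridging the $U_{j'}$ and $U_j$ butterflies, altering the $B_{U_1^-}$ map that enters S2 for the leftmost D5 brane $U_1$; but this new $B$-component carries the $U_{j'}$-part of $W_{X_n}$ into the $U_j$-part, so it leaves untouched the $B_{U_1^-}$-orbit, inside the $U_1$-part, of $\im A_{U_1}+\im a_{U_1}$, and S2 still holds. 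For the $\nu$ condition I would reuse the reachability argument of Lemma~\ref{lemma:type1_curve}: every vertex of every butterfly is reachable from the target of its $a$-map or the top vertex under $U_0^+$ by $A,A^{-1},B,C,D$ moves already present in $\tilde p$, and the new edges only add such moves; the constraint~\eqref{eqn:typeII_constraint} is precisely what makes the negative green edge land on a vertex of the correct height so that the corrected graph is genuinely the one the butterfly rules produce.

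For part~(2), note first that, exactly as in Section~\ref{sec:butterfly_surgery}, scaling the new edges by a parameter and sending it to $0$ recovers $\tilde p$, so $p\in\overline{\torus.p_\gamma}$; hence $\dim(\torus.p_\gamma)=0$ would force $\torus.p_\gamma=\{p\}$ and $p_\gamma=p$, and it is enough to show $p_\gamma\neq p$. For this I would use the $\G$-invariant quantity $D(q)$ defined as the dimension of the smallest subspace of $\W$ that contains $\sum_U\im(a_U)$ and is invariant under all $A,B,C,D$ maps of $q$: in the honest butterfly diagram $\tilde p$ each butterfly span is invariant under these maps, so the minimal such subspace is a direct sum over the butterflies, sitting inside the respective butterfly spans; but in $\tilde p_\gamma$ the sources of the new edges lie in the $U_{j'}$ butterfly and are forward-reachable from $\im(a_{U_{j'}})$ using moves already in $\tilde p$, so the minimal invariant subspace now also contains vertices of the linearly independent $U_j$-butterfly span, giving $D(\tilde p_\gamma)>D(\tilde p)$ and hence $p_\gamma\neq p$. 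I expect the main obstacle to lie here: one must check, in each of the three constructions (botched surgery with $j>j'$, botched surgery with $j<j'$, and the nonsurgery construction), that some source of a new edge really is forward-reachable from the $a$-images, and one must dispose of the degenerate case $a_{U_{j'}}=0$, in which the relevant construction is empty or trivial in any case.
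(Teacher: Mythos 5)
Your part (2) argument has a genuine flaw. You propose to separate $p_\gamma$ from $p$ by the $\G$-invariant quantity $D(q)$, the dimension of the smallest subspace of $\W$ containing $\sum_U\im(a_U)$ and invariant under all $A,B,C,D$ maps, claiming $D(\tilde p_\gamma)>D(\tilde p)$ because at $\tilde p$ this subspace sits properly inside the butterfly spans. But at a fixed point of a \emph{separated} bow variety that minimal subspace is already all of $\W$: the $a_{U_j}$ edge hits the top vertex of the column under $U_j^-$, the right side of the $U_j$ butterfly is a $c_j\times j$ grid swept out from that vertex by the leftward $A$ edges and downward $B$ edges, and every left-side vertex is then reached by the forward $C$ (left-down, magenta) edges, since $s_{ij}$ is nondecreasing in $i$ and the staircase alignment makes each left-column vertex the $C$-image of an existing vertex one column to the right. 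Hence $D(\tilde p)=\dim\W$, no strict increase is possible, and your invariant cannot distinguish the two points. The paper instead uses sharper $\G$-invariant (non)vanishing statements: for the nonsurgery construction and the botched surgery with $j<j'$, $b_{U_j}=0$ at every separated fixed point but $b_{U_j}\neq 0$ in $\tilde p_\gamma$; for the botched surgery with $j>j'$, $B^{\ell}a_{U_{j'}}$ with $\ell=d^{U_{j'}}_{U_{j'}^-}$ vanishes in $\tilde p$ but not in $\tilde p_\gamma$. Some substitute of this kind is needed to make (2) work.

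In part (1) your S1/S2 discussion is close to the paper's (though ``adjoining edges can only shrink kernels and enlarge images'' is not literally true for a map of the form $A+N$ and has to be replaced by the block-triangular structure of the new components, or, as the paper does for S1, by injectivity of the modified $A$ maps). The more serious omission is in the $\nu$ condition: the new edges do not merely ``add moves'', they \emph{modify} $A_{U_j}$, so the backward $A$-steps used in Lemma~\ref{lemma:type1_curve} are no longer legitimate as stated. If $w'$ is a source of a new edge, then $A^\gamma_{U_j}(w')=A_{U_j}(w')+w$ with $w$ in the $U_j$ butterfly, and to conclude membership of $w'$ in an invariant subspace $S$ from membership of $A_{U_j}(w')$ one must first know $w\in S$; the paper supplies exactly this by observing that $w$ is obtained from $a_{U_j}(\pm 1)$ by repeated application of $B$, so the extra component can be cancelled. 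Your remark about constraint~\eqref{eqn:typeII_constraint} fixing the height of the new green edge does not address this point.
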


\begin{proof}
(1) S1 follows from the fact that the $A$ maps are injective. S2 can be verified in the same way as for butterfly surgeries (see Lemma~\ref{lemma:type1_curve}). For the $\nu$ condition, we have to modify the strategy for butterfly surgeries slightly. The novelty is in botched butterfly surgeries moving part of a higher butterfly to a lower butterfly and nonsurgery curves. In these cases, for some vertices $w'$ of the $U_{j'}$ butterfly under $U_j^+$, there are two distinct $A_{U_j}$ edges with source $w'$. Namely, \smash{$A_{U_j}^\gamma(w') = A_{U_j}(w') + w$}, where $w$ is a vertex of the $U_j$ butterfly under \smash{$U_j^-$}, and we use the superscript of $\gamma$ to distinguish between the maps of $\tilde{p}$ and $\tilde{p}_\gamma$. However, $w$ can be obtained by repeatedly applying \smash{$B_{U_j}^-$} to \smash{$a_{U_j}(\pm 1)$}. Any vertex of the \smash{$U_{j'}$} butterfly can be obtained by~applying a sequence of $A$, $B$, $C$, $D$ maps to $a_{U_{j'}}(\pm 1)$ and canceling out the components of $A_{U_j}$ landing in the $U_j$ butterfly, as described above.

(2) Let \smash{$\tilde{p} \in \prequot$} be the point described by the butterfly diagram of $p$. The statement for~bot\-ched butterfly surgeries moving part of a higher butterfly to a lower butterfly and non\-sur\-gery curves~follows immediately from the fact that $b_{U_j}$ vanishes in $\tilde{p}$ but not in $\tilde{p}_\gamma$. For \linebreak butterfly~surgeries moving part of a lower butterfly to a higher butterfly, let
\smash{$\ell = d_{U_{j'}^-}^{U_{j'}}$},
the number of vertices of the $U_{j'}$ butterfly below $U_{j'}^-$. So, \smash{$B^\ell a_{U_{j'}}$} vanishes in $\tilde{p}$ but not in~$\tilde{p}_\gamma$.
\end{proof}

\begin{Lemma} \label{lemma:nonsurgery}
Let $\bowvar(\brane)$ be a separated bow variety and let $c = (c_1, \dots , c_m)$ denote the margin vector of D5 brane charges. Let $p\in \bowvar(\brane)^\torus$, and $1\leq j < j' \leq m$. Then, there are $\max\{0, c_{j'} - c_j\}$ invariant curves containing $p$ with tangent weights
\[
\frac{\ub_j}{\ub_{j'}}\hb, \frac{\ub_j}{\ub_{j'}}\hb^2, \dots , \frac{\ub_j}{\ub_{j'}}\hb^{\max\{0, c_{j'} - c_j\}}
\]
at $p$.
\end{Lemma}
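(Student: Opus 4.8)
The plan is to run the same argument as in Section~\ref{sec:butterfly_surgery}, adapted to the nonsurgery construction, and to use Lemma~\ref{lemma:type23_curve} to dispose of the part that is genuinely new. First I would observe that when $c_{j'}-c_j\leq 0$ there is nothing to prove, so assume $c_{j'}>c_j$ and fix $1\leq i\leq c_{j'}-c_j$. Form the point $\tilde p_\gamma\in\MM$ obtained from the butterfly diagram of $p$ by adding the green edge from $v'_i$ to the $U_j$ framing vertex and the blue edges from $v'_{i+k}$ to $v_k$ for $1\leq k\leq c_j$, as prescribed before the statement and depicted in Figure~\ref{fig:nonsurgery_curve}. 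The first task is to check that this is a legitimate point of the prequotient: the $0$-momentum conditions hold by construction since the new edges only touch the right side (the $X_n,\dots,X_{n+m}$ columns of the $U_j$ and $U_{j'}$ butterflies), where the relevant momentum condition on the $U_j$ triangle part involves $A$, $B'$, $a$, $b$ and is arranged exactly as in the botched surgery case; and the S$1$, S$2$, $\nu$ stability conditions are precisely the content of Lemma~\ref{lemma:type23_curve}(1). Hence $\tilde p_\gamma\in\prequot$ and descends to $p_\gamma\in\bowvar(\brane)$, with $p_\gamma\neq p$ by Lemma~\ref{lemma:type23_curve}(2) (the map $b_{U_j}$ vanishes in $\tilde p$ but not in $\tilde p_\gamma$). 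So $\dim(\torus.p_\gamma)>0$.

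Next I would pin down the dimension and the tangent weight. Scaling all the newly added edges by a common parameter $t\in\CC^\times$ gives a surjection $\CC^\times\to\torus.p_\gamma$ after a suitable $\G$-change of basis (multiply the vertices of the span of $\{v_k\}$ and of the span of the target of the new $b$-edge by $t^{-1}$, exactly as in Section~\ref{sec:butterfly_surgery}; the only edges not invariant under this are the new ones, whose scaling factor then cancels, and the old edges adjacent to these vertices, which pick up $t^{-1}$ and die as $t\to\infty$). This shows $\dim(\torus.p_\gamma)\leq 1$, hence $=1$, and that $p$ lies in the closure $\gamma$ of the orbit. The tangent weight of $\gamma$ at $p$ is read off from the added $b$-edge: its source is the framing vertex of the $U_j$ butterfly, carrying weight $\ub_j$, and its target is $v'_i$, the $i$-th vertex from the top of the $X_{n+j}$ column of the $U_{j'}$ butterfly, which by Definition~\ref{def:height} has height $1-i$ relative to that butterfly (the top vertex under $U_{j'}^+$ has height one less than the head of the green edge, which sits at height $0$; counting down, the $i$-th vertex has height $1-i$), so it carries weight $\ub_{j'}\hb^{1-i}$. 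The resulting tangent weight of the torus action on this edge, and hence the tangent weight of $\gamma$ at $p$, is therefore $\dfrac{\ub_j}{\ub_{j'}}\hb^{i}$, which ranges over $\dfrac{\ub_j}{\ub_{j'}}\hb,\dots,\dfrac{\ub_j}{\ub_{j'}}\hb^{\max\{0,c_{j'}-c_j\}}$ as $i$ runs from $1$ to $c_{j'}-c_j$.

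Finally, I would note that the $c_{j'}-c_j$ curves so produced are pairwise distinct: their tangent weights at $p$ are distinct monomials, so their tangent lines $T_p\gamma$ are distinct weight spaces (or distinct lines within a weight space when weights coincide with others — but here they are mutually distinct by construction), and distinctness of curves then follows from Proposition~\ref{prop:equiv_geo}(2). The main obstacle is the $\nu$-stability verification hidden in Lemma~\ref{lemma:type23_curve}(1): unlike the genuine butterfly-surgery case, here $A_{U_j}$ acquires an extra component landing inside the $U_j$ butterfly (via the new blue edges), so reaching every vertex of the $U_{j'}$ butterfly from $\im(a_{U_{j'}})$ requires following $A$, $A^{-1}$, $B$, $C$, $D$ edges and then cancelling the parasitic $U_j$-component of $A_{U_j}$, which is possible only because that component is itself reachable from $a_{U_j}(\pm1)$ by iterating $B_{U_j^-}$ — this is exactly the bookkeeping carried out in the proof of Lemma~\ref{lemma:type23_curve}, and everything else here is a routine transcription of Section~\ref{sec:butterfly_surgery}.
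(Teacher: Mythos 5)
Your proposal is correct and takes essentially the same route as the paper, which treats this lemma as a summary of the nonsurgery construction of Section~\ref{sec:nonsurgery}: prequotient membership and $p_\gamma\neq p$ via Lemma~\ref{lemma:type23_curve}, a common scaling of the new edges giving a $1$-dimensional orbit with $p$ in its closure, and the tangent weight read off the added $b$-edge by comparing heights. The only nitpicks are that the added green edge runs from $v'_i$ into the $U_j$ framing vertex (so $v'_i$ is its source, not its target), and that the exponent comes out as $\hb^{i}$ rather than $\hb^{i-1}$ precisely because $b$ is one of the circled, $\hb$-scaled maps — both points that your final weight $\frac{\ub_j}{\ub_{j'}}\hb^{i}$ already reflects correctly.
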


\subsection{Young diagram surgery}
The combinatorics of Sections~\ref{sec:butterfly_surgery} and~\ref{sec:botched_surgery} can be reformulated using the more familiar language of partitions and Young diagrams. Again, we fix a separated bow variety $\bowvar(\brane)$ and a~fixed point~${p\in \bowvar(\brane)^\torus}$. Let $M$ be the associated BCT and $c=(c_1, \dots , c_m)$ be the margin vector of D5 brane charges. For each D5 brane $U_j$, define a partition~\smash{$\lambda^{(j)}$} with $c_j$ distinct parts~\smash{$\lambda^{(j)}_1 > \cdots > \lambda^{(j)}_{c_i} > 0$} by
\begin{enumerate}\itemsep=0pt
\item[(1)] letting $i_1 < i_2 < \cdots < i_{c_j}$ be the indices for which $M(i_k, j) = 1$ and
\item[(2)] setting $\lambda^{(j)}_k = n - i_k + 1$.
\end{enumerate}
In other words, the parts of the partition correspond to the vertical positions of the 1's in the $j$-th column of $M$ measured from the bottom. The Young diagrams associated with these partitions will be drawn in a nonstandard way with the boxes aligned on the right. Each row of boxes represents a part of the partition, and the longer rows will be drawn on top. See Figure~\ref{fig:young} for an example. We will use $\lambda^{(j)}$ to denote the partition as well as its Young diagram.

\begin{figure}
\centering
\subcaptionbox{}[0.3\textwidth]
{
$
\begin{pmatrix}
1&0&0\\
0&0&1\\
0&0&1\\
1&0&0\\
1&0&0\\
1&1&0\\
0&1&1\\
0&0&0\\
1&0&0\\
0&0&1
\end{pmatrix}
$
}
\subcaptionbox{}[0.3\textwidth]
{
\begin{tikzpicture}[scale=0.35, xscale=-1]
\draw (0, 0)--(10, 0); \draw (0, -1)--(10, -1); \draw (0, -2)--(7, -2); \draw (0, -3)--(6, -3); \draw (0, -4)--(5, -4); \draw (0, -5)--(2, -5);
\draw (0, 0)--(0, -5); \draw (1, 0)--(1, -5); \draw (2, 0)--(2, -5); \draw (3, 0)--(3, -4); \draw (4, 0)--(4, -4); \draw (5, 0)--(5, -4); \draw (6, 0)--(6, -3); \draw (7, 0)--(7, -2); \draw (8, 0)--(8, -1); \draw (9, 0)--(9, -1); \draw (10, 0)--(10, -1);

\draw (0, -6)--(5, -6); \draw (0, -7)--(5, -7); \draw (0, -8)--(4, -8);
\draw (0, -6)--(0, -8); \draw (1, -6)--(1, -8); \draw (2, -6)--(2, -8); \draw (3, -6)--(3, -8); \draw (4, -6)--(4, -8); \draw (5, -6)--(5, -7);

\draw (0, -9)--(9, -9); \draw (0, -10)--(9, -10); \draw (0, -11)--(8, -11); \draw (0, -12)--(4, -12); \draw (0, -13)--(1, -13);
\draw (0, -9)--(0, -13); \draw (1, -9)--(1, -13); \draw (2, -9)--(2, -12); \draw (3, -9)--(3, -12); \draw (4, -9)--(4, -12); \draw (5, -9)--(5, -11); \draw (6, -9)--(6, -11); \draw (7, -9)--(7, -11); \draw (8, -9)--(8, -11); \draw (9, -9)--(9, -10);
\end{tikzpicture}
}
\caption{The (a) BCT and (b) Young diagrams associated to the fixed point of Figure~\ref{fig:butterfly_sides}. Our (nonstandard) convention is to draw the Young diagrams aligned on the right with longer rows on top.}
\label{fig:young}
\end{figure}
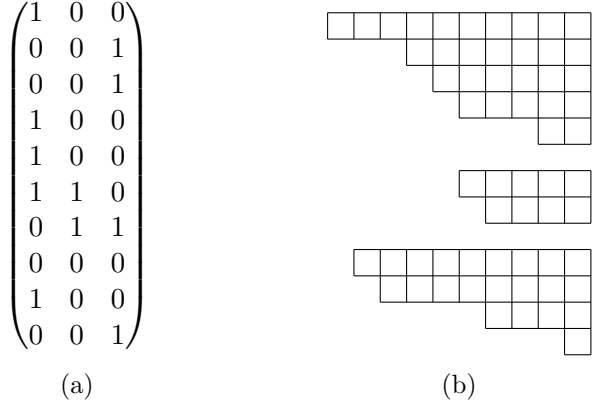

Note that these Young diagrams can also be realized by taking the vertices in the left side of each butterfly, realigning them at the top, and replacing them with boxes. Under this realization, butterfly surgeries and botched butterfly surgeries can be realized as surgery operations on Young diagrams. For $j\neq j'$, let $\site$ be a set of boxes of $\lambda^{(j)}$ with the property that if a box is in $\site$, then any box below it is also in $\site$. Suppose deleting $\site$ from $\lambda^{(j)}$ leaves a Young diagram with distinct parts, and stacking those boxes below $\lambda^{(j')}$ results in a Young diagram with distinct parts. Then, call the operation on the set of Young diagrams $\big\{\lambda^{(j)} \mid 1\leq j \leq m\big\}$ that deletes $\site$ from \smash{$\lambda^{(j)}$} and stacks it below \smash{$\lambda^{(j')}$} a ``Young diagram surgery''. If $\site$ does not contain any boxes in the rightmost column, then the Young diagram surgery corresponds to a butterfly surgery. Otherwise, if $\site$ contains at least one box of the rightmost column, then the Young diagram surgery corresponds to a botched butterfly surgery. The constraint \eqref{eqn:typeII_constraint} is reformulated as
\begin{equation} \label{eqn:box_constraint}
\site\text{ contains at least }c_j - c_{j'} + 1\text{ boxes of the rightmost column.}
\end{equation}
This is simply a reformulation of Sections~\ref{sec:butterfly_surgery} and~\ref{sec:botched_surgery}. By Lemmas~\ref{lemma:butterfly_surgery} and~\ref{lemma:botched_surgery}, Young diagram surgeries satisfying the additional constraint \eqref{eqn:box_constraint} whenever a box in the rightmost column is moved correspond to pencils of invariant curves. Given a box $\bx$ in a Young diagram $\lambda$, define its height $y(\bx)$ by $y(\bx) = 1 - i$, where $\bx$ belongs to the $i$-th row of $\lambda$. The tangent weight of the invariant curve is given by comparing the heights of any box in $\site$ before and after the surgery. Namely, the exponent of $\hb$ is $-\Delta_\surgery y = y(\bx)-y(\bx')$, where $\bx$ is any box in $\site$ and $\bx'$ is the result of applying $\surgery$ to $\bx$. Treating the Young diagram as a subset of $\RR^2$ and each box as a closed square, the number of connected components of $\site$ is precisely the number of connected components of the corresponding (botched) butterfly surgery. In Figure~\ref{fig:yd_surgeries}, we reformulate Figure~\ref{fig:butterfly_sides} in the language of Young diagram surgeries.

\begin{Lemma} \label{lemma:Young_curves}
Let $\brane$ be a separated brane diagram and $p\in\bowvar(\brane)^\torus$. Let $\surgery$ be a surgery on the corresponding Young diagrams that moves boxes from $\lambda^{(j)}$ to $\lambda^{(j')}$. If $j<j'$ and the site $\site$ contains at least one box of the rightmost column, then impose the additional constraint that $\site$ contains at least $c_j-c_{j'}+1$ boxes of the rightmost column, where $c=(c_1,\dots ,c_m)$ is the margin vector of D$5$ brane charges. Then, there is a $k$-dimensional pencil $\pen_\surgery$ of $\torus$-invariant curves containing $p$ with tangent weight
\smash{$
\frac{\ub_j}{\ub_{j'}}\hb^{-\Delta_\surgery y}$},
where $k$ is the number of connected components of~$\site$.
\end{Lemma}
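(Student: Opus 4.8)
The plan is to treat Lemma~\ref{lemma:Young_curves} as a pure translation statement: reduce it to Lemmas~\ref{lemma:butterfly_surgery} and~\ref{lemma:botched_surgery} by making precise the dictionary between the left side of butterflies and right-justified Young diagrams. First I would fix the bijection at the level of vertices and boxes. Given the fixed point $p$ with BCT $M$, the left side of the $U_j$ butterfly has the inverted-staircase shape described in Section~\ref{sec:botched_surgery}; realigning each column at the top and replacing vertices by boxes produces exactly $\lambda^{(j)}$, with the $k$-th row from the top corresponding to the column of ones at position $i_k$ and having length $\lambda^{(j)}_k = n - i_k + 1$. Under this correspondence I would check that a Young-diagram site $\site$ --- a down-closed set of boxes of $\lambda^{(j)}$ --- corresponds precisely to a subgraph of the left side of the $U_j$ butterfly that is $B$, $C$, $D$-invariant in the sense of Sections~\ref{sec:butterfly_surgery}--\ref{sec:botched_surgery}: the down-closure of boxes is literally closure under following $B$ edges downward within a column, and the $C$, $D$ edges on the left side never leave $\site$ once $\site$ is down-closed. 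I expect this to be essentially immediate from the edge-drawing rules of Section~\ref{sec:fixed_pts}.

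Next I would match the remaining hypotheses. The requirement that deleting $\site$ from $\lambda^{(j)}$ leaves a Young diagram with distinct parts is equivalent to the condition that the graph $\but_2$ obtained by deleting the corresponding vertices and incident edges is again (the left side of) a butterfly; likewise, the requirement that stacking $\site$ below $\lambda^{(j')}$ yields distinct parts is equivalent to $\but'_2$ being a butterfly once the translated site is reattached via the rules of Section~\ref{sec:fixed_pts}. With that in hand the argument splits into the two cases already anticipated in the paragraph preceding the lemma. If $\site$ has no box in the rightmost column, then $\site$ is disjoint from the $X_n$ column, so $\surgery$ does not touch the $X_n$ column and is an honest butterfly surgery; Lemma~\ref{lemma:butterfly_surgery} supplies the $k$-dimensional pencil $\pen_\surgery$ with the claimed tangent weight $\frac{\ub_j}{\ub_{j'}}\hb^{-\Delta_\surgery y}$, where $k$ is the number of connected components of $\site$. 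If $\site$ meets the rightmost column, then $\site$ contains a vertex of the $X_n$ column and $\surgery$ is a botched butterfly surgery; here, when $j < j'$, the hypothesis that $\site$ contains at least $c_j - c_{j'} + 1$ boxes of the rightmost column is exactly constraint~\eqref{eqn:typeII_constraint}, so Lemma~\ref{lemma:botched_surgery} applies verbatim.

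Finally I would reconcile the two combinatorial invariants. Viewing a Young diagram as a union of closed unit squares in $\RR^2$, its connected components are in obvious bijection with those of the associated subgraph of the butterfly diagram, so the integer $k$ is unambiguous. For the tangent weight I would observe that the box-height convention $y(\bx) = 1 - i$ for a box in the $i$-th row and the vertex-height convention of Definition~\ref{def:height} differ, within any one butterfly, only by an additive constant; since the sites in $\lambda^{(j)}$ and $\lambda^{(j')}$ are aligned in the prescribed way, the constant cancels in the difference, giving $y(\bx) - y(\bx') = y(v) - y(v') = -\Delta_\surgery y$, which is the exponent of $\hb$ output by Lemmas~\ref{lemma:butterfly_surgery} and~\ref{lemma:botched_surgery}. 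The main obstacle --- and really the only nontrivial point --- is this height bookkeeping: one must confirm that top-realignment shifts every box in a given column by the same amount and that the shift is compatible across the two butterflies involved, so that $y(\bx) - y(\bx')$ genuinely equals $-\Delta_\surgery y$ independent of the chosen box; everything else is a direct unwinding of definitions.
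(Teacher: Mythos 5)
Your proposal is correct and follows essentially the same route as the paper: the paper gives no separate proof of Lemma~\ref{lemma:Young_curves}, justifying it in the preceding paragraph as a direct reformulation of the butterfly and botched butterfly surgeries of Sections~\ref{sec:butterfly_surgery} and~\ref{sec:botched_surgery}, so the lemma follows from Lemmas~\ref{lemma:butterfly_surgery} and~\ref{lemma:botched_surgery} exactly as you argue, with the same dictionary (boxes $\leftrightarrow$ left-side vertices, constraint~\eqref{eqn:box_constraint} $\leftrightarrow$ constraint~\eqref{eqn:typeII_constraint}, box heights $\leftrightarrow$ vertex heights, components of $\site$ as a union of closed squares $\leftrightarrow$ components of the surgery site). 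Your translation is in fact spelled out in slightly more detail than the paper's (e.g., the height-convention bookkeeping and the invariance/down-closedness check), which is fine; the only quibble is that on the left side the downward closure is enforced by the $C$, $D$ (and, in the $X_n$ column, $B$) edges rather than by $B$ edges alone, but this does not affect the argument.
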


\begin{figure}
\centering
\subcaptionbox{$w = \frac{\ub_1}{\ub_3}\hb$}
{
\begin{tikzpicture}[scale=0.31, xscale=-1]
\draw (0, -2)--(10, -2); \draw (0, -3)--(10, -3); \draw (0, -4)--(4, -4); \draw (0, -5)--(1, -5);
\draw (0, -2)--(0, -5); \draw (1, -2)--(1, -5); \draw (2, -2)--(2, -4); \draw (3, -2)--(3, -4); \draw (4, -2)--(4, -4); \draw (5, -2)--(5, -3); \draw (6, -2)--(6, -3); \draw (7, -2)--(7, -3); \draw (8, -2)--(8, -3); \draw (9, -2)--(9, -3); \draw (10, -2)--(10, -3);

\draw (0, -6)--(5, -6); \draw (0, -7)--(5, -7); \draw (0, -8)--(4, -8);
\draw (0, -6)--(0, -8); \draw (1, -6)--(1, -8); \draw (2, -6)--(2, -8); \draw (3, -6)--(3, -8); \draw (4, -6)--(4, -8); \draw (5, -6)--(5, -7);

\draw (0, -9)--(9, -9); \draw (0, -10)--(9, -10); \draw (0, -11)--(8, -11); \draw (0, -12)--(4, -12); \draw (0, -13)--(1, -13);
\draw (0, -9)--(0, -13); \draw (1, -9)--(1, -13); \draw (2, -9)--(2, -12); \draw (3, -9)--(3, -12); \draw (4, -9)--(4, -12); \draw (5, -9)--(5, -11); \draw (6, -9)--(6, -11); \draw (7, -9)--(7, -11); \draw (8, -9)--(8, -11); \draw (9, -9)--(9, -10);

\begin{scope}[yshift=-10cm]
\draw[fill=gray] (4, -1) rectangle (5, -2); \draw[fill=gray] (5, -1) rectangle (6, -2); \draw[fill=gray] (6, -1) rectangle (7, -2);

\draw[fill=gray] (1, -2) rectangle (2, -3); \draw[fill=gray] (2, -2) rectangle (3, -3); \draw[fill=gray] (3, -2) rectangle (4, -3);
\draw[fill=gray] (4, -2) rectangle (5, -3); \draw[fill=gray] (5, -2) rectangle (6, -3);

\draw[fill=gray] (0, -3) rectangle (1, -4); \draw[fill=gray] (1, -3) rectangle (2, -4); \draw[fill=gray] (2, -3) rectangle (3, -4);
\draw[fill=gray] (3, -3) rectangle (4, -4); \draw[fill=gray] (4, -3) rectangle (5, -4);

\draw[fill=gray] (0, -4) rectangle (1, -5); \draw[fill=gray] (1, -4) rectangle (2, -5);
\end{scope}

\draw[->, thick] (13,-6.5)--(11,-6.5);

\begin{scope}[xshift = 14cm, yshift = -2cm]
\draw (0, 0)--(10, 0); \draw (0, -1)--(10, -1); \draw (0, -2)--(7, -2); \draw (0, -3)--(6, -3); \draw (0, -4)--(5, -4); \draw (0, -5)--(2, -5);
\draw (0, 0)--(0, -5); \draw (1, 0)--(1, -5); \draw (2, 0)--(2, -5); \draw (3, 0)--(3, -4); \draw (4, 0)--(4, -4); \draw (5, 0)--(5, -4); \draw (6, 0)--(6, -3); \draw (7, 0)--(7, -2); \draw (8, 0)--(8, -1); \draw (9, 0)--(9, -1); \draw (10, 0)--(10, -1);

\draw (0, -6)--(5, -6); \draw (0, -7)--(5, -7); \draw (0, -8)--(4, -8);
\draw (0, -6)--(0, -8); \draw (1, -6)--(1, -8); \draw (2, -6)--(2, -8); \draw (3, -6)--(3, -8); \draw (4, -6)--(4, -8); \draw (5, -6)--(5, -7);

\draw (0, -9)--(9, -9); \draw (0, -10)--(9, -10); \draw (0, -11)--(8, -11); \draw (0, -12)--(4, -12); \draw (0, -13)--(1, -13);
\draw (0, -9)--(0, -13); \draw (1, -9)--(1, -13); \draw (2, -9)--(2, -12); \draw (3, -9)--(3, -12); \draw (4, -9)--(4, -12); \draw (5, -9)--(5, -11); \draw (6, -9)--(6, -11); \draw (7, -9)--(7, -11); \draw (8, -9)--(8, -11); \draw (9, -9)--(9, -10);

\draw[fill=gray] (4, -1) rectangle (5, -2); \draw[fill=gray] (5, -1) rectangle (6, -2); \draw[fill=gray] (6, -1) rectangle (7, -2);

\draw[fill=gray] (1, -2) rectangle (2, -3); \draw[fill=gray] (2, -2) rectangle (3, -3); \draw[fill=gray] (3, -2) rectangle (4, -3);
\draw[fill=gray] (4, -2) rectangle (5, -3); \draw[fill=gray] (5, -2) rectangle (6, -3);

\draw[fill=gray] (0, -3) rectangle (1, -4); \draw[fill=gray] (1, -3) rectangle (2, -4); \draw[fill=gray] (2, -3) rectangle (3, -4);
\draw[fill=gray] (3, -3) rectangle (4, -4); \draw[fill=gray] (4, -3) rectangle (5, -4);

\draw[fill=gray] (0, -4) rectangle (1, -5); \draw[fill=gray] (1, -4) rectangle (2, -5);
\end{scope}
\end{tikzpicture}
}
\hfill
\subcaptionbox{$w = \frac{\ub_3}{\ub_2}$}
{
\begin{tikzpicture}[scale=0.31, xscale=-1]
\draw (0, 0)--(10, 0); \draw (0, -1)--(10, -1); \draw (0, -2)--(7, -2); \draw (0, -3)--(6, -3); \draw (0, -4)--(5, -4); \draw (0, -5)--(2, -5);
\draw (0, 0)--(0, -5); \draw (1, 0)--(1, -5); \draw (2, 0)--(2, -5); \draw (3, 0)--(3, -4); \draw (4, 0)--(4, -4); \draw (5, 0)--(5, -4); \draw (6, 0)--(6, -3); \draw (7, 0)--(7, -2); \draw (8, 0)--(8, -1); \draw (9, 0)--(9, -1); \draw (10, 0)--(10, -1);

\draw (0, -6)--(5, -6); \draw (0, -7)--(5, -7); \draw (0, -8)--(4, -8);
\draw (0, -6)--(0, -8); \draw (1, -6)--(1, -8); \draw (2, -6)--(2, -8); \draw (3, -6)--(3, -8); \draw (4, -6)--(4, -8); \draw (5, -6)--(5, -7);

\draw (0, -11)--(5, -11); \draw (0, -12)--(5, -12); \draw (0, -13)--(4, -13);
\draw (0, -11)--(0, -13); \draw (1, -11)--(1, -13); \draw (2, -11)--(2, -13); \draw (3, -11)--(3, -13); \draw (4, -11)--(4, -13); \draw (5, -11)--(5, -12);

\begin{scope}[yshift=3cm]
\draw[fill=gray] (5, -9) rectangle (6, -10); \draw[fill=gray] (6, -9) rectangle (7, -10); \draw[fill=gray] (7, -9) rectangle (8, -10);
\draw[fill=gray] (8, -9) rectangle (9, -10);

\draw[fill=gray] (4, -10) rectangle (5, -11); \draw[fill=gray] (5, -10) rectangle (6, -11); \draw[fill=gray] (6, -10) rectangle (7, -11);
\draw[fill=gray] (7, -10) rectangle (8, -11);

\draw[fill=gray] (0, -11) rectangle (1, -12); \draw[fill=gray] (1, -11) rectangle (2, -12); \draw[fill=gray] (2, -11) rectangle (3, -12);
\draw[fill=gray] (3, -11) rectangle (4, -12);

\draw[fill=gray] (0, -12) rectangle (1, -13);
\end{scope}

\draw[->, thick] (13,-6.5)--(11,-6.5);

\begin{scope}[xshift = 14cm, yshift = 0cm]
\draw (0, 0)--(10, 0); \draw (0, -1)--(10, -1); \draw (0, -2)--(7, -2); \draw (0, -3)--(6, -3); \draw (0, -4)--(5, -4); \draw (0, -5)--(2, -5);
\draw (0, 0)--(0, -5); \draw (1, 0)--(1, -5); \draw (2, 0)--(2, -5); \draw (3, 0)--(3, -4); \draw (4, 0)--(4, -4); \draw (5, 0)--(5, -4); \draw (6, 0)--(6, -3); \draw (7, 0)--(7, -2); \draw (8, 0)--(8, -1); \draw (9, 0)--(9, -1); \draw (10, 0)--(10, -1);

\draw (0, -6)--(5, -6); \draw (0, -7)--(5, -7); \draw (0, -8)--(4, -8);
\draw (0, -6)--(0, -8); \draw (1, -6)--(1, -8); \draw (2, -6)--(2, -8); \draw (3, -6)--(3, -8); \draw (4, -6)--(4, -8); \draw (5, -6)--(5, -7);

\draw (0, -9)--(9, -9); \draw (0, -10)--(9, -10); \draw (0, -11)--(8, -11); \draw (0, -12)--(4, -12); \draw (0, -13)--(1, -13);
\draw (0, -9)--(0, -13); \draw (1, -9)--(1, -13); \draw (2, -9)--(2, -12); \draw (3, -9)--(3, -12); \draw (4, -9)--(4, -12); \draw (5, -9)--(5, -11); \draw (6, -9)--(6, -11); \draw (7, -9)--(7, -11); \draw (8, -9)--(8, -11); \draw (9, -9)--(9, -10);

\draw[fill=gray] (5, -9) rectangle (6, -10); \draw[fill=gray] (6, -9) rectangle (7, -10); \draw[fill=gray] (7, -9) rectangle (8, -10);
\draw[fill=gray] (8, -9) rectangle (9, -10);

\draw[fill=gray] (4, -10) rectangle (5, -11); \draw[fill=gray] (5, -10) rectangle (6, -11); \draw[fill=gray] (6, -10) rectangle (7, -11);
\draw[fill=gray] (7, -10) rectangle (8, -11);

\draw[fill=gray] (0, -11) rectangle (1, -12); \draw[fill=gray] (1, -11) rectangle (2, -12); \draw[fill=gray] (2, -11) rectangle (3, -12);
\draw[fill=gray] (3, -11) rectangle (4, -12);

\draw[fill=gray] (0, -12) rectangle (1, -13);
\end{scope}
\end{tikzpicture}
}

\caption{The two botched butterfly surgeries of Figure~\ref{fig:butterfly_sides} reformulated as Young diagram surgeries. The sites are shaded gray. Both sites are connected, so exactly two curves are obtained (no infinite pencils). The tangent weights $w$ of the corresponding invariant curves are also shown.}
\label{fig:yd_surgeries}
\end{figure}
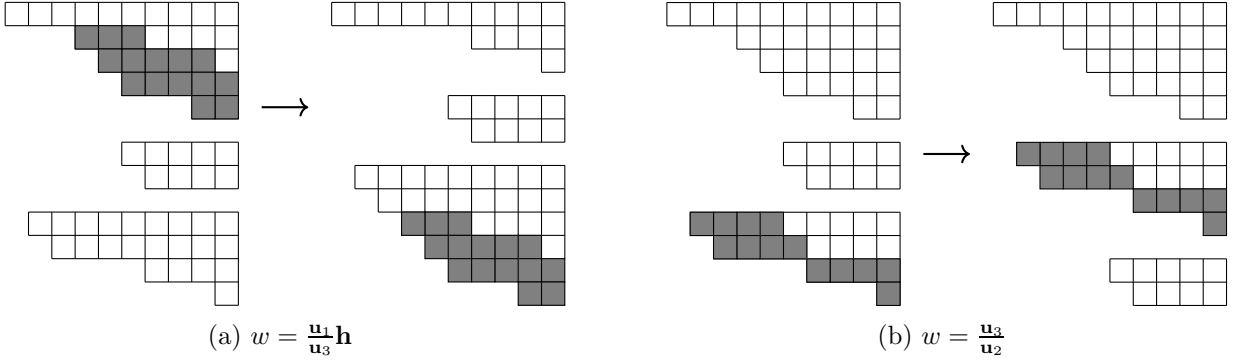

\begin{Lemma} \label{lemma:surgery_uniqueness}
Let $\surgery$ be a Young diagram surgery with connected site $\site$. Then, $\surgery$ is uniquely determined by its action on any box in $\site$.
\end{Lemma}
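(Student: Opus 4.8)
A Young diagram surgery $\surgery$ is completely encoded by three pieces of data: the source index $j$, the target index $j'$, and the site $\site\subseteq\lambda^{(j)}$ --- the restacking of $\site$ below $\lambda^{(j')}$ being canonical, namely directly underneath, column by column, with no lateral displacement. Since the fixed point $p$ is fixed in advance, all of the diagrams $\lambda^{(1)},\dots,\lambda^{(m)}$ are known, so the plan is to recover the triple $(j,j',\site)$ from a single pair $(\bx,\bx')$, where $\bx\in\site$ is an arbitrary box and $\bx'=\surgery(\bx)$.

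First I would read off the cheap data. The box $\bx$ lies in the diagram $\lambda^{(j)}$ and $\bx'$ lies in (the modified) $\lambda^{(j')}$, so $j$ and $j'$ are immediate. Next, because the construction involves no lateral movement, $\bx$ and $\bx'$ lie in the same column, say the column below the segment $X_{i_0}$, where $\site$ is appended directly beneath the $s_{i_0 j'}$ boxes already carried by $\lambda^{(j')}$. Comparing the row of $\bx$ inside $\lambda^{(j)}$ with the row of $\bx'$ inside the restacked diagram then determines the single integer $r_{i_0}$, the number of boxes of $\site$ in that column. (Equivalently, this records $\Delta_\surgery y$, shown to be constant on $\site$ in Section~\ref{sec:butterfly_surgery}, together with the position of $\bx$ inside its column-run.) Since $\site$ is downward closed in columns, $\site$ is the union over all columns $X_\kappa$ of the bottom $r_\kappa$ boxes of that column, so the problem reduces to showing that the whole profile $(r_1,\dots,r_n)$ is forced by the single value $r_{i_0}$.

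This last step is the crux, and I would handle it by propagating outward from the column $X_{i_0}$. The requirement that deleting $\site$ leave a Young diagram forces the ``remaining'' column heights $s_{\kappa j}-r_\kappa$ to be weakly increasing in $\kappa$, and the requirement that restacking yield a Young diagram forces the ``stacked'' heights $s_{\kappa j'}+r_\kappa$ to be weakly increasing; since both of these diagrams must moreover have \emph{distinct} parts --- equivalently, scanning columns from the left, each of these two sequences increases by at most one at every step --- the four resulting inequalities confine $r_{\kappa+1}$ to at most two values once $r_\kappa$ is known, and in two of the four possible local configurations they already pin it down exactly. In the remaining configurations one must appeal to the connectedness of $\site$ --- which for consecutive columns says precisely that the two vertical runs of $\site$-boxes overlap --- and to the global force of the distinct-parts conditions: a locally admissible but incorrect choice of $r_{\kappa+1}$ propagates to a violation of ``distinct parts'' or of the eventual termination of $\site$ further along the diagram. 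Iterating in both directions then recovers $(r_1,\dots,r_n)$, hence $\site$, hence $\surgery$. The main obstacle is exactly this rigidity claim; the delicate point is that ``distinct parts'' is a global statement about which intermediate row lengths actually occur, so showing that it combines with the local overlap condition coming from connectedness into a deterministic column-by-column recursion --- in particular resolving the two genuinely ambiguous local configurations --- is where the real work lies.
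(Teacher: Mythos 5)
There is a genuine gap, and you locate it yourself: the column-by-column recursion at the heart of your argument is never closed. You concede that your four inequalities (weak monotonicity and ``increase by at most one'' for both the depleted and the stacked diagrams) only confine the next count to two values, and that in the remaining configurations one must appeal to connectedness plus some global consequence of the distinct-parts conditions and the ``eventual termination'' of $\site$ --- but you do not carry out that argument, so the crux of the lemma is described rather than proved. The ambiguity you are fighting is also an artifact of discarding the strongest available piece of structure. You mention in passing that $\Delta_\surgery y$ is constant on $\site$, but you never use it in the propagation. A Young diagram surgery is a \emph{rigid} vertical translation: every box of $\site$ moves by the same row displacement $d$, staying in its column, and in each column the translated run must sit flush beneath $\lambda^{(j')}$, since columns of a Young diagram are contiguous from the top. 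Hence for every column $\kappa$ that meets $\site$ the count of site boxes there is forced outright to be $d+h_\kappa-h'_\kappa$, where $h_\kappa$ and $h'_\kappa$ are the known column heights of $\lambda^{(j)}$ and $\lambda^{(j')}$; there is no two-valued ambiguity in the counts at all, only the binary question of which columns meet $\site$.

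That binary question is exactly what the paper settles locally, and this is the content of its (short) proof: rigidity means the image of the given box determines the would-be image of every box of $\site$, and for a box adjacent to a known site box one can decide membership directly. For instance, if a site box in column $\kappa$, row $i$, has image in row $i+d$, then the stacked diagram (having weakly increasing column heights to the right) contains a box in row $i+d$ of column $\kappa+1$; that box belongs to $\lambda^{(j')}$ precisely when $h'_{\kappa+1}\ge i+d$ and must be the image of the neighboring box at row $i$, column $\kappa+1$, precisely when $h'_{\kappa+1}<i+d$, so membership of that neighbor is decided by comparing $i+d$ with $h'_{\kappa+1}$; the other neighbors are handled even more easily using downward closure, the flush condition, and monotonicity of the depleted diagram's columns. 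Connectedness of $\site$ then propagates this determination from the given box through all of $\site$, which is the paper's three-step argument (rigidity, local determination of adjacent membership, connectedness). If you replace your inequality bookkeeping by the forced counts $d+h_\kappa-h'_\kappa$ together with such a local membership criterion, your ``genuinely ambiguous local configurations'' disappear and the proof closes; as written, the proposal stops short of proving the statement.
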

\begin{proof}
Since Young diagram surgeries are rigid translations, the action of $\surgery$ on a box in $\site$ completely determines the action of $\surgery$ on any other box in $\site$. It is trivial to show that the action~$\surgery$ on any box in $\site$ completely determines whether or not each adjacent box is in $\site$. The result follows from these two statements and the connectedness of $\site$.
\end{proof}

\begin{Corollary} \label{cor:disjoint_sites}
Let $\surgery_1$ and $\surgery_2$ be two connected Young diagram surgeries with $\Delta_{\surgery_1}y=\Delta_{\surgery_2}y$. Then, their sites are either disjoint or equal.
\end{Corollary}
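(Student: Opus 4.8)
The plan is to reduce the statement to Lemma~\ref{lemma:surgery_uniqueness}. First, if $\surgery_1$ and $\surgery_2$ move boxes out of different Young diagrams of the collection $\bigl\{\lambda^{(j)}\bigr\}$, then $\site_1$ and $\site_2$ are contained in different diagrams and are automatically disjoint, so the conclusion holds. Hence I may assume $\surgery_1$ and $\surgery_2$ share a source diagram $\lambda^{(j)}$; suppose moreover that $\bx\in\site_1\cap\site_2$, and aim to prove $\site_1=\site_2$.

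Next I would analyse the action of each $\surgery_\ell$ on the box $\bx$. A Young diagram surgery only re-assigns a box to a new diagram and shifts its row; it never changes the box's column (equivalently, the underlying segment $X$), and the row of the image is recovered from its height via $y\bigl(\surgery_\ell(\bx)\bigr)=y(\bx)+\Delta_{\surgery_\ell}y$. Since $\Delta_{\surgery_1}y=\Delta_{\surgery_2}y$, the images $\surgery_1(\bx)$ and $\surgery_2(\bx)$ occupy the same row and the same column; the only remaining ambiguity in the action of $\surgery_\ell$ on $\bx$ is the identity of the target diagram $\lambda^{(j'_\ell)}$. I would then argue that this is forced as well: stacking $\site_\ell$ below $\lambda^{(j'_\ell)}$ must produce an honest Young diagram with distinct parts, and together with the downward-closedness of $\site_\ell$ and the fixed position of $\bx$ inside it, the row and column of $\surgery_\ell(\bx)$ leave exactly one admissible target. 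Granting this, $\surgery_1$ and $\surgery_2$ act identically on $\bx$, so Lemma~\ref{lemma:surgery_uniqueness} yields $\surgery_1=\surgery_2$, whence $\site_1=\site_2$.

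The step I expect to be the main obstacle is precisely this last one: ruling out two genuinely distinct surgeries out of $\lambda^{(j)}$, with equal $\Delta y$ and a common box, that nonetheless carve out different sites. Rather than classifying the possible targets head-on, the cleaner route is probably to run, in parallel for $\surgery_1$ and $\surgery_2$, the adjacency-propagation argument from the proof of Lemma~\ref{lemma:surgery_uniqueness}: starting at $\bx$, membership of each neighbouring box in a connected surgery site is decided by whether that box's column-preserving, height-shifted image is a legal box of the re-stacked diagram, a test that depends only on $\Delta_\surgery y$ and on data already common to $\surgery_1$ and $\surgery_2$. Thus $\site_1$ and $\site_2$ grow identically outward from $\bx$, and connectedness forces $\site_1=\site_2$. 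I would also keep in mind that only the equality of \emph{sites}, not of surgeries, is asserted, so if a residual gap remains it suffices to compare $\site_1$ and $\site_2$ column by column, using downward-closedness and the constancy of $\Delta_\surgery y$ along each site.
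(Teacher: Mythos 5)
Your overall route --- reduce to Lemma~\ref{lemma:surgery_uniqueness} by looking at a box shared by the two sites --- is exactly how the paper deduces Corollary~\ref{cor:disjoint_sites}, and your handling of the different-source case is fine. The genuine gap is the step you yourself flag, and neither of your two proposed fixes works: the identity of the target diagram is \emph{not} forced by the row and column of $\surgery_\ell(\bx)$, and the adjacency test does \emph{not} depend only on $\Delta_\surgery y$ and data common to $\surgery_1$ and $\surgery_2$ --- whether the height-shifted image of a neighbouring box is ``legal'' depends on the shape of the target $\lambda^{(j'_\ell)}$ (whether the target already occupies the relevant row and column), which differs between the two surgeries when their targets differ. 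Concretely, take a fixed point with $\lambda^{(3)}=(3,2)$, $\lambda^{(1)}=(4)$, $\lambda^{(2)}=(1)$ (for instance the BCT with rows $(1,0,0)$, $(0,0,1)$, $(0,0,1)$, $(0,1,0)$ on the separated diagram $\ttt{\fs 1\fs 2\fs 3\fs 4\bs 3\bs 2\bs}$). Let $\surgery_1$ move the second row of $\lambda^{(3)}$ onto $\lambda^{(1)}$, producing $(3)$ and $(4,2)$, and let $\surgery_2$ move every box of $\lambda^{(3)}$ except the rightmost box of its first row onto $\lambda^{(2)}$, producing $(1)$ and $(3,2)$ (both targets have smaller index than the source, so constraint~\eqref{eqn:box_constraint} is not in play). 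Both sites are connected and downward closed, all resulting diagrams have distinct parts, and in both surgeries every box keeps its row number, so $\Delta_{\surgery_1}y=\Delta_{\surgery_2}y=0$; yet the two sites share the second row of $\lambda^{(3)}$ and one is a proper subset of the other. So starting the propagation at the shared box genuinely branches according to the target, and no argument of the kind you sketch can close the gap at this level of generality.

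What this shows is that the statement needs (and, in the only place it is used, Proposition~\ref{prop:pencil_generation}, automatically has) the additional hypothesis that the two surgeries move boxes between the \emph{same} pair of diagrams: there the curves share the tangent weight $\frac{\ub_j}{\ub_{j'}}\hb^{-\Delta_\surgery y}$, which fixes the source, the target, and $\Delta_\surgery y$ simultaneously. Once $j'_1=j'_2$ is assumed, your first paragraph already finishes the proof: the two surgeries act identically on the shared box (same target, same column, same row because $\Delta_{\surgery_1}y=\Delta_{\surgery_2}y$), so Lemma~\ref{lemma:surgery_uniqueness} gives $\surgery_1=\surgery_2$ and hence equal sites, with no target-forcing step at all. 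In short: either build the equal-target hypothesis into the statement you prove, or be aware that what you are trying to establish without it is false; the one thing you cannot do is recover the target from $\Delta_\surgery y$ and the shared box alone.
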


\section{BCT block swap moves} \label{sec:block_swap}
An elementary result on BCTs is that any two BCTs with the same margin vectors are related by a sequence of ``swap moves''. A swap move acts on a BCT $M$ by taking a $\left(\begin{smallmatrix} 1 & 0 \\ 0 & 1 \end{smallmatrix}\right)$ minor and replacing it with $\left(\begin{smallmatrix} 0 & 1 \\ 1 & 0 \end{smallmatrix}\right)$ or vice-versa. A similar type of combinatorial move will provide an alternative combinatorial code for compact $\torus$-invariant curves.

\begin{Remark}
Swap moves are also used in {\rm\cite{W}}, where they are called ``simple moves'' and appear in a generalization of the Chevalley--Monk formula.
\end{Remark}

Let us first introduce some preliminary notions.

\begin{Definition} \label{def:blocks}
Let $M$ be the BCT of some $p\in\bowvar(\brane)^\torus$. Call a minor
\[
\overline M = \overline{M}(i_1, i_2, j_1, j_2) = \left(M_{ij}\right)^{i = i_1, i_1+1, \dots , i_2}_{j = j_1, j_2}
\]
consisting of two columns and any number of consecutive rows of $M$ a ``block''. Furthermore, define
\[
\delta\bigl(\overline M\bigr) = \begin{cases}
\displaystyle\sum_{i = i_1}^{i_2} (M_{i j_1} - M_{i j_2}) & \text{if }j_1 < j_2, \\
\displaystyle\sum_{i = i_1}^{i_2} (M_{i j_2} - M_{i j_1}) & \text{if }j_1 > j_2.
\end{cases}
\]
Call a block $\overline M$ ``matched'' if $\delta\bigl(\overline M\bigr) = 0$.
\end{Definition}

The following result provides an algorithmic way of constructing Young diagram surgeries. It will be a key ingredient in the proof of our main result, Theorem~\ref{thm:curves}.

\begin{Proposition} \label{prop:BCT_curves}
Let $M$ be the BCT of some $p\in\bowvar(\brane)^\torus$ where $\brane$ is separated with $m$ D$5$ branes. Then, using Definition~{\rm\ref{def:pairs}},
\begin{enumerate}\itemsep=0pt
\item[$(1)$] Every $01$-pair $(i, j_0, j_1)$ or 10-pair $(i, j_1, j_0)$ in $M$ corresponds to a Young diagram surgery $\surgery$ moving boxes from $\lambda^{(j_1)}$ to $\lambda^{(j_0)}$.

\item[$(2)$] In the context above, if $\overline{M}(i, i', j_0, j_1)$ is matched for some $i' > i$, then the site $\site$ of $\surgery$ does not contain any boxes of the rightmost column of $\lambda^{(j_1)}$. If no such matched block exists, then $\site$ contains $\big|\delta\bigl(\overline{M}(i, n, j_0, j_1)\bigr)\big|$ boxes of the rightmost column of $\lambda^{(j_1)}$.

\item[$(3)$] The relative displacement of the boxes during $\surgery$ is given by
\[
\Delta_\surgery(y) = \sum_{k = 1}^{i} (M_{k j_1} - M_{k j_0}) - 1 = s_{i j_1} - s_{i j_0} - 1.
\]
\end{enumerate}
\end{Proposition}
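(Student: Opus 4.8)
The plan is to give the surgery explicitly and then read off all three parts from the construction. First note that the two cases are uniform: for a $01$-pair $(i,j_0,j_1)$ with $j_0<j_1$ and for a $10$-pair $(i,j_1,j_0)$ with $j_1<j_0$ we have in both cases $M_{ij_1}=1$ and $M_{ij_0}=0$, and the surgery is to move boxes out of $\lambda^{(j_1)}$ (the ``$1$-column'') into $\lambda^{(j_0)}$ (the ``$0$-column''); only the relative vertical position of the two butterflies differs, and that affects only the downstream constraint \eqref{eqn:box_constraint}, not this proposition. I would also set up once the dictionary between $M$ and the Young diagrams: if $i_1<\cdots<i_{c_j}$ are the rows carrying a $1$ in column $j$, then the $k$-th of them corresponds to row $k$ of $\lambda^{(j)}$, of length $\lambda^{(j)}_k=n-i_k+1$; equivalently, butterfly column $X_\ell$ of the $U_j$ butterfly (for $0\le\ell\le n$) has exactly $s_{\ell j}$ boxes, so its left side is the staircase with column heights $0=s_{0j}\le s_{1j}\le\cdots\le s_{nj}=c_j$ (each step $0$ or $1$), and $\lambda^{(j)}$ is that staircase realigned at the top. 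In this language, a right-aligned staircase has distinct parts precisely when its height sequence attains every value from $0$ to its maximum.

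Second, I would build the site. Reading down from row $i$, set $t_\ell=\sum_{k=i}^{\ell}(M_{kj_1}-M_{kj_0})$; this is a lattice path with $t_i=1$ and steps in $\{-1,0,1\}$, and $\delta\bigl(\overline M(i,\ell,j_0,j_1)\bigr)=\pm t_\ell$ by Definition~\ref{def:blocks}, so the block $\overline M(i,\ell,j_0,j_1)$ is matched exactly when $t_\ell=0$. Let $i^\flat$ be the least $i'>i$ with $t_{i'}=0$, and $i^\flat=\infty$ if there is none. Define $\site$ to consist, in butterfly column $X_\ell$ of $\lambda^{(j_1)}$, of the bottom $t_\ell$ boxes for $i\le\ell\le\min(i^\flat-1,n)$ and of no boxes otherwise; on that range $t_\ell\ge1$, so this is nonempty exactly there. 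This is a lower set by construction and is forced by the pair, and since $M_{ij_1}=1$, $M_{ij_0}=0$, its highest box is the unique $\site$-box in column $X_i$, sitting in row $s_{ij_1}$ of $\lambda^{(j_1)}$.

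Third, I would verify that $\site$ is a legitimate Young-diagram-surgery site and extract parts~(1) and~(2). Removing $\site$ from $\lambda^{(j_1)}$ changes the height sequence $s_{\ell j_1}$ only on the range, where it becomes $s_{\ell j_1}-t_\ell=s_{(i-1)j_1}-s_{(i-1)j_0}+s_{\ell j_0}$; a direct check of the three transition places (into the range, inside it, out of it at $\ell=i^\flat$, using $t_{i^\flat-1}=1$ and hence $M_{i^\flat j_1}=0$, $M_{i^\flat j_0}=1$) shows the new sequence still increases by $0$ or $1$ at every step, hence still attains every value up to its maximum, so the result is a Young diagram with distinct parts. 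Stacking the translated $\site$ below $\lambda^{(j_0)}$ changes $s_{\ell j_0}$ on the range to $s_{\ell j_0}+t_\ell=s_{\ell j_1}-s_{(i-1)j_1}+s_{(i-1)j_0}$, and the same calculation applies; this proves part~(1). For part~(2): the $X_n$ column of the $U_{j_1}$ butterfly has $s_{nj_1}=c_{j_1}$ boxes, and $\site$ meets it exactly when $n$ lies in the range, i.e.\ when $t_{i'}\neq0$ for all $i'\in(i,n]$, i.e.\ when no matched block $\overline M(i,i',j_0,j_1)$ with $i'>i$ exists; in that case $\site$ contains $t_n=\sum_{k=i}^n(M_{kj_1}-M_{kj_0})=|\delta(\overline M(i,n,j_0,j_1))|$ boxes of that column. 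Finally, for part~(3), $\Delta_\surgery y$ is constant on $\site$ because the surgery is a rigid vertical translation, so I would evaluate it on the highest box $\bx$: $\bx$ lies in row $s_{ij_1}$ of $\lambda^{(j_1)}$, it is the lone $\site$-box in column $X_i$, and that column of $\lambda^{(j_0)}$ acquires exactly $t_i=1$ new box at its bottom, so the image $\bx'$ lies in row $s_{ij_0}+1$ of the new $\lambda^{(j_0)}$; hence $\Delta_\surgery y=y(\bx')-y(\bx)=\bigl(1-(s_{ij_0}+1)\bigr)-(1-s_{ij_1})=s_{ij_1}-s_{ij_0}-1$, and $s_{ij_1}-s_{ij_0}=\sum_{k=1}^i(M_{kj_1}-M_{kj_0})$ by Definition~\ref{def:pairs}.

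The main obstacle is the height calculation in the third paragraph: showing that \emph{both} modified staircases remain valid (monotone with unit-or-zero steps, and attaining every value) uniformly in whether or not a matched block exists. The botched subcase, where the top of the range is $X_n$ and the deleted copy of $\lambda^{(j_1)}$ genuinely loses rows, needs the most care, and one must track where the translated site lands in $\lambda^{(j_0)}$ after realigning at the top. I expect the cleanest write-up carries out the entire construction in the staircase (butterfly) language of the first paragraph — so that ``valid site'' literally means ``both left sides stay inverted staircases'' — and only translates back to Young diagrams at the end; the remaining verifications are then short and mechanical.
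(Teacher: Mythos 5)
Your proposal is correct, and at its core it uses the same mechanism as the paper's proof: both constructions hinge on the first row $i'>i$ at which the running difference of the two columns vanishes (your first return of $t_\ell$ to $0$ is exactly the paper's minimal matched block $\overline{M}(i,i')$), with the no-return case producing the site that spills into the rightmost column, which is precisely how parts (2) and (3) are read off in both arguments. The execution differs in flavor: the paper reduces to an $n\times 2$ BCT, forms the restricted partitions $\overline{\lambda}^{(1)},\overline{\lambda}^{(2)}$ supported on the block, and proves interlacing inequalities ($\overline{\lambda}^{(1)}_1>\overline{\lambda}^{(2)}_1$ and $\overline{\lambda}^{(1)}_{\overline i+1}\ge\overline{\lambda}^{(2)}_{\overline i}$) by intermediate-value arguments on $\delta$, deducing the surgery row-by-row and leaving the displacement as ``easily seen''; you instead give the site explicitly column-by-column (the bottom $t_\ell$ boxes of column $X_\ell$ over the interval up to the first return), verify validity by checking that both modified staircase height sequences still have steps in $\{0,1\}$, and compute $\Delta_\surgery y$ by tracking the single site box in column $X_i$. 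Your identity $s_{\ell j_1}-t_\ell=s_{(i-1)j_1}-s_{(i-1)j_0}+s_{\ell j_0}$ also shows the rigidity/snug-stacking condition is automatic, and your uniform treatment of $01$- and $10$-pairs replaces the paper's ``symmetric argument''. What each buys: your explicit per-column formula makes (2) and (3) immediate and the botched case is actually the easy one (there is no exit transition to check, contrary to your closing worry); the paper's row-interlacing formulation more directly exhibits the connectedness of the site, which the proposition does not assert but which the surrounding text (minimal matched blocks and block swap moves) relies on.
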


\begin{proof}
We will provide the proof for 10-pairs. The result for 01-pairs follows from a symmetric argument. Make the simplifying assumption that $M$ is $n\times 2$. The general case follows from applying this argument to every pair of columns in $M$. To simplify the notation, let $\overline{M}(i_1, i_2) = \overline{M}(i_1, i_2, 1, 2)$.
Suppose the $i_1$-th row of $M$ is $(1, 0)$. Let $\overline M$ be the smallest matched block of the form~$\overline{M}(i_1, i_2)$ where $i_1 < i_2$, if such a block exists. Otherwise, if no such matched block exists, let $\overline M = \overline{M}(i_1, n)$. Recall that each 1 in the $j$-th column of $\overline M$ represents a part of the Young diagram~$\lambda^{(j)}$. Let~\smash{$\overline{\lambda}^{(j)}$} be the partition consisting of the parts of $\lambda^{(j)}$ associated with the 1's in the $j$-th column of~$\overline M$. Denote the number of parts of \smash{$\overline{\lambda}^{(j)}$} by $\overline{c}_j$. Clearly, $\big|\delta\bigl(\overline{M}(i_1, i + 1)\bigr) - \delta\bigl(\overline{M}(i_1, i)\bigr)\big| \leq 1$ for all~${i_1 \leq i < n}$, and $\delta\bigl(\overline{M}(i_1, i_1)\bigr) = 1$. If it were the case that $\delta\bigl(\overline M\bigr) < 0$, then there would exist~${i > i_1}$ for which $\delta\bigl(\overline{M}(i_1, i)\bigr) = 0$. It follows that $\delta\bigl(\overline M\bigr) \geq 0$, and therefore $\overline{c}_1 \geq \overline{c}_2$.

Since the first row of $\overline M$ is $(1, 0)$, we have
\[
\overline{\lambda}^{(1)}_1 > \overline{\lambda}^{(2)}_1.
\]
 We claim that \smash{$\overline{\lambda}^{(1)}_{\overline{i}+1} \geq \overline{\lambda}^{(2)}_{\overline{i}}$} for all $\overline{i}=1, \dots , \overline{c}_2 - 1$. Suppose to the contrary that for some $\overline{i}$, we have
\[
\overline{\lambda}^{(1)}_{\overline{i}+1} < \overline{\lambda}^{(2)}_{\overline{i}}.
\] Let the $(1, 0)$ row corresponding to \smash{$\overline{\lambda}^{(1)}_{\overline{i}+1}$} be row $i'+1$ of $M$. Then, we have
$\delta\bigl(\overline{M}(i_1, i' + 1)\bigr) \leq 1$. It follows that $\delta\bigl(\overline{M}(i_1, i')\bigr) \leq 0$. By the intermediate value argument above, there is $i_1 < i < i_2$ such that~${\overline{M}(i_1, i)}$ is matched. This contradicts the definition of $\overline M$.

These inequalities imply that there is a connected Young diagram surgery which moves \smash{$\overline{\lambda}^{(1)}_{\overline{i}} - \overline{\lambda}^{(2)}_{\overline{i}} > 0$} boxes from \smash{$\overline{\lambda}^{(1)}_{\overline{i}}$} to \smash{$\overline{\lambda}^{(2)}_{\overline{i}}$}. If $\overline M$ is matched, then this surgery constitutes a surgery on $\lambda^{(1)}$ and~$\lambda^{(2)}$. Otherwise, this surgery can be extended to a surgery on $\lambda^{(1)}$ and $\lambda^{(2)}$ by moving the bottom~${\delta\bigl(\overline{M}\bigr) >0} $ rows of boxes in \smash{$\overline{\lambda}^{(1)}$} completely below $\lambda^{(2)}$. The displacement~$\Delta_\surgery y$ is easily seen to be \smash{$\sum_{k = 1}^{i_1} (M_{k 1} - M_{k 2}) - 1$}.
\end{proof}

Let $\overline M$ be a minimal matched block whose top row is a 01- or 10- pair. The proof of Proposition~\ref{prop:BCT_curves} shows that there is a connected butterfly surgery transforming $M$ by exchanging the columns of $\overline M$. The displacement of the surgery is given by applying the formula in Proposition~\ref{prop:BCT_curves}\,(3) to the top row of $\overline M$. Also note that among any set of consecutive minimal matched blocks, those whose top row is a 10- or 01-pair are associated with surgeries with disjoint sites and the same displacement. Conversely, any matched block can be decomposed uniquely into consecutive minimal matched blocks. It follows that every matched block with top row $(1, 0)$ or~$(0, 1)$ is associated with a pencil of compact invariant curves.

\begin{Definition}
Let $M$ be a BCT. A ``block swap move'' $\psi$ on $M$ consists of swapping the columns of a matched block $\overline M$ within $M$, where the first and last rows of $\overline M$ are $(1, 0)$ or $(0, 1)$. Call $\psi$ indecomposable if $\overline M$ is a minimal matched block.
\end{Definition}

Note that if the top row of a minimal matched block is $(1, 0)$, then the bottom row must be $(0, 1)$ and vice-versa. Moreover, any block swap move can be uniquely decomposed into \emph{simultaneous} indecomposable block swap moves whose associated minimal matched blocks are separated by $(1, 1)$ and $(0, 0)$ rows.

\begin{Proposition}
Let $p\in\bowvar(\brane)^\torus$, with $\brane$ separated. Let $M$ be the BCT of $p$.
\begin{enumerate}\itemsep=0pt
\item[$(1)$] Every block swap move $\psi$ on $M$ corresponds to a pencil $\Gamma$ of compact invariant curves containing $p$.
\item[$(2)$] Suppose $\psi$ decomposes into $k$ simultaneous indecomposable block swap moves $\psi_1, \dots , \psi_k$. Then, we have $\dim(\Gamma) = k$.
\item[$(3)$] The BCT for any fixed point in $\Gamma$ can be obtained by performing some subset of the indecomposable block swap moves $\psi_i$.
\item[$(4)$] Suppose row $i$ is the topmost row affected by $\psi$. Let $(i, j_0)$ be the index of the $0$ in row $i$ that $\psi$ swaps to $1$. Similarly, let $(i, j_1)$ be the index of the $1$ in row $i$ that $\psi$ swaps to $0$. Then, the tangent weight of $\Gamma$ at $p$ is $\frac{\ub_{j_1}}{\ub_{j_0}}\hb^{1+s_{i j_0}-s_{i j_1}}$.
\end{enumerate}
\end{Proposition}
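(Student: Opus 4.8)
The plan is to reduce the statement to Proposition~\ref{prop:BCT_curves} by translating the block swap move into a collection of Young diagram surgeries, and then to assemble the resulting curves into a single pencil exactly as in Section~\ref{sec:butterfly_surgery}.

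First I would fix the decomposition of $\psi$. Write $\overline M$ for the matched block swapped by $\psi$ and decompose it into consecutive minimal matched blocks. A minimal matched block whose top row is $(1,1)$ or $(0,0)$ is a single row and its column swap is trivial; discarding these, the surviving minimal matched blocks $\overline M_1,\dots,\overline M_k$ all have top (and bottom) rows of the form $(1,0)$ or $(0,1)$, and their column swaps are precisely the indecomposable moves $\psi_1,\dots,\psi_k$. The top row of $\overline M_1$ is the topmost row $i$ affected by $\psi$; let $j_0$ and $j_1$ be its columns as in part~(4). By Proposition~\ref{prop:BCT_curves}(1) each $\overline M_\ell$ gives a Young diagram surgery $\surgery_\ell$ with connected site $\site_\ell$; since $\overline M_\ell$ is itself matched, Proposition~\ref{prop:BCT_curves}(2) forces $\site_\ell$ to avoid the rightmost column, so $\surgery_\ell$ is an ordinary (non-botched) butterfly surgery. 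Lemma~\ref{lemma:butterfly_surgery} (equivalently Lemma~\ref{lemma:Young_curves}) then produces a compact invariant curve $\gamma_\ell$ through $p$, and realizing $\surgery_\ell$ amounts to performing the column swap $\psi_\ell$ on $M$.

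Next I would fuse the $\gamma_\ell$ into one $k$-dimensional pencil. The crucial point is that $\surgery_1,\dots,\surgery_k$ move boxes between the same two Young diagrams, in the same direction, and with a common displacement: this is forced by matchedness, because between consecutive non-trivial minimal matched blocks the running partial sum $i'\mapsto s_{i'j_1}-s_{i'j_0}$ returns to a fixed value, so the displacements $\Delta_{\surgery_\ell}y$ computed by Proposition~\ref{prop:BCT_curves}(3) all coincide. Corollary~\ref{cor:disjoint_sites} then shows the sites $\site_\ell$ are pairwise disjoint (they cannot be equal, arising from distinct rows), so $\site:=\site_1\sqcup\cdots\sqcup\site_k$ is the site of a single butterfly surgery $\surgery$ with exactly $k$ connected components, realizing the full column swap $\psi$. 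Setting $\Gamma:=\pen_\surgery$, Lemma~\ref{lemma:butterfly_surgery} gives at once: $\Gamma$ is a pencil of compact invariant curves containing $p$, which is part~(1); $\dim(\Gamma)=k$, which is part~(2); and each fixed point of $\Gamma$ is a $p_I$, $I\subseteq\{1,\dots,k\}$, obtained by performing the sub-surgeries $\surgery_i$, $i\in I$ --- equivalently, by the bijection of Proposition~\ref{prop:BCT_curves}, by performing the block swaps $\psi_i$, $i\in I$, on $M$ --- which is part~(3). For part~(4), Lemma~\ref{lemma:butterfly_surgery} says every curve in $\pen_\surgery$ has tangent weight $\frac{\ub_{j_1}}{\ub_{j_0}}\hb^{-\Delta_\surgery y}$ at $p$, with $\surgery$ moving boxes from $\lambda^{(j_1)}$ to $\lambda^{(j_0)}$; and $\Delta_\surgery y=\Delta_{\surgery_1}y=s_{ij_1}-s_{ij_0}-1$ by Proposition~\ref{prop:BCT_curves}(3) applied to the top row of $\overline M_1$, which rearranges to the claimed $\frac{\ub_{j_1}}{\ub_{j_0}}\hb^{1+s_{ij_0}-s_{ij_1}}$.

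The main obstacle is the fusion step: I expect the delicate part to be verifying rigorously that the $k$ sub-surgeries attached to $\overline M_1,\dots,\overline M_k$ are mutually compatible --- same pair of Young diagrams, same direction, same displacement --- so that Corollary~\ref{cor:disjoint_sites} applies and the pencils $\pen_{\surgery_\ell}$ genuinely fuse into one pencil $\pen_\surgery$ rather than an incoherent family carrying several distinct tangent weights at $p$. This is an elementary but careful bookkeeping argument with the partial sums $s_{ij}$, controlled by the matchedness of $\overline M$ together with the placement of its $(1,1)$ and $(0,0)$ rows relative to the non-trivial minimal sub-blocks; once it is in place, everything else follows directly from Lemmas~\ref{lemma:butterfly_surgery} and~\ref{lemma:Young_curves}, Corollary~\ref{cor:disjoint_sites}, and Proposition~\ref{prop:BCT_curves}.
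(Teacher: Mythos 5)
Your route is the paper's own: the result is justified there precisely by the paragraph preceding the proposition, namely that each minimal matched block whose top row is a 01- or 10-pair yields, via the proof of Proposition~\ref{prop:BCT_curves}, a connected butterfly surgery exchanging its columns (with displacement computed from its top row), that consecutive such blocks give surgeries with disjoint sites and equal displacement, and that the fused surgery's pencil from Lemma~\ref{lemma:butterfly_surgery} (equivalently Lemma~\ref{lemma:Young_curves}) delivers parts (1)--(4); your weight computation in (4) also matches. So in structure and in the lemmas invoked, your argument coincides with the intended one, and you even supply more of the bookkeeping than the paper does.

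The one step that does not hold as you state it is the claim that the sub-surgeries move boxes ``in the same direction \dots forced by matchedness''. Matchedness gives equal displacements (your partial-sum argument is fine) but says nothing about orientation: $\overline M=\left(\begin{smallmatrix}1&0\\0&1\\0&1\\1&0\end{smallmatrix}\right)$ is a matched block whose first and last rows are of the allowed form, and it decomposes into two minimal matched blocks of opposite orientation; the associated surgeries move boxes in opposite directions and have tangent weights of the form $\frac{\ub_{j_1}}{\ub_{j_0}}\hb^{\ast}$ and $\frac{\ub_{j_0}}{\ub_{j_1}}\hb^{\ast}$ at $p$, so they cannot fuse into one pencil with a single weight --- indeed, in the smallest example with this BCT, Theorem~\ref{thm:weights} gives each weight multiplicity one, so no $2$-dimensional pencil with the weight named in (4) exists. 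Thus ``same direction'' is not a consequence of matchedness; it is an implicit hypothesis (all indecomposable components $\psi_1,\dots,\psi_k$ have top rows of the same orientation) which the paper's statement and its informal discussion also leave tacit. Under that reading your computation does give a common displacement and weight, Corollary~\ref{cor:disjoint_sites} gives disjoint sites, and the fusion goes through; one further point you (like the paper) assert rather than check is that the fused site has exactly $k$ connected components, i.e., that sites from consecutive blocks are never adjacent --- this does hold, because the lowest target part of one block is strictly longer than the topmost moved part of the next, but it deserves a sentence.
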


It follows from Theorem~\ref{thm:curves} that all pencils of compact invariant curves in $\bowvar(\brane)$ are associated with block swap moves.

\section{Classification of invariant curves} \label{sec:classification}
\begin{Definition}
Let $\bowvar(\brane)$ be a separated bow variety and $p\in\bowvar(\brane)^\torus$. Define three types of invariant curves containing $p$:
\begin{itemize}\itemsep=0pt
\item[$\rm I)$] curves arising from connected butterfly surgeries $($see Section~{\rm\ref{sec:butterfly_surgery})},
\item[$\rm II)$] curves arising from connected botched butterfly surgeries $($see Section~{\rm\ref{sec:botched_surgery})},
\item[$\rm III)$] nonsurgery curves $($see Section~{\rm\ref{sec:nonsurgery})}.
\end{itemize}
\end{Definition}
A priori, the type of a compact curve may depend on which of the two fixed points you apply this definition to. We will show in Corollary~\ref{cor:compactness} that, in fact, type II and III curves are always noncompact. Because butterfly surgeries are reversible, a type I curve containing $p_1$ is also type~I when viewed with respect to its other fixed point $p_2$.

\begin{Proposition} \label{prop:disjoint_types}
The type $I$, type $II$, and type $III$ curves containing a fixed point $p$ form three disjoint sets. Moreover, no type $II$ curve has the same tangent weight as a type $III$ curve.
\end{Proposition}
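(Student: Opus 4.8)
The plan is to work entirely with tangent data at the single fixed point $p$. By Proposition~\ref{prop:equiv_geo}, two $\torus$-invariant curves through $p$ coincide iff they have the same tangent line at $p$; in particular, curves with distinct tangent weights at $p$ are automatically distinct. So I would (i) prove the ``moreover'' clause --- a type~II and a type~III curve never have the same tangent weight at $p$, which already forces the type~II and type~III curve sets to be disjoint --- and (ii) rule out a type~I curve coinciding with a type~II or a type~III curve by exhibiting a structure map on $\bowvar(\brane)$ that vanishes identically along every type~I curve but not along the type~II or type~III curve sharing its tangent weight.

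\emph{Step (i).} Recall from Lemma~\ref{lemma:nonsurgery} that a type~III curve for columns $j<j'$ has tangent weight $\frac{\ub_j}{\ub_{j'}}\hb^{e}$ with $1\le e\le c_{j'}-c_j$, so its numerator index is smaller than its denominator index, and such curves exist only when $c_{j'}>c_j$. A type~II curve comes from a connected botched surgery moving a connected site $\site$ from $\lambda^{(a)}$ to $\lambda^{(b)}$, with tangent weight $\frac{\ub_a}{\ub_b}\hb^{-\Delta_\surgery y}$ (Lemma~\ref{lemma:botched_surgery}), and ``botched'' means $\site$ contains some $r\ge 1$ boxes of the rightmost column of $\lambda^{(a)}$. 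If $a>b$ the numerator index exceeds the denominator index and this is not a type~III weight, so suppose $a<b$. The $r$ rightmost-column boxes of $\site$ are the bottom $r$ boxes of that column of $\lambda^{(a)}$, the topmost at height $r-c_a$; after the surgery, which is a rigid translation, they become the bottom $r$ rightmost-column boxes of $\lambda^{(b)}\cup\site$, which sit strictly below the $c_b$ rightmost-column boxes of $\lambda^{(b)}$, hence at heights $\le -c_b$. Since $\Delta_\surgery y$ is constant on $\site$ this gives $(r-c_a)+\Delta_\surgery y\le -c_b$, i.e.\ $-\Delta_\surgery y\ge c_b-c_a+r\ge c_b-c_a+1$, so the $\hb$-exponent of the type~II weight strictly exceeds $c_b-c_a$, the largest $\hb$-exponent among type~III weights for the pair $(a,b)$ (with nothing to check when $c_b\le c_a$). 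Hence the two weight sets are disjoint.

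\emph{Step (ii).} Because $\brane$ is separated, no tie at a D5 brane $U_k$ covers the segment $U_k^+$, so $d_{U_k^+}^{U_k}=0$; then no green edge points into the $U_k$ framing vertex in the butterfly diagram of any fixed point, so $b_{U_k}=0$ at $p$ and, more importantly, $Z_k:=\{b_{U_k}=0\}$ is a closed $\torus$- and $\G$-invariant subvariety of $\bowvar(\brane)$. Now I would observe: for a type~I curve $\gamma$ coming from a butterfly surgery moving a site from $\lambda^{(a)}$ to $\lambda^{(b)}$, none of the newly added edges is incident to the $U_a$ (or $U_b$) framing vertex, so $b_{U_a}\equiv b_{U_b}\equiv 0$ along $\gamma$; and since a genuine butterfly surgery never touches the right-hand grid columns, the map $B^{\ell}a_{U_b}$ is unchanged by the surgery, so $B^{\ell}a_{U_b}\equiv 0$ along $\gamma$, where $\ell=d_{U_b^-}^{U_b}$. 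By contrast, a type~III curve for columns $j<j'$ has $b_{U_j}\not\equiv 0$ along it (the added green edge $v'_i\to(U_j\text{ framing})$), and --- as in the proof of Lemma~\ref{lemma:type23_curve}(2) --- a type~II curve moving $\lambda^{(a)}\to\lambda^{(b)}$ has $b_{U_a}\not\equiv 0$ along it when $a<b$, and $B^{\ell}a_{U_b}\not\equiv 0$ along it when $a>b$. If a type~I curve equalled a type~III (resp.\ type~II) curve, equality of tangent weights at $p$ would force the ordered pair of columns to agree; but then the type~I curve would lie in the vanishing locus of exactly the map the other curve switches on --- a contradiction. Together with Step~(i) this gives all three disjointness statements.

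\emph{The main obstacle} is Step~(ii): type~I curves genuinely can share a tangent \emph{weight} with type~III curves (and, with the column order reversed, with type~II curves), so weights alone do not separate them, and there is no compactness statement available at this point in the paper. The argument therefore has to descend to the explicit prequotient representatives of Section~\ref{sec:curves} and track precisely which structure maps $b_{U_k}$, $B^{\ell}a_{U_k}$ each construction switches on --- exactly the bookkeeping recorded in Lemma~\ref{lemma:type23_curve} and in Figures~\ref{fig:invariant_curve}, \ref{fig:typeII_curve}, and~\ref{fig:nonsurgery_curve}. The routine-but-delicate part will be verifying the ``switched off'' claims for type~I curves, i.e.\ that the newly created edges avoid the relevant framing vertices and grid columns.
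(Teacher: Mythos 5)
Your proposal is correct and follows essentially the same route as the paper's proof: first separate type~II from type~III by showing their tangent weights cannot coincide (your explicit height computation, giving $-\Delta_\surgery y\ge c_b-c_a+r\ge c_b-c_a+1$ versus the type~III exponents $1,\dots,c_b-c_a$, is exactly the content of the paper's ``easy to see from Lemma~\ref{lemma:nonsurgery} and constraint~\eqref{eqn:typeII_constraint}''), and then separate type~I from types~II and~III by tracking the $\G$- and $\torus$-invariant vanishing conditions $b_{U_k}=0$ and $B^{\ell}a_{U_k}=0$ on the explicit prequotient representatives, which hold identically along type~I curves but fail for the type~II and type~III constructions. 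The only cosmetic difference is that you phrase the last step via closed invariant loci $Z_k$ and a tangent-weight matching of column indices, whereas the paper compares the representatives $\tilde p_\gamma$ directly; the substance is identical.
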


\begin{proof}
It is easy to see from Lemma~\ref{lemma:nonsurgery} and the constraint \eqref{eqn:typeII_constraint} that a type III curve cannot have the same tangent weight at $p$ as a type II curve. Thus, no curve is both type II and type~III.

A type I, type II, or type III curve $\gamma$ is constructed as the closure of the orbit of some point~$p_\gamma$ for which we give an explicit representation in \smash{$\tilde{p}_\gamma \in \prequot$}. Suppose $\gamma$ is type II and has tangent weight $\hb^r \ub_j/\ub_j'$ at $p$. If $j > j'$, then
\[
B^\ell a_{U_{j'}} \neq 0 \qquad \text{for} \quad \ell = d_{U_{j'}^-}^{U_{j'}}.
\]
On the other hand, for any type~I curve, \smash{$B^\ell a_{U_{j'}} = 0$} in $\tilde{p}_\gamma$. If $j < j'$, then $b_{U_{j'}} \neq 0$. Any type~I curve, however, has $b_{U_{j'}} = 0$. It follows that no curve is both type I and type II.

Finally, if $\gamma$ is type I, then $\tilde{p}_\gamma$ has $b=0$, and if $\gamma$ is type III, $\tilde{p}_\gamma$ has $b\neq 0$. Thus, no curve can be both type I and type III.
\end{proof}

Curves of type I, II, and III will form a basic set from which all pencils of invariant curves can be generated.

\begin{Proposition} \label{prop:pencil_generation}
Let $\gamma_1, \dots , \gamma_k$ be distinct invariant curves containing $p\in\bowvar(\brane)^\torus$, each of type $I$, type $II$, or type $III$, and all having the same tangent weight at $p$. Then, there is a $k$-dimensional pencil of invariant curves $\Gamma$ with $\gamma_1, \dots , \gamma_k$ in its boundary. Moreover, every vector in $T_p\gamma_1 \oplus\cdots\oplus T_p\gamma_k$ is in $T_p\gamma$ for some $\gamma\in\Gamma$.
\end{Proposition}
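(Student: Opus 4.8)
### Proof Proposal for Proposition~\ref{prop:pencil_generation}

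The plan is to leverage the explicit prequotient representatives \smash{$\tilde p_{\gamma_i} \in \prequot$} already constructed for each $\gamma_i$ in Sections~\ref{sec:butterfly_surgery}--\ref{sec:nonsurgery}. The key observation is that each $\tilde p_{\gamma_i}$ differs from $\tilde p$ (the butterfly diagram representative of $p$) by adding a fixed packet of ``new edges'' of a common weight $w = \frac{\ub}{\ub'}\hb^{-\Delta y}$, where the pair $(\ub,\ub')$ is the \emph{same} for all $i$ because the tangent weights agree at $p$. Since by Corollary~\ref{cor:weights} this forces the ordered pair of D5 branes to coincide, all $\gamma_i$ move material between the \emph{same} two butterflies $U$ and $U'$ (with $U$ playing the role of the butterfly supplying the site). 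I would first establish this bookkeeping: the tangent weight $w$ at $p$ records $(j,j')$ and the height shift $\Delta y$, so distinct curves with the same $w$ differ only in \emph{which} edges are added, not in the ambient data. For a site $\site$ of a type~I or type~II surgery this means a particular choice of vertically-translated subgraph; for a type~III curve it is the particular index $i$ selecting the green/blue edge packet.

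Next I would form the candidate pencil. Let $E_i$ denote the set of new edges added to the butterfly diagram of $p$ to produce $\tilde p_{\gamma_i}$. Assign to each $\gamma_i$ a parameter $z_i \in \CC$ and define $\tilde p_\Gamma(z_1,\dots,z_k) \in \MM$ to be the diagram of $p$ with every edge in $E_i$ multiplied by $z_i$ (when an edge lies in more than one $E_i$, multiply by the corresponding sum or product — here I expect the construction forces the edge-packets to be organized so this is unambiguous, which is the first point requiring care). The 0-momentum conditions hold because they hold for each individual $\tilde p_{\gamma_i}$ and the relevant quadratic relations are, in each butterfly-diagram setting, \emph{linear} in the new edges once the correction edges of Section~\ref{sec:botched_surgery} are inserted with their fixed $-1$ coefficients; thus the conditions are preserved under independent rescalings. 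Stability (S1, S2, $\nu$) follows exactly as in Lemmas~\ref{lemma:type1_curve} and~\ref{lemma:type23_curve}: for generic $z$ all edges of $E_i$ are nonzero so the reachability arguments go through verbatim, and for special $z$ the diagram degenerates to one of the already-verified configurations (a sub-surgery or $p$ itself). Compactifying each $\CC$-factor to $\CCP^1$ and sending boundary strata to the appropriate fixed points (obtained by performing only the surgeries indexed by the finite components, as in the construction of $\pen_\surgery$) gives a $\torus$-equivariant map \smash{$\Gamma\colon (\CCP^1)^k \to \bowvar(\brane)$}.

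I would then verify the tangent-space claim. By construction, $\frac{\partial}{\partial z_i}\big|_{z=0}\tilde p_\Gamma$ is exactly the infinitesimal deformation of $p$ producing $\gamma_i$, i.e.\ represents $\gamma_i'(0)$ in $T_p\bowvar(\brane)$; hence for any $z = (z_1,\dots,z_k)$ the curve $t \mapsto \Gamma(tz)$ has tangent vector $\sum_i z_i \gamma_i'(0)$ at $p$ (each lying in the single weight space of $w$, so this is well-defined as a vector, not just a class). To conclude that \emph{every} vector of $T_p\gamma_1 \oplus \cdots \oplus T_p\gamma_k$ is realized, I need the $\gamma_i'(0)$ to be linearly independent — but this is precisely the statement being used to define a genuine $k$-dimensional pencil, and it is the content that Lemma~\ref{lemma:butterfly_surgery} already anticipates. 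I would prove independence directly from the fixed-point restriction formula: the edge $A$-, $a$-, or $b$-components distinguishing $E_i$ land in combinatorially disjoint positions of the $\xi_X|_p$ decomposition (different rows/columns of the butterfly, or different site components), so their images under the linearization $\MM \to T_p\bowvar(\brane)$ are supported on disjoint monomials of the same $\torus$-weight. Once independence holds, $z \mapsto \Gamma(tz)$ runs over all of $\PP(T_p\gamma_1\oplus\cdots\oplus T_p\gamma_k)$ of tangent directions, and smoothness of the curves (Proposition~\ref{prop:equiv_geo}(1)) together with $\dim\Gamma = k$ gives the result.

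The main obstacle I anticipate is the case analysis when the $\gamma_i$ are of \emph{mixed} types (some type~I, some type~II, some type~III simultaneously), since then the ``new edges'' packets $E_i$ interact with different momentum relations — in particular a type~II packet carries the correction edges of \eqref{eqn:typeII_constraint} and a type~III packet carries a nonzero $b$-edge. Checking that the combined $\tilde p_\Gamma$ still satisfies all 0-momentum conditions simultaneously, and that no two packets collide in a way that breaks linearity, requires inspecting each butterfly incident to $U$ or $U'$. I expect this to reduce, via Proposition~\ref{prop:disjoint_types} and the shared tangent weight, to only a small number of genuinely distinct local pictures, but organizing the verification cleanly — ideally by phrasing the whole construction in the Young-diagram language of Section~\ref{sec:butterfly_surgery} where the packets become disjoint sets of boxes — is where the real work lies.
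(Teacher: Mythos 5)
Your construction is essentially the one in the paper: scale the disjoint edge packets $E_i$ by parameters $z_i$, check the 0-momentum and stability conditions as in Lemmas~\ref{lemma:type1_curve} and~\ref{lemma:type23_curve}, and let the parameters vary to get the pencil. The mixed-type difficulty you flag at the end is dispatched in the paper in a few lines, with no local case analysis: by Corollary~\ref{cor:disjoint_sites} the sites of the type I and type II surgeries with a common weight are pairwise disjoint, at most one $\gamma_i$ can be type II (a botched surgery must move the bottom vertex of the $X_n$ column), at most one can be type III (their weights are pairwise distinct by Lemma~\ref{lemma:nonsurgery}), and types II and III never share a weight (Proposition~\ref{prop:disjoint_types}); hence at most one $\gamma_i$ fails to be type I and the packets $E_i$ never collide, so your worry about edges lying in more than one $E_i$ does not arise.

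The genuine gap is your independence argument. You propose to prove that the lines $T_p\gamma_i$ are independent ``directly from the fixed-point restriction formula'' because the packets $E_i$ occupy disjoint positions in the butterfly diagram. But $T_p\bowvar(\brane)\cong\ker\beta/\im\alpha$, where $\alpha$ is the differential of the $\G$-action (the complex of~\cite{NT} quoted in the paper's proof), and disjoint support of the lifts $\tilde\gamma_i'(0)$ inside $\MM$ says nothing about their classes modulo $\im\alpha$: a nontrivial linear combination could a priori be a pure gauge direction, and the weight-$w$ part of $\im\alpha$ need not vanish. (You half-concede the circularity when you note that independence ``is precisely the statement being used''.) The paper closes this point without any further combinatorics: for \emph{every} $z\in\CC^k\setminus\{0\}$ the point $\tilde p_\gamma$ obtained by scaling $E_i$ by $z_i$ lies in $\prequot$ and has a $1$-dimensional orbit, so its closure $\gamma$ is an invariant curve through $p$, smooth at $p$ by Proposition~\ref{prop:equiv_geo}; consequently $T_p\gamma$ is the line spanned by $\sum_i z_i\bigl(\tilde\gamma_i'(0)+\im\alpha\bigr)$, which is therefore nonzero. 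Since $z$ was an arbitrary nonzero vector, the classes $\tilde\gamma_i'(0)+\im\alpha$ are linearly independent and every vector of $T_p\gamma_1\oplus\cdots\oplus T_p\gamma_k$ is tangent to some curve of the family. You already have all the ingredients for this; replace the ``disjoint monomials'' step by this observation, taking care to verify $\prequot$-membership and $\dim(\torus.p_\gamma)=1$ for all nonzero $z$, not merely generic $z$.
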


\begin{proof}
Since the tangent weights of the $\gamma_i$ are all equal, Corollary~\ref{cor:disjoint_sites} implies that the sites of the surgeries associated with any type I and II $\gamma_i$ are disjoint. Since any botched butterfly surgery must move the bottom vertex of $X_n$, there is at most 1 type II $\gamma_i$.
We can also see from Lemma~\ref{lemma:nonsurgery} that the tangent weights of the type III curves are distinct. Therefore, there is at most 1 type III $\gamma_i$.
From Proposition~\ref{prop:disjoint_types}, type II and type III curves never share the same tangent weight. Hence, if one of the $\gamma_i$ is type II, then none are type III, and vice-versa. In summary, there is at most 1 $i$ for which $\gamma_i$ is not type I.

It follows from the above that each $\gamma_i$ is constructed by adding a set $E_i$ of new edges to disjoint portions of the butterfly diagram of $p$. Attaching a factor of $t \in \CC$ to each edge in $E_i$ results in a lift $\tilde{\gamma}_i\colon\CC \to \prequot$ of $\gamma_i$ in a neighborhood of $p$. Section~\ref{sec:butterfly_surgery} describes this construction for type I curves. According to \cite[Section~2.5]{NT}, there is a complex
\[
\begin{tikzcd}
\bigoplus_X \End(W_X) \arrow[r, "\alpha"] & \MM \arrow[r, "\beta"] & \NN,
\end{tikzcd}
\]
such that $T_p\bowvar(\brane) \cong \ker\beta/\im\alpha$. In particular, $\alpha$ is the differential of the $\G$-action and $\ker(\beta) \cong \smash{T_{\tilde{p}}\prequot}$, where \smash{$\tilde{p} \in \prequot$} is the representative of $p$ given by the butterfly diagram. We will not describe this complex in detail, as it is not required for the proof. Note that $\tilde{\gamma}_i'(0) + \im\alpha$ spans~$T_p\gamma_i$.

Fix $(z_1, \dots , z_k) \in \CC^k - \{0\}$, and let $s = \sum_{i=1}^k z_i (\tilde{\gamma}_i'(0) + \im\alpha)$. Let $\tilde{p}_\gamma \in \MM$ be obtained from $\tilde{p}$ by adding $E_i$ with a factor of $z_i$ for all $i$.
The arguments of Sections~\ref{sec:butterfly_surgery}, \ref{sec:botched_surgery}, and \ref{sec:nonsurgery} can be applied to show that $\tilde{p}_\gamma \in \prequot$ and $\dim(\torus.p_\gamma) = 1$, where $p_\gamma$ is the image of $\tilde{p}_\gamma$ in $\bowvar(\brane)$. Therefore, $\gamma = \overline{\torus.p_\gamma}$ is a $\torus$-invariant curve. As before, we can construct a lift \smash{$\tilde{\gamma} = \sum_{i=1}^k z_i \tilde{\gamma}_i$} of~$\gamma$. Putting everything together, we have
\[
0 \neq T_p\gamma = \CC\big\{ \tilde{\gamma}'(0) + \im\alpha \big\} = \CC \left\{ \sum_{i=1}^k z_i \bigl( \tilde{\gamma}_i'(0) + \im\alpha \bigr) \right\} = \CC \{s\}.
\]
It follows that $T_p\gamma_1, \dots , T_p\gamma_k$ are linearly independent, and every vector in $T_p\gamma_1 \oplus \cdots \oplus T_p\gamma_k$ is in $T_p\gamma$ for some invariant curve $\gamma$. Allowing $z_1, \dots , z_k$ to vary yields the desired pencil $\Gamma$ of invariant curves.
See Section~\ref{sec:butterfly_surgery} for the construction of $\Gamma$ in the case where all $\gamma_i$ are type I.
\end{proof}

\begin{Definition} \label{def:spanned}
We say that the pencil $\Gamma$ in Proposition~{\rm\ref{prop:pencil_generation}} is ``spanned'' by the curves $\gamma_1,\allowbreak \dots , \gamma_k$.
\end{Definition}

 This leads us to our main result.

\begin{Theorem} \label{thm:curves}
Let $\brane$ be separated and $p\in\bowvar(\brane)^\torus$. All $\torus$-invariant pencils of curves containing~$p$ are spanned by curves of type $I$, type $II$, and type $III$.
\end{Theorem}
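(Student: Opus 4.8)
The plan is to reduce the statement to a single counting identity and then assemble the facts already available. Here is the reduction. By Proposition~\ref{prop:equiv_geo}, every $\torus$-invariant curve $\gamma$ through $p$ is smooth, so $T_p\gamma$ is a line lying in a single $\torus$-weight space of $T_p\bowvar(\brane)$; call the corresponding weight the tangent weight of $\gamma$ at $p$. Fix a weight $w$ occurring at $p$ and let $\mathcal{C}_w$ denote the set of type~I, II, or~III curves through $p$ with tangent weight $w$. By Proposition~\ref{prop:pencil_generation} the tangent lines $\{T_p\gamma : \gamma\in\mathcal{C}_w\}$ are linearly independent inside the weight space $\bigl(T_p\bowvar(\brane)\bigr)_w$, so $|\mathcal{C}_w|\le\dim\bigl(T_p\bowvar(\brane)\bigr)_w$.

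The key claim is that this is an equality. Granting it, Proposition~\ref{prop:pencil_generation} produces a pencil $\Gamma_w$ spanned by $\mathcal{C}_w$, of dimension $|\mathcal{C}_w|$, in which every vector of $\bigoplus_{\gamma\in\mathcal{C}_w}T_p\gamma$ is tangent to a member curve; but the dimension count forces $\bigoplus_{\gamma\in\mathcal{C}_w}T_p\gamma = \bigl(T_p\bowvar(\brane)\bigr)_w$, so every line of that weight space is the tangent line of a curve of $\Gamma_w$. Proposition~\ref{prop:equiv_geo}(2) then shows that every $\torus$-invariant curve through $p$ with tangent weight $w$ is already a member of $\Gamma_w$. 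Since a pencil of invariant curves through $p$ has a single tangent weight $w$ at $p$ and its member curves are distinguished by their tangent lines, it must coincide with the subpencil of $\Gamma_w$ spanned by the subcollection of $\mathcal{C}_w$ that occurs in it; in particular it is spanned by curves of type~I, II, and~III. This is the assertion of the theorem.

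To prove the key claim I would fix two columns $a<b$ of the BCT $M$ and treat the weights $\frac{\ub_a}{\ub_b}\hb^{e}$ and $\frac{\ub_b}{\ub_a}\hb^{e}$, since the tangent bundle formula of Theorem~\ref{thm:weights}, the surgery correspondence of Proposition~\ref{prop:BCT_curves}, and the nonsurgery curves of Lemma~\ref{lemma:nonsurgery} all decouple over pairs of columns. By Theorem~\ref{thm:weights} the multiplicity of each such weight is the number of rows $i$ of a prescribed $01$- or $10$-pattern in columns $a,b$ with a prescribed value of $s_{ib}-s_{ia}$. On the other side, Proposition~\ref{prop:BCT_curves} attaches to each such row a connected Young diagram surgery — moving boxes $\lambda^{(b)}\to\lambda^{(a)}$ from a $(0,1)$-row and $\lambda^{(a)}\to\lambda^{(b)}$ from a $(1,0)$-row — whose displacement, hence tangent weight, is given by part~(3); distinct rows at the same level produce surgeries with disjoint sites by Corollary~\ref{cor:disjoint_sites} and Lemma~\ref{lemma:surgery_uniqueness}, so the associated type~I/II curves are genuinely distinct. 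Using Proposition~\ref{prop:BCT_curves}(2) and the admissibility constraint~\eqref{eqn:box_constraint} of Lemma~\ref{lemma:botched_surgery}, I would check that the surgeries $\lambda^{(b)}\to\lambda^{(a)}$ are always admissible, whereas a surgery $\lambda^{(a)}\to\lambda^{(b)}$ fails admissibility precisely for the unique ``tail'' surgery at each level whose exponent $e$ is $\le 0$; and Lemma~\ref{lemma:nonsurgery} contributes exactly one type~III curve of weight $\frac{\ub_a}{\ub_b}\hb^{e}$ for $1\le e\le c_b-c_a$ and none of weight $\frac{\ub_b}{\ub_a}\hb^{e}$. Matching all three counts against the multiplicity then reduces to the elementary crossing identity for the lattice path $D_i=s_{ia}-s_{ib}$ (with $D_0=0$, $D_n=c_a-c_b$): the number of up-steps ending at level $\ell+1$ minus the number of down-steps ending at level $\ell$ is the signed number of times the path crosses the edge between levels $\ell$ and $\ell+1$, namely $+1$ if $0\le\ell$ and $\ell+1\le c_a-c_b$, $-1$ if $c_a-c_b\le\ell$ and $\ell+1\le 0$, and $0$ otherwise. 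In each regime the lone ``excess'' step is either the inadmissible tail surgery (when $e\le 0$) or is matched by the single type~III curve (when $1\le e\le c_b-c_a$), which gives $|\mathcal{C}_w|=\dim\bigl(T_p\bowvar(\brane)\bigr)_w$.

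The heart of the argument — and the step most likely to require care — is this final matching: one must verify that every admissible surgery and every nonsurgery construction really yields an honest $\torus$-invariant curve (Lemmas~\ref{lemma:butterfly_surgery}, \ref{lemma:botched_surgery}, \ref{lemma:type23_curve}), that the inadmissible tail surgeries yield nothing, that no curve is counted twice across types (Proposition~\ref{prop:disjoint_types}), and that the displacement formula of Proposition~\ref{prop:BCT_curves}(3) aligns with Theorem~\ref{thm:weights} after the reindexing $w\mapsto\hb w^{-1}$ built into the latter's pairing of weights. Organized around the lattice-path picture above, the remaining verifications are routine case checks.
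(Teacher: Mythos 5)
Your overall architecture is the same as the paper's: by Proposition~\ref{prop:pencil_generation} it suffices to exhibit, for each weight $w$ of multiplicity $k$ in $T_p\bowvar(\brane)$, $k$ distinct type I/II/III curves of weight $w$; one works two columns $a<b$ at a time; the weights $\frac{\ub_b}{\ub_a}\hb^{e}$ are accounted for immediately by the (unconstrained) surgeries attached to $01$-pairs via Proposition~\ref{prop:BCT_curves} and Lemma~\ref{lemma:Young_curves}; and the weights $\frac{\ub_a}{\ub_b}\hb^{e}$ are matched against $10$-pair surgeries subject to~\eqref{eqn:box_constraint} together with the type III curves of Lemma~\ref{lemma:nonsurgery}, with distinctness handled by Corollary~\ref{cor:disjoint_sites} and Proposition~\ref{prop:disjoint_types}. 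Where you differ is only in how the counting is organized: you phrase it as a crossing identity for the path $D_i=s_{ia}-s_{ib}$, whereas the paper argues by induction on the number of $10$-pairs when $c_a\le c_b$ and, when $c_a>c_b$, by decomposing $M$ into matched blocks separated by the distinguished rows $i_{k_1},\dots,i_{k_{c_a-c_b}}$. Your repackaging is clean and does work, but be aware that the statement you defer to a ``check''--- that exactly one $10$-pair surgery per exponent $e\le 0$ fails admissibility and all others yield curves --- is not a routine verification; it is precisely the content of the paper's $c_a>c_b$ case. It is true, and in your lattice-path language it follows from Proposition~\ref{prop:BCT_curves}(2): a $10$-pair in row $i$ fails to give a curve iff the path never returns to level $D_i-1$ after row $i$ (no matched block headed by that row) and moreover $D_i\ge 1$, since in the no-return case the surgery moves $\delta\bigl(\overline M(i,n,a,b)\bigr)=c_a-c_b-D_i+1$ boxes of the rightmost column and~\eqref{eqn:box_constraint} then amounts to $D_i\le 0$, i.e., $e=1-D_i\ge 1$. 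A ``last visit to level $v-1$'' argument shows there is exactly one such no-return up-step for each $v=D_i\in\{1,\dots,c_a-c_b\}$ and none for $v>c_a-c_b$ (the path must come back down), so the failures occur one per exponent $e\in\{c_b-c_a+1,\dots,0\}$ rather than at every $e\le 0$ as you state; this is exactly the range where your crossing count drops by one, so the totals match. Two small corrections: in Theorem~\ref{thm:weights} both members of each weight pair are indexed by $01$-pairs, so the multiplicity of either family is the number of $01$-rows at the appropriate value of $s_{ib}-s_{ia}$ (not ``$01$- or $10$-patterns''); and the missing curves when $c_b>c_a$ are supplied by exactly one type III curve per exponent $1\le e\le c_b-c_a$, which your identity does use correctly. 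With the no-return classification actually proved, your argument is complete and is essentially the paper's proof with the matched-block bookkeeping replaced by the crossing identity.
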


\begin{proof}
Due to Proposition~\ref{prop:pencil_generation}, it suffices to construct $k$ distinct type I, II, or III $\torus$-invariant curves with tangent weight $w$ at $p$ for each tangent weight $w$ of multiplicity $k$ in $T_p\bowvar(\brane)$. For simplicity, let us reduce to the case where $m=2$, and the BCT $M$ of $p$ is an $n\times 2$ matrix. The general case is obtained by applying the same argument to each pair of columns in $M$.
From Theorem~\ref{thm:weights}, the tangent weights at $p$ come in two varieties: those of the form $\frac{\ub_1}{\ub_2}\hb^r$ and those of the form $\frac{\ub_2}{\ub_1}\hb^r$. It is easy to see from Proposition~\ref{prop:BCT_curves} and Lemma~\ref{lemma:Young_curves} that the weights of the latter form are accounted for by type I and II curves. Hence, we turn our attention to the former.\looseness=-1

We restrict our attention to weights of the form $\frac{\ub_1}{\ub_2}\hb^r$. There is one such weight associated with each 01-pair. First, we consider the case where $c_1 \leq c_2$. In this case, the extra constraint~\eqref{eqn:box_constraint} is automatically satisfied. If there are no 10-pairs in $M$, then by Lemma~\ref{lemma:nonsurgery}, the tangent weights under consideration are accounted for by type III curves. Induct on the number of 10-pairs. Assume that $M$ has both 10- and 01-pairs. We see that whenever a 10-pair and a 01-pair are only separated by 00- or 11-pairs, the tangent weight of the curve associated with the 10-pair via Proposition \ref{prop:BCT_curves} is equal to the $\frac{\ub_1}{\ub_2}\hb^r$ tangent weight associated with the~01-pair. Note also that the minimal matched block between such a 01-pair and 10-pair makes no net contribution to the tangent weight formula of Theorem~\ref{thm:weights} or the displacement formula of Proposition~\ref{prop:BCT_curves}\,(3). A 01-pair with a 10-pair separated by only 00- or 11-pairs always exists as long as there are both 01- and 10-pairs, so deleting the block between them and applying induction completes the argument.

Next, we consider the case $c_1 > c_2$. Let the row indices of the 10-pairs be $i_1, \dots , i_{c_1}$. Define $\overline{M}(i_1, i_2) = \overline{M}(i_1, i_2, 1, 2)$ and $\delta_i = \delta\bigl(\overline{M}(i, n)\bigr)$. Clearly, $|\delta_n| \leq 1$, $\delta_1 = c_1 - c_2$, and $|\delta_i - \delta_{i + 1}| \leq 1$. Since $\delta_i - \delta_{i + 1} = 1$ if and only if $M$ has a 10-pair in the $i$-th row, we have
\[
\{1, \dots , c_1 - c_2\} \subset \{\delta_{i_k}\mid k = 1, \dots , c_1\}.
\]
Moreover, defining
\begin{gather*}
k_1 = \max\{k\mid \delta_{i_k} = c_1 - c_2\},\qquad k_2 = \max\{k\ |\ \delta_{i_k} = c_1 - c_2 - 1\},\qquad \dots ,\\
 k_{c_1 - c_2} = \max\{k\mid \delta_{i_k} = 1\},
\end{gather*}
it follows from an intermediate value argument that $k_1 < k_2 < \cdots < k_{c_1 - c_2}$. We collect two basic facts:
\begin{enumerate}\itemsep=0pt
\item[(1)] If $\delta_{i} - \delta_{i'} = 0$ for $i < i'$, then $\delta\bigl(\overline{M}(i, i' - 1)\bigr) = 0$.
\item[(2)] If $\delta_{i_k} - \delta_{i_{k'}} = 1$ for $k < k'$, then $\delta\bigl(\overline{M}(i_k + 1, i_{k'} -1)\bigr) = 0$.
 \end{enumerate}
If $i_{k_1} > 1$, applying (1) with $i = 1, i' = i_{k_1}$ tells us that $\overline{M}(1, i_{k_1} - 1)$ is matched. Applying (2) to consecutive indices in the ordered sequence $k_1, \dots , k_{c_1 - c_2}$ tells us that the blocks lying between the 10-pairs in rows $i_{k_1}, \dots , i_{k_{c_1- c_2}}$ are matched. If $i_{k_{c_1 - c_2}} < n$, the previous two sentences imply that \smash{$\overline{M}(i_{k_{c_1 - c_2}} + 1, n)$} is matched. Hence, rows $i_{k_1}, \dots , i_{k_{c_2 - c_1}}$ separate $M$ into matched blocks.

Our next goal is to show that every 10-pair except the those in rows $i_{k_1}, \dots , i_{k_{c_1-c_2}}$ are associated with an invariant curve. By Proposition~\ref{prop:BCT_curves}, a 10-pair in row $i$ is associated with a~curve if it is the top row of some matched block or if it satisfies the constraint $\delta_i \geq c_1 - c_2 + 1$ (constraint~\eqref{eqn:box_constraint}). Let $(i, 1, 2)$ be a 10-pair where $i\neq i_k$ for $k = 1, \dots , c_1 - c_2$. Suppose~${\delta_i \leq c_1 - c_2}$. If $\delta_i < 1$, then there exists $i' > i$ such that $\delta\bigl(\overline{M}(i, i')\bigr) = 0$ by an intermediate value argument. In other words, $(i, 1, 2)$ is the top of a matched block. If $1 \leq \delta_i \leq c_1 - c_2$, then $\delta_i = \delta_{i_k}$ for some~$k$. We have $i < i_k$ by definition of $i_k$, so $\overline{M}(i, i_k - 1)$ is matched by (1).
Applying the argument above for the $c_1 \leq c_2$ case to each matched block lying between rows $i_{k_1}, \dots , i_{k_{c_1 - c_2}}$ completes the proof.
\end{proof}

\begin{Remark}
While not necessary for the proof, it is easy to show that the $10$-pairs in rows $i_{k_1}, \dots , i_{k_{c_1 - c_2}}$ correspond to Young diagram surgeries that move at least one block of the right column but fail constraint \eqref{eqn:box_constraint}.
\end{Remark}

At first glance, the existence of nonsurgery (type III) curves and the constraint \eqref{eqn:box_constraint} might seem mysterious. The proof of Theorem~\ref{thm:curves} elucidates their role. By Theorem~\ref{thm:weights}, each 01-pair corresponds to two tangent weights, $w$ and $\hb w^{-1}$. Each 01-pair is also associated with a Young diagram surgery, by Proposition~\ref{prop:BCT_curves}. Hence, it is natural to expect half of the tangent weights to be accounted for by these surgeries, and indeed this is the case. On the other hand, we might expect the remaining half of the tangent weights to be accounted for by surgeries associated with 10-pairs. If there are fewer 10- than 01-pairs, we get a deficit of weights, which is made up by nonsurgery curves. Otherwise, if there are more 10- than 01-pairs, constraint \eqref{eqn:box_constraint} kicks in to correct the surplus.

\begin{Corollary} \label{cor:compactness}
Let $\brane$ be separated and $p\in\bowvar(\brane)$. Invariant curves of type I containing $p$ are compact, and those of type $II$ or $III$ are noncompact.
\end{Corollary}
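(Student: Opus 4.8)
The plan is to treat the two halves of the statement separately. For type I curves there is essentially nothing new to do: a type I curve $\gamma$ is by construction the closure of the $\torus$-orbit produced by a connected butterfly surgery $\surgery$ sending one $\torus$-fixed point $p_1$ to another $\torus$-fixed point $p_2$, and the analysis in Section~\ref{sec:butterfly_surgery} (the paragraphs leading up to Lemma~\ref{lemma:butterfly_surgery}) already shows that $p_1,p_2\in\gamma$ and $\gamma\cong\CCP^1$. So I would simply recall that $\gamma$ contains two fixed points and is therefore compact.

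For type II and III curves I would argue by contradiction. Suppose $\gamma$ is of type II or III with respect to $p=p_1$ and that $\gamma$ is compact; then $\gamma$ contains a second $\torus$-fixed point $p_2$, and by Theorem~\ref{thm:curves} applied at $p_2$ the curve $\gamma$ is of type I, II, or III with respect to $p_2$. The first step is to rule out type I at $p_2$: butterfly surgeries are reversible, so a curve that is type I with respect to one of its fixed points is type I with respect to the other; if $\gamma$ were type I at $p_2$ it would be type I at $p_1$, contradicting the disjointness of the three types at $p_1$ (Proposition~\ref{prop:disjoint_types}). Hence $\gamma$ is of type II or III with respect to \emph{both} $p_1$ and $p_2$.

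Now the tangent-weight constraint does the work. If the recipe producing $\gamma$ at $p_1$ transfers material from the $U_s$ butterfly to the $U_t$ butterfly, the tangent weight of $\gamma$ at $p_1$ is $\frac{\ub_s}{\ub_t}\hb^{-\Delta_\surgery y}$, so its tangent weight at $p_2$ is the reciprocal $\frac{\ub_t}{\ub_s}\hb^{\Delta_\surgery y}$, which forces the recipe at $p_2$ to transfer material from $U_t$ to $U_s$. Since a type III (nonsurgery) curve always moves material from a lower-indexed to a higher-indexed D5 brane, $\gamma$ cannot be type III at both $p_1$ and $p_2$. For the remaining combinations I would invoke the explicit vanishing statements from the proof of Proposition~\ref{prop:disjoint_types}: one of the conditions ``$b_{U_r}=0$'' or ``$B_{U_r}^{\ell}a_{U_r}=0$'' (for the appropriate $r\in\{s,t\}$ and exponent $\ell$) is automatic for the recipe at one of $p_1,p_2$ — because that recipe, starting from a $\torus$-fixed diagram, never alters the maps $a_{U_r}$ and $B_{U_r}$ — while the recipe at the other fixed point, run in the reversed direction, forces the same map to be nonzero at the generic point of $\gamma$. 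Because these conditions are $\G$- and $\torus$-invariant, they are properties of the orbit $\gamma\setminus\{p_1,p_2\}$ and cannot simultaneously hold and fail; this is the contradiction.

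The step I expect to be the main obstacle is precisely this final bookkeeping: one must run through the finitely many possibilities for the pair (type and move-direction at $p_1$, type and move-direction at $p_2$) and verify, case by case, that reversing the move always produces a ``forced nonzero'' versus ``forced zero'' clash for one and the same map attached to one and the same D5 brane. Making the matching come out requires the precise link between the exponent of $\hb$ in the tangent weight (Theorem~\ref{thm:weights} and Proposition~\ref{prop:BCT_curves}(3)) and the shapes of the Young diagrams $\lambda^{(s)},\lambda^{(t)}$ at the two fixed points. If that case analysis proves unwieldy, an alternative route is a direct geometric argument: parametrize $\gamma\setminus\{p\}$ by scaling the new edges of $\tilde p_\gamma$ by $t\in\CC^\times$ and show the orbit has no limit as $t\to\infty$, because any $g(t)\in\G$ bounding $g(t)\tilde p_\gamma(t)$ must, in the limit, kill the map $a$ at the target D5 brane and hence violate stability condition S2 — a Hilbert--Mumford-type estimate inside the GIT quotient. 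I would use whichever route turns out shorter.
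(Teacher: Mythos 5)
Your proposal is correct and follows essentially the same route as the paper: compactness would force a second fixed point $p_2$, Theorem~\ref{thm:curves} forces $\gamma$ to arise from the surgery constructions at $p_2$ with the reciprocal tangent weight, and the $\G$- and $\torus$-invariant (non)vanishing of the maps $b$ and $B^\ell a$ along the common orbit then produces the clash. The one point where you should be slightly more careful than your sketch is that Theorem~\ref{thm:curves} only says $\gamma$ lies in a pencil \emph{spanned} by type I/II/III curves at $p_2$, so the vanishing conditions must be verified for the pencil's representative point rather than for a single spanning curve --- which is exactly how the paper phrases it.
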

\begin{proof}
We have already shown in Section~\ref{sec:butterfly_surgery} that type I curves are compact. Let $\gamma$ be a~compact invariant curve containing $p_1$. Then, $\gamma$ contains another fixed point $p_2$. Recall that the tangent weights of $\gamma$ at $p_1$ and $p_2$ are reciprocols. First, assume that \smash{$T_{p_1}\gamma = \frac{\ub_j}{\ub_{j'}}\hb^r$}, where $j<j'$. Suppose to the contrary that $\gamma$ is type II or III.
In Sections~\ref{sec:botched_surgery} and~\ref{sec:nonsurgery}, we constructed a point~${p_{\gamma, 1}\in\bowvar(\brane)}$ such that $\gamma = \overline{\torus.p_{\gamma, 1}}.$ From the form of the tangent weight, we see that~$p_{\gamma, 1}$ is represented by a point \smash{$\tilde{p}_{\gamma, 1} \in \prequot$} with $b\neq 0$.
Viewing $\gamma$ from the perspective of $p_2$, Theorem~\ref{thm:curves} implies that $\gamma$ belongs to a pencil of curves spanned by type I, II, and III curves containing~$p_2$. Thus, the surgery constructions with respect to $p_2$ give us a point $p_{\gamma, 2}\in\bowvar(\brane)$ such that~${\gamma = \overline{\torus.p_{\gamma, 2}}}$. In particular, $p_{\gamma, 1}$ and $p_{\gamma, 2}$ are in the same $\torus$-orbit. It follows that $p_{\gamma, 2}$ must be represented by an element \smash{$\tilde{p}_{\gamma, 2} \in \prequot$} with $b\neq 0$.
However, examining the constructions of Sections~\ref{sec:butterfly_surgery}, \ref{sec:botched_surgery} and~\ref{sec:nonsurgery}, pencils of curves with tangent weight \smash{$\frac{\ub_{j'}}{\ub_j}\hb^{-r}$}, where $j<j'$, are given by orbit closures of points represented by elements of $\prequot$ with $b=0$. Contradiction!

The case where \smash{$T_{p_1}\gamma = \frac{\ub_{j}}{\ub_{j'}}\hb^r$}, where $j > j'$ is similar. For this case, we do not need to consider type III curves. If the curve is type II, then $b=0$ and \smash{$B^\ell a_{U_{j'}} \neq 0$} for \smash{$\ell = d_{U_{j'}^-}^{U_{j'}}$} in $\tilde{p}_{\gamma, 1}$. Hence, we also have $b=0$ and $B^\ell a_{U_{j'}} \neq 0$ in $\tilde{p}_{\gamma, 2}$. The invariant curves containing $p_2$ with tangent weight \smash{$\frac{\ub_{j'}}{\ub_j}\hb^{-r}$} all violate one of these two properties. Contradiction!
\end{proof}

\section{Examples of invariant curves} \label{sec:examples}
We will apply the classification of Section~\ref{sec:classification} to describe the invariant curves of the example bow varieties from \cite{RS} in terms of Young diagram surgeries.

\subsection[Invariant curves ...]{Invariant curves for $\boldsymbol{\bowvar(\ttt{\fs 1\fs 2\fs 3\fs 4\fs 5\bs 2\bs})}$} \label{sec:dualofGr_2(C^5)}
Letting $\brane = \ttt{\fs 1\fs 2\fs 3\fs 4\fs 5\bs 2\bs}$, the bow variety $\bowvar(\brane)$ is Hanany--Witten isomorphic to the 3d mirror dual of $T^*\mathrm{Gr}(2,5)$. Its $\torus$-fixed points are in bijection with the $2$-element subsets of the set $\{1,2,3,4,5\}$, where the subset $\{k,l\}$ corresponds to the tie diagram with $U_2$ connected to both $V_k$ and $V_l$, and $U_1$ connected to the three remaining NS5 branes. This fixed point will be denoted $kl$. The invariant curves from Figure \ref{fig:GKM1} were computed via Young diagram surgeries.

\begin{figure}[t]
\centering
 \begin{tikzpicture}[scale=1.52]

\draw[] (3.5,-.5) -- (-0.5,3.5);
\draw[] (3.5,1.5) -- (.5,4.5);
\draw[] (3,4) -- (1.5,5.5);

\draw [](3,0) to [out=40,in=-40] (3,4);
\draw [](3,2) to [out=40,in=-40] (3,6);

\draw[] (1.5,.5) -- (3,2);
\draw[] (.5,1.5) -- (3.5,4.5);
\draw[] (-.5,2.5) -- (3.5,6.5);
\draw[] (3,0) -- (2.5,-.5);
\draw[] (3,6) -- (2.5,6.5);

 \node[] at (2.5,.2) {\scalebox{.6}[.6]{$\frac{\ub_1}{\ub_2}\hb^{-2}$}};
 \node[] at (2.6,-.2) {\scalebox{.6}[.6]{$\frac{\ub_2}{\ub_1}\hb^{2}$}};
 \node[] at (3.5,.2) {\scalebox{.6}[.6]{$\frac{\ub_1}{\ub_2}\hb^{-1}$}};
 \node[] at (3.44,-.2) {\scalebox{.6}[.6]{$\frac{\ub_2}{\ub_1}\hb^3$}};

 \node[] at (1.5,1.2) {\scalebox{.6}[.6]{$\frac{\ub_1}{\ub_2}\hb^{-1}$}};
 \node[] at (1.6,.8) {\scalebox{.6}[.6]{$\frac{\ub_2}{\ub_1}\hb^{2}$}};
 \node[] at (2.55,1.2) {\scalebox{.6}[.6]{$\frac{\ub_1}{\ub_2}\hb^{-1}$}};
 \node[] at (2.45,.8) {\scalebox{.6}[.6]{$\frac{\ub_2}{\ub_1}\hb^2$}};

 \node[] at (0.6,2.2) {\scalebox{.6}[.6]{$\frac{\ub_1}{\ub_2}$}};
 \node[] at (0.6,1.8) {\scalebox{.6}[.6]{$\frac{\ub_2}{\ub_1}\hb^2$}};
 \node[] at (1.14,2.33) {\scalebox{.6}[.6]{$\frac{\ub_1}{\ub_2}\hb^{-1}$}};
 \node[] at (1.2,1.6) {\scalebox{.6}[.6]{$\frac{\ub_2}{\ub_1}\hb$}};

 \node[] at (2.9,2.4) {\scalebox{.6}[.6]{$\frac{\ub_1}{\ub_2}\hb^{-1}$}};
 \node[] at (2.8,1.55) {\scalebox{.6}[.6]{$\frac{\ub_2}{\ub_1}\hb$}};
 \node[] at (3.4,2.2) {\scalebox{.6}[.6]{$\frac{\ub_1}{\ub_2}$}};
 \node[] at (3.45,1.8) {\scalebox{.6}[.6]{$\frac{\ub_2}{\ub_1}\hb^2$}};

 \node[] at (1.6,3.2) {\scalebox{.6}[.6]{$\frac{\ub_1}{\ub_2}$}};
 \node[] at (1.6,2.8) {\scalebox{.6}[.6]{$\frac{\ub_2}{\ub_1}\hb$}};
 \node[] at (2.4,3.2) {\scalebox{.6}[.6]{$\frac{\ub_1}{\ub_2}$}};
 \node[] at (2.4,2.8) {\scalebox{.6}[.6]{$\frac{\ub_2}{\ub_1}\hb$}};

 \node[] at (-.4,3.2) {\scalebox{.6}[.6]{$\frac{\ub_1}{\ub_2}\hb$}};
 \node[] at (-.4,2.8) {\scalebox{.6}[.6]{$\frac{\ub_2}{\ub_1}\hb^2$}};
 \node[] at (.5,3.2) {\scalebox{.6}[.6]{$\frac{\ub_1}{\ub_2}\hb^{-1}$}};
 \node[] at (.4,2.8) {\scalebox{.6}[.6]{$\frac{\ub_2}{\ub_1}$}};

 \node[] at (.6,4.2) {\scalebox{.6}[.6]{$\frac{\ub_1}{\ub_2}\hb$}};
 \node[] at (.6,3.8) {\scalebox{.6}[.6]{$\frac{\ub_2}{\ub_1}\hb$}};
 \node[] at (1.2,4.35) {\scalebox{.6}[.6]{$\frac{\ub_1}{\ub_2}$}};
 \node[] at (1.25,3.6) {\scalebox{.6}[.6]{$\frac{\ub_2}{\ub_1}$}};

 \node[] at (2.85,4.3) {\scalebox{.6}[.6]{$\frac{\ub_1}{\ub_2}$}};
 \node[] at (2.75,3.55) {\scalebox{.6}[.6]{$\frac{\ub_2}{\ub_1}$}};
 \node[] at (3.45,4.2) {\scalebox{.6}[.6]{$\frac{\ub_1}{\ub_2}\hb$}};
 \node[] at (3.4,3.8) {\scalebox{.6}[.6]{$\frac{\ub_2}{\ub_1}\hb$}};

 \node[] at (1.6,5.2) {\scalebox{.6}[.6]{$\frac{\ub_1}{\ub_2}\hb$}};
 \node[] at (1.6,4.8) {\scalebox{.6}[.6]{$\frac{\ub_2}{\ub_1}$}};
 \node[] at (2.45,5.2) {\scalebox{.6}[.6]{$\frac{\ub_1}{\ub_2}\hb$}};
 \node[] at (2.4,4.8) {\scalebox{.6}[.6]{$\frac{\ub_2}{\ub_1}$}};

 \node[] at (2.6,6.2) {\scalebox{.6}[.6]{$\frac{\ub_1}{\ub_2}\hb$}};
 \node[] at (2.6,5.8) {\scalebox{.6}[.6]{$\frac{\ub_2}{\ub_1}\hb^{-1}$}};
 \node[] at (3.45,6.2) {\scalebox{.6}[.6]{$\frac{\ub_1}{\ub_2}\hb^2$}};
 \node[] at (3.4,5.8) {\scalebox{.6}[.6]{$\frac{\ub_2}{\ub_1}$}};

 \draw [](2.2,1.2) to [out=53,in=-53] (2.2,2.8);
 \draw [](2.1,1.12) to [out=76,in=-76] (2.1,2.88);
 \draw [](2,1.15) to [out=90,in=-90] (2,2.85);
 \draw [](1.9,1.12) to [out=104,in=-104] (1.9,2.88);
 \draw [](1.8,1.2) to [out=127,in=-127] (1.8,2.8);

 \draw [](2.2,3.2) to [out=53,in=-53] (2.2,4.8);
 \draw [](2.1,3.12) to [out=76,in=-76] (2.1,4.88);
 \draw [](2,3.15) to [out=90,in=-90] (2,4.85);
 \draw [](1.9,3.12) to [out=104,in=-104] (1.9,4.88);
 \draw [](1.8,3.2) to [out=127,in=-127] (1.8,4.8);

 \draw [](2,1) to (2.35,0.5);
 \draw [](2,1) to (2.2,0.5);
 \draw [](2,1) to (2 ,0.5);
 \draw [](2,1) to (1.8,0.5);
 \draw [](2,1) to (1.65,0.5);

 \draw [](2,5) to (2.35,5.5);
 \draw [](2,5) to (2.2,5.5);
 \draw [](2,5) to (2 ,5.5);
 \draw [](2,5) to (1.8,5.5);
 \draw [](2,5) to (1.65,5.5);

 \node[fill,white,draw,circle,minimum size=.5cm,inner sep=0pt] at (3,0) {\footnotesize$45$};
 \node[fill,white,draw,circle,minimum size=.5cm,inner sep=0pt] at (2,1) {\footnotesize$35$};
 \node[fill,white,draw,circle,minimum size=.5cm,inner sep=0pt] at (3,2) {\footnotesize$34$};
 \node[fill,white,draw,circle,minimum size=.5cm,inner sep=0pt] at (1,2) {\footnotesize$25$};
 \node[fill,white,draw,circle,minimum size=.5cm,inner sep=0pt] at (2,3) {\footnotesize$24$};
 \node[fill,white,draw,circle,minimum size=.5cm,inner sep=0pt] at (3,4) {\footnotesize$23$};
 \node[fill,white,draw,circle,minimum size=.5cm,inner sep=0pt] at (0,3) {\footnotesize$15$};
 \node[fill,white,draw,circle,minimum size=.5cm,inner sep=0pt] at (1,4) {\footnotesize$14$};
 \node[fill,white,draw,circle,minimum size=.5cm,inner sep=0pt] at (2,5) {\footnotesize$13$};
 \node[fill,white,draw,circle,minimum size=.5cm,inner sep=0pt] at (3,6) {\footnotesize$12$};
 \node[draw,circle,minimum size=.5cm,inner sep=0pt] at (3,0) {\footnotesize$45$};
 \node[draw,circle,minimum size=.5cm,inner sep=0pt] at (2,1) {\footnotesize$35$};
 \node[draw,circle,minimum size=.5cm,inner sep=0pt] at (3,2) {\footnotesize$34$};
 \node[draw,circle,minimum size=.5cm,inner sep=0pt] at (1,2) {\footnotesize$25$};
 \node[draw,circle,minimum size=.5cm,inner sep=0pt] at (2,3) {\footnotesize$24$};
 \node[draw,circle,minimum size=.5cm,inner sep=0pt] at (3,4) {\footnotesize$23$};
 \node[draw,circle,minimum size=.5cm,inner sep=0pt] at (0,3) {\footnotesize$15$};
 \node[draw,circle,minimum size=.5cm,inner sep=0pt] at (1,4) {\footnotesize$14$};
 \node[draw,circle,minimum size=.5cm,inner sep=0pt] at (2,5) {\footnotesize$13$};
 \node[draw,circle,minimum size=.5cm,inner sep=0pt] at (3,6) {\footnotesize$12$};

 \end{tikzpicture}
\vspace{-1mm}

\caption{Illustration of $\torus$-fixed points and invariant curves of $\bowvar(\ttt{\fs 1\fs 2\fs 3\fs 4\fs 5\bs 2\bs})$ (with their $\torus$-weights), which is the 3d mirror dual of $T^*\mathrm{Gr}(2,5)$.} \label{fig:GKM1}
\vspace{-1mm}
\end{figure}

As an example, consider the fixed point $13$, whose BCT and Young diagrams appear in Figure~\ref{fig:fixedpoint13}. By Theorem~\ref{thm:weights}, $T_{13}\bowvar(\brane) = \frac{\ub_1}{\ub_2}\hb + \frac{\ub_2}{\ub_1} + \frac{\ub_1}{\ub_2}\hb + \frac{\ub_2}{\ub_1}$. Using Young diagram surgeries, four invariant curves are constructed, depicted in Figure~\ref{fig:13_yd_surgeries}, one for each tangent weight.

Each pair of surgeries that share a tangent weight (columns of Figure~\ref{fig:13_yd_surgeries}) have disjoint sites, and thus span pencils. The pencil of weight $\frac{\ub_2}{\ub_1}$ is spanned by two type I curves, so all the curves of this pencil are compact. The other pencil has weight $\frac{\ub_1}{\ub_2} \hb$ and is spanned by a type I curve and a type II curve, and thus only one curve of the pencil is compact.

Note that $13$ also has one Young diagram surgery that is not depicted, given by moving a~single box in the rightmost column, but it is an example of a surgery that does not satisfy constraint \eqref{eqn:box_constraint}, as $c_1-c_2+1=2$.

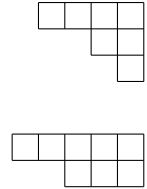
\begin{figure}[th]
\centering
\subcaptionbox{}[0.3\textwidth]
{
$
\begin{pmatrix}
0&1\\
1&0\\
0&1\\
1&0\\
1&0
\end{pmatrix}
$
}
\subcaptionbox{}[0.3\textwidth]
{
\begin{tikzpicture}[scale=0.35]
\draw (1, 0)--(5, 0); \draw (1, -1)--(5, -1); \draw (3, -2)--(5, -2); \draw (4, -3)--(5, -3);
\draw (1, 0)--(1, -1); \draw (2, 0)--(2, -1); \draw (3, 0)--(3, -2); \draw (4, 0)--(4, -3); \draw (5, 0)--(5, -3);

\draw (0, -5)--(5, -5); \draw (0, -6)--(5, -6); \draw (2, -7)--(5, -7); \draw (0, -5)--(0, -6); \draw (1, -5)--(1, -6); \draw (2, -5)--(2, -7); \draw (3, -5)--(3, -7); \draw (4, -5)--(4, -7); \draw (5, -5)--(5, -7);
\end{tikzpicture}
}
\caption{The (a) BCT and (b) Young diagrams associated to the fixed point $13$ of Figure~\ref{fig:GKM1}.}
\label{fig:fixedpoint13}
\end{figure}

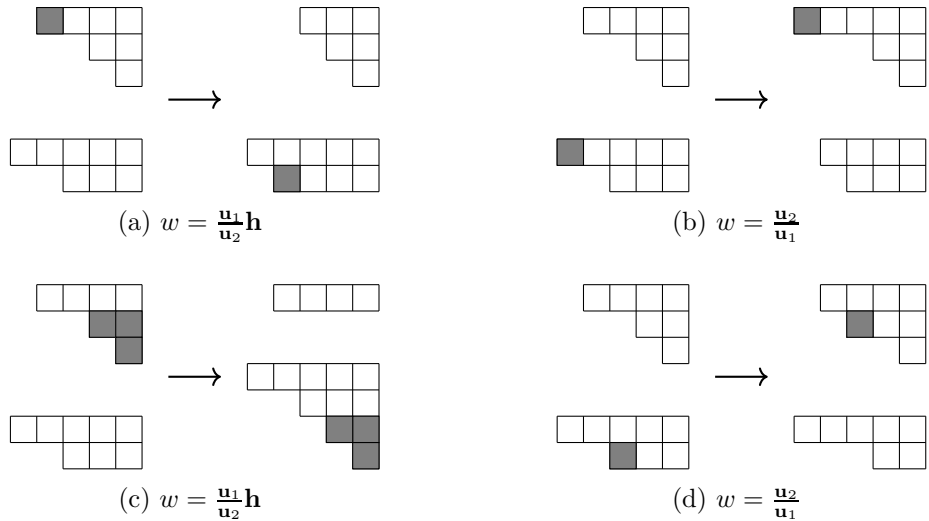
\begin{figure}
\centering
\hfill
\subcaptionbox*{(a) $w = \frac{\ub_1}{\ub_2}\hb$}
{
\begin{tikzpicture}[scale=0.35]
\draw (1, 0)--(5, 0); \draw (1, -1)--(5, -1); \draw (3, -2)--(5, -2); \draw (4, -3)--(5, -3);
\draw (1, 0)--(1, -1); \draw (2, 0)--(2, -1); \draw (3, 0)--(3, -2); \draw (4, 0)--(4, -3); \draw (5, 0)--(5, -3);

\draw (0, -5)--(5, -5); \draw (0, -6)--(5, -6); \draw (2, -7)--(5, -7); \draw (0, -5)--(0, -6); \draw (1, -5)--(1, -6); \draw (2, -5)--(2, -7); \draw (3, -5)--(3, -7); \draw (4, -5)--(4, -7); \draw (5, -5)--(5, -7);

\draw[fill=gray] (1, -1) rectangle (2, 0);

\draw[->, thick] (6, -3.5)--(8, -3.5);

\begin{scope}[xshift = 9cm]
\draw (2, 0)--(5, 0); \draw (2, -1)--(5, -1); \draw (3, -2)--(5, -2); \draw (4, -3)--(5, -3);
\draw (2, 0)--(2, -1); \draw (3, 0)--(3, -2); \draw (4, 0)--(4, -3); \draw (5, 0)--(5, -3);

\draw (0, -5)--(5, -5); \draw (0, -6)--(5, -6); \draw (1, -7)--(5, -7); \draw (0, -5)--(0, -6); \draw (1, -5)--(1, -7); \draw (2, -5)--(2, -7); \draw (3, -5)--(3, -7); \draw (4, -5)--(4, -7); \draw (5, -5)--(5, -7);

\draw[fill=gray] (1, -7) rectangle (2, -6);
\end{scope}
\end{tikzpicture}
}
\hfill
\subcaptionbox*{(b) $w = \frac{\ub_2}{\ub_1}$}
{
\begin{tikzpicture}[scale=0.35]
\draw (1, 0)--(5, 0); \draw (1, -1)--(5, -1); \draw (3, -2)--(5, -2); \draw (4, -3)--(5, -3);
\draw (1, 0)--(1, -1); \draw (2, 0)--(2, -1); \draw (3, 0)--(3, -2); \draw (4, 0)--(4, -3); \draw (5, 0)--(5, -3);

\draw (0, -5)--(5, -5); \draw (0, -6)--(5, -6); \draw (2, -7)--(5, -7); \draw (0, -5)--(0, -6); \draw (1, -5)--(1, -6); \draw (2, -5)--(2, -7); \draw (3, -5)--(3, -7); \draw (4, -5)--(4, -7); \draw (5, -5)--(5, -7);

\draw[fill=gray] (0, -5) rectangle (1, -6);

\draw[->, thick] (6, -3.5)--(8, -3.5);

\begin{scope}[xshift = 9cm]
\draw (0, 0)--(5, 0); \draw (0, -1)--(5, -1); \draw (3, -2)--(5, -2); \draw (4, -3)--(5, -3);
\draw (0, 0)--(0, -1); \draw (1, 0)--(1, -1); \draw (2, 0)--(2, -1); \draw (3, 0)--(3, -2); \draw (4, 0)--(4, -3); \draw (5, 0)--(5, -3);

\draw (1, -5)--(5, -5); \draw (1, -6)--(5, -6); \draw (2, -7)--(5, -7); \draw (1, -5)--(1, -6); \draw (2, -5)--(2, -7); \draw (3, -5)--(3, -7); \draw (4, -5)--(4, -7); \draw (5, -5)--(5, -7);

\draw[fill=gray] (0, 0) rectangle (1, -1);
\end{scope}

\end{tikzpicture}
}
\hspace*{\fill}

\vspace{15pt}
\hfill
\subcaptionbox*{(c) $w = \frac{\ub_1}{\ub_2}\hb$}
{
\begin{tikzpicture}[scale=0.35]
\draw (1, 0)--(5, 0); \draw (1, -1)--(5, -1); \draw (3, -2)--(5, -2); \draw (4, -3)--(5, -3);
\draw (1, 0)--(1, -1); \draw (2, 0)--(2, -1); \draw (3, 0)--(3, -2); \draw (4, 0)--(4, -3); \draw (5, 0)--(5, -3);

\draw (0, -5)--(5, -5); \draw (0, -6)--(5, -6); \draw (2, -7)--(5, -7); \draw (0, -5)--(0, -6); \draw (1, -5)--(1, -6); \draw (2, -5)--(2, -7); \draw (3, -5)--(3, -7); \draw (4, -5)--(4, -7); \draw (5, -5)--(5, -7);

\draw[fill=gray] (3, -1) rectangle (4, -2); \draw[fill=gray] (4, -1) rectangle (5, -2); \draw[fill=gray] (4, -2) rectangle (5, -3);

\draw[->, thick] (6, -3.5)--(8, -3.5);

\begin{scope}[xshift = 9cm]
\draw (1, 0)--(5, 0); \draw (1, -1)--(5, -1);
\draw (1, 0)--(1, -1); \draw (2, 0)--(2, -1); \draw (3, 0)--(3, -1); \draw (4, 0)--(4, -1); \draw (5, 0)--(5, -1);

\draw (0, -3)--(5, -3); \draw (0, -4)--(5, -4); \draw (2, -5)--(5, -5); \draw (3, -6)--(5, -6); \draw (4, -7)--(5, -7); \draw (0, -3)--(0, -4); \draw (1, -3)--(1, -4); \draw (2, -3)--(2, -5); \draw (3, -3)--(3, -6); \draw (4, -3)--(4, -7); \draw (5, -3)--(5, -7);

\draw[fill=gray] (4, -7) rectangle (5, -6); \draw[fill=gray] (4, -6) rectangle (5, -5); \draw[fill=gray] (3, -6) rectangle (4, -5);
\end{scope}

\end{tikzpicture}
}
\hfill
\subcaptionbox*{(d) $w = \frac{\ub_2}{\ub_1}$}
{
\begin{tikzpicture}[scale=0.35]
\draw (1, 0)--(5, 0); \draw (1, -1)--(5, -1); \draw (3, -2)--(5, -2); \draw (4, -3)--(5, -3);
\draw (1, 0)--(1, -1); \draw (2, 0)--(2, -1); \draw (3, 0)--(3, -2); \draw (4, 0)--(4, -3); \draw (5, 0)--(5, -3);

\draw (0, -5)--(5, -5); \draw (0, -6)--(5, -6); \draw (2, -7)--(5, -7); \draw (0, -5)--(0, -6); \draw (1, -5)--(1, -6); \draw (2, -5)--(2, -7); \draw (3, -5)--(3, -7); \draw (4, -5)--(4, -7); \draw (5, -5)--(5, -7);

\draw[fill=gray] (2, -6) rectangle (3, -7);

\draw[->, thick] (6, -3.5)--(8, -3.5);

\begin{scope}[xshift = 9cm]
\draw (1, 0)--(5, 0); \draw (1, -1)--(5, -1); \draw (2, -2)--(5, -2); \draw (4, -3)--(5, -3);
\draw (1, 0)--(1, -1); \draw (2, 0)--(2, -2); \draw (3, 0)--(3, -2); \draw (4, 0)--(4, -3); \draw (5, 0)--(5, -3);

\draw (0, -5)--(5, -5); \draw (0, -6)--(5, -6); \draw (3, -7)--(5, -7); \draw (0, -5)--(0, -6); \draw (1, -5)--(1, -6); \draw (2, -5)--(2, -6); \draw (3, -5)--(3, -7); \draw (4, -5)--(4, -7); \draw (5, -5)--(5, -7);

\draw[fill=gray] (2, -1) rectangle (3, -2);
\end{scope}
\end{tikzpicture}
}
\hspace*{\fill}

\caption{Four Young diagram surgeries representing four invariant curves of Figure~\ref{fig:GKM1} containing fixed point $13$. The sites are shaded gray. The tangent weights $w$ of the corresponding invariant curves are also shown. Each pair of curves with the same weight span one of the two 2-dimensional pencils seen in Figure~\ref{fig:GKM1}.}
\label{fig:13_yd_surgeries}
\end{figure}

\subsection[Invariant curves ...]{Invariant curves for $\boldsymbol{\bowvar(\ttt{\fs 2\fs 3\fs 5\bs 3\bs 2\bs})}$}
In \cite{RS}, the fixed curves are depicted for the bow variety $\bowvar(\ttt{\bs 1\fs 2\fs 2\bs 2\bs 1\fs})$. This is Hanany--Witten isomorphic to the separated bow variety $\bowvar(\ttt{\fs 2\fs 3\fs 5\bs 3\bs 2\bs})$. This isomorphism is $\torus$-equivariant up to the reparametrization of $\torus$ described in Section~\ref{sec:HW}. The following computation will be done using tangent weights of fixed points in $\bowvar(\ttt{\fs 2\fs 3\fs 5\bs 3\bs 2\bs})$, which differs from \cite{RS} in the exponents of $\hb$. The invariant curves of this bow variety can be seen in Figure~\ref{fig:GKM3}.

\begin{figure}[th!]
\centering
\begin{tikzpicture}[scale=2.5]

\draw[] (.07,.07) -- (.93,.93);
\draw[] (.07,1.93) -- (.93,1.07);
\draw[] (1.07,0.93) -- (1.93,0.07);
\draw[] (1.07,1.07) -- (1.93,1.93);
\draw[] (0,.1) -- (0,1.9);
\draw[] (2,.1) -- (2,1.9);

\draw[] (-.07,.07) -- (-.5,.5);
\draw[] (-.07,-.07) -- (-.5,-.5);
\draw[] (0,-.1) -- (0,-.5);
\draw[] (.07,-.07) -- (.5,-.5);

\draw[] (2.07,.07) -- (2.5,.5);
\draw[] (2.07,-.07) -- (2.5,-.5);
\draw[] (1.93,-.07) -- (1.5,-.5);
\draw[] (2,-.1) -- (2,-.5);

\draw[] (-.07,2.07) -- (-.5,2.5);
\draw[] (-.07,1.93) -- (-.5,1.5);
\draw[] (0,2.1) -- (0,2.5);
\draw[] (.07,2.07) -- (.5,2.5);

\draw[] (2.07,2.07) -- (2.5,2.5);
\draw[] (2.07,1.93) -- (2.5,1.5);
\draw[] (1.93,2.07) -- (1.5,2.5);
\draw[] (2,2.1) -- (2,2.5);

\draw[] (1,1) -- (1,1.6); \draw[] (1,1) -- (1.1,1.6); \draw[] (1,1) -- (1.2,1.6); \draw[] (1,1) -- (1.3,1.6); \draw[] (1,1) -- (1.4,1.6); \draw[] (1,1) -- (1.5,1.6);
\draw[] (1,1) -- (1,.4);\draw[] (1,1) -- (.9,.4);\draw[] (1,1) -- (.8,.4);\draw[] (1,1) -- (.7,.4);\draw[] (1,1) -- (.6,.4);\draw[] (1,1) -- (.5,.4);

\node[fill,white,draw,circle,minimum size=.5cm,inner sep=0pt] at (0,0) {\footnotesize $1$};
\node[fill,white,draw,circle,minimum size=.5cm,inner sep=0pt] at (1,1) {$3$};
\node[fill,white,draw,circle,minimum size=.5cm,inner sep=0pt] at (2,0) {$2$};
\node[fill,white,draw,circle,minimum size=.5cm,inner sep=0pt] at (0,2) {$4$};
\node[fill,white,draw,circle,minimum size=.5cm,inner sep=0pt] at (2,2) {$5$};
\node[draw,circle,minimum size=.5cm,inner sep=0pt] at (0,0) {\footnotesize $1$};
\node[draw,circle,minimum size=.5cm,inner sep=0pt] at (1,1) {\footnotesize$3$};
\node[draw,circle,minimum size=.5cm,inner sep=0pt] at (2,0) {\footnotesize$2$};
\node[draw,circle,minimum size=.5cm,inner sep=0pt] at (0,2) {\footnotesize$4$};
\node[draw,circle,minimum size=.5cm,inner sep=0pt] at (2,2) {\footnotesize$5$};

\node at (1.38,1.22) {\scalebox{.6}[.6]{$\frac{{\ub}_2}{{\ub}_3}\hb$}};
\node at (.9,1.4) {\scalebox{.6}[.6]{$\frac{{\ub}_2}{{\ub}_3}\hb$}};
\node at (.67,1.22) {\scalebox{.6}[.6]{$\frac{{\ub}_1}{{\ub}_2}$}};
\node at (.65,.76) {\scalebox{.6}[.6]{$\frac{{\ub}_3}{{\ub}_2}$}};
\node at (1.07,.6) {\scalebox{.6}[.6]{$\frac{{\ub}_3}{{\ub}_2}$}};
\node at (1.4,.75) {\scalebox{.6}[.6]{$\frac{{\ub}_2}{{\ub}_1}\hb$}};

\node at (2.38,2.22) {\scalebox{.6}[.6]{$\frac{{\ub}_2}{{\ub}_3}\hb^2$}};
\node at (1.89,2.33) {\scalebox{.6}[.6]{$\frac{{\ub}_2}{{\ub}_3}\hb$}};
\node at (1.58,2.27) {\scalebox{.6}[.6]{$\frac{{\ub}_1}{{\ub}_3}\hb$}};
\node at (1.65,1.77) {\scalebox{.6}[.6]{$\frac{{\ub}_3}{{\ub}_2}\hb$}};
\node at (2.075,1.64) {\scalebox{.6}[.6]{$\frac{{\ub}_3}{{\ub}_1}$}};
\node at (2.39,1.73) {\scalebox{.6}[.6]{$\frac{{\ub}_3}{{\ub}_2}$}};

\node at (2.4,.23) {\scalebox{.6}[.6]{$\frac{{\ub}_2}{{\ub}_3}\hb$}};
\node at (1.93,.4) {\scalebox{.6}[.6]{$\frac{{\ub}_1}{{\ub}_3}$}};
\node at (1.59,.19) {\scalebox{.6}[.6]{$\frac{{\ub}_1}{{\ub}_2}\hb^{-1}$}};
\node at (1.63,-.24) {\scalebox{.6}[.6]{$\frac{{\ub}_3}{{\ub}_1}\hb$}};
\node at (2.12,-.4) {\scalebox{.6}[.6]{$\frac{{\ub}_2}{{\ub}_1}\hb^2 $}};
\node at (2.38,-0.27) {\scalebox{.6}[.6]{$\frac{{\ub}_3}{{\ub}_2}$}};

\node at (.32,.2) {\scalebox{.6}[.6]{$\frac{{\ub}_2}{{\ub}_3}$}};
\node at (-.07,.4) {\scalebox{.6}[.6]{$\frac{{\ub}_1}{{\ub}_3}$}};
\node at (-.36,.22) {\scalebox{.6}[.6]{$\frac{{\ub}_2}{{\ub}_3}\hb$}};
\node at (-0.37,-.25) {\scalebox{.6}[.6]{$\frac{{\ub}_3}{{\ub}_2}\hb$}};
\node at (.06,-.4) {\scalebox{.6}[.6]{$\frac{{\ub}_3}{{\ub}_2} $}};
\node at (.41,-0.25) {\scalebox{.6}[.6]{$\frac{{\ub}_3}{{\ub}_1}\hb$}};

\node at (.35,2.2) {\scalebox{.6}[.6]{$\frac{{\ub}_1}{{\ub}_3}\hb$}};
\node at (-.1,2.4) {\scalebox{.6}[.6]{$\frac{{\ub}_1}{{\ub}_2}\hb$}};
\node at (-.4,2.24) {\scalebox{.6}[.6]{$\frac{{\ub}_2}{{\ub}_3}\hb$}};
\node at (-0.35,1.78) {\scalebox{.6}[.6]{$\frac{{\ub}_3}{{\ub}_2}$}};
\node at (-.07,1.5) {\scalebox{.6}[.6]{$\frac{{\ub}_3}{{\ub}_1}$}};
\node at (.31,1.815) {\scalebox{.6}[.6]{$\frac{{\ub}_2}{{\ub}_1}$}};
\end{tikzpicture}
\caption{Illustration of $\torus$-fixed points and invariant curves (with their $\torus$-weights) of $\bowvar(\ttt{\fs 2\fs 3\fs 5\bs 3\bs 2\bs})$.} \label{fig:GKM3}
\end{figure}

\begin{figure}[th!]
\centering
\subcaptionbox{}[0.3\textwidth]
{
$
\begin{pmatrix}
1&0&1\\
0&1&0\\
1&0&1
\end{pmatrix}
$
}
\subcaptionbox{}[0.3\textwidth]
{
\begin{tikzpicture}[scale=0.35]
\draw (1, 0)--(4, 0); \draw (1, -1)--(4, -1); \draw (3, -2)--(4, -2);
\draw (1,0)--(1,-1); \draw (2,0)--(2,-1); \draw (3,0)--(3,-2); \draw (4,0)--(4,-2);

\draw (2,-3)--(4,-3); \draw (2,-4)--(4,-4);
\draw (2,-3)--(2,-4); \draw (3,-3)--(3,-4); \draw (4,-3)--(4,-4);

\draw (1, -5)--(4, -5); \draw (1, -6)--(4, -6); \draw (3, -7)--(4, -7);
\draw (1,-5)--(1,-6); \draw (2,-5)--(2,-6); \draw (3,-5)--(3,-7); \draw (4,-5)--(4,-7);
\end{tikzpicture}
}
\caption{The (a) BCT and (b) Young diagrams associated to the fixed point $3$ of Figure~\ref{fig:GKM3}.}
\label{fig:fixedpoint3}
\end{figure}

\begin{figure}[th!]
\centering
\hfill
\subcaptionbox*{(a) $w = \frac{\ub_1}{\ub_2}$}
{
\begin{tikzpicture}[scale=0.35]
\draw (1, 0)--(4, 0); \draw (1, -1)--(4, -1); \draw (3, -2)--(4, -2);
\draw (1,0)--(1,-1); \draw (2,0)--(2,-1); \draw (3,0)--(3,-2); \draw (4,0)--(4,-2);

\draw (2,-3)--(4,-3); \draw (2,-4)--(4,-4);
\draw (2,-3)--(2,-4); \draw (3,-3)--(3,-4); \draw (4,-3)--(4,-4);

\draw[fill=gray] (1, -1) rectangle (2, 0);

\draw[->, thick] (5, -2)--(7, -2);

\begin{scope}[xshift = 7cm]
\draw (2, 0)--(4, 0); \draw (2, -1)--(4, -1); \draw (3, -2)--(4, -2);
\draw (2,0)--(2,-1); \draw (3,0)--(3,-2); \draw (4,0)--(4,-2);

\draw (1,-3)--(4,-3); \draw (1,-4)--(4,-4);
\draw (1,-3)--(1,-4); \draw (2,-3)--(2,-4); \draw (3,-3)--(3,-4); \draw (4,-3)--(4,-4);

\draw[fill=gray] (1, -3) rectangle (2, -4);
\end{scope}

\end{tikzpicture}
}
\hfill
\subcaptionbox*{(b) $w = \frac{\ub_2}{\ub_1}\hb$}
{
\begin{tikzpicture}[scale=0.35]
\draw (1, 0)--(4, 0); \draw (1, -1)--(4, -1); \draw (3, -2)--(4, -2);
\draw (1,0)--(1,-1); \draw (2,0)--(2,-1); \draw (3,0)--(3,-2); \draw (4,0)--(4,-2);

\draw (2,-3)--(4,-3); \draw (2,-4)--(4,-4);
\draw (2,-3)--(2,-4); \draw (3,-3)--(3,-4); \draw (4,-3)--(4,-4);

\draw[fill=gray] (2, -3) rectangle (3, -4);

\draw[->, thick] (5, -2)--(7, -2);

\begin{scope}[xshift = 7cm]
\draw (1, 0)--(4, 0); \draw (1, -1)--(4, -1); \draw (2, -2)--(4, -2);
\draw (1,0)--(1,-1); \draw (2,0)--(2,-2); \draw (3,0)--(3,-2); \draw (4,0)--(4,-2);

\draw (3,-3)--(4,-3); \draw (3,-4)--(4,-4);
\draw (3,-3)--(3,-4); \draw (4,-3)--(4,-4);

\draw[fill=gray] (3, -2) rectangle (2, -1);
\end{scope}

\end{tikzpicture}
}
\hspace*{\fill}

\caption{The two Young diagram surgeries possible between the top two Young diagrams of Figure~\ref{fig:fixedpoint3}. The sites are shaded gray and $w$ is the tangent weight of the corresponding invariant curve.}
\label{fig:3_top_yd_surgeries}
\end{figure}
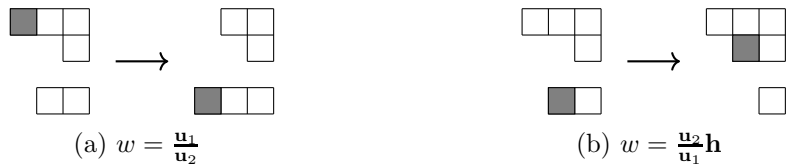

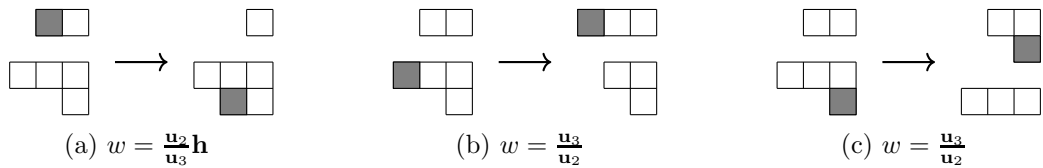
\begin{figure}[th!]
\centering
\hfill
\subcaptionbox*{(a) $w = \frac{\ub_2}{\ub_3}\hb$}
{
\begin{tikzpicture}[scale=0.35]
\draw (2,-3)--(4,-3); \draw (2,-4)--(4,-4);
\draw (2,-3)--(2,-4); \draw (3,-3)--(3,-4); \draw (4,-3)--(4,-4);

\draw (1, -5)--(4, -5); \draw (1, -6)--(4, -6); \draw (3, -7)--(4, -7);
\draw (1,-5)--(1,-6); \draw (2,-5)--(2,-6); \draw (3,-5)--(3,-7); \draw (4,-5)--(4,-7);

\draw[fill=gray] (2, -4) rectangle (3, -3);

\draw[->, thick] (5, -5)--(7, -5);

\begin{scope}[xshift = 7cm]
\draw (3,-3)--(4,-3); \draw (3,-4)--(4,-4);
\draw (3,-3)--(3,-4); \draw (4,-3)--(4,-4);

\draw (1, -5)--(4, -5); \draw (1, -6)--(4, -6); \draw (2, -7)--(4, -7);
\draw (1,-5)--(1,-6); \draw (2,-5)--(2,-7); \draw (3,-5)--(3,-7); \draw (4,-5)--(4,-7);

\draw[fill=gray] (2, -6) rectangle (3, -7);
\end{scope}

\end{tikzpicture}
}
\hfill
\subcaptionbox*{(b) $w = \frac{\ub_3}{\ub_2}$}
{
\begin{tikzpicture}[scale=0.35]
\draw (2,-3)--(4,-3); \draw (2,-4)--(4,-4);
\draw (2,-3)--(2,-4); \draw (3,-3)--(3,-4); \draw (4,-3)--(4,-4);

\draw (1, -5)--(4, -5); \draw (1, -6)--(4, -6); \draw (3, -7)--(4, -7);
\draw (1,-5)--(1,-6); \draw (2,-5)--(2,-6); \draw (3,-5)--(3,-7); \draw (4,-5)--(4,-7);

\draw[fill=gray] (1, -5) rectangle (2, -6);

\draw[->, thick] (5, -5)--(7, -5);

\begin{scope}[xshift = 7cm]
\draw (1,-3)--(4,-3); \draw (1,-4)--(4,-4);
\draw (1,-3)--(1,-4); \draw (2,-3)--(2,-4); \draw (3,-3)--(3,-4); \draw (4,-3)--(4,-4);

\draw (2, -5)--(4, -5); \draw (2, -6)--(4, -6); \draw (3, -7)--(4, -7);
\draw (2,-5)--(2,-6); \draw (3,-5)--(3,-7); \draw (4,-5)--(4,-7);

\draw[fill=gray] (2, -4) rectangle (1, -3);
\end{scope}

\end{tikzpicture}
}
\hfill
\subcaptionbox*{(c) $w = \frac{\ub_3}{\ub_2}$}
{
\begin{tikzpicture}[scale=0.35]
\draw (2,-3)--(4,-3); \draw (2,-4)--(4,-4);
\draw (2,-3)--(2,-4); \draw (3,-3)--(3,-4); \draw (4,-3)--(4,-4);

\draw (1, -5)--(4, -5); \draw (1, -6)--(4, -6); \draw (3, -7)--(4, -7);
\draw (1,-5)--(1,-6); \draw (2,-5)--(2,-6); \draw (3,-5)--(3,-7); \draw (4,-5)--(4,-7);

\draw[fill=gray] (3, -7) rectangle (4, -6);

\draw[->, thick] (5, -5)--(7, -5);

\begin{scope}[xshift = 7cm]
\draw (2,-3)--(4,-3); \draw (2,-4)--(4,-4); \draw (3,-5)--(4,-5);
\draw (2,-3)--(2,-4); \draw (3,-3)--(3,-5); \draw (4,-3)--(4,-5);

\draw (1,-6)--(4,-6); \draw(1,-7)--(4,-7);
\draw (1,-6)--(1,-7); \draw(2,-6)--(2,-7); \draw (3,-6)--(3,-7); \draw (4,-6)--(4,-7);

\draw[fill=gray] (4, -5) rectangle (3, -4);
\end{scope}

\end{tikzpicture}
}
\hspace*{\fill}

\caption{The three surgeries possible between the bottom two Young diagrams of Figure \ref{fig:fixedpoint3}. The sites are shaded gray and the tangent weights are included. Note that surgeries (b) and (c) share a tangent weight and have disjoint sites, and thus span a pencil of invariant curves.}
\label{fig:3_bottom_yd_surgeries}
\end{figure}

Let us consider the fixed point labeled $3$, which has BCT and Young diagrams in Figure \ref{fig:fixedpoint3}. By Theorem \ref{thm:weights}, $T_{3}\bowvar(\brane) = \frac{\ub_2}{\ub_3}\hb + \frac{\ub_3}{\ub_2} + \frac{\ub_1}{\ub_2} + \frac{\ub_2}{\ub_1}\hb + \frac{\ub_2}{\ub_3}\hb + \frac{\ub_3}{\ub_2}$. We can produce an invariant curve for each of these tangent weights.

First we check for type III curves. By Lemma \ref{lemma:nonsurgery}, there is one type III curve, since $c_3-c_2=1$. This curve has tangent weight $\frac{\ub_2}{\ub_3}\hb$. If there are other curves with this weight, they must all be type I and, together with the type III curve, will span a pencil.

We now seek to construct five more invariant curves that come from Young diagram surgeries. To do this, we will consider surgeries between each pair of Young diagrams separately. There are two surgeries between the top two Young diagrams, three surgeries between the bottom two Young diagrams, and zero surgeries between the top and bottom Young diagrams. These five surgeries are depicted in Figures \ref{fig:3_top_yd_surgeries} and~\ref{fig:3_bottom_yd_surgeries}.

With these six curves described, we now build pencils of curves. There is a pencil of curves with weight $\frac{\ub_3}{\ub_2}$, which is spanned by a type I curve and type II curve, as seen in Figure \ref{fig:3_bottom_yd_surgeries}. There is also a pencil of curves with weight $\frac{\ub_2}{\ub_3}\hb$, spanned by a type I curve from Figure \ref{fig:3_bottom_yd_surgeries} and a type III curve. No other curves share weights, so these are the only multidimensional pencils of invariant curves.

\subsection*{Acknowledgements}

We would like to thank R.~Rim\'anyi for helpful discussions on the topic.
We would also like to thank the referees, whose comments have resulted in numerous corrections and improvements.

\pdfbookmark[1]{References}{ref}
\LastPageEnding


\begin{thebibliography}{99}
\footnotesize\itemsep=0pt

\bibitem{AO}
Aganagic M., Okounkov A., Elliptic stable envelopes, \href{https://doi.org/10.1090/jams/954}{\textit{J.~Amer. Math.
 Soc.}} \textbf{34} (2021), 79--133, \href{https://arxiv.org/abs/1604.00423}{arXiv:1604.00423}.

\bibitem{equiv_coh}
Anderson D., Fulton W., Equivariant cohomology in algebraic geometry,
 \textit{Cambridge Stud. Adv. Math.}, Vol.~210, \href{https://doi.org/10.1017/9781009349994}{Cambridge University Press},
 Cambridge, 2023.

\bibitem{BR}
Botta T.M., Rim\'anyi R., Bow varieties: Stable envelopes and their 3d mirror
 symmetry, \href{https://arxiv.org/abs/2308.07300}{arXiv:2308.07300}.

\bibitem{CS}
Chang T., Skjelbred T., The topological {S}chur lemma and related results,
 \href{https://doi.org/10.2307/1971074}{\textit{Ann. of Math.}} \textbf{100} (1974), 307--321.

\bibitem{C1}
Cherkis S.A., Moduli spaces of instantons on the {T}aub-{NUT} space,
  \href{https://doi.org/10.1007/s00220-009-0863-8}{\textit{Comm. Math. Phys.}} \textbf{290} (2009), 719--736, \href{https://arxiv.org/abs/0805.1245}{arXiv:0805.1245}.

\bibitem{C2}
Cherkis S.A., Instantons on the {T}aub-{NUT} space, \href{https://doi.org/10.4310/atmp.2010.v14.n2.a7}{\textit{Adv. Theor. Math.
 Phys.}} \textbf{14} (2010), 609--641, \href{https://arxiv.org/abs/0902.4724}{arXiv:0902.4724}.

\bibitem{C3}
Cherkis S.A., Instantons on gravitons, \href{https://doi.org/10.1007/s00220-011-1293-y}{\textit{Comm. Math. Phys.}} \textbf{306}
 (2011), 449--483, \href{https://arxiv.org/abs/1007.0044}{arXiv:1007.0044}.

\bibitem{motivic_overview}
Feh\'er L.M., Rim\'anyi R., Weber A., Characteristic classes of orbit
 stratifications, the axiomatic approach, in Schubert {C}alculus and {I}ts
 {A}pplications in {C}ombinatorics and {R}epresentation {T}heory,
 \textit{Springer Proc. Math. Stat.}, Vol. 332, \href{https://doi.org/10.1007/978-981-15-7451-1}{Springer}, Singapore, 2020,
 223--249, \href{https://arxiv.org/abs/1811.11467}{arXiv:1811.11467}.

\bibitem{GKM}
Goresky M., Kottwitz R., MacPherson R., Equivariant cohomology, {K}oszul
 duality, and the localization theorem, \href{https://doi.org/10.1007/s002220050197}{\textit{Invent. Math.}} \textbf{131}
 (1998), 25--83.

\bibitem{yibo}
Ji Y., Bow varieties as symplectic reductions of {$T^*(G/P)$},
 \href{https://arxiv.org/abs/2312.04696}{arXiv:2312.04696}.

\bibitem{MO}
Maulik D., Okounkov A., Quantum groups and quantum cohomology,
 \href{https://doi.org/10.24033/ast}{\textit{Ast\'erisque}} \textbf{408} (2019), ix+209, \href{https://arxiv.org/abs/1211.1287}{arXiv:1211.1287}.

\bibitem{MFK}
Mumford D., Fogarty J., Kirwan F., Geometric invariant theory, 3rd ed., \textit{Ergeb.
 Math. Grenzgeb.}, Vol.~34, Springer, Berlin, 1994.

\bibitem{N_quiver}
Nakajima H., Quiver varieties and {K}ac--{M}oody algebras, \href{https://doi.org/10.1215/S0012-7094-98-09120-7}{\textit{Duke
 Math.~J.}} \textbf{91} (1998), 515--560.

\bibitem{N_satake}
Nakajima H., Towards geometric {S}atake correspondence for {K}ac--{M}oody
 algebras, {C}herkis bow varieties and affine {L}ie algebras of type~{$A$},
 \textit{Ann. Sci. \'Ec. Norm. Sup\'er.} \textbf{56} (2023), 1777--1824,
 \href{https://arxiv.org/abs/1810.04293}{arXiv:1810.04293}.

\bibitem{NT}
Nakajima H., Takayama Y., Cherkis bow varieties and {C}oulomb branches of
 quiver gauge theories of affine type~{$A$}, \href{https://doi.org/10.1007/s00029-017-0341-7}{\textit{Selecta Math.~(N.S.)}}
 \textbf{23} (2017), 2553--2633, \href{https://arxiv.org/abs/1606.02002}{arXiv:1606.02002}.

\bibitem{R}
Rim\'anyi R., $\hbar$-deformed schubert calculus in equivariant cohomology,
 {K}-theory, and elliptic cohomology, in Singularities and {T}heir {I}nteraction
 with {G}eometry and {L}ow {D}imensional {T}opology, \textit{Commun. Comput. Inf. Sci.}, \href{https://doi.org/10.1007/978-3-030-61958-9_5}{Springer}, Cham, 2021, 73--96, \href{https://arxiv.org/abs/1912.13089}{arXiv:1912.13089}.

\bibitem{RS}
Rim\'anyi R., Shou Y., Bow varieties~-- geometry, combinatorics, characteristic
 classes, \href{https://doi.org/10.4310/cag.241015012209}{\textit{Comm. Anal. Geom.}} \textbf{32} (2024), 507--575,
 \href{https://arxiv.org/abs/2012.07814}{arXiv:2012.07814}.

\bibitem{fullflag_mirror}
Rim\'anyi R., Smirnov A., Varchenko A., Zhou Z., Three-dimensional mirror
 self-symmetry of the cotangent bundle of the full flag variety,
 \href{https://doi.org/10.3842/SIGMA.2019.093}{\textit{SIGMA}} \textbf{15} (2019), 093, 22~pages, \href{https://arxiv.org/abs/1906.00134}{arXiv:1906.00134}.

\bibitem{grassman_mirror}
Rim\'anyi R., Smirnov A., Zhou Z., Varchenko A., Three-dimensional mirror
 symmetry and elliptic stable envelopes, \href{https://doi.org/10.1093/imrn/rnaa389}{\textit{Int. Math. Res. Not.}}
 \textbf{2022} (2022), 10016--10094, \href{https://arxiv.org/abs/1902.03677}{arXiv:1902.03677}.

\bibitem{RTV1}
Rim\'anyi R., Tarasov V., Varchenko A., Partial flag varieties, stable
 envelopes, and weight functions, \href{https://doi.org/10.4171/QT/65}{\textit{Quantum Topol.}} \textbf{6} (2015),
 333--364, \href{https://arxiv.org/abs/1212.6240}{arXiv:1212.6240}.

\bibitem{RTV2}
Rim\'anyi R., Tarasov V., Varchenko A., Trigonometric weight functions as
 {$K$}-theoretic stable envelope maps for the cotangent bundle of a flag
 variety, \href{https://doi.org/10.1016/j.geomphys.2015.04.002}{\textit{J.~Geom. Phys.}} \textbf{94} (2015), 81--119,
 \href{https://arxiv.org/abs/1411.0478}{arXiv:1411.0478}.

\bibitem{RTV3}
Rim\'anyi R., Tarasov V., Varchenko A., Elliptic and {$K$}-theoretic stable
 envelopes and {N}ewton polytopes, \href{https://doi.org/10.1007/s00029-019-0451-5}{\textit{Selecta Math.~(N.S.)}} \textbf{25}
 (2019), 16, 43~pages, \href{https://arxiv.org/abs/1705.09344}{arXiv:1705.09344}.

\bibitem{S}
Shou Y., Bow varieties~-- geometry, combinatorics, characteristic classes, Ph.D.~Thesis, {T}he {U}niversity of North Carolina at Chapel Hill, 2021,
 \href{https://arxiv.org/abs/2012.07814}{arXiv:2012.07814}.

\bibitem{SZ}
Smirnov A., Zhou Z., 3d mirror symmetry and quantum {$K$}-theory of hypertoric
 varieties, \href{https://doi.org/10.1016/j.aim.2021.108081}{\textit{Adv. Math.}} \textbf{395} (2022), 108081, 61~pages,
 \href{https://arxiv.org/abs/2006.00118}{arXiv:2006.00118}.

\bibitem{Ty}
Tymoczko J.S., An introduction to equivariant cohomology and homology,
 following {G}oresky, {K}ottwitz, and {M}ac{P}herson, in Snowbird {L}ectures
 in {A}lgebraic {G}eometry, \textit{Contemp. Math.}, Vol. 388, \href{https://doi.org/10.1090/conm/388/07264}{American
 Mathematical Society}, Providence, RI, 2005, 169--188,
 \href{https://arxiv.org/abs/math.AG/0503369}{arXiv:math.AG/0503369}.

\bibitem{W}
Wehrhan T., {C}hevalley--{M}onk formulas for bow varieties,
 \href{https://arxiv.org/abs/2310.11235}{arXiv:2310.11235}.

\end{thebibliography}
\end{document}